\pgfplotsset{compat=1.10}
\setlist{leftmargin=9mm}
\renewcommand\subsection{
\Needspace{2cm}
\@startsection{subsection}{2}%
  \z@{-.5\linespacing\@plus-.7\linespacing}{.5\linespacing}%
  {\bf}}
\tikzset{double line with arrow/.style args={#1,#2}{decorate,decoration={markings,%
mark=at position 0 with {\coordinate (ta-base-1) at (0,1pt);
\coordinate (ta-base-2) at (0,-1pt);},
mark=at position 1 with {\draw[#1] (ta-base-1) -- (0,1pt);
\draw[#2] (ta-base-2) -- (0,-1pt);
}}}}
\tikzset{Equals/.style={-,double line with arrow={-,-}}}
\newsavebox{\@brx}
\newcommand{\llangle}[1][]{\savebox{\@brx}{\(\m@th{#1\langle}\)}%
  \mathopen{\copy\@brx\kern-0.5\wd\@brx\usebox{\@brx}}}
\newcommand{\rrangle}[1][]{\savebox{\@brx}{\(\m@th{#1\rangle}\)}%
  \mathclose{\copy\@brx\kern-0.5\wd\@brx\usebox{\@brx}}}
\def\blfootnote{\xdef\@thefnmark{}\@footnotetext}
\setlist[enumerate]{leftmargin=25pt, label={(\roman*)}}
\DeclarePairedDelimiterX\set[1]\lbrace\rbrace{\def\given{\;\delimsize\vert\;}#1}
\renewcommand{\d}[1]{\ensuremath{\operatorname{d}\!{#1}}}
\newcommand{\D}[1]{\ensuremath{\operatorname{D}\!{#1}}}
\newcommand{\enquote}[1]{``{#1}''}
\newcommand{\rra}{\rightrightarrows}
\newcommand{\Ra}{\Rightarrow}
\newcommand{\ra}{\rightarrow}
\newcommand{\R}{\mathbb{R}}
\newcommand{\T}{\mathbb{T}}
\newcommand{\Z}{\mathbb{Z}}
\renewcommand{\AA}{\mathbb{A}}
\newcommand{\MM}{\mathbb{M}}
\newcommand{\C}{\mathcal{C}}
\newcommand{\A}{\mathscr{A}}
\newcommand{\F}{\mathcal{F}}
\newcommand{\ve}{\mathscr{V}\hspace{-0.25em}\mathscr{E}}
\newcommand{\wedgedot}{\:\dot\wedge\:}
\newcommand{\ul}[1]{\underline{\smash{#1}}}
\renewcommand{\O}{\mathcal{O}}
\renewcommand{\sec}{§}
\renewcommand{\L}{\mathscr{L}}
\renewcommand{\frak}{\mathfrak}
\newcommand{\End}{\operatorname{End}}
\newcommand{\Der}{\operatorname{Der}}
\newcommand{\Hom}{\operatorname{Hom}}
\newcommand{\codim}{\operatorname{codim}}
\newcommand{\sgn}{\operatorname{sgn}}
\newcommand{\pr}{\mathrm{pr}}
\newcommand{\Ad}{\mathrm{Ad}}
\newcommand{\ad}{\operatorname{ad}}
\newcommand{\Hor}{\mathrm{Hor}}
\newcommand{\id}{\mathrm{id}}
\newcommand{\obs}{\mathrm{obs}}
\newcommand{\im}{\operatorname{im}}
\newcommand{\inv}{\mathrm{inv}}
\newcommand{\ext}{\mathrm{ext}}
\newcommand{\ev}{\mathrm{ev}}
\newcommand{\vf}{\mathfrak X}
\newcommand{\deriv}[2]{\frac{d}{d{#1}}\Big|_{{#1}={#2}}}
\newcommand{\smallderiv}[2]{\left.\tfrac{d}{d{#1}}\right|_{{#1}={#2}}}
\newcommand{\arc}{\mathscr{C}}
\newcommand{\comp}[1]{{#1}^{(2)}}
\newcommand{\iso}[1]{I({#1})^\circ}
\renewcommand{\|}{\,|\,}
\DeclareRobustCommand{\gobblefive}[5]{}
\newcommand*{\SkipTocEntry}{\addtocontents{toc}{\gobblefive}}
\newcommand{\cev}[1]{{#1}^L}
\numberwithin{equation}{section}
\theoremstyle{plain} 
\newtheorem{theorem}{Theorem}
\numberwithin{theorem}{section}
\newtheorem{corollary}[theorem]{Corollary}
\newtheorem{lemma}[theorem]{Lemma}
\newtheorem{proposition}[theorem]{Proposition}
\theoremstyle{definition}
\newtheorem{definition}[theorem]{Definition}
\theoremstyle{remark}
\newtheorem{remark}[theorem]{Remark}
\newtheorem{intermezzo}[theorem]{Intermezzo}
\newtheorem*{remark*}{Remark}
\newtheorem{example}[theorem]{Example}
\newcommand{\Addresses}{{
  \bigskip
  \footnotesize

  Ž.\ Grad, \textsc{Center for Mathematical Analysis, Geometry, and Dynamical Systems,
Department of mathematics,
Instituto Superior Técnico,
Av. Rovisco Pais,
Lisbon,
Portugal.}\par\nopagebreak
  \textit{E-mail address}: \url{zan.grad@tecnico.ulisboa.pt}.\par\nopagebreak
  
  \textit{Webpage}: \url{https://zangrad.github.io}.
}}
\renewcommand{\tocsection}[3]{%
  \indentlabel{\@ifnotempty{#2}{\bfseries\ignorespaces#1 #2\quad}}\bfseries#3}
\renewcommand{\tocsubsection}[3]{%
  \indentlabel{\@ifnotempty{#2}{\ignorespaces#1 #2\quad}}#3}
\def\@tocline#1#2#3#4#5#6#7{\relax
  \ifnum #1>\c@tocdepth 
  \else
    \par \addpenalty\@secpenalty\addvspace{#2}%
    \begingroup \hyphenpenalty\@M
    \@ifempty{#4}{%
      \@tempdima\csname r@tocindent\number#1\endcsname\relax
    }{%
      \@tempdima#4\relax
    }%
    \parindent\z@ \leftskip#3\relax \advance\leftskip\@tempdima\relax
    \rightskip\@pnumwidth plus1em \parfillskip-\@pnumwidth
    #5\leavevmode\hskip-\@tempdima{#6}\nobreak
    \ifnum #1>1 \leaders\hbox{$\m@th\mkern \@dotsep mu\hbox{.}\mkern \@dotsep mu$} \fi
	\hfill
    \nobreak
    \hbox to\@pnumwidth{\@tocpagenum{\ifnum#1=1\bfseries\fi#7}}\par
    \nobreak
    \endgroup
  \fi}
\renewcommand\csname r@tocindent0\endcsname{0pt}
\def\l@subsection{\@tocline{2}{0pt}{1.7pc}{0pc}{}}
\def\l@section{\@tocline{1}{1em}{0pt}{}{\bfseries}}
\begin{document}

\title[Covariant derivatives of representation-valued forms]{Covariant derivatives in the representation-valued\\Bott--Shulman--Stasheff and Weil complex}

\author[Ž.\ Grad]{Žan Grad}






\begin{abstract}
  For a Lie groupoid $G$, the differential forms on its nerve comprise a double complex. A natural question is if this statement extends to forms with values in a representation $V$ of $G$. In this paper, we research two types of covariant derivatives which commute with the simplicial differential, yielding two types of \enquote{curved} double complexes of forms with coefficients in $V$. The naïve approach is to consider a linear connection $\nabla$ on $V$, in which case $\d{}^\nabla$ commutes with the simplicial differential if and only if $\nabla$ satisfies a certain (restrictive) invariance condition. The heart of this paper focuses on another, more compelling approach: using a multiplicative Ehresmann connection for a bundle of ideals. In this case, we obtain a geometrically richer curved double complex, where the cochain map is given by the horizontal exterior covariant derivative $\D{}$, which generalizes the well-known operator from the theory of principal bundles. Moreover, both differential operators $\d{}^\nabla$ and $\D{}$ are researched in the infinitesimal setting of Lie algebroids, as well as their relationship with the van Est map. We conclude by using the operator $\D{}$ to study the curvature of an (infinitesimal) multiplicative Ehresmann connection.
\end{abstract}

\vspace*{-1.0cm}

\maketitle
\thispagestyle{empty}

\vspace{-1.8em}

\tableofcontents

\vspace{-2em}

\pagebreak


\section{Introduction}
\label{sec:intro}
The motivation for this paper is provided by multiplicative forms on Lie groupoids---differential forms compatible with the groupoid multiplication. They were first used to study the global counterparts of Poisson and Dirac structures \cites{symplectic_groupoids,twisted_dirac}; their infinitesimal counterparts, known as infinitesimal multiplicative forms, received further treatment in \cites{im_forms, linear_mult, local, meinrenken_pike}. In both the global and the infinitesimal realm, multiplicative forms were recognized as cocycles of cochain complexes in \cites{weil, ve_mein}, where the picture of double complexes was established. More precisely, one considers forms
\begin{align*}
  \Omega^q\big(G^{(p)}\big)
\end{align*}
of degree $q$ on the level $p$ of the nerve, $G^{(p)}$---the  manifold consisting of all $p$-tuples of composable arrows of a Lie groupoid $G\rra M$. This yields a double complex, endowed with the simplicial differential $\delta$ and de Rham differential $\d{}$, satisfying $\delta^2=0,$ $\d{}^2=0$ and $\delta{\d{}}=\d{}\delta$.
\begin{align*}
  \begin{tikzcd}[ampersand replacement=\&, column sep=large, row sep=large]
    {\Omega^{q+1}(M)} \& {\Omega^{q+1}(G)} \& {\Omega^{q+1}(G^{(2)})} \& \cdots \\
    {\Omega^q(M)} \& {\Omega^{q}(G)} \& {\Omega^q(G^{(2)} )} \& \cdots
    \arrow["{\delta}", from=1-1, to=1-2]
    \arrow["{\delta}", from=1-2, to=1-3]
    \arrow["{\delta}", from=1-3, to=1-4]
    \arrow["{\d{}}", from=2-1, to=1-1]
    \arrow["{\delta}", from=2-1, to=2-2]
    \arrow["{\d{}}", from=2-2, to=1-2]
    \arrow["{\delta}", from=2-2, to=2-3]
    \arrow["{\d{}}", from=2-3, to=1-3]
    \arrow["{\delta}", from=2-3, to=2-4]
  \end{tikzcd}
  \end{align*}
The 1-cocycles of the simplicial differential correspond precisely to multiplicative forms. Extending the case of real coefficients, multiplicative forms with values in representations (and more generally, in VB-groupoids) were studied in \cites{diff_cohomology, spencer, vb-valued, homogeneous}, where the elementary properties of the simplicial differential $\delta$ were studied. Moreover, important applications of representation-valued multiplicative forms were obtained in \cites{gerbes, mec}, where they appear as the connection forms corresponding to multiplicative Ehresmann connections. 

This paper is a continuation of the research mentioned above. Specifically, we deal with covariant differentiation of representation-valued forms on the nerve of a Lie groupoid $G$, 
\[
\Omega^q(G^{(p)};(s\circ\pr_p)^*V),
\]
and their infinitesimal counterparts comprising the Weil complex. 
Since we are concerned with forms valued in an arbitrary representation, the first idea, already considered in \cite{mec}*{Appendix A}, is to additionally impose a linear connection $\nabla$ on $V$. The pullback connection along $\smash{s\circ\pr_p}$ induces an exterior covariant derivative on the space of forms above. Denoting it simply by $\d{}^\nabla$, our motivating question is whether the following implication holds  for any $\omega\in\Omega^q(G;s^*V)$:
\begin{align*}
  \label{eq:mult_motivation}
  \omega\text{ is multiplicative}\stackrel{?}{\implies} \d{}^\nabla\omega \text{ is multiplicative}
\end{align*}
Since multiplicative forms are 1-cocycles, this clearly holds when $\d{}^\nabla$ is a cochain map, hence, we instead ask which conditions should be imposed on $\nabla$ so that we have
\[\smash{\delta\d{}^\nabla\stackrel{}{=}\d{}^\nabla\delta}.\] 
As we will see, this holds if and only if $\nabla$ satisfies the so-called \textit{invariance} condition: the pullbacks of $\nabla$ along the source and target maps must coincide, under the identification of the two pullback bundles induced by the action $G\curvearrowright V$. Infinitesimally, this implies that the connection $\nabla$ actually induces the representation $\nabla^A$ on $V$ of the Lie algebroid $A$ of $G$, i.e., $\nabla^A_\alpha=\nabla_{\rho\alpha}$ for all $\alpha\in\Gamma(A)$, hence the invariance condition on $\nabla$ is very restrictive. Constraining is also present globally: the invariance condition implies that the action by the flow $\phi^{\smash{\alpha^L}}_\lambda(1_x)$ of a left-invariant vector field $\alpha^L$, must equal the parallel transport along  $\lambda\mapsto \phi^{\smash{\alpha^L}}_\lambda(1_x)$ with respect to the pullback connection $s^*\nabla$.

At this point, an observation is in order. Consider once again the case of real coefficients; this is but a very special case, since the trivial bundle $V=M\times \R$ (with the trivial representation) admits a canonical flat connection, the use of which is a choice subconsciously made to obtain the differential $\d{}\colon\Omega^q(G^{(p)})\ra \Omega^{q+1}(G^{(p)})$. On a general vector bundle $V$, there is no canonical choice of $\nabla$, and a flat connection may not exist at all. Of course, for the cochain map $\d{}^\nabla$ to induce a double complex, we need both $\delta \d{}^\nabla=\d{}^\nabla\delta$ and $(\d{}^\nabla)^2=0$. As it turns out, invariance of $\nabla$ only implies the curvature $R^\nabla$ vanishes along the orbits in $M$, allowing it to be nonvanishing in the transversal directions. For this reason, it is natural for us to instead have in mind the relaxed notion of a \textit{curved double complex}, by which we mean a bigraded vector space $C=(C^{p,q})_{p,q\geq 0}$ equipped with: 
\begin{enumerate}[label={(\roman*)}]
  \item A \textit{differential} $\delta\colon C^{p,q}\ra C^{p+1,q}$, making $(C^{\bullet,q},\delta)$ into a cochain complex at any $q\geq 0$.
  \item A cochain map $C^{\bullet,q}\ra C^{\bullet,q+1}$, called the \textit{vertical map}.
\end{enumerate}
When the vertical map squares to zero, $C$ is called a \textit{flat double complex}, recovering the usual notion of a double complex. However, flatness will always be of secondary importance to our discussion.  

As mentioned, the notion of an invariant connection $\nabla$ on $V$ is very restrictive. However, if we focus on a specific class of representations, we can use another, less restrictive type of connections to obtain a vertical map with a richer geometry. Consider a surjective, submersive groupoid morphism: 
\[\begin{tikzcd}[column sep=small]
	G && H \\
	& M
	\arrow["\Phi", from=1-1, to=1-3]
	\arrow[shift left, from=1-1, to=2-2]
	\arrow[shift right, from=1-1, to=2-2]
	\arrow[shift left, from=1-3, to=2-2]
	\arrow[shift right, from=1-3, to=2-2]
\end{tikzcd}\]
The kernel of the corresponding algebroid morphism $\frak k=\ker\d \Phi|_M\subset \ker\rho$ is a representation of $G$, with the action given by the restriction to $\frak k$ of the adjoint representation $\Ad\colon G\curvearrowright\ker \rho$. Instead of an invariant linear connection $\nabla$ on $\frak k$, we now consider a \textit{multiplicative Ehresmann connection} $E$ for the submersion $\Phi$---a distribution $E\subset TG$ complementary to the vertical subbundle, $TG=E\oplus \ker\d \Phi$, which is compatible with the groupoid structure: $E$ is a subgroupoid of $TG$. As witnessed by many interesting examples presented in \cite{mec}, this notion has a deep geometric character. One of the central results of our paper is that it induces a rich curved double complex structure: 
the one corresponding to the \textit{horizontal} exterior covariant derivative,
\begin{align*}
  \D{}=h^*\circ \d{}^\nabla.
\end{align*}
Here, $h^*$ denotes the precomposition with the horizontal projection $TG\ra E$, and $\nabla$ is a certain linear connection on $\frak k$ induced by $E$, which is in general not an invariant connection. In fact, the latter is true (under a connectedness assumption) if and only if $\frak k$ is abelian as a Lie algebra bundle. It should not be surprising that the operator $\D{}$ does not yield a flat double complex unless $E$ is involutive. Moreover, we note that $\D{}$ provides a broad generalization of the horizontal exterior covariant derivative from the theory of principal bundles, seen as gauge groupoids. Since the horizontal exterior covariant derivative is used to define the curvature of $E$, this operator is central for understanding the theory of multiplicative Ehresmann connections on Lie groupoids. 

Infinitesimally, one aims to find an analogous operator $\D{}$ on the \textit{Weil complex}, now induced by an \textit{infinitesimally multiplicative} (IM) connection for a bundle of ideals $\frak k$ on a Lie algebroid.  However, in striking contrast with the groupoid case, there is now no straightforward and intuitive way of defining $h^*$ for Weil cochains, hence the very definition of the horizontal exterior covariant derivative is evasive in the infinitesimal setting. The discovery of this operator for Weil cochains is thus also considered  one of the main achievements of the paper. As with groupoids, its importance is again central to understanding IM connections---for instance, it enables us to establish important structural properties of IM connections, such as the \textit{infinitesimal Bianchi identity}. 

\SkipTocEntry\subsubsection*{Outline of the paper}
In \sec\ref{sec:invariant_linear_connections}, we consider arbitrary representations, endowed with linear connections. After recalling the simplicial differentials of the global and infinitesimal cochain complexes, we compute the commutator $[\delta,\d{}^\nabla]$ of the simplicial differential $\delta$ and the exterior covariant derivative $\d{}^\nabla$ in both the global and infinitesimal settings. A direct corollary is that the invariance condition on $\nabla$ is necessary and sufficient for $\d{}^\nabla$ to be a cochain map (Theorems \ref{thm:G_invariant} and \ref{thm:A_invariant}). We 
show that under the invariance assumption, the derivatives commute with the van Est map (Theorem \ref{thm:van_est_G_A}), and that this holds on normalized forms regardless of invariance. At last, we show there is a cohomological obstruction to the existence of invariant connections (Theorems $\ref{thm:obstruction_invariance_G}$ and $\ref{thm:obstruction_invariance}$).

In \sec\ref{sec:bss_boi}, we examine the  case when the representation is a bundle of ideals $\frak k \subset \ker \rho$, focusing first on Lie groupoids. By providing a novel way of expressing the linear connection $\nabla$ on $\frak k$ induced by a multiplicative Ehresmann connection $\omega\in\Omega_{m}^1(G;\frak k)$, we show that the horizontal exterior covariant derivative $\D{}^\omega = h^* \circ \d{}^\nabla$ is a cochain map (Theorem \ref{thm:deltaD}), whereas $\d{}^\nabla$ is not, unless $\frak k$ is abelian (and $G$ is source-connected). The upshot is that the horizontal projection of differential forms is a cochain map due to the multiplicativity of the connection $\omega$, so it suffices to show that the horizontal projection of the commutator of $\delta$ and $\d{}^\nabla$ vanishes.

In \sec\ref{sec:weil_boi}, we establish an analogous result in the infinitesimal setting of Lie algebroids. That is, an infinitesimal multiplicative Ehresmann connection $(\C, v)\in\Omega_{im}^1(A;\frak k)$ for a bundle of ideals $\frak k$ gives rise to the horizontal exterior covariant derivative $\D{}^{(\C, v)}$, which is a cochain map (Theorem \ref{thm:deltaD_inf}). As in the global case, the definition of $\D{}^{(\C, v)}$ requires us to define the horizontal projection $h^*$ of Weil cochains, whose definition is now significantly less trivial---to obtain the formula for $h^*$ and show it is a cochain map, we employ the viewpoint of VB-algebroids in \sec\ref{sec:homogeneous_horizontal} and utilize the complex of exterior cochains (Theorem \ref{thm:derivation_hor_proj}).
We show that $\D{}^\omega$ and $\D{}^{(\C, v)}$ commute with the van Est map at the level of multiplicative forms (Theorem \ref{thm:van_est_D}), and explain why this fails for general cochains.

Finally, \sec\ref{sec:applications} is dedicated to the study of applications of the curved double complexes from \sec\ref{sec:bss_boi} and \sec\ref{sec:weil_boi}. We first observe that a specific cohomology class provides the obstruction to the existence of (infinitesimal) multiplicative Ehresmann connections. 
Next, we investigate the applications of the developed horizontal exterior covariant derivatives. In \sec\ref{sec:curvature}, we study the fundamental properties of the curvature of a multiplicative connection, including the infinitesimal Bianchi identity (Theorem \ref{thm:bianchi_inf}) that is made possible by the discovery of $\D{}^{(\C,v)}$. Additionally, we examine how the curvature transforms under affine deformations of multiplicative connections (Theorems \ref{thm:expansion} and \ref{thm:expansion_inf}), where $\D{}^\omega$ and $\D{}^{(\C,v)}$ play an important role in the first-order term of the expansion. At last, in \sec\ref{sec:primitive}, we focus on a special class of \textit{primitive} multiplicative connections---those with cohomologically trivial curvature. We explore several interesting properties of this subclass of multiplicative connections. Notably, we show that if $\frak k$ has a semisimple fibre, every multiplicative connection is primitive with a unique representative 2-form on the base, which is proved by showing that the zeroth and first horizontal simplicial cohomology groups are trivial. This also holds in the case when $\frak k$ is the isotropy bundle of a transitive algebroid. We finish our discussion  by providing an explicit description of primitive IM connections in the case when $\frak k$ is abelian.

\SkipTocEntry\subsection*{Acknowledgments}
I am grateful to Rui L.\ Fernandes, Alejandro Cabrera, Thiago Drummond, João Nuno Mestre and Lennart Obster for the helpful discussions and suggestions. I am also grateful to my PhD supervisors Ioan M\u{a}rcu\c{t} and Pedro Resende for their guidance and advice.

This work is supported by PhD Grant \href{http://dx.doi.org/10.54499/UI/BD/152069/2021}{UI/BD/152069/2021} of FCT, Portugal. It is further supported by COST Action CaLISTA (CA21109) of the European Cooperation in Science and Technology (\url{http://cost.eu}). 

\SkipTocEntry\section*{Preliminaries}
We assume the reader is familiar with the basics of Lie groupoids and algebroids, found in \cite{mackenzie}. Let us only recall the basics regarding representations. A \textit{representation} of a Lie groupoid $G\rra M$ on a vector bundle $\pi\colon V\ra M$ is a smooth map $\Delta\colon G\tensor[_s]{\times}{_\pi}V\ra V$, denoted $(g,v)\mapsto g\cdot v$ satisfying
\begin{align*}
  \pi(g\cdot v)\in V_{t(g)},\quad 1_x\cdot v=v,\quad g\cdot (h\cdot v)=(gh)\cdot v,
\end{align*}
for all $g,h\in G$ and $v\in V$ for which the expressions are defined. Additionally, for any $g\in G$, we require the action map $\Delta_g\colon V_{s(g)}\ra V_{t(g)}$, $v\mapsto g\cdot v$ to be linear.

Passing to the infinitesimal level, we first recall that a \textit{representation} of a Lie algebroid $A\Ra M$ is a flat $A$-connection on a vector bundle $V\rightarrow M$. We recall this is a bilinear map
\begin{align*}
  \nabla^A\colon \Gamma(A)\times \Gamma(V)\ra\Gamma(V),\quad (\alpha,\sigma)\mapsto \nabla^A_\alpha \sigma,
\end{align*}
which is $C^\infty(M)$-linear in the first argument, satisfies the Leibniz identity
\begin{align*}
  \nabla^A_\alpha(f\sigma)=f\nabla^A_\alpha \sigma+ \rho(\alpha)(f)\sigma,
\end{align*}
and the flatness condition reads $\nabla^A_{[\alpha,\beta]}=[\nabla^A_\alpha,\nabla^A_\beta]$. If $A=\ker\d t|_M$ is the Lie algebroid of a Lie groupoid $G$, a representation of $G$ on $V$ induces a representation of $A$ on $V$, by setting
\[
(\nabla^A_\alpha \xi)(x)=\deriv\lambda 0 \phi^{\alpha^L}_{\lambda}(1_x)\cdot \xi(\phi^{\rho(\alpha)}_{\lambda}(x)),
\]
where $\alpha^L\in\vf(G)$ denotes the left-invariant extension of $\alpha\in \Gamma(A)$ on $G$, $\phi^{\alpha^L}_{\lambda}$ is its flow at time $\lambda$, and we denoted by $\phi^{\smash{\rho(\alpha)}}_\lambda$ the flow of the vector field $\rho(\alpha)\in\vf(M)$. Since $s_*\alpha^L=\rho(\alpha)$, the two flows are related by $s\circ \phi^{\smash{\alpha^L}}_\lambda=\phi^{\smash{\rho(\alpha)}}_\lambda\circ s$. 

In the case when $G$ has simply connected $s$-fibres, this defines a bijective correspondence between representations of $G$ and representations of $A$ by virtue of Lie's second fundamental theorem for groupoids \cite{lie2}.


\section{Invariant linear connections on representations}
\label{sec:invariant_linear_connections}
\subsection{Bott--Shulman--Stasheff complex with values in a representation}
Let $V$ be a representation of a Lie groupoid $G\rra M$ and consider the set of  $V$-valued differential forms of degree $q$ on the level $p$ of the nerve of $G$,
\begin{align*}
&\Omega^{0,q}(G;V)\coloneqq \Omega^q(M;V),\\
&\Omega^{p,q}(G;V)\coloneqq \Omega^q(G^{(p)};(s\circ \pr_p)^*V),\quad \text{for }p\geq 1,
\end{align*}
where $\pr_p\colon G^{(p)}\ra G$ denotes the projection to the last element. For a fixed degree $q$, this becomes a cochain complex, with the differential defined as follows. At level $p=0$,
\begin{align}
\begin{split}
\label{eq:delta_0}
&\delta^{0}\colon \Omega^q(M;V)\ra \Omega^q(G;s^*V),\quad(\delta^0\gamma)_g=(s^*\gamma)_g-g^{-1}\cdot (t^*\gamma)_g.
\end{split}
\end{align}
At level $p\geq 1$, the differential is given by the alternating sum of pullbacks along the face maps,
\begin{align}
\begin{split}
\label{eq:delta_l}
&\delta^p\colon \Omega^{p,q}(G;V)\ra \Omega^{p+1,q}(G;V),\quad
\delta^p=\sum_{i=0}^{p}(-1)^i\big(f_i^{(p+1)}\big)^* + (-1)^{p+1}\Phi_*\big(f_{p+1}^{(p+1)}\big)^*
\end{split}
\end{align}
where the face maps $f_i^{(p+1)}\colon G^{(p+1)}\ra G^{(p)}$ are defined as
\[
f_i^{(p+1)}(g_1,\dots,g_{p+1})=
\begin{cases}
(g_2,\dots,g_{p+1})&i=0,\\
(g_1,\dots,g_{i}g_{i+1},\dots,g_{p+1})&1\leq i\leq p,\\
(g_1,\dots,g_{p})&i=p+1,
\end{cases}
\]
and the map $\Phi_*\colon \Omega^q(G^{(p+1)};(t\circ\pr_{p+1})^*V)\ra \Omega^q(G^{(p+1)};(s\circ\pr_{p+1})^*V)$ changes the coefficients of forms---it is induced by the isomorphism of vector bundles
\begin{align*}
  \phi\colon t^*V\ra s^*V,\quad \phi(g,v)=(g,g^{-1}\cdot v)
\end{align*}
or rather its $(p+1)$-level analogue,
\begin{align*}
&\Phi\colon(t\circ \pr_{p+1})^*V\ra (s\circ \pr_{p+1})^*V,\\
&\Phi(g_1,\dots,g_{p+1},v)= (g_1,\dots,g_{p+1},g_{p+1}^{-1}\cdot v).
\end{align*}
\begin{definition}
  Let $G\rra M$ be a Lie groupoid with a representation $V$. The differential $\delta$ is called the \textit{simplicial differential} of differential forms on the nerve. At any fixed degree $q\geq 0$, it defines a cochain complex called the \textit{Bott--Shulman--Stasheff complex}, 
  \[
  (\Omega^{\bullet,q}(G;V),\delta),
  \]
  whose cohomology is called the \textit{simplicial cohomology} of differential forms on the nerve,
  \[
    H^{p,q}(G;V)\coloneqq H^p(\Omega^{\bullet,q}(G;V),\delta).
  \]
\end{definition}
\begin{example}
At level $p=0$, the cocycles $\ker\delta^0$ are called \textit{invariant forms} on $M$, i.e., forms $\gamma\in\Omega^q(M;V)$ which satisfy
$(t^*\gamma)_g=g\cdot (s^*\gamma)_g.$
At level $p=1$, the cocycles $\ker\delta^1$ are called \textit{multiplicative forms} on $G$. These are forms $\omega\in\Omega^q(G;s^*V)$ that satisfy
\begin{align}
\label{eq:multiplicative}
  \omega_{gh}(\d m(X_i,Y_i))_i=\omega_h(Y_i)_i+h^{-1}\cdot \omega_g(X_i)_i
\end{align}
for any vectors $(X_i,Y_i)\in T_{(g,h)}G^{(2)}$, where $s(g)=t(h)$, and we denote them by $\Omega^q_m(G;V)$. Here, $(X_i)_i\coloneq (X_i)_{i=1}^q$ denotes a tuple $(X_1,\dots, X_q)$ with the number $q$ seen from the context, and similarly $(\d m (X_i,Y_i))_i$ denotes $(\d m (X_1,Y_1),\dots,\d m (X_q,Y_q))$; such notation will be used often. 

The cocycles arising as coboundaries $\im \delta^0\subset \Omega^\bullet_m(G;V)$ are called \textit{cohomologically trivial}.
\end{example}

\subsubsection{Invariant linear connections in the global setting}

We now assume that a connection $\nabla$ on the vector bundle $V\ra M$ is given, with no a priori additional assumptions regarding the compatibility of $\nabla$ with the groupoid action $G\curvearrowright V$. 
\begin{definition}
  The \textit{exterior covariant derivative} of $V$-valued forms $\Omega^{p,q}(G;V)$ is given by
  \[
  \d{}^{\nabla^{s\circ\pr_p}}\colon \Omega^{p,q}(G;V)\ra \Omega^{p,q+1}(G;V),
  \]
where $\nabla^{s\circ\pr_p}$ denotes the pullback along $s\circ \pr_p\colon G^{(p)}\ra M$ of a given connection $\nabla$ on $V$. When no confusion arises, we will simply denote it by $\d{}^\nabla$. 
\end{definition}

The following result tells us precisely when $\d{}^\nabla$ commutes with $\delta$. It was already obtained in \cite{mec}*{Proposition A.7} for $p=1$, however, we hereby prove it by computing the explicit formula for the commutator 
\begin{align}
  \label{eq:commutator}
  [\d{}^\nabla,\delta]\colon \Omega^{p,q}(G;V)\ra \Omega^{p+1,q+1}(G;V)
\end{align}
which was not obtained there, and will be needed in \sec\ref{sec:bss_boi}.
\begin{theorem}
\label{thm:G_invariant}
Let $\nabla$ be a connection on a representation $V$ of a Lie groupoid $G\rra M$. The map $\d{}^\nabla$ is a cochain map if and only if $\nabla$ is $G$-invariant, that is, if the following tensor vanishes:
\begin{align}
  \label{eq:invariance_form_G}
  \Theta\in \Omega^1(G;\Hom(t^*V,s^*V)),\quad \Theta(X)\xi=\phi(\nabla_X^t\xi)-\nabla_X^s \phi (\xi),
\end{align}
for any $X\in \vf(G)$ and $\xi\in\Gamma(t^*V)$.  Hence, if $\nabla$ is $G$-invariant, $\d{}^\nabla$ preserves multiplicativity.
\end{theorem}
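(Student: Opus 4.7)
The plan is to compute the commutator $[\d{}^\nabla,\delta^p]$ explicitly and exhibit it as a $C^\infty(G^{(p+1)})$-linear operator whose coefficient is a pullback of $\Theta$; both implications are then immediate, and the preservation of multiplicativity follows from the fact that multiplicative forms are the $1$-cocycles of $\delta$.

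First, I would identify which terms of $\delta^p$ in \eqref{eq:delta_l} can fail to commute with $\d{}^\nabla$. A direct case-check on the face maps gives $s\circ \pr_p\circ f_i^{(p+1)}=s\circ \pr_{p+1}$ for $i\leq p$, while $s\circ \pr_p\circ f_{p+1}^{(p+1)}=t\circ \pr_{p+1}$. Since the pullback of a pullback connection is the pullback along the composition, for each $i\leq p$ the map $(f_i^{(p+1)})^*$ intertwines $\d{}^{\nabla^{s\circ \pr_p}}$ with $\d{}^{\nabla^{s\circ \pr_{p+1}}}$, and these terms drop out of the commutator. The only non-trivial contribution comes from $(-1)^{p+1}\Phi_*(f_{p+1}^{(p+1)})^*$: pulling back along $f_{p+1}^{(p+1)}$ produces a form with coefficients in $(t\circ \pr_{p+1})^*V$, and $\Phi_*$ switches them back to $(s\circ \pr_{p+1})^*V$ via $\phi$.

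Next I would carry out the following Koszul-formula computation for the residual piece: for any $\tau\in\Omega^q(G^{(p+1)};(t\circ\pr_{p+1})^*V)$,
\begin{align*}
  \d{}^{\nabla^{s\circ\pr_{p+1}}}(\Phi_*\tau)\,-\,\Phi_*\,\d{}^{\nabla^{t\circ\pr_{p+1}}}(\tau) \;=\; -\,\pr_{p+1}^*\Theta\wedgedot \tau.
\end{align*}
The algebraic part of Koszul's formula (the bracket sum) does not involve the connection and commutes with the bundle map $\Phi_*$, so it cancels; the remaining sum $\sum_i (-1)^i[\nabla^s_{X_i}\Phi(\tau(\dots)) - \Phi\,\nabla^t_{X_i}\tau(\dots)]$ collapses, directly from the definition of $\Theta$ (pulled back along $\pr_{p+1}$), to $-\sum_i (-1)^i \pr_{p+1}^*\Theta(X_i)\,\tau(\dots)$. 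Assembling the two steps produces a closed formula $[\d{}^\nabla,\delta^p]\omega = \pm\,\pr_{p+1}^*\Theta\wedgedot (f_{p+1}^{(p+1)})^*\omega$, from which the ``if'' direction is immediate.

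For the converse, I would specialize to $p=0$ and $\omega=\eta\in\Gamma(V)$, where the formula reduces to $[\d{}^\nabla,\delta^0]\eta = \pm\,\Theta\cdot t^*\eta\in\Omega^1(G;s^*V)$. Vanishing of this for every $\eta$, evaluated at an arbitrary $X\in T_gG$, forces $\Theta_g(X)v=0$ for every $v\in V_{t(g)}$, since $\eta(t(g))$ sweeps out $V_{t(g)}$ as $\eta$ varies; hence $\Theta=0$. The main obstacle is not conceptual but bookkeeping: one must keep careful track of which pullback of $\nabla$ acts on which bundle at each stage, so that the definition of $\Theta$ plugs in at precisely the right moment. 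Once the diagram of pullback connections is laid out, the identification of the commutator with a $\Theta$-wedge is essentially forced, and the final statement about multiplicative forms is an immediate corollary.
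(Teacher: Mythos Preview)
Your proposal is correct and follows essentially the same route as the paper: both arguments isolate the last face map $f_{p+1}^{(p+1)}$ as the sole source of non-commutation (using $s\circ\pr_p\circ f_i^{(p+1)}=s\circ\pr_{p+1}$ for $i\le p$ versus $t\circ\pr_{p+1}$ for $i=p+1$), then identify the residual operator $\d{}^{\nabla^{s\circ\pr_{p+1}}}\Phi_*-\Phi_*\d{}^{\nabla^{t\circ\pr_{p+1}}}$ with $-\pr_{p+1}^*\Theta$ acting by wedge. Your converse (specializing to $p=0$, $q=0$) is a concrete instance of the paper's appeal to ``surjectivity and submersivity of face maps and projections,'' and your use of $\wedgedot$ should simply be the paper's $\wedge$ here.
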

\begin{lemma}
\label{lem:G_invariant}
Let $\nabla$ be a connection on a representation $V$ of a Lie groupoid $G\rra M$. At level $p=0$, the commutator of $\delta$ and $\d{}^\nabla$ reads
\begin{align}
\label{eq:commutator_zero}
  (\d{}^{\nabla^{s}}\delta^0-\delta^0\d{}^{\nabla})\omega=\Theta\wedge t^*\omega
\end{align}
for any $\omega\in \Omega^q(M;V)$. At any higher level $p\geq 1$, there holds
\begin{align}
\label{eq:commutator_higher}
  (\d{}^{\nabla^{s\circ\pr_{p+1}}}\delta^p-\delta^p\d{}^{\nabla^{s\circ\pr_p}})\omega=(-1)^{p}(\pr_{p+1})^*\Theta\wedge\big (f^{(p+1)}_{p+1}\big)^*\omega
\end{align}
for any $\omega\in\Omega^{p,q}(G;V)$. 
Explicitly, the $(q+1)$-form on $G^{(p+1)}$ on the right-hand side reads
\begin{align*}
  \Big((\pr_{p+1})^*\Theta&\wedge\big (f^{(p+1)}_{p+1}\big)^*\omega\Big)(X_1,\dots,X_{q+1})\\
  &=\sum_{i=1}^{q+1}(-1)^{i+1}\Theta(X_i^{p+1})\cdot\omega((X_1^1,\dots, X_1^p),\dots,\widehat{(X_i^1,\dots,X_i^p)},\dots (X_{q+1}^1,\dots, X_{q+1}^p)),
\end{align*}
for any tangent vectors $X_i=(X_i^1,\dots, X_i^{p+1})\in TG^{(p+1)}$ from the same fibre.
\end{lemma}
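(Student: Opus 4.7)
My plan is to exploit the fact that pullback commutes with the exterior covariant derivative, reducing the whole commutator to the sole term in $\delta$ that changes coefficient bundles---namely the one involving the isomorphism $\phi\colon t^*V\ra s^*V$ (respectively $\Phi$ at higher levels)---and then apply a Leibniz rule which produces $\Theta$ as a tensorial defect.

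For the level $p=0$ case, I would split $\delta^0\gamma=s^*\gamma-\phi(t^*\gamma)$ and treat the two summands separately. Since $\nabla^s$ is the pullback connection, $\d{}^{\nabla^s}s^*\gamma=s^*\d{}^\nabla\gamma$, which is the tautological pullback identity. For the second summand, I would view $\phi$ as a section of $\Hom(t^*V,s^*V)$, endow the latter with the connection induced by $\nabla^t$ and $\nabla^s$, and use the Leibniz identity
\[
\d{}^{\nabla^s}(\phi\cdot\beta)=(\widetilde\nabla\phi)\wedge\beta+\phi\cdot\d{}^{\nabla^t}\beta,
\]
valid for any $\beta\in\Omega^q(G;t^*V)$. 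Reading off the definition \eqref{eq:invariance_form_G}, one sees that $(\widetilde\nabla_X\phi)(\xi)=\nabla^s_X(\phi\xi)-\phi(\nabla^t_X\xi)=-\Theta(X)\xi$, so $\widetilde\nabla\phi=-\Theta$. Applying this with $\beta=t^*\gamma$ and noting once more that pullback commutes with $\d{}^\nabla$ gives $\d{}^{\nabla^s}(\phi(t^*\gamma))=-\Theta\wedge t^*\gamma+\phi(t^*\d{}^\nabla\gamma)$, and subtracting yields \eqref{eq:commutator_zero}.

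For $p\geq 1$, I would first check that the face maps $f_i^{(p+1)}$ with $i=0,\dots,p$ all satisfy $s\circ\pr_p\circ f_i^{(p+1)}=s\circ\pr_{p+1}$---the source of the last arrow is untouched by any of them---so on those summands the pullback $\big(f_i^{(p+1)}\big)^*$ automatically intertwines the respective exterior covariant derivatives. The only exceptional summand is the last, where $f_{p+1}^{(p+1)}$ switches the coefficient bundle from $s$-pullback to $t$-pullback and must be followed by $\Phi_*$. Recognizing $\Phi$ fibrewise as $\phi$ applied in the last slot, i.e.\ $\Phi=\phi\circ\pr_{p+1}$ in the bundle-map sense, I would rerun the Leibniz argument above on $G^{(p+1)}$: this gives $\widetilde\nabla\Phi=-(\pr_{p+1})^*\Theta$, hence
\[
\d{}^{\nabla^{s\circ\pr_{p+1}}}\big(\Phi_*\big(f_{p+1}^{(p+1)}\big)^*\omega\big)=-(\pr_{p+1})^*\Theta\wedge\big(f_{p+1}^{(p+1)}\big)^*\omega+\Phi_*\big(f_{p+1}^{(p+1)}\big)^*\d{}^{\nabla^{s\circ\pr_p}}\omega.
\]
Combining with the sign $(-1)^{p+1}$ from the definition \eqref{eq:delta_l} of $\delta^p$ yields the asserted commutator \eqref{eq:commutator_higher}. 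The explicit pointwise formula is then obtained by unraveling $\d{\pr_{p+1}}(X_i)=X_i^{p+1}$ and $\d f_{p+1}^{(p+1)}(X_i)=(X_i^1,\dots,X_i^p)$ in the wedge expansion of $(\pr_{p+1})^*\Theta\wedge(f_{p+1}^{(p+1)})^*\omega$.

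The conceptual content is modest---only $\phi$ (and its level-$(p+1)$ analogue $\Phi$) fails to be parallel, and $\Theta$ measures precisely that failure. I expect the one delicate point is bookkeeping: making sure the exterior covariant derivative on each pulled-back summand is taken with respect to the correct pullback connection, and that the Leibniz rule is applied with the correctly induced connection on $\Hom(t^*V,s^*V)$, so that the sign $-\Theta=\widetilde\nabla\phi$ appears with the right orientation and combines with $(-1)^{p+1}$ to give the $(-1)^p$ in \eqref{eq:commutator_higher}.
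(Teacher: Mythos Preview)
Your proposal is correct and follows essentially the same approach as the paper: both isolate the last face map as the only source of the commutator defect, and both identify that defect with $\Theta$ via the failure of $\phi$ (respectively $\Phi$) to be parallel. Your framing via the Leibniz rule for the induced connection on $\Hom(t^*V,s^*V)$---giving $\widetilde\nabla\phi=-\Theta$---is a slight repackaging of the paper's direct expansion of $\d{}^{\nabla^{s\circ\pr_{p+1}}}\Phi_*-\Phi_*\d{}^{\nabla^{t\circ\pr_{p+1}}}$, but the content and sign tracking are identical.
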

\begin{proof}
We will make use of the basic fact that if $E\ra B$ is any vector bundle equipped with a connection $\nabla$, and if $\pi\colon N\ra B$ is any smooth map, then the pullback connection $\nabla^\pi$ on $\Omega^\bullet(N;\pi^*E)$ is characterized by the following identity involving its exterior covariant derivative,
\begin{align}
\label{eq:pullback}
  \d{}^{\nabla^\pi}\pi^*=\pi^*\d{}^\nabla.
\end{align}
Using it on the face maps $\pi=f_i^{(p+1)}$ for $i\leq p$, with connection $\nabla^{s\circ\pr_p}$  in place of $\nabla$, the identity $s\circ\pr_p\circ \smash{f_i^{(p+1)}}=s\circ\pr_{p+1}$ yields
\[
\d{}^{\nabla^{s\circ\pr_{p+1}}}(f_i^{(p+1)})^*=(f_i^{(p+1)})^*\d{}^{\nabla^{s\circ\pr_p}}.
\]
Moreover, using \eqref{eq:pullback} on $\pi=f_{p+1}^{(p+1)}$, the identity $s\circ\pr_p\circ f^{(p+1)}_{p+1}=s\circ\pr_p=t\circ\pr_{p+1}$ yields
\[
  \d{}^{\nabla^{t\circ\pr_{p+1}}}(f_{p+1}^{(p+1)})^*=(f_{p+1}^{(p+1)})^*\d{}^{\nabla^{s\circ\pr_p}}.
\]
Using these two equalities and the defining equation \eqref{eq:delta_l} of $\delta$, we compute
\begin{align*}
  \d{}^{\nabla^{s\circ\pr_{p+1}}}\delta^p-\delta^p\d{}^{\nabla^{s\circ\pr_p}} &=(-1)^{p+1}\big(\d{}^{\nabla^{s\circ\pr_{p+1}}}\Phi_* (f_{p+1}^{(p+1)})^* -\Phi_*(f_{p+1}^{(p+1)})^*\d{}^{\nabla^{s\circ\pr_p}}\big)\\
  &=(-1)^{p+1}\big({\d{}^{\nabla^{s\circ\pr_{p+1}}}}\Phi_*-\Phi_*\d{}^{\nabla^{t\circ\pr_{p+1}}}\big)(f_{p+1}^{(p+1)})^*.
\end{align*}
Let us expand the differential operator on the right-hand side of the last expression, on an arbitrary form $\gamma\in\Omega^q(G^{(p+1)};(t\circ\pr_{p+1})^*V)$:
\begin{align*}
(\d{}^{\nabla^{s\circ\pr_{p+1}}}\Phi_*\gamma&-\Phi_*\d{}^{\nabla^{t\circ\pr_{p+1}}}\gamma)(X_1,\dots,X_{q+1})
\\
&=\textstyle\sum\limits_i(-1)^{i+1}\big(\nabla_{X_i}^{s\circ\pr_{p+1}}\circ\Phi-\Phi\circ\nabla_{X_i}^{t\circ\pr_{p+1}}\big)\gamma(X_1,\dots,\widehat X_i,\dots, X_{q+1}),
\end{align*}
for any vectors $X_i\in T G^{(p+1)}$ over the same point, where the terms with sums over double indices have cancelled out. But it is not hard to see that 
\[
\nabla^{s\circ\pr_{p+1}}\circ\Phi-\Phi\circ\nabla^{t\circ\pr_{p+1}}=-\pr_{p+1}^*\Theta,
\]
which is enough to be checked on pullback sections $(t\circ\pr_{p+1})^*\xi$ for $\xi\in \Gamma(V)$. Indeed, the stated equality is immediate once we use the relation $\Phi\circ\pr_{p+1}^*=\pr_{p+1}^*\circ\phi$. At last, taking $\smash{\gamma=(f_{p+1}^{(p+1)})^*}\omega$ concludes the proof of \eqref{eq:commutator_higher}. Identity \eqref{eq:commutator_zero} is simple and left to the reader.
\end{proof}

Theorem \ref{thm:G_invariant} now follows by surjectivity and submersivity of face maps and projections, since the lemma above implies that $\Theta=0$ if and only if at any level $p\geq 0$ (hence, at all levels) the exterior covariant derivative commutes with the simplicial differential.
In conclusion, any $G$-invariant connection $\nabla$ on $V$ yields the columns of a curved double complex,
\[
(\Omega^{\bullet,\bullet}(G;V),\delta,\d{}^\nabla).
\]
We once again emphasize the notable feature that $\d{}^\nabla$ does not square to zero unless $\nabla$ is flat, in which case we obtain a flat double complex.
\begin{align*}
\begin{tikzcd}[ampersand replacement=\&, column sep=large, row sep=large]
	{\Omega^{q+1}(M;V)} \& {\Omega^{q+1}(G;s^*V)} \& {\Omega^{q+1}(G^{(2)};(s\circ\pr_2)^*V)} \& \cdots \\
	{\Omega^q(M;V)} \& {\Omega^{q}(G;s^*V)} \& {\Omega^q(G^{(2)};(s\circ\pr_2)^*V )} \& \cdots
	\arrow["{\delta}", from=1-1, to=1-2]
	\arrow["{\delta}", from=1-2, to=1-3]
	\arrow["{\delta}", from=1-3, to=1-4]
	\arrow["{\d{}^\nabla}", from=2-1, to=1-1]
	\arrow["{\delta}", from=2-1, to=2-2]
	\arrow["{\d{}^\nabla}", from=2-2, to=1-2]
	\arrow["{\delta}", from=2-2, to=2-3]
	\arrow["{\d{}^\nabla}", from=2-3, to=1-3]
	\arrow["{\delta}", from=2-3, to=2-4]
\end{tikzcd}
\end{align*}

\subsection{Weil complex  with values in a representation}
Let $V$ be a representation of a Lie algebroid $A\Ra M$. Following \cite{homogeneous}*{\sec 4}, the infinitesimal analogue of the complex $\Omega^{p,q}(G;V)$ is the Weil complex: it is defined as the family of sets $W^{p,q}(A;V)$ of sequences $c=(c_0,\dots,c_p)$ of alternating $\R$-multilinear maps 
\[
c_k\colon\underbrace{\Gamma(A)\times\dots\times \Gamma(A)}_{p-k \text{ copies}}\ra \Omega^{q-k}(M;S^k(A^*)\otimes V),
\]
whose failure at being $C^\infty(M)$-multilinear is controlled by the \textit{Leibniz identity}
\[
c_k(f\alpha_1,\alpha_2,\dots,\alpha_{p-k}\|\cdot)=fc_k(\alpha_1,\dots,\alpha_{p-k}\|\cdot)+\d f\wedge c_{k+1}(\alpha_2,\dots,\alpha_{p-k}\|\alpha_1,\cdot).
\]
Here, the notation reflects the fact that each $c_k$ in total inputs $p$ sections of $A$, accounting also for the $k$ arguments in which $c_k$ is symmetric. More precisely, we write
\[c_k(\alpha_1,\dots,\alpha_{p-k}\|\beta_1,\dots,\beta_k)=c_k(\ul\alpha\|\ul\beta) \in \Omega^{q-k}(M;V)\]
to mean that $c_k$ is $\R$-multilinear and antisymmetric in the first $p-k$ arguments, and  $C^\infty(M)$-multilinear and symmetric in the last $k$ arguments. The elements of $W^{p,q}(A;V)$ will be called \textit{Weil cochains}. The term $c_0$ will be called the \textit{leading term}, while the higher terms will be called the \textit{correction terms}.

\begin{remark}
There is a general principle that we will often use, as in \cite{weil}. Suppose we want to define a map $F\colon C\ra W^{p,q}(A;V)$ to the Weil complex from some set $C$. To do so, we will often prescribe the leading term $(Fc)_0$ and then infer the correction terms $(Fc)_1,\dots,(Fc)_p$ by consecutively applying the Leibniz identity. The leading term will always have a clear conceptual meaning, whereas the correction terms will usually be more complicated. We will usually omit this heuristic procedure of obtaining the correction terms, and will instead provide the proof of well-definedness of $F$. 
\end{remark}
As in the setting of groupoids, the sets $W^{p,q}(A;V)$ form a cochain complex, with the differential $\delta\colon W^{p,q}(A;V)\ra W^{p+1,q}(A;V)$ defined as follows. First  note that for arbitrary $m,n\geq 0$ the space $\Omega^m(M;S^n(A^*)\otimes V)$ is a module for the Lie algebra $\Gamma(A)$, with the action given  by the Lie derivative induced by the representation, defined by the chain rule:
\begin{align*}
(\L^A_\alpha\gamma)(\ul\beta)(\ul X)&=\nabla^A_\alpha\gamma(\ul\beta)(\ul X)\\
&-\textstyle\sum_i\gamma(\beta_1,\dots,[\alpha,\beta_i],\dots,\beta_n)(\ul X)\\
&-\textstyle\sum_j\gamma(\ul\beta)(X_1,\dots,[\rho(\alpha),X_j],\dots, X_m).
\end{align*}
Then, the leading term $(\delta c)_0$ for a given sequence $c=(c_0,\dots,c_p)\in W^{p,q}(A;V)$ is defined as the \textit{Koszul differential} of the leading term $c_0$, with respect to this module structure:
\begin{align*}
(\delta c)_0 (\alpha_0,\dots,\alpha_p)&=\textstyle\sum_i(-1)^{i}\L^A_{\alpha_i} \big(c_0(\alpha_0,\dots,\widehat{\alpha_i},\dots,\alpha_{p})\big)\\
&+\textstyle\sum_{i<j}(-1)^{i+j} c_0([\alpha_i,\alpha_j],\alpha_0,\dots,\widehat{\alpha_i},\dots,\widehat{\alpha_j},\dots,\alpha_{p}).
\end{align*}
The correction terms can be found by consecutively applying the Leibniz rule; as obtained in \cite{homogeneous}*{equation 4.2}, they read
\begin{align}
\begin{split}
\label{eq:delta_inf}
(-1)^k(\delta c)_k &(\alpha_0,\dots,\alpha_{p-k}\|\beta_1,\dots,\beta_k)\\
&=\textstyle\sum_{i=0}^{p-k}(-1)^{i}\L^A_{\alpha_i} \big(c_k(\alpha_0,\dots,\widehat{\alpha_i},\dots,\alpha_{p-k}\|\cdot)\big)(\ul\beta)\\
&+\textstyle\sum_{i<j}^{p-k}(-1)^{i+j} c_k([\alpha_i,\alpha_j],\alpha_0,\dots,\widehat{\alpha_i},\dots,\widehat{\alpha_j},\dots,\alpha_{p-k}\|\ul\beta)\\
&-\textstyle\sum_{j=1}^k \iota_{\rho(\beta_j)} c_{k-1}(\ul\alpha\|\beta_1,\dots,\widehat{\beta_j},\dots,\beta_k).
\end{split}
\end{align}
Notably, the $(p+1)$-th term simply reads
\[
(\delta c)_{p+1}(\ul\beta)=(-1)^p\textstyle\sum_j \iota_{\rho(\beta_j)} c_p(\beta_1,\dots,\widehat{\beta_j},\dots,\beta_p).
\]
\begin{definition}
  Let $A\Ra M$ be a Lie algebroid with a representation $V$. The differential $\delta$ is called the \textit{simplicial differential} of Weil cochains. At any fixed degree $q\geq 0$, it defines a cochain complex called the \textit{Weil complex}, 
  \[
  (W^{\bullet,q}(A;V),\delta),
  \]
  whose cohomology is called the \textit{simplicial cohomology} of Weil cochains,
  \[
    H^{p,q}(A;V)\coloneqq H^p(W^{\bullet,q}(A;V),\delta).
  \]
\end{definition}
\begin{example}
At $p=0$, we have $W^{0,q}(A;V)=\Omega^q(M;V)$ and $\delta$ is given by
\begin{align}
\label{eq:delta0_im}
(\delta c)_0(\alpha)=\L^A_\alpha c, \qquad(\delta c)_1(\beta)=\iota_{\rho(\beta)}c.
\end{align}
The cocycles at $p=0$ are called \textit{invariant forms}, denoted $\Omega^q_{\inv}(M;V)$. At level $p=1$, the differential of a Weil cochain $c=(c_0,c_1)$ reads 
\begin{align}
\label{eq:delta_p=1}
\begin{split}
  (\delta c)_0(\alpha_1,\alpha_2)&=\L^A_{\alpha_1} c_0(\alpha_2)-\L^A_{\alpha_2} c_0(\alpha_1)-c_0[\alpha_1,\alpha_2],\\
  (\delta c)_1(\alpha,\beta)&=-(\L^A_{\alpha} c_1)(\beta)+\iota_{\rho(\beta)}c_0(\alpha)=-\L^A_{\alpha} (c_1(\beta))+ c_1[\alpha,\beta]+\iota_{\rho(\beta)}c_0(\alpha),\\
  (\delta c)_2(\beta_1,\beta_2)&=-\iota_{\rho(\beta_1)} c_1(\beta_2)-\iota_{\rho(\beta_2)} c_1(\beta_1).
\end{split}
\end{align}
The cocycles at $p=1$ are called \textit{infinitesimal multiplicative forms} (more briefly, \textit{IM forms}) or \textit{Spencer operators}, and we denote $\Omega^\bullet_{im}(A;V)=\ker\delta^1$. By the equations above, a cocycle $c=(c_0,c_1)$ satisfies
\begin{align}
  c_0[\alpha_1,\alpha_2]&=\L^A_{\alpha_1} c_0(\alpha_2)-\L^A_{\alpha_2} c_0(\alpha_1),\label{eq:c1}\tag{C.1}\\
  c_1[\alpha,\beta]&=\L^A_{\alpha} (c_1(\beta))-\iota_{\rho(\beta)}c_0(\alpha),\label{eq:c2}\tag{C.2}\\
  \iota_{\rho(\beta_1)} c_1(\beta_2)&=-\iota_{\rho(\beta_2)} c_1(\beta_1).\label{eq:c3}\tag{C.3}
  \end{align}
The equations \eqref{eq:c1}--\eqref{eq:c3} will be referred to as the \textit{compatibility conditions} for $(c_0,c_1)$; as noted in \cite{spencer}*{Remark 2.7}, condition \eqref{eq:c2} follows from the Leibniz identity and \eqref{eq:c1} unless $\dim M =q+1$, however, it nonetheless often turns out to be useful. The cocycles $\im\delta^0\subset \Omega^\bullet_{im}(A;V)$ arising as coboundaries are called \textit{cohomologically trivial} IM forms. 
\end{example}
\subsubsection{Invariant linear connections in the infinitesimal setting}
As before, we now again assume that a connection $\nabla$ on the representation $V\ra M$ is given, with no a priori assumptions on its compatibility with the algebroid action $A\curvearrowright V$.
\begin{definition}
  Let $\nabla$ be a connection on a representation $V$ of a Lie algebroid $A\Ra M$. The \textit{exterior covariant derivative} of $V$-valued Weil cochains is the map
  \[\d{}^\nabla\colon W^{p,q}(A;V)\ra W^{p,q+1}(A;V),\]
  whose leading term is defined on any $c\in W^{p,q}(A;V)$ as the exterior covariant derivative of its leading term $c_0$, that is,
  \[
  (\d{}^\nabla c)_0(\ul\alpha)=\d{}^\nabla c_0(\ul\alpha).
  \]
  The correction coefficients $(\d{}^\nabla c)_k$ are defined by
  \begin{align}
    \label{eq:dnabla}
    (-1)^k(\d{}^\nabla c)_k&(\ul\alpha\|\ul\beta)=\d{}^\nabla c_k (\ul\alpha\|\ul\beta)-\textstyle\sum_{i=1}^k c_{k-1}(\beta_i,\ul\alpha\|\beta_1,\dots,\widehat{\beta_i},\dots,\beta_k).
    \end{align}
\end{definition}
In the definition above, the correction terms were obtained using the general principle. As always, one needs to check the obtained map is well-defined, which is a straightforward computation that we have provided in Lemma \ref{lemma:wd_dnabla}.
\begin{example}
At $p=1$, the exterior covariant derivative of $c=(c_0,c_1)\in W^{1,q}(A;V)$ reads
\begin{align}
\label{eq:ext_cov_der_p=1}
(\d{}^\nabla c)_0(\alpha)=\d{}^\nabla c_0(\alpha),\quad (\d{}^\nabla c)_1(\beta)=c_0(\beta)-\d{}^\nabla c_1(\beta).
\end{align}
This coincides with the operator defined in \cite{mec}*{Proposition A.10}. Going one level further, at $p=2$ we obtain:
\begin{align}
\label{eq:ext_cov_der_p=2}
\begin{split}
(\d{}^\nabla c)_0(\alpha_1,\alpha_2)&=\d{}^\nabla c_0(\alpha_1,\alpha_2),\\
(\d{}^\nabla c)_1(\alpha\|\beta)&=c_0(\beta,\alpha)-\d{}^\nabla c_1(\alpha\|\beta),\\
(\d{}^\nabla c)_2(\beta_1,\beta_2)&=\d{}^\nabla c_2(\beta_1,\beta_2)-c_1(\beta_1\|\beta_2)-c_1(\beta_2\|\beta_1),
\end{split}
\end{align}
for any $c=(c_0,c_1,c_2)\in W^{2,q}(A;V)$.
\end{example}
We now prove the infinitesimal analogue of Theorem \ref{thm:G_invariant}.
\begin{theorem}
\label{thm:A_invariant}
Let $\nabla$ be a connection on a representation $V$ of a Lie algebroid $A\Ra M$. The map $\d{}^\nabla$ is a cochain map if and only if $\nabla$ is $A$-invariant, that is, if there holds
\[\nabla^A_\alpha=\nabla_{\rho(\alpha)}\quad\text{and}\quad\iota_{\rho(\alpha)}R^\nabla=0,\]
for any $\alpha\in A$. In particular, the operator defined by \eqref{eq:ext_cov_der_p=1} maps IM forms to IM forms. 
\end{theorem}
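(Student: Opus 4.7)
The plan is to mirror the global argument of Lemma~\ref{lem:G_invariant} and Theorem~\ref{thm:G_invariant}. First, I introduce the infinitesimal analogue of the tensor $\Theta$ from \eqref{eq:invariance_form_G},
\[
D\in\Gamma(A^*\otimes \End V),\qquad D_\alpha\coloneqq \nabla^A_\alpha-\nabla_{\rho(\alpha)},
\]
which measures the failure of $\nabla$ to match the given representation. The Leibniz identities of $\nabla^A$ and $\nabla$ force $D_{f\alpha}=fD_\alpha$, so $D$ is genuinely tensorial; the condition $\nabla^A_\alpha=\nabla_{\rho(\alpha)}$ is just $D=0$.

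For necessity, I test commutativity of $\d{}^\nabla$ and $\delta$ on the simplest Weil cochains, sections $\sigma\in\Gamma(V)=W^{0,0}(A;V)$, and extract the leading and correction pieces of $(\d{}^\nabla\delta-\delta\d{}^\nabla)\sigma \in W^{1,1}(A;V)$. Using \eqref{eq:delta0_im} and \eqref{eq:ext_cov_der_p=1}, together with $R^\nabla(X,Y)\sigma=\nabla_X\nabla_Y\sigma-\nabla_Y\nabla_X\sigma-\nabla_{[X,Y]}\sigma$, I expect to obtain
\begin{align*}
\big((\d{}^\nabla\delta-\delta\d{}^\nabla)\sigma\big)_0(\alpha)(X) &= (\nabla_X D_\alpha)\sigma - R^\nabla(\rho(\alpha),X)\sigma,\\
\big((\d{}^\nabla\delta-\delta\d{}^\nabla)\sigma\big)_1(\beta) &= D_\beta\sigma.
\end{align*}
Vanishing of the correction term, which is valued in $\Gamma(V)$ and linear in $(\sigma,\beta)$, immediately forces $D=0$; vanishing of the leading term then forces $R^\nabla(\rho(\alpha),X)=0$ for every $X,\alpha$, i.e.\ $\iota_{\rho(\alpha)}R^\nabla=0$. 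This establishes necessity already from the $p=0$ case alone.

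For sufficiency, I assume $D=0$ and $\iota_{\rho(\alpha)}R^\nabla=0$ and verify $\d{}^\nabla\delta=\delta\d{}^\nabla$ at each level $W^{p,q}(A;V)$. The key observation is that, under $D=0$, the module action $\L^A_\alpha$ on the $V$-valued factor of any coefficient $\Omega^{q-k}(M;S^k(A^*)\otimes V)$ reduces to the Cartan Lie derivative $\L^\nabla_{\rho(\alpha)}=\iota_{\rho(\alpha)}\d{}^\nabla+\d{}^\nabla\iota_{\rho(\alpha)}$, combined with the adjoint action on the $A$-arguments. The standard identity $[\L^\nabla_X,\d{}^\nabla]=(\iota_X R^\nabla)\wedge\cdot$ then collapses to $[\d{}^\nabla,\L^A_\alpha]=0$ by the curvature hypothesis, while the interior-product terms $\iota_{\rho(\beta_j)}$ appearing in \eqref{eq:delta_inf} interact with $\d{}^\nabla$ via the Cartan magic formula $\iota_{\rho(\beta)}\d{}^\nabla+\d{}^\nabla\iota_{\rho(\beta)}=\L^A_\beta$. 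Substituting these identities into \eqref{eq:delta_inf} and \eqref{eq:dnabla} and matching each summand of $\d{}^\nabla\delta c$ against the corresponding summand of $\delta\d{}^\nabla c$ at every correction level $k=0,\dots,p+1$ yields the desired equality, most cleanly by induction on $p$, where the Leibniz identity reduces level $p+1$ to level $p$.

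The main obstacle will be the bookkeeping in this last step: the Weil differentials intertwine Lie derivatives on the antisymmetric $\Gamma(A)$-arguments, interior products on the symmetric ones, bracket rearrangements of the algebroid, and the covariant derivative on the $V$-factor, so the cancellations for arbitrary $(p,q)$ are intricate. A cleaner route, which I would prefer to pursue in the final writeup, is to formulate and prove an infinitesimal analogue of Lemma~\ref{lem:G_invariant} that expresses $[\d{}^\nabla,\delta]$ universally as a wedge-type operator built from $D$ and $\iota_{\rho(\cdot)}R^\nabla$; both directions of the theorem, together with the statement about IM forms, then follow at once.
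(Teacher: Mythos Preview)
Your necessity argument via $p=0$ is correct and efficient: the computation of $[\d{}^\nabla,\delta^0]$ on $\sigma\in\Gamma(V)$ indeed yields the pair $(T(\alpha)\sigma,\theta(\beta)\sigma)$ in the paper's notation (your $D$ is the paper's $\theta$, and your leading term $(\nabla_X D_\alpha)\sigma-R^\nabla(\rho\alpha,X)\sigma$ is exactly $T(\alpha)(X)\sigma$ as in \eqref{eq:T_explicit}). This already forces both invariance conditions, so necessity is settled.

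Your sufficiency argument, however, is only sketched, and the phrase ``induction on $p$, where the Leibniz identity reduces level $p+1$ to level $p$'' is not right as stated: the Leibniz identity relates the correction terms $c_k$ and $c_{k+1}$ of a \emph{single} cochain $c\in W^{p,q}$, it does not relate the differentials $\delta^p$ and $\delta^{p+1}$. What the Leibniz identity does buy you is that, for a Weil cochain, the leading term determines the correction terms---but only when $q\le\dim M$. For $q>\dim M$ the leading term is trivially zero, and a direct verification of $[\d{}^\nabla,\delta]=0$ on the correction terms is genuinely needed.

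The paper takes exactly the ``cleaner route'' you propose in your last paragraph: it proves the universal commutator formula (Lemma~\ref{lem:A_invariant}), expressing $[\d{}^\nabla,\delta]$ as the wedge-type operator $(T,\theta)\wedge\cdot$. The key additional idea you are missing is how to handle the correction terms without brute force: the paper checks the formula only on leading terms, and then pulls back along a surjective submersion $A\ltimes P\to A$ with $\dim P\ge q$ (Remark~\ref{rem:action_trick}). Since the pullback is injective and intertwines all the operators involved, and since on the larger base every correction term is determined by the leading term via Leibniz, the leading-term check suffices. This trick replaces your proposed induction and is what makes the argument clean at arbitrary $(p,q)$.
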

\begin{remark}
  \label{rem:G_inv_implies_A_inv}
  In \cite{mec}*{Proposition A.8}, it was shown that $G$-invariance implies $A$-invariance, and that the converse also holds if $G$ is source-connected. A new proof of this is given in Proposition \ref{prop:g_inv_a_inv}, where the statement is seen as a consequence of the properties of the van Est map.
\end{remark}
\noindent As with groupoids, we prove the theorem by computing the formula for the commutator 
\[
[\d{}^\nabla,\delta]\colon W^{p,q}(A;V)\ra W^{p+1,q+1}(A;V),
\]
which will turn out to be vital later.
\begin{lemma}
\label{lem:A_invariant}
Let $V$ be a representation of a Lie algebroid $A\Ra M$ and let $\nabla$ be a  connection on $V$. The commutator $[\d{}^\nabla,\delta]$ evaluated on a cochain $c=(c_0,\dots,c_p)\in W^{p,q}(A;V)$ reads
\begin{align}
(\d{}^\nabla\delta c)_k(\alpha_0,&\dots,\alpha_{p-k}\|\ul\beta)-(\delta\d{}^\nabla c)_k(\alpha_0,\dots,\alpha_{p-k}\|\ul\beta)\nonumber\\
&=\textstyle\sum_{i=0}^{p-k} (-1)^i T(\alpha_i)\wedge c_k(\alpha_0,\dots,\widehat{\alpha_i},\dots,\alpha_{p-k}\|\ul\beta)\label{eq:A_invariant_commutator}\\
&+\textstyle\sum_{j=1}^k\theta(\beta_j)\cdot c_{k-1}(\ul\alpha\|\beta_1,\dots,\widehat{\beta_j},\dots,\beta_k),\nonumber
\end{align}
where the tensor $\theta\colon A\rightarrow \End(V)$ and the map $T\colon \Gamma(A)\ra \Omega^1(M;\End V)$ are given by
\begin{align}
\label{eq:invariance_form}
\begin{split}
\theta(\alpha)&=\nabla^A_\alpha-\nabla_{\rho(\alpha)},\\
T(\alpha)&=\d{}^\nabla\theta(\alpha)-\iota_{\rho(\alpha)}R^\nabla.
\end{split}
\end{align}
\end{lemma}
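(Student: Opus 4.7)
The plan is to derive and then repeatedly apply two commutator identities relating $\d{}^\nabla$, $\L^A_\alpha$, and $\iota_{\rho(\alpha)}$. Unwinding the definition of $\L^A_\alpha$ together with the Cartan-type formula for the exterior covariant derivative on $V$-valued forms gives the identity
\[
\L^A_\alpha \;=\; \theta(\alpha)\cdot \;+\; \d{}^\nabla \iota_{\rho(\alpha)} \;+\; \iota_{\rho(\alpha)} \d{}^\nabla,
\]
where $\theta(\alpha)\cdot$ denotes the pointwise action of $\theta(\alpha)\in\End(V)$; this is just the statement that the usual Cartan formula, computed with the connection $\nabla$, differs from $\L^A_\alpha$ by exactly the error term $\theta(\alpha)$ measuring the failure of $\nabla^A_\alpha = \nabla_{\rho(\alpha)}$. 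Combining this with $(\d{}^\nabla)^2 = R^\nabla\wedge(\cdot)$ and the graded Leibniz rule for $\iota$ with respect to $\wedge$ then yields the key commutator
\[
[\d{}^\nabla,\,\L^A_\alpha] \;=\; T(\alpha)\wedge(\cdot),
\]
since the $R^\nabla\iota - \iota R^\nabla$ part collapses to $-(\iota_{\rho(\alpha)}R^\nabla)\wedge$ and the $[\d{}^\nabla,\theta(\alpha)\cdot]$ part produces $\d{}^\nabla\theta(\alpha)\wedge$.

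With these two identities in hand, the $k=0$ case of \eqref{eq:A_invariant_commutator} is immediate: $(\delta c)_0$ is a Koszul-type sum in $\L^A_{\alpha_i}$ and in bracket terms, so applying $[\d{}^\nabla,\delta]$ acts nontrivially only on the Lie-derivative pieces and produces exactly $\sum_i(-1)^i T(\alpha_i)\wedge c_0(\alpha_0,\dots,\widehat{\alpha_i},\dots,\alpha_p)$. For $k\geq 1$, I would expand both $(\d{}^\nabla\delta c)_k$ and $(\delta\d{}^\nabla c)_k$ using \eqref{eq:delta_inf} and \eqref{eq:dnabla} and subtract term by term. The $[\d{}^\nabla,\L^A_{\alpha_i}]$ commutators applied to $c_k$ contribute the $T(\alpha_i)$ sum on the right-hand side of \eqref{eq:A_invariant_commutator}. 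The $\iota_{\rho(\beta_j)}c_{k-1}$ summand appearing in the last line of \eqref{eq:delta_inf}, once $\d{}^\nabla$ is pushed through it via the Cartan identity in the rearranged form $\d{}^\nabla\iota_{\rho(\beta_j)} = \L^A_{\beta_j} - \theta(\beta_j)\cdot - \iota_{\rho(\beta_j)}\d{}^\nabla$, produces a $\theta(\beta_j)\cdot c_{k-1}$ contribution (which survives and matches the second sum in \eqref{eq:A_invariant_commutator}) together with an $\L^A_{\beta_j} c_{k-1}$ contribution and an $\iota_{\rho(\beta_j)}\d{}^\nabla c_{k-1}$ contribution. The latter two must then be matched against the corresponding terms appearing in $(\delta \d{}^\nabla c)_k$ via the correction terms of \eqref{eq:dnabla}, and they cancel.

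The main obstacle is the bookkeeping of signs and cancellations in the $k\geq 1$ case: one must simultaneously track the antisymmetric $\alpha$-slots, the symmetric $\beta$-slots, the $(-1)^k$ prefactors from \eqref{eq:dnabla}, and the $(-1)^i,(-1)^{i+j}$ signs of the Koszul sum in \eqref{eq:delta_inf}, and verify that everything outside the $T(\alpha_i)$ and $\theta(\beta_j)$ contributions disappears pairwise. The cancellations are mechanical rather than conceptual; once the Cartan identity and its consequence $[\d{}^\nabla,\L^A_\alpha]=T(\alpha)\wedge$ are in place, no new ideas are needed, and the terms can be matched in a systematic accounting. As a sanity check, one can verify the formula directly in small cases (e.g.\ $p=1$, $k=1$ or $k=2$), where it reduces to a short computation using \eqref{eq:ext_cov_der_p=1}--\eqref{eq:ext_cov_der_p=2} and \eqref{eq:delta_p=1}.
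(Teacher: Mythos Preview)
Your approach is correct but takes a different route from the paper. You propose a direct term-by-term computation for every $k$, using the two identities $\L^A_\alpha=\theta(\alpha)\cdot+\d{}^\nabla\iota_{\rho(\alpha)}+\iota_{\rho(\alpha)}\d{}^\nabla$ and $[\d{}^\nabla,\L^A_\alpha]=T(\alpha)\wedge(\cdot)$ and then carefully matching all the cross-terms coming from \eqref{eq:delta_inf} and \eqref{eq:dnabla}. This works, and your identification of where the $\theta(\beta_j)$ contribution arises (from pushing $\d{}^\nabla$ past $\iota_{\rho(\beta_j)}$ in the last line of \eqref{eq:delta_inf}) is exactly right; the remaining cancellations are indeed mechanical, though one must also track the $c_{k-2}$ pieces hidden inside $(\d{}^\nabla c)_{k-1}$ and the $\L^A_{\beta_j}$ pieces hidden inside $(\delta c)_{k-1}(\beta_j,\ul\alpha\|\cdots)$.

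The paper avoids all of this bookkeeping with a structural shortcut: it verifies the identity only for the leading term $k=0$ (the same computation you do), and then observes that both $[\d{}^\nabla,\delta]c$ and the right-hand side $(T,\theta)\wedge c$ are themselves Weil cochains (the latter by Lemma~\ref{lem:T_theta_wedge}). Since a Weil cochain is determined by its leading term via the Leibniz identity whenever $q\le\dim M$, equality of leading terms forces equality of all correction terms in that range; the residual case $q>\dim M$ is handled by pulling back along an action algebroid $A\ltimes P\to A$ with $\dim P\ge q$, which is natural with respect to all the operators involved and reduces to the previous case. Your direct computation buys independence from this auxiliary construction at the cost of the combinatorics; the paper's argument buys brevity and a reusable technique (Remark~\ref{rem:action_trick}) at the cost of invoking an extra lemma and a pullback trick.
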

\begin{remark}
The exterior covariant derivative $\d{}^\nabla$ in the definition of the map $T$ is with respect to the induced connection on $\End V$. We also note that $(T,\theta)\in W^{1,1}(A;\End V)$ is a Weil cochain with values in the induced representation of $A$ on $\End V$, since it clearly satisfies the Leibniz rule \[T(f\alpha)=f T(\alpha)+\d f\otimes \theta(\alpha).\]
In \sec \ref{sec:obstruction_invariance}, we will see that this is actually an IM form with values in $\End V$, and we will use it to define an obstruction class to the existence of $A$-invariant connections. The pair $(T,\theta)$ will henceforth be called the \textit{$A$-invariance form} of the connection $\nabla$.
\end{remark}
\begin{proof}
Let us first check the theorem holds at the level of leading terms. By the definition of maps $\d{}^\nabla$ and $\delta$, there holds
\begin{align*}
(\d{}^\nabla\delta c)_0(\ul\alpha)&-(\delta\d{}^\nabla c)_0(\ul\alpha)=\textstyle\sum_i (-1)^i \big({\d{}^\nabla}\L^A_{\alpha_i}-\L^A_{\alpha_i}\d{}^\nabla\big) c_0(\alpha_0,\dots,\widehat{\alpha_i},\dots,\alpha_{p}),
\end{align*}
so we must compute $({\d{}^\nabla}\L^A_\alpha-\L^A_\alpha\d{}^\nabla)\gamma$ for any $\gamma\in\Omega^q(M;V)$. Notice that 
\[
\L^A_\alpha\gamma=\L^\nabla_{\rho(\alpha)}\gamma+\theta(\alpha) \gamma,
\]
and using Cartan's magic formula, we get
\begin{align}
\label{eq:d_lie}
\d{}^\nabla\L^\nabla_X\gamma-\L^\nabla_X\d{}^\nabla\gamma =(\d{}^\nabla)^2\iota_X\gamma-\iota_X(\d{}^\nabla)^2\gamma=-(\iota_X R^\nabla)\wedge \gamma
\end{align}
for any $X\in \vf(M)$. A simple computation using these two identities shows
\begin{align*}
{\d{}^\nabla}\L^A_\alpha\gamma-\L^A_\alpha\d{}^\nabla\gamma=T(\alpha)\wedge\gamma,
\end{align*}
proving our claim for the leading coefficient.

We now claim that it was actually enough to check that the formula \eqref{eq:A_invariant_commutator} holds at the level of leading coefficients. This follows from a similar argument as in Remark \ref{rem:action_trick} after this proof, and was inspired by the technique in the proof of \cite{weil}*{Proposition 4.1}. To be precise, first observe that the right-hand side of equation \eqref{eq:A_invariant_commutator} defines an operator 
\[c\mapsto (T,\theta)\wedge c,\]
which maps Weil cochains to Weil cochains by Lemma \ref{lem:T_theta_wedge}, raising the degree and the level by 1. Hence, the case $q\leq\dim M-1$ is proved. If $q\geq\dim M$, the idea is to apply the same proof for the leading coefficient, but on a larger algebroid. Take any action of $A$ on a surjective submersion $\mu\colon P\ra M$ with $\dim P\geq q+1$ and form the action algebroid $A\ltimes P\Ra P$. The surjective algebroid morphism $p\colon A\ltimes P\ra A$ then induces a pullback of Weil cochains, denoted $p^*$, defined by \eqref{eq:weil_pullback}. The following naturality relations of the operators $\d{}^\nabla$ and $(T,\theta)\wedge \cdot$ clearly hold:
\begin{align*}
  p^*\d{}^\nabla&=\d{}^{\nabla^\mu}p^*,\\ p^*\big((T,\theta)\wedge c\big)&=p^*(T,\theta)\wedge p^*c,
\end{align*}
where $p^*(T,\theta)\in W^{1,1}(A\ltimes P;\mu^*V)$ is precisely the $A$-invariance form of the pullback connection $\nabla^\mu$ on the representation $\mu^*V$ of $A\ltimes P$. Since $p^*$ is injective, we are done.
\end{proof}
\begin{remark}
  \label{rem:action_trick}
  Suppose we are given two Weil cochains $c,c'\in W^{p,q}(A;V)$, and we want to show $c=c'$. Observe that for any $k\in \set{0,\dots,p-1}$, $c_k=c_k'$ implies $c_{k+1}=c'_{k+1}$ by the Leibniz rule, provided $q-k\leq \dim M$. Hence, $c_0=c_0'$ implies $c=c'$ provided $q\leq \dim M$, so in this case it is enough to check that the leading terms coincide. On the other hand, if $q>\dim M$, then the first nontrivial component of a nonzero cochain $c$ is $c_{q-\dim M}$, and we would like to see it as the leading term. The following trick is a precise way of doing so.
  
  Take any action of $A\Ra M$ on a surjective submersion $\mu\colon P\ra M$, as in \cite{actions}*{Definition 3.1} and consider the action algebroid $A\ltimes P\Ra P$. For clarity, let us briefly recall its construction. To begin with, its underlying vector bundle is the pullback bundle $\mu^* A=P\times_M A\ra P$. As such, its space of sections  is canonically isomorphic as a $C^\infty(P)$-module to 
  \[\Gamma(\mu^*A)\cong C^\infty(P)\otimes_{C^\infty(M)}\Gamma(A),\]
  and it is thus generated by $\Gamma(A)$.\footnote{The isomorphism $C^\infty(P)\otimes\Gamma(A)\ra \Gamma(\mu^*A)$ is given by  $h\otimes\alpha\mapsto (p\mapsto(p,h(p)\alpha_{\mu(p)}))$, and the tensor product over $C^\infty(M)$ identifies $h\otimes f\alpha=(f\circ \mu)h\otimes \alpha$, where $h\in C^\infty(P), \alpha\in \Gamma(A)$ and $f\in C^\infty(M)$.} The algebroid structure on $A\ltimes P$ is determined by
  \begin{align*}
    \rho_{A\ltimes P}(1\otimes \alpha)&=X^\alpha,\\
    [1\otimes\alpha,1\otimes\beta]_{A\ltimes P}&=1\otimes[\alpha,\beta]
  \end{align*}
  on generators $\alpha,\beta\in\Gamma(A)$, and extended to the whole space by $C^\infty(P)$-linearity of the anchor and the Leibniz rule of the bracket. Here, $\alpha\mapsto X^\alpha$ denotes the action $\Gamma(A)\ra \vf(P)$. We thus obtain the following natural surjective submersion of Lie algebroids.
\begin{align}
  \label{eq:p_mu}
  \begin{split}
    \begin{tikzcd}[ampersand replacement=\&]
    	{A\ltimes P} \& A \\
    	P \& M
    	\arrow["p", from=1-1, to=1-2]
    	\arrow[Rightarrow, from=1-1, to=2-1]
    	\arrow[Rightarrow, from=1-2, to=2-2]
    	\arrow["\mu"', from=2-1, to=2-2]
    \end{tikzcd}
  \end{split}
\end{align}
Moreover, a representation $V$ of $A$ induces a representation $\mu^* V$ of $A\ltimes P$, determined by
\[\nabla^{A\ltimes P}_{1\otimes \alpha}(1\otimes\xi)=1\otimes \nabla^A_\alpha\xi,\] 
for any $\xi\in \Gamma(V)$, and extended by $C^\infty(P)$-linearity and the Leibniz rule. 
Now observe that the pullback along the algebroid map \eqref{eq:p_mu} induces a monomorphism of Weil complexes,
\begin{align}
  \label{eq:weil_pullback}
  \begin{split}
    &p^*\colon W^{p,q}(A;V)\ra W^{p,q}(A\ltimes P;\mu^*V),\\
    &(p^* c)_k(1\otimes \alpha_1,\dots,1\otimes \alpha_{p-k}\|1\otimes \beta_1, \dots, 1\otimes \beta_k)=\mu^* c_k(\ul\alpha\|\ul\beta),
  \end{split}
\end{align} 
defined on the generators and extended by the Leibniz rule in the antisymmetric arguments and by $C^\infty(P)$-linearity in the symmetric ones. Since $\dim P\geq\dim M$, $p^*$ is well-defined. The upshot now is that in the case $q>\dim M$, the first nontrivial term of a nonzero Weil cochain $c\in W^{p,q}(A;V)$ is $c_{q-\dim M}$, but choosing any space $P$ with $\dim P\geq q$ translates this term into $(p^* c)_0\neq 0$. Hence, showing $c=c'$ is equivalent to showing $(p^*c)_0=(p^*c')_0$.
\end{remark}

Theorem \ref{thm:A_invariant} follows directly from the last lemma. In conclusion, an $A$-invariant connection $\nabla$ on $V$ now yields the columns of a curved double complex, 
\[
(W^{\bullet,\bullet}(A;V),\delta,\d{}^\nabla).
\]
As in the case of groupoids, it has the obvious feature that $\d{}^\nabla$ does not square to zero unless $\nabla$ is flat, in which case we obtain a flat double complex.
\begin{align}
\begin{tikzcd}[ampersand replacement=\&, column sep=large, row sep=large]
	{\Omega^{q+1}(M;V)} \& {W^{1,q}(A;V)} \& {W^{2,q+1}(A;V)} \& \cdots \\
	{\Omega^q(M;V)} \& {W^{1,q}(A;V)} \& {W^{2,q}(A;V)} \& \cdots
	\arrow["{\delta}", from=1-1, to=1-2]
	\arrow["{\delta}", from=1-2, to=1-3]
	\arrow["{\delta}", from=1-3, to=1-4]
	\arrow["{\d{}^\nabla}", from=2-1, to=1-1]
	\arrow["{\delta}", from=2-1, to=2-2]
	\arrow["{\d{}^\nabla}", from=2-2, to=1-2]
	\arrow["{\delta}", from=2-2, to=2-3]
	\arrow["{\d{}^\nabla}", from=2-3, to=1-3]
	\arrow["{\delta}", from=2-3, to=2-4]
\end{tikzcd}
\end{align}
\begin{remark}
  If $\nabla$ is an $A$-invariant connection, its curvature tensor is an invariant form,
  \[
  R^\nabla\in \Omega^2_\inv(M;\End V),
  \]
  with respect to the induced representation of $A$ on $\End V$. This follows from computing
  \[
  \L^A_\alpha R^\nabla = {\underbrace{\L^\nabla_{\rho(\alpha)} R^\nabla}_{\mathclap{\d{}^{\nabla^{\End V}}\iota_{\rho(\alpha)} R^\nabla}}} + \theta(\alpha)\circ R^\nabla-R^\nabla\circ \theta(\alpha)=0,
  \]
  where the under-brace is due to the Bianchi identity $\d{}^{\nabla^{\End V}} R^\nabla=0$.
\end{remark}

\subsection{Van Est map and \texorpdfstring{$\mathrm{d}^\nabla$}{exterior covariant derivatives}}
It has already been established in \cite{homogeneous}*{Theorem 4.4} that the van Est map relating the Bott--Shulman--Stasheff with the Weil complex, 
\[
\ve\colon \Omega^{p,q}(G;V)\ra W^{p,q}(A;V),
\]
is a cochain map---it commutes with the simplicial differentials $\delta_G$ and $\delta_A$. Moreover, it was also shown there that under the assumption that $G$ is source $p_0$-connected for some $p_0\geq 0$, the induced map in cohomology $\ve\colon H^{p,q}(G;V)\ra H^{p,q}(A;V)$ is an isomorphism for all $p\leq p_0$, and injective for $p=p_0+1$, for each fixed $q$. 

In this section, we prove that under the invariance assumption on $\nabla$, $\ve$ also commutes with the exterior covariant derivatives $\d{}_G^\nabla$ and $\d{}_A^\nabla$ introduced in the previous sections. This result generalizes \cite{weil}*{Proposition 4.1}, which considered the trivial representation with the canonical flat connection.

The van Est map is defined as follows. Given a section $\alpha\in\Gamma(A)$, we will denote the flow of the left-invariant vector field $\cev\alpha$ at time $\lambda$ by $\phi^{\cev\alpha}_\lambda$. Furthermore, $\cev\alpha$ defines a vector field on the $p$-th level of the nerve $G^{(p)}$, which we will denote by 
\[
\alpha^{(p)}|_{(g_1,\dots,g_p)}\coloneqq (0_{g_1},\dots,0_{g_{p-1}},\cev\alpha_{g_p}),
\]
and whose flow is given by $\phi^{\alpha^{(p)}}_\lambda(g_1,\dots,g_p)=(g_1,\dots,g_{p-1},\phi^{\cev\alpha}_\lambda(g_p))$. We define the operator
\begin{align*}
&R_\alpha\colon \Omega^{p,q}(G;V)\ra \Omega^{p-1,q}(G;V),\\
&R_\alpha \omega|_{(g_1,\dots,g_{p-1})}=j_p^*\left(\deriv\lambda 0\phi^{\cev\alpha}_\lambda \big(1_{s(g_{p-1})}\big)\cdot \big(\phi^{\alpha^{(p)}}_\lambda\big)^*\omega\right),
\end{align*}
where $j_p\colon G^{(p-1)}\ra G^{(p)}$ is the degeneracy map that inserts the identity at the last factor (for an even more explicit expression of $R_\alpha$ see Example \ref{ex:RJ} below). This enables us to define the leading term of the van Est map:
\begin{align}
\label{eq:ve_leading}
\ve (\omega)_0(\alpha_1,\dots,\alpha_p)=\sum_{\sigma\in S_p}(\sgn\sigma) R_{\alpha_{\sigma(1)}}\dots R_{\alpha_{\sigma(p)}}\omega.
\end{align}
The correction terms are obtained from equation \eqref{eq:ve_leading} using the Leibniz rule. To write them, one shows that $R_{f\alpha}=f R_\alpha+\d f\wedge J_\alpha$, where $J_\alpha\colon \Omega^{p,q}(G;V)\ra \Omega^{p-1,q-1}(G;V)$ reads
\begin{align}
\label{eq:J_alpha}
  J_\alpha\omega=j_p^*(\iota_{\alpha^{(p)}}\omega).
\end{align}
As obtained in \cite{homogeneous}*{equation 4.4}, the correction terms then read
\begin{align*}
\ve(\omega)_k(\alpha_1,\dots,\alpha_{p-k}\|\beta_1,\dots,\beta_k)=(-1)^{\frac{k(k+1)}2}\sum_{\sigma\in S_p}(\sgn\sigma) (-1)^{\epsilon(\sigma,k)} D_{{\sigma(1)}}\dots D_{{\sigma(p)}}\omega
\end{align*}
where we are denoting
\begin{align*}
D_j&=\begin{cases}
J_{\beta_j},&\text{if }1\leq j\leq k,\\
R_{\alpha_{k-j}},&\text{if }k+1\leq j\leq p,
\end{cases}\\
\epsilon(\sigma,k)&=\#\set{(i,j)\in\set{1,\dots,k}^2\given i<j\text{ and }\sigma^{-1}(i)>\sigma^{-1}(j)}.
\end{align*}
The sign that $\epsilon$ induces is sometimes also called the \textit{Koszul sign}.
\begin{example}
\label{ex:RJ}
At level $p=2$, the maps $R_\alpha$ and $J_\alpha$ on $\omega\in \Omega^{2,q}(G;V)$ read
\begin{align*}
(R_\alpha\omega)_g(X_i)_{i=1}^q&=\deriv\lambda 0\phi^{\cev\alpha}_\lambda (1_{s(g)})\cdot \omega\big(X_i,\d(\phi^{\cev\alpha}_\lambda)_{1_{s(g)}}\d u_{s(g)} \d s_g X_i\big)_i,\\
(J_\alpha\omega)_g(X_i)_{i=1}^{q-1}&=\omega_{(g,1_{s(g)})}\big((0_g,\alpha_{s(g)}),(X_i,\d u_{s(g)} \d s_g X_i)_i\big)
\end{align*}
for any vectors $X_i\in T_g G$. We hereby provide some intuition regarding the signs appearing in the van Est map. The first correction term of $\ve(\omega)$ for $\omega$ as above reads
\[
\ve(\omega)_1(\alpha\|\beta)=-(J_\beta R_\alpha- R_\alpha J_\beta)\omega.
\]
As another example, the second correction term $\ve(\omega)_2(\alpha\|\beta_1,\beta_2)$ for $\omega\in\Omega^{3,q}(G;V)$ equals
\begin{align*}
  -(J_{\beta_1}J_{\beta_2}R_\alpha+J_{\beta_2}J_{\beta_1}R_\alpha-J_{\beta_1}R_\alpha J_{\beta_2}-J_{\beta_2}R_\alpha J_{\beta_1}+R_\alpha J_{\beta_1}J_{\beta_2}+R_\alpha J_{\beta_2}J_{\beta_1})\omega\in \Omega^{q-2}(M;V).
\end{align*}
A pictorial way of interpreting the signs in this expression, ignoring the factor $(-1)^{k(k+1)/2}$, is provided by the diagram below. Consider all paths $\Omega^{3,q}(G;V)$ to $\Omega^{q-2}(M;V)$ following horizontal and diagonal lines. Starting with the path that first follows the maximal number of horizontal segments, we obtain $JJR$. Take a positive sign there, symmetrize the expression in the indices of $J$'s and antisymmetrize in the indices of $R$'s. For other possible paths, we follow a similar procedure, with the difference that for any change of path $JR\ra RJ$, we obtain an additional minus sign; this uncovers the role of the Koszul sign.
{
\[\begin{tikzcd}
	{\Omega^{q}(M;V)} & {\Omega^{q}(G;s^*V)} & {\Omega^{2,q}(G;V)} & {\Omega^{3,q}(G;V)} \\
	{\Omega^{q-1}(M;V)} & {\Omega^{q-1}(G;s^*V)} & {\Omega^{2,q-1}(G;V)} & {\Omega^{3,q-1}(G;V)} \\
	{\Omega^{q-2}(M;V)} & {\Omega^{q-2}(G;s^*V)} & {\Omega^{2,q-2}(G;V)} & {\Omega^{3,q-2}(G;V)}
	\arrow[no head, from=1-1, to=1-2]
	\arrow[no head, from=1-2, to=1-3]
	\arrow["J"', color={rgb,255:red,32;green,121;blue,63}, from=1-3, to=2-2]
	\arrow["R"', color={rgb,255:red,32;green,121;blue,63}, from=1-4, to=1-3]
	\arrow["J"', color={rgb,255:red,186;green,28;blue,30}, from=1-4, to=2-3]
	\arrow["J", shift left, color={rgb,255:red,14;green,72;blue,139}, from=1-4, to=2-3]
	\arrow[no head, from=2-1, to=1-1]
	\arrow[no head, from=2-1, to=2-2]
	\arrow[no head, from=2-2, to=1-2]
	\arrow["J"', shift right, color={rgb,255:red,32;green,121;blue,63}, from=2-2, to=3-1]
	\arrow["J", color={rgb,255:red,186;green,28;blue,30}, from=2-2, to=3-1]
	\arrow[no head, from=2-3, to=1-3]
	\arrow["R", color={rgb,255:red,186;green,28;blue,30}, from=2-3, to=2-2]
	\arrow[no head, from=2-3, to=2-4]
	\arrow["J", color={rgb,255:red,14;green,72;blue,139}, from=2-3, to=3-2]
	\arrow[no head, from=2-4, to=1-4]
	\arrow[no head, from=3-1, to=2-1]
	\arrow[no head, from=3-2, to=2-2]
	\arrow["R", color={rgb,255:red,14;green,72;blue,139}, from=3-2, to=3-1]
	\arrow[no head, from=3-2, to=3-3]
	\arrow[no head, from=3-3, to=2-3]
	\arrow[no head, from=3-3, to=3-4]
	\arrow[no head, from=3-4, to=2-4]
\end{tikzcd}\]
}
\end{example}

\begin{theorem}
\label{thm:van_est_G_A}
Let $\nabla$ be a connection on a representation $V$ of a Lie groupoid $G\rra M$ with Lie algebroid $A\Ra M$. If $\nabla$ is $G$-invariant, then the van Est map commutes with the exterior covariant derivatives:
\begin{align}
  \label{eq:ve_d}
  \ve \d{}_G^\nabla=\d{}_A^\nabla \ve.
\end{align}
Moreover, this equality holds on all normalized forms regardless of $G$-invariance of the connection $\nabla$, so in particular, it holds for multiplicative forms.
\end{theorem}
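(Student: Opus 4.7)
The approach is to deduce the identity $\ve \d{}_G^\nabla = \d{}_A^\nabla \ve$ from a single commutation property of the building blocks of the van Est map: under $G$-invariance of $\nabla$, the operator $R_\alpha$ commutes with $\d{}^\nabla$ at every level. From this I would recover the leading-term equality directly via \eqref{eq:ve_leading}, and then extend to all correction coefficients using the action trick of Remark \ref{rem:action_trick}.

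The core computational step is to establish $\d{}^\nabla R_\alpha = R_\alpha \d{}^\nabla$ for $G$-invariant $\nabla$. Unwinding the definition, $R_\alpha$ is a composition of four ingredients: pullback by the flow $\phi^{\alpha^{(p)}}_\lambda$, the fibrewise action by $\phi^{\cev\alpha}_\lambda(1_{s(g_{p-1})})$ on coefficients, differentiation at $\lambda = 0$, and pullback along the degeneracy $j_p$. The two pullbacks commute with $\d{}^\nabla$ by the naturality formula \eqref{eq:pullback}, which applies because $\cev\alpha$ is $s$-tangent so that $s \circ \pr_p$ is preserved along the flow. The action on coefficients commutes with $\d{}^\nabla$ exactly when the tensor $\Theta$ of \eqref{eq:invariance_form_G} vanishes, i.e.\ under $G$-invariance. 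Finally, differentiation at $\lambda = 0$ produces a Lie-derivative-type term whose obstruction to commuting with $\d{}^\nabla$ is, via \eqref{eq:d_lie}, proportional to $\iota_{\rho(\alpha)} R^\nabla$; this vanishes because $G$-invariance implies $A$-invariance by Remark \ref{rem:G_inv_implies_A_inv}.

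With the commutation lemma in hand, expanding \eqref{eq:ve_leading} immediately yields $(\ve \d{}^\nabla \omega)_0 = \d{}^\nabla \ve(\omega)_0 = (\d{}^\nabla \ve(\omega))_0$, matching the leading-term prescription for $\d{}^\nabla$ on Weil cochains. For the correction coefficients I would avoid a direct Koszul-sign bookkeeping with the $J_\beta$ operators and instead exploit the naturality of $\ve$ and $\d{}^\nabla$ under the pullback $p^*$ of \eqref{eq:weil_pullback} along an action algebroid $A \ltimes P \Rightarrow P$: for $\dim P$ sufficiently large, the first nontrivial component of any Weil cochain becomes a leading term on the enlarged algebroid, and injectivity of $p^*$ reduces the equality of arbitrary correction coefficients to the leading-term identity already established.

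For the statement about normalized forms, I would drop $G$-invariance and track the error in the commutator $[\d{}^\nabla, R_\alpha]$ (and analogously $[\d{}^\nabla, J_\alpha]$): by the computations above, it is a sum of terms of the form $\Theta \wedge (\cdot)$ together with a contribution from $\iota_{\rho(\alpha)} R^\nabla$, all composed with the degeneracy pullback $j_p^*$ inserting an identity in the last slot. On a normalized $\omega$ these contributions vanish, and since normalization is inherited under $R_\alpha$ and $J_\alpha$, the entire symmetrized composition defining $\ve$ stays in the normalized subcomplex. Multiplicative forms being normalized, the identity holds on them unconditionally. The principal obstacle is the rigorous execution of the core commutation lemma, where differentiating the coefficient action at $\lambda = 0$ produces interlocking contributions from $\Theta$ and $\iota_{\rho(\alpha)} R^\nabla$ that must simultaneously be eliminated—either by the $G$-invariance assumption or, in the normalized case, by the degeneracy pullback $j_p^*$.
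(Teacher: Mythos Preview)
Your overall strategy matches the paper's: establish that $R_\alpha$ commutes with $\d{}^\nabla$ (up to a controllable error), deduce the leading-term equality from \eqref{eq:ve_leading}, and then extend to all correction terms via the action-algebroid trick of Remark~\ref{rem:action_trick}. The treatment of the normalized case via preservation of normalization under $R_\alpha$ is also the paper's argument.

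There is, however, a concrete error in your decomposition of $R_\alpha$. You claim that ``$\cev\alpha$ is $s$-tangent so that $s\circ\pr_p$ is preserved along the flow,'' and hence that the flow pullback commutes with $\d{}^{\nabla^{s\circ\pr_p}}$ via \eqref{eq:pullback}. This is false: the left-invariant field $\alpha^L$ satisfies $s_*\alpha^L=\rho(\alpha)$, not zero, so $s\circ\pr_p\circ\phi^{\alpha^{(p)}}_\lambda=\phi^{\rho(\alpha)}_\lambda\circ s\circ\pr_p$. The flow pullback therefore changes the coefficient bundle from $(s\circ\pr_p)^*V$ to $(\phi^{\rho(\alpha)}_\lambda\circ s\circ\pr_p)^*V$, and the naturality \eqref{eq:pullback} does not apply as stated. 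Your four-step decomposition thus does not split into pieces that individually commute with $\d{}^\nabla$; the flow pullback and the coefficient action cannot be separated.

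The paper handles this by first proving the lemma $R_\alpha\omega=j_p^*\L^{\nabla^{s\circ\pr_p}}_{\alpha^{(p)}}\omega$, valid when either $\nabla$ is $G$-invariant or $j_p^*\omega=0$: one replaces the groupoid action on coefficients by the parallel transport of $\nabla^{s\circ\pr_p}$, and the difference of the two at $\lambda=0$ is precisely $\theta(\alpha)\cdot j_p^*\omega$ (see \eqref{eq:R_uL_theta}). Now $\L^\nabla_{\alpha^{(p)}}$ is an honest operator on $\Omega^\bullet(G^{(p)};(s\circ\pr_p)^*V)$, its commutator with $\d{}^\nabla$ is computed by \eqref{eq:d_lie}, and together with $j_p^*\d{}^\nabla=\d{}^\nabla j_p^*$ one obtains the explicit error
\[
R_\alpha\d{}^{\nabla^{s\circ\pr_p}}\omega-\d{}^{\nabla^{s\circ\pr_{p-1}}}R_\alpha\omega=(s\circ\pr_{p-1})^*(\iota_{\rho\alpha}R^\nabla)\wedge j_p^*\omega.
\]
Both the $\theta$-term in the lemma and this curvature term vanish under $G$-invariance (which implies $A$-invariance), and both vanish on normalized forms because each carries a factor $j_p^*\omega$. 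You have correctly identified the two obstructions $\Theta$ and $\iota_{\rho\alpha}R^\nabla$; what is missing is the repackaging via $\L^\nabla$ that makes them appear as genuine error terms multiplied by $j_p^*\omega$, rather than as obstructions to commutation of intermediate steps that are not individually well-defined.
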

  We recall from \cite{weil} that a form $\omega\in\Omega^{p,q}(G;V)$ is said to be \textit{normalized}, if its pullbacks along all the degeneracy maps $G^{(p-1)}\ra G^{(p)}$ vanish. Any multiplicative form $\omega\in\Omega_m^q(G;V)$ necessarily satisfies $u^*\omega=0$, so it is normalized.
\begin{lemma}
  Let $\nabla$ be a connection on a representation $V$ of a Lie groupoid $G\rra M$. If either the connection $\nabla$ is $G$-invariant, or a given form $\omega\in\Omega^{p,q}(G;V)$ satisfies $j_p^*\omega=0$, then there holds  
\begin{align}
  \label{eq:R_jL}
  R_\alpha\omega=j_p^* \L^{\nabla^{s\circ\pr_p}}_{\alpha^{(p)}}\omega,
\end{align}
for any $\alpha\in\Gamma(A)$.
\end{lemma}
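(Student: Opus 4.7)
The plan is to recast both sides of \eqref{eq:R_jL} as derivatives at $\lambda=0$ of the same $\lambda$-family of pulled-back forms, with two different transport maps applied, and then isolate their difference via Leibniz. The first ingredient is the flow description of the Lie derivative: for a vector bundle $E\to N$ with connection $\nabla$ and a vector field $X\in\vf(N)$ with flow $\phi_\lambda$,
\[
(\L^\nabla_X\omega)_p=\deriv{\lambda}{0}P_\lambda^{-1}\cdot(\phi_\lambda^*\omega)_p,
\]
where $P_\lambda^{-1}\colon E_{\phi_\lambda(p)}\to E_p$ is inverse parallel transport along the flow curve. Applied with $E=(s\circ\pr_p)^*V$, $\nabla=\nabla^{s\circ\pr_p}$ and $X=\alpha^{(p)}$, and evaluated at the degeneracy point $j_p(g_1,\dots,g_{p-1})=(g_1,\dots,g_{p-1},1_x)$ with $x=s(g_{p-1})$, and using that parallel transport of a pullback connection is the pullback of parallel transport together with the identity $s\circ\pr_p\circ\phi^{\alpha^{(p)}}_\lambda\circ j_p(g_1,\dots,g_{p-1})=\phi^{\rho(\alpha)}_\lambda(x)$, the relevant transport map reduces to the parallel transport $B_\lambda\colon V_{\phi^{\rho(\alpha)}_\lambda(x)}\to V_x$ of $\nabla$ along $\lambda\mapsto\phi^{\rho(\alpha)}_\lambda(x)$. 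The very definition of $R_\alpha$ exhibits the same expression, with $B_\lambda$ replaced by the groupoid action $A_\lambda\colon V_{\phi^{\rho(\alpha)}_\lambda(x)}\to V_x$, $v\mapsto \phi^{\cev\alpha}_\lambda(1_x)\cdot v$.

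Subtracting, one is left with $\deriv{\lambda}{0}(A_\lambda-B_\lambda)$ applied to the pulled-back form. Since $A_0=B_0=\id_{V_x}$, the endomorphism $M_\lambda\coloneqq A_\lambda\circ B_\lambda^{-1}\in\End(V_x)$ satisfies $M_0=\id$, and one rewrites $(A_\lambda-B_\lambda)c(\lambda)=(M_\lambda-\id)(B_\lambda c(\lambda))$ for any smooth curve $c(\lambda)\in V_{\phi^{\rho(\alpha)}_\lambda(x)}$. The Leibniz rule then collapses this derivative to $M_0'\cdot c(0)$. Comparing the preliminaries' formula $\nabla^A_\alpha\xi|_x=\deriv{\lambda}{0}A_\lambda(\xi(\phi^{\rho(\alpha)}_\lambda(x)))$ with the analogous characterization $\nabla_{\rho(\alpha)}\xi|_x=\deriv{\lambda}{0}B_\lambda(\xi(\phi^{\rho(\alpha)}_\lambda(x)))$ of the pullback connection identifies $M_0'=\nabla^A_\alpha-\nabla_{\rho(\alpha)}=\theta(\alpha)|_x$, the tensor from \eqref{eq:invariance_form}. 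This yields the key identity
\[
R_\alpha\omega-j_p^*\L^{\nabla^{s\circ\pr_p}}_{\alpha^{(p)}}\omega=\theta(\alpha)\cdot j_p^*\omega,
\]
whose right-hand side vanishes in each case of the hypothesis: directly when $j_p^*\omega=0$, and because $G$-invariance implies $A$-invariance (hence $\theta=0$) in the sense of Theorem~\ref{thm:A_invariant} and Remark~\ref{rem:G_inv_implies_A_inv}.

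The main technical subtlety I anticipate is the careful fibre bookkeeping: verifying that $A_\lambda$ and $B_\lambda$ indeed share the domain $V_{\phi^{\rho(\alpha)}_\lambda(x)}$ and codomain $V_x$, and that the parallel transport of $\nabla^{s\circ\pr_p}$ along the flow of $\alpha^{(p)}$ restricts at the degeneracy point to $B_\lambda$. After these identifications are in place, the remainder of the argument is an elementary application of the Leibniz rule.
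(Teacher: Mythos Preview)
Your proposal is correct and follows essentially the same route as the paper: both arguments express $R_\alpha\omega$ and $j_p^*\L^{\nabla^{s\circ\pr_p}}_{\alpha^{(p)}}\omega$ as $\lambda$-derivatives of the same pulled-back form under the two transports $A_\lambda$ (groupoid action) and $B_\lambda$ (parallel transport), and then identify the difference as $\theta(\alpha)\cdot j_p^*\omega$, which is exactly the paper's identity \eqref{eq:R_uL_theta} and its higher-level analogue. Your $M_\lambda=A_\lambda B_\lambda^{-1}$ device is a minor repackaging of the paper's direct differentiation of $(A_\lambda-B_\lambda)\xi_\lambda$, but the substance is identical.
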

\begin{remark}
Given a connection $\nabla$ on a vector bundle $V\ra M$, the Lie derivative of a $V$-valued differential form $\omega\in\Omega^q(M;V)$ is defined by Cartan's formula, $\L^\nabla_X\omega=\d{}^\nabla\iota_X\omega+\iota_X\d{}^\nabla\omega$ for any $X\in \vf(M)$. Equivalently, we can use parallel transport:
  \[
  (\L^\nabla_X\omega)_x(Y_i)_i=\deriv \lambda 0 \tau(\gamma^X_x)^\nabla_{\lambda,0}\big(\omega(\d(\phi^X_\lambda)_x(Y_i))_i\big),
  \]
  where $\gamma^X_x$ denotes the (maximal) integral path of $X$ starting at $x$, and \[\tau(\gamma^X_x)^\nabla_{\lambda,0}\colon V_{\gamma^X_x(\lambda)}\ra V_x\] denotes the parallel transport along $\gamma^X_x$ with respect to the connection $\nabla$ from time $\lambda$ to $0$.
\end{remark}
\begin{proof}
  Consider first the level $p=1$. Let us compute the difference
  \begin{align*}
    \big(R_\alpha\omega-u^*\L^{\nabla^s}_{\alpha^L}\omega\big)_x(X_i)_i=\deriv \lambda 0\bigg(\Delta_{\phi^{\alpha^L}_\lambda(1_x)}-\tau\big(\gamma^{\alpha^L}_{1_x}\big)^{\nabla^s}_{\lambda,0}\bigg)\underbrace{\omega\big({\d(\phi^{\alpha^L}_\lambda)_{1_x}\d u (X_i)}\big)_i}_{\xi_{\gamma^{\rho\alpha}_{x}(\lambda)}}
  \end{align*}
  where $\Delta$ denotes the action of $G$ on $V$, and the expression denoted by $\xi$ defines a section of $V$ along the path $\gamma^{\rho\alpha}_x$. However, for any section $\xi$ along the path $\gamma^{\rho\alpha}_x$, there holds
  \begin{align*}
    \deriv \lambda 0\bigg(\Delta_{\phi^{\alpha^L}_\lambda(1_x)}-\tau\big(\gamma^{\alpha^L}_{1_x}\big)^{\nabla^s}_{\lambda,0}\bigg)\cdot \xi_{\gamma^{\rho\alpha}_{x}(\lambda)}=\nabla^A_{\alpha_x}\xi-\nabla_{\rho\alpha_x}\xi=\theta(\alpha_x)\cdot \xi_x,
  \end{align*}
  where we have observed there holds $\tau\big(\gamma^{\alpha^L}_{1_x}\big)^{\nabla^s}=\tau(s\circ\gamma^{\alpha^L}_{1_x})^\nabla=\tau(\gamma^{\rho\alpha}_x)^\nabla$ since $\nabla^s$ is a pullback connection and $s_*\alpha^L=\rho\alpha$. This implies
  \begin{align}
    \label{eq:R_uL_theta}
    R_\alpha\omega-u^*\L^{\nabla^s}_{\alpha^L}\omega=\theta(\alpha)\cdot u^*\omega.
  \end{align}
  More generally, at any higher level $p>1$, for a given $\omega\in \Omega^{p,q}(G;V)$ we similarly obtain
  \begin{align}
  \big(R_\alpha\omega-j_p^*\L^{\nabla^{s\circ \pr_p}}_{\alpha^{(p)}}\omega\big)_{(g_1,\dots,g_{p-1})}=\theta(\alpha_{s(g_{p-1})})\cdot (j_p^*\omega)_{(g_1,\dots,g_{p-1})},
  \end{align}
  proving the lemma since $G$-invariance implies $A$-invariance.
\end{proof}
\begin{proof}[Proof of Theorem \ref{thm:van_est_G_A}]
Let us first check the theorem holds at the level of leading terms, 
\begin{align}
  \label{eq:ve_dnabla_0}
  (\ve \d{}^\nabla\omega)_0=(\d{}^\nabla\ve\omega)_0.
\end{align}
We do so by first establishing the following identity for the case when either $\nabla$ is $G$-invariant, or the form $\omega$ satisfies $j_p^*\omega=0$:
\begin{align}
\label{eq:R_d}
R_\alpha \d{}^{\nabla^{s\circ\pr_p}}\omega=\d{}^{\nabla^{s\circ\pr_{p-1}}}R_\alpha\omega.
\end{align}
Similarly to equation \eqref{eq:d_lie}, the commutator of the $V$-valued Lie derivative with $\d{}^\nabla$ reads
\[
\L^{\nabla^{s\circ\pr_p}}_{\alpha^{(p)}}\d{}^{\nabla^{s\circ\pr_p}}-\d{}^{\nabla^{s\circ\pr_p}}\L^{\nabla^{s\circ\pr_p}}_{\alpha^{(p)}}=\iota_{\alpha^{(p)}}R^{\nabla^{s\circ\pr_p}}\wedge \cdot=(s\circ\pr_p)^*(\iota_{\rho\alpha}R^\nabla)\wedge \cdot
\]
Moreover, using the identity \eqref{eq:pullback} on $\pi=j_p$ with $\nabla^{s\circ\pr_p}$ as the connection, we get
\[
\d{}^{\nabla^{s\circ\pr_{p-1}}}j_p^*=j_p^*\d{}^{\nabla^{s\circ\pr_p}},
\]
since $s\circ\pr_p\circ j_p=s\circ\pr_{p-1}$. Hence, by the last lemma, we obtain
\[
  R_\alpha \d{}^{\nabla^{s\circ\pr_p}}\omega-\d{}^{\nabla^{s\circ\pr_{p-1}}} R_\alpha \omega=(s\circ \pr_{p-1})^* (\iota_{\rho\alpha} R^\nabla)\wedge j_p^*\omega,
\]
This proves the identity \eqref{eq:R_d} and thus also \eqref{eq:ve_dnabla_0} for the case when $\nabla$ is $G$-invariant. Dropping the assumption of invariance and instead assuming $\omega$ is normalizable, identity \eqref{eq:R_d} now follows by the fact that $R_\alpha$ preserves normalizability---more precisely, for any $\omega\in\Omega^{p,q}(G;V)$ there holds
\[
(j_{p-1}^k)^*R_\alpha\omega=R_\alpha(j_{p}^k)^*\omega,
\] 
for any $k\leq p-1$, where $j_{p}^k\colon G^{(p-1)}\ra G^{(p)}$ denotes the degeneracy map that inserts the unit into the $k$-th place (hence $j_p=j_p^p$). Hence, for any $\alpha_1,\dots,\alpha_p\in\Gamma(A)$,
\begin{align*}
  R_{\alpha_1}\dots R_{\alpha_p}\d{}^{\nabla^{s\circ\pr_p}}\omega&=R_{\alpha_1}\dots R_{\alpha_{p-1}}\d{}^{\nabla^{s\circ\pr_{p-1}}}R_{\alpha_p}\omega&\text{(since $j_p^*\omega=0$)}\\
  &=R_{\alpha_1}\dots R_{\alpha_{p-2}}\d{}^{\nabla^{s\circ\pr_{p-1}}}R_{\alpha_{p-2}}R_{\alpha_p}\omega &\text{(since $(j_p^{p-1})^*\omega=0$)}\\
  &\vdotswithin{=}\\
  &=\d{}^\nabla R_{\alpha_1}\dots R_{\alpha_p}\omega &\text{(since $(j_p^{1})^*\omega=0$)}
\end{align*}
From this the desired result for normalized forms follows.

To see that equation \eqref{eq:ve_dnabla_0} was actually enough to show, we again use the trick from Remark \ref{rem:action_trick}. More precisely, we now note that if $G\rra M$ acts on a surjective submersion $\mu\colon P\ra M$, the action differentiates to an action  $\Gamma(A)\ra \vf(P)$ of its Lie algebroid, and the obtained action algebroid $A\ltimes P$ is now just the algebroid of the action groupoid $G\ltimes P$. Denoting the respective surjective submersive Lie groupoid and algebroid morphisms by $p_G\colon G\ltimes P\ra G$ and $p_A\colon A\ltimes P\ra A$, there now clearly holds 
\[R_{1\otimes\alpha}(p_G)^*=(p_G)^*R_\alpha,\quad J_{1\otimes \alpha}(p_G)^*=(p_G)^* J_\alpha.\]
Moreover, at level $p=0$, both pullbacks $(p_G)^*$ and $(p_A)^*$ restrict simply to $\mu^*$. Hence, the naturality of the van Est map follows:
\[\begin{tikzcd}[row sep=large]
	{\Omega^{p,q}(G;V)} & {\Omega^{p,q}(G\ltimes P;\mu^*V)} \\
	{W^{p,q}(A;V)} & {W^{p,q}(A\ltimes P;\mu^*V)}
	\arrow["{(p_G)^*}", from=1-1, to=1-2]
	\arrow["\ve"', from=1-1, to=2-1]
	\arrow["\ve", from=1-2, to=2-2]
	\arrow["{(p_A)^*}"', from=2-1, to=2-2]
\end{tikzcd}\]
This concludes the proof.
\end{proof}

\subsection{Obstruction to existence of invariant connections}
\label{sec:obstruction_invariance}
Given a representation $V$ of either a Lie groupoid or an algebroid, we now construct the cohomological class which controls the existence of invariant connections on $V$. Let
\[\A_\inv(G;V)\quad\text{and}\quad\A_\inv(A;V)\]
denote the sets of all invariant linear connections on $V$ for the global and the infinitesimal case, respectively. Let us start with the global case.

\subsubsection{Global case}
We first observe that the form \eqref{eq:invariance_form_G} controlling the $G$-invariance of a linear connection $\nabla$ on a representation $V$ can be seen as an $\End(V)$-valued form,
\begin{align}
  \label{eq:invariance_form_theta}
  \Theta\in\Omega^1(G;s^*{\End V}),\quad \Theta(X)\xi=\phi\big(\nabla^t_X(\phi^{-1}\xi)\big)-\nabla_X^s\xi,
\end{align}
for any $\xi\in\Gamma(s^*V)$. It is called the \textit{$G$-invariance form} of the connection $\nabla$. Without further ado:
\begin{theorem}
  \label{thm:obstruction_invariance_G}
  Let $V\ra M$ be a representation of a Lie groupoid $G\rra M$. 
  \begin{enumerate}[label={(\roman*)}]
    \item Given any connection $\nabla$ on $V\ra M$, its $G$-invariance form \eqref{eq:invariance_form_theta} is multiplicative,
    \[\Theta\in\Omega_m^1(G;\End V).\]
    \item The set $\A_\inv(G;V)$ is an affine space over  invariant, endomorphism-valued 1-forms \[\Omega^1_\inv(M;\End V).\]
    \item The cohomological class of the $G$-invariance form $\Theta$ of any connection $\nabla$ on $V$ is independent of the choice of connection $\nabla$. We denote it by
    \begin{align*}
      \obs_{\A_\inv(G;V)}\coloneq [\Theta]\in H^{1,1}(G;\End V).
    \end{align*}
    \item A $G$-invariant connection on $V$ exists if and only if $\obs_{\A_\inv(G;V)}=0$.
  \end{enumerate}
\end{theorem}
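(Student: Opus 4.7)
The heart of the theorem is part (i): once multiplicativity of the $G$-invariance form is established, the remaining three parts follow from a single affine computation describing how $\Theta$ transforms under $\nabla \rightsquigarrow \nabla + A$. My plan is to dispatch (i) by a direct verification, then derive (ii), (iii), and (iv) in quick succession from the change-of-connection formula. The main obstacle will be the bookkeeping in (i), where the three face-map pullbacks of $\Theta$ live in different $\End V$-fibres and must be compared after transport along $\phi$.

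\textbf{Part (i): multiplicativity of $\Theta$.} Given composable $(g,h)$ with compatible vectors $X \in T_g G$, $Y \in T_h G$, I would unfold
\[
(\delta^1 \Theta)_{(g,h)}(X,Y) \;=\; \Theta_h(Y) \;-\; \Theta_{gh}(\d m (X,Y)) \;+\; h^{-1}\!\cdot\Theta_g(X),
\]
evaluate on an arbitrary section $\xi \in \Gamma(s^*V)$ near $gh$, and expand each $\Theta_\bullet(\cdot)$ via its definition \eqref{eq:invariance_form_theta}. The key algebraic ingredient is the groupoid action cocycle $(gh)^{-1}\cdot v = h^{-1}\cdot(g^{-1}\cdot v)$, which on the level of pulled-back sections reads $\pr_2^*\phi \circ f_2^*\phi = f_1^*\phi$ and, upon differentiating, feeds the Leibniz rule so that the $\nabla^t$-terms from $\Theta_g(X)$ and $\Theta_h(Y)$ combine into the $\nabla^t$-term of $\Theta_{gh}(\d m(X,Y))$, while the three $\nabla^s$-contributions cancel via the identity $\d s \circ \d m = \d s \circ \pr_2$. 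Alternatively, one can package this more conceptually: view $\Theta$ as the difference of the two linear connections $s^*\nabla$ and $\phi^*(t^*\nabla)$ on $s^*V$, then observe that each of them is ``simplicial'' in the sense that its face-map pullbacks over $G^{(2)}$ satisfy $f_0^* = f_1^*$ and $f_1^* = (f_2^*\text{ under }\phi)$, so the alternating-sum differential $\delta$ annihilates the difference.

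\textbf{Parts (ii) and (iii): affine computation.} Let $\nabla' = \nabla + A$ with $A \in \Omega^1(M;\End V)$. A direct substitution in \eqref{eq:invariance_form_theta} gives
\[
\Theta_{\nabla'}(X)\xi - \Theta_{\nabla}(X)\xi \;=\; \phi\big((t^*A)(X)\phi^{-1}\xi\big) - (s^*A)(X)\xi \;=\; g^{-1}\!\cdot A(\d t\, X) - A(\d s\, X),
\]
where the conjugation action on $\End V$ is used on the middle term. Recognising the right-hand side as $-(\delta^0 A)_g(X)$ (with $\delta^0$ on the representation $\End V$), I obtain the master identity
\[
\Theta_{\nabla + A} \;=\; \Theta_{\nabla} - \delta^0 A.
\]
This immediately gives (ii): among all translates $\nabla + A$ of an invariant $\nabla$, invariance persists precisely when $\delta^0 A = 0$, i.e.\ $A \in \Omega^1_{\inv}(M;\End V)$. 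It also gives (iii): the class $[\Theta_\nabla] \in H^{1,1}(G;\End V)$ is unaffected by replacing $\nabla$ with $\nabla + A$, since the two cocycles differ by the coboundary $\delta^0 A$.

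\textbf{Part (iv): existence from vanishing of the obstruction.} Suppose $\obs_{\A_\inv(G;V)} = 0$, so $\Theta_\nabla = \delta^0 A$ for some $A \in \Omega^1(M;\End V)$. By the master identity, $\Theta_{\nabla + A} = 0$, hence $\nabla + A$ is $G$-invariant. Conversely, if a $G$-invariant connection $\nabla'$ exists, then with $A = \nabla - \nabla'$ the same identity yields $\Theta_\nabla = \delta^0 A$, so $[\Theta_\nabla] = 0$. The main obstacle here will have already been disposed of in (i); this last step is pure bookkeeping. Throughout, I will have to be careful that the affine calculations in (ii)--(iv) are done with respect to the induced representation on $\End V$ (by conjugation), so that $\delta^0$ is the correct differential appearing in all the formulas.
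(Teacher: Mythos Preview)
Your proposal is correct and follows essentially the same route as the paper. For (i), the paper carries out exactly the direct verification you sketch, making the bookkeeping precise by introducing the bundle isomorphisms $\Phi_1,\Phi_2\colon \pr_i^*(t^*V)\to\pr_i^*(s^*V)$ over $G^{(2)}$ and observing that $\Phi_m=\Phi_2\Phi_1$; for (ii)--(iv), the paper derives precisely your master identity $\Theta_{\nabla+A}=\Theta_\nabla-\delta^0 A$ (written there as $\tilde\Theta-\Theta=-\delta^0\gamma$) and draws the same three consequences.
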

\begin{proof}
  The proof of (i) essentially consists of understanding what multiplicativity means for $\End(V)$-valued forms. By definition of the induced representation on $\End(V)$, we need to check that for any $X\in\vf(G^{(2)})$ and $\xi\in\Gamma((s\circ\pr_2)^* V)$ there holds
  \begin{align*}
    (m^*\Theta)(X)\cdot\xi=(\pr_2^*\Theta)(X)\cdot\xi+\Phi\big((\pr_1^*\Theta)(X)\cdot(\Phi^{-1}\xi)\big),
  \end{align*}
  where $\Phi\colon(s\circ \pr_1)^*V\ra (s\circ\pr_2)^*V$ is given by $(g,h,\xi)\mapsto (g,h,h^{-1}\cdot\xi)$. By definition of $\Theta$, the terms on the right-hand side above equal
  \begin{align}
    \label{eq:intermed_twoterms_theta}
    (\pr_2^*\Theta)(X)\cdot\xi&=\Phi_2\big(\nabla^{t\circ\pr_2}_X(\Phi_2^{-1}\xi)\big)-\nabla^{s\circ\pr_2}_X \xi\\
    \Phi\big((\pr_1^*\Theta)(X)\cdot(\Phi^{-1}\xi)\big)&=\Phi_2\Phi_1\big(\nabla^{t\circ\pr_1}_X(\Phi_1^{-1}\Phi_2^{-1}\xi)\big)-\Phi_2\big(\nabla^{s\circ\pr_1}_X(\Phi_2^{-1}\xi)\big),\label{eq:intermed_twoterms_theta2}
  \end{align}
  where we are denoting by $\Phi_i$ ($i=1,2$) the vector bundle morphisms covering $\id_{G^{(2)}}$, determined by the identities $\Phi_i (\pr_i^*\eta)=\pr_i^*\phi(\eta)$ for any $\eta\in\Gamma(t^* V)$. They are given by:
\[\begin{tikzcd}[row sep=-2pt]
	{(t\circ\pr_1)^*V} & {(s\circ\pr_1)^*V} & {(s\circ\pr_2)^*V} & \hspace{-1.5em}{\pr_i^* (t^*V)} & {\pr_i^* (s^*V)} \\
	{(g,h,\xi_{t(g)})} & {(g,h,g^{-1}\cdot\xi_{t(g)})} \\
	& {(g,h,\xi_{s(g)})} & {(g,h,h^{-1}\cdot \xi_{s(g)})} & {t^*V} & {s^*V}
	\arrow["{\Phi_1}", from=1-1, to=1-2]
	\arrow["{\Phi_2}", from=1-2, to=1-3]
	\arrow["{\Phi_i}", from=1-4, to=1-5]
	\arrow[from=1-4, to=3-4]
	\arrow[from=1-5, to=3-5]
	\arrow[maps to, from=2-1, to=2-2]
	\arrow[maps to, from=3-2, to=3-3]
	\arrow["\phi"', from=3-4, to=3-5]
\end{tikzcd}\]
Identities  \eqref{eq:intermed_twoterms_theta} and \eqref{eq:intermed_twoterms_theta2} are easily shown on pullback sections, i.e., those of the form $\xi=\pr_2^*\eta$ and $\Phi^{-1}\xi=\pr_1^*\eta$ for $\eta\in\Gamma(s^*V)$, respectively, by observing there holds $\Phi=\Phi_2$.  Notice that the first term of \eqref{eq:intermed_twoterms_theta} and the second term of \eqref{eq:intermed_twoterms_theta2} cancel out since $s\circ\pr_1=t\circ \pr_2$, hence we just need to check $\Theta$ satisfies
\begin{align*}
  (m^*\Theta)(X)\cdot\xi=\Phi_2\Phi_1\big(\nabla^{t\circ\pr_1}_X(\Phi_1^{-1}\Phi_2^{-1}\xi)\big)-\nabla^{s\circ\pr_2}_X \xi,
\end{align*}
This is clear since $t\circ m=t\circ \pr_1$, $s\circ m=s\circ \pr_2$ and the map $\Phi_m\colon (t\circ\pr_1)^*V\ra (s\circ\pr_2)^*V$ determined by $\Phi_m m^*=m^*\phi$ clearly equals $\Phi_m=\Phi_2\Phi_1$, hence (i) is proved.

The points (ii), (iii) and (iv) are direct consequences of the  following observation: if $\tilde\nabla$ and $\nabla$ are two connections on $V$ with respective invariance forms $\tilde\Theta$ and $\Theta$, then
\[
\Tilde\Theta-\Theta=-\delta^0\gamma
\]
where $\gamma=\tilde\nabla-\nabla\in\Omega^1(M;\End V)$. This easily follows from the identity $s^*\gamma=\tilde\nabla^s-\nabla^s$ and a similar one for $t^*\gamma$, concluding the proof.
\end{proof}

\subsubsection{Infinitesimal case}
We now prove the infinitesimal analogue of Theorem \ref{thm:obstruction_invariance_G}.
\begin{theorem}
  \label{thm:obstruction_invariance}
  Let $V\ra M$ be a representation of a Lie algebroid $A\Ra M$. 
  \begin{enumerate}[label={(\roman*)}]
    \item Given any connection $\nabla$ on $V\ra M$, its $A$-invariance form \eqref{eq:invariance_form} is multiplicative: \[(T,\theta)\in\Omega^1_{im}(A; \End V).\]
    \item The set $\A_\inv(A;V)$ is an affine space over  invariant, endomorphism-valued 1-forms \[\Omega^1_\inv(M;\End V).\]
    \item The cohomological class of the $A$-invariance form $(T,\theta)$ of any connection $\nabla$ on $V$ is independent of the choice of connection $\nabla$. We denote it by
    \begin{align*}
      \obs_{\A_\inv(A;V)}\coloneq [(T,\theta)]\in H^{1,1}(A;\End V).
    \end{align*}
    \item An $A$-invariant connection on $V$ exists if and only if $\obs_{\A_\inv(A;V)}=0$.
  \end{enumerate}
\end{theorem}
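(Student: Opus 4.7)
The strategy will be to mirror the proof of Theorem \ref{thm:obstruction_invariance_G}: once the affine transformation law of $(T,\theta)$ under a change of connection is computed, parts (ii)--(iv) follow formally. The core work therefore lies in (i), namely showing that $(T,\theta)$ is an IM form in the sense of \eqref{eq:c1}--\eqref{eq:c3}.

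For (i), I plan to verify the compatibility conditions directly. Condition \eqref{eq:c3} is vacuous at this bidegree, since $\theta(\beta)$ has form degree zero. For \eqref{eq:c2}, the decomposition $\nabla^A_\alpha = \nabla_{\rho(\alpha)} + \theta(\alpha)$ together with the flatness $[\nabla^A_\alpha,\nabla^A_\beta] = \nabla^A_{[\alpha,\beta]}$ should yield
\[
\nabla^{A,\End V}_\alpha \theta(\beta) = \theta([\alpha,\beta]) + \nabla^{\End V}_{\rho(\beta)}\theta(\alpha) - R^\nabla(\rho(\alpha),\rho(\beta)),
\]
and directly from the definition of $T$ one reads off $\iota_{\rho(\beta)}T(\alpha) = \nabla^{\End V}_{\rho(\beta)}\theta(\alpha) - R^\nabla(\rho(\alpha),\rho(\beta))$; subtracting these two identities produces \eqref{eq:c2}. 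For \eqref{eq:c1}, I would apply $\d{}^\nabla$ to the identity just established, expand $\L^A_\alpha T(\beta)$ using the analogous relation $\L^A_\alpha = \L^\nabla_{\rho(\alpha)} + [\theta(\alpha),\,\cdot\,]$ on $\End V$-valued forms, and invoke the Bianchi identity $\d{}^\nabla R^\nabla = 0$ in the induced $\End V$-connection to cancel the leftover curvature terms. A more conceptual alternative is to use the commutator identity $[\d{}^\nabla,\delta] = (T,\theta)\wedge\cdot$ from Lemma \ref{lem:A_invariant}: since $\delta^2=0$, the operator relation implies $\delta((T,\theta)\wedge c) + (T,\theta)\wedge \delta c = 0$ for every Weil cochain $c$, and Remark \ref{rem:action_trick} then allows one to extract $\delta(T,\theta)=0$ after pulling back to an action algebroid over a sufficiently high-dimensional base.

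For (ii), (iii), (iv), the key step is the affine transformation law: for $\tilde\nabla = \nabla + \gamma$ with $\gamma \in \Omega^1(M;\End V)$,
\[
(\tilde T, \tilde\theta) - (T,\theta) = -\delta^0\gamma.
\]
Since $\nabla^A$ is intrinsic to the Lie algebroid representation and independent of the choice of linear connection on $V$, I obtain at once $\tilde\theta(\alpha) - \theta(\alpha) = -\iota_{\rho(\alpha)}\gamma = -(\delta^0\gamma)_1(\alpha)$. For the leading term I will expand $\d{}^{\tilde\nabla^{\End V}} = \d{}^{\nabla^{\End V}} + [\gamma,\,\cdot\,]$ and $R^{\tilde\nabla} - R^\nabla = \d{}^\nabla\gamma + \gamma\wedge\gamma$; the quadratic contribution cancels via $\iota_{\rho(\alpha)}(\gamma\wedge\gamma) = [\iota_{\rho(\alpha)}\gamma,\gamma]$, Cartan's formula assembles the linear terms, and the identity $\L^A_\alpha\gamma = \L^\nabla_{\rho(\alpha)}\gamma + [\theta(\alpha),\gamma]$ collapses the result to $\tilde T(\alpha) - T(\alpha) = -\L^A_\alpha\gamma = -(\delta^0\gamma)_0(\alpha)$. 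Part (iii) is then immediate; (ii) is the special case where both $(T,\theta)$ and $(\tilde T,\tilde\theta)$ vanish, forcing $\delta^0\gamma = 0$, i.e., $\gamma \in \Omega^1_\inv(M;\End V)$; and (iv) is the equivalent statement that $[(T,\theta)]$ vanishes exactly when some $\gamma$ realizes $\nabla + \gamma$ as $A$-invariant.

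The main obstacle will be the verification of \eqref{eq:c1}: it is the step requiring the Bianchi identity and careful bookkeeping between $\L^A$, $\L^\nabla$, $\iota_\rho$ and the induced connection on $\End V$. All remaining steps are essentially formal, modeled on Theorem \ref{thm:obstruction_invariance_G}.
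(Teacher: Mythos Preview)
Your proposal is correct, and for parts (ii)--(iv) it is essentially identical to the paper's: both derive the transformation law $(\tilde T,\tilde\theta)-(T,\theta)=-\delta^0\gamma$ and deduce the remaining claims formally. The paper computes $\tilde T(\alpha)-T(\alpha)$ by plugging $\nabla+\gamma$ into the explicit formula \eqref{eq:T_explicit} for $T$, whereas you propose to expand $\d{}^{\tilde\nabla}$ and $R^{\tilde\nabla}$; these are equivalent calculations.

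For (i), the paper takes a more direct route than either of yours: it writes $T(\alpha)(X)\xi=\nabla_X\nabla^A_\alpha\xi-\nabla^A_\alpha\nabla_X\xi+\nabla_{[\rho\alpha,X]}\xi$, expands $\L^A_\alpha(T\beta)(X)\xi$ by the chain rule, and observes that after antisymmetrizing in $\alpha,\beta$ the surviving terms assemble into $T([\alpha,\beta])(X)\xi$ via flatness of $\nabla^A$ and the Jacobi identity on $\vf(M)$. This is a short and self-contained verification of \eqref{eq:c1} only. Your alternative approach---using $[\d{}^\nabla,\delta]=(T,\theta)\wedge\cdot$ together with $\delta^2=0$---is genuinely different and more conceptual: applying the operator identity to a section $\xi\in\Gamma(V)=W^{0,0}(A;V)$ and comparing leading terms yields \eqref{eq:c1} without any hands-on curvature manipulation, and the action-algebroid trick is not even needed since $q=1\le\dim M$ in the nontrivial case. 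By contrast, your primary suggestion for \eqref{eq:c1} (applying $\d{}^\nabla$ to the identity you used for \eqref{eq:c2}) is vaguer and would require more bookkeeping with $R^{\nabla^{\End V}}$ than the paper's direct computation; I would recommend committing to the operator-identity argument instead.
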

\begin{proof}
  The proof of (i) consists of a straightforward computation verifying the condition \eqref{eq:c1} for the map $T$, which we will now carry out. Writing out the map $T$ in full, it has a formula similar to the usual curvature tensor,
  \begin{align}
    \label{eq:T_explicit}
    T(\alpha)(X)\xi=\nabla_X\nabla^A_\alpha\xi-\nabla^A_\alpha\nabla_X\xi+\nabla_{[\rho\alpha,X]}\xi,
  \end{align}
  for any $\alpha\in\Gamma(A)$, $X\in \vf(M)$ and $\xi\in\Gamma(V)$. If $\beta\in\Gamma(A)$ is another section, then
  \begin{align*}
    \L^A_\alpha (T\beta)(X)\xi&=\nabla^A_\alpha(T(\beta)(X)\xi)-T(\beta)(X)\nabla^A_\alpha\xi-T(\beta)([\rho\alpha,X])\xi\\
    &=\nabla^A_\alpha(\nabla_X\nabla^A_\beta\xi-\nabla^A_\beta\nabla_X\xi+\nabla_{[\rho\beta,X]}\xi)\\
    &-(\nabla_X\nabla^A_\beta-\nabla^A_\beta\nabla_X+\nabla_{[\rho\beta,X]})\nabla^A_\alpha\xi\\
    &-(\nabla_{[\rho\alpha,X]}\nabla^A_\beta\xi-\nabla^A_\beta\nabla_{[\rho\alpha,X]}\xi+\nabla_{[\rho\beta,[\rho\alpha,X]]}\xi)
  \end{align*}
  In this expression, the following terms come in pairs with respect to interchanging $\alpha$ and $\beta$: first and fifth, third and eighth, sixth and seventh. Hence, subtracting $\L^A_\beta (T\alpha)(X)\xi$ from this expression kills these terms. We are left with  
  \begin{align*}
    \L^A_\alpha (T\beta)(X)\xi-\L^A_\beta (T\alpha)(X)\xi=\nabla_X \nabla^A_{[\alpha,\beta]}\xi-\nabla^A_{[\alpha,\beta]}\nabla_X\xi+\nabla_{[[\rho\alpha,\rho\beta],X]}\xi=T[\alpha,\beta](X)\xi,
  \end{align*}
  where we have used the flatness $\smash{\nabla^A_{[\alpha,\beta]}=[\nabla_\alpha^A,\nabla_\beta^A]}$ of the representation $\nabla^A$ and the Jacobi identity on $\vf(M)$. This proves (i).

  The remaining points (ii), (iii) and (iv) are direct consequences of the following more general observation. If $\tilde \nabla$ and $\nabla$ are two connections on $V$ with respective invariance forms $(\tilde T,\tilde \theta)$ and $(T,\theta)$, then there holds
  \begin{align}
    (\tilde T,\tilde \theta)-(T,\theta) = -\delta^0\gamma,
  \end{align}
  where $\gamma=\tilde\nabla-\nabla\in \Omega^1(M;\End V)$. To see this, we first note that on the level of symbols, we clearly have $\tilde\theta(\alpha)-\theta(\alpha)=-\gamma(\rho\alpha)$. For the leading terms, we insert $\nabla+\gamma$ in place of connection $\nabla$ in the equation \eqref{eq:T_explicit} to obtain
  \begin{align*}
    \tilde T(\alpha)(X)\xi&=T(\alpha)(X)\xi+\gamma(X)\nabla^A_\alpha\xi-\nabla^A_\alpha(\gamma(X)\xi)+\gamma([\rho\alpha,X])\xi\\
    &=T(\alpha)(X)\xi-(\L^A_\alpha\gamma)(X)\xi,
  \end{align*}
  that is, $\tilde T(\alpha)=T(\alpha)-\L^A_\alpha \gamma$. This concludes the proof.
\end{proof}
Finally, we show that the van Est map relates the global and infinitesimal invariance tensor.
\begin{proposition}
  \label{prop:g_inv_a_inv}
  Let $A$ be the Lie algebroid of $G$. For any connection $\nabla$ on a representation $V$ of $G$, the van Est map sends the $G$-invariance form of $\nabla$ to its $A$-invariance form:
  \begin{align}
    \label{eq:ve_Theta}
    \ve(\Theta)=(T,\theta).
  \end{align}
  In particular, $G$-invariance implies $A$-invariance of $\nabla$, and if $G$ is source-connected, the converse also holds. Moreover, the van Est map relates the obstruction classes:
    \[\ve(\obs_{\A_{\inv}(G;V)})=\obs_{\A_{\inv}(A;V)}.\]
\end{proposition}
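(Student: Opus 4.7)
The plan is to reduce the proposition to the cochain-level identity $\ve(\Theta)=(T,\theta)$, which I would verify by direct computation of both the correction and the leading term of $\ve(\Theta)\in W^{1,1}(A;\End V)$. Once this identity is established, all remaining claims follow formally: $G$-invariance ($\Theta=0$) implies $A$-invariance ($(T,\theta)=0$) by applying $\ve$; the converse under source-connectedness uses the classical injectivity of $\ve$ on multiplicative forms in that setting (alternatively, one may cite \cite{mec}*{Proposition~A.8}); and the obstruction identity $\ve(\obs_{\A_\inv(G;V)})=\obs_{\A_\inv(A;V)}$ is obtained by passing $\ve(\Theta)=(T,\theta)$ to cohomology, noting that the class on both sides is independent of the chosen $\nabla$ by Theorems~\ref{thm:obstruction_invariance_G} and~\ref{thm:obstruction_invariance}.

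For the correction term $\ve(\Theta)_1(\beta)=-J_\beta\Theta=-u^*\iota_{\cev\beta}\Theta$, the crucial preliminary observation is that $\Theta$ is normalized, i.e., $u^*\Theta=0$: at $1_x$ the map $\phi$ restricts to $\id_{V_x}$, while $\d t\circ\d u=\d s\circ\d u=\id_{TM}$, so the two summands defining $\Theta_{1_x}(\d u X)$ cancel. Evaluating $\Theta_{1_x}(\beta_x)$ instead on a pullback section $s^*\eta$, the flow curve $\lambda\mapsto\phi^{\cev\beta}_\lambda(1_x)$ stays inside the $t$-fibre over $x$, so the pullback connection $\nabla^t$ is trivial along it; the resulting derivative of $g_\lambda\cdot\eta(s(g_\lambda))$ then reproduces the defining expression of $\nabla^A_\beta\eta(x)$. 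Hence $\Theta_{1_x}(\beta_x)\eta=(\nabla^A_\beta-\nabla_{\rho\beta})\eta(x)=\theta(\beta)\eta(x)$, matching the symbol of $(T,\theta)$ as required.

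For the leading term $\ve(\Theta)_0(\alpha)=R_\alpha\Theta$, equation~\eqref{eq:R_uL_theta} applied to the $\End V$-valued form $\Theta$ yields $R_\alpha\Theta=u^*\L^{\nabla^{s}}_{\cev\alpha}\Theta$, since $u^*\Theta=0$. Cartan's formula together with the pullback identity $u^*\d^{\nabla^s}=\d^\nabla u^*$ then splits this into $\d^\nabla\theta(\alpha)+u^*(\iota_{\cev\alpha}\d^{\nabla^s}\Theta)$. The second summand I would compute via the gauge-theoretic interpretation of $\Theta$: viewing it as the tensorial difference $\phi\nabla^t\phi^{-1}-\nabla^s$ of two connections on $s^*V$, the standard curvature-difference identity reads $\d^{\nabla^s}\Theta=\phi(t^*R^\nabla)\phi^{-1}-s^*R^\nabla-\Theta\wedge\Theta$. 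Evaluating at $1_x$ on the pair $(\alpha_x,\d u X)$, the first term vanishes because $\d t\,\alpha_x=0$, the wedge term vanishes because $u^*\Theta=0$, and what survives is $-\iota_{\rho\alpha_x}R^\nabla_x$. Altogether, $R_\alpha\Theta=\d^\nabla\theta(\alpha)-\iota_{\rho\alpha}R^\nabla=T(\alpha)$; this gauge-theoretic identification of $\d^{\nabla^s}\Theta$ is the only substantive step of the proof, with everything else following from the naturality of $\ve$.
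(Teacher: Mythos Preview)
Your proof is correct, and the argument for the leading term takes a genuinely different route from the paper's. Both you and the paper reduce to $R_\alpha\Theta = u^*\L^{\nabla^s}_{\alpha^L}\Theta$ using $u^*\Theta=0$ (the paper phrases this via multiplicativity of $\Theta$, which in particular gives normalization). From there, however, the paper expands the Lie derivative by hand: it chooses an $s$-projectable extension $Y$ of $u_*X$, writes out $(\L^{\nabla^s}_{\alpha^L}\Theta)(Y)\cdot s^*\xi$ term by term, collects the pieces into an auxiliary tensor $S(\alpha)(Y)$, and then identifies $S$ with the curvature $R^{\overline\nabla^t}$ of the conjugated connection $\overline\nabla^t=\phi\nabla^t\phi^{-1}$ plus correction terms involving $\Theta(\alpha^L)$; the vanishing of $R^{\overline\nabla^t}(\alpha^L,Y)$ (since $t_*\alpha^L=0$) and the identification $u^*\overline\nabla^t_Y=\nabla_X$ then finish the computation. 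Your approach bypasses this by invoking Cartan's formula and the standard curvature-difference identity $\d^{\nabla^s}\Theta = R^{\overline\nabla^t}-R^{\nabla^s}-\Theta\wedge\Theta$ for the difference of two connections on the same bundle; the three terms then localize at $u(M)$ for transparent reasons ($\d t\,\alpha_x=0$, $u^*\Theta=0$), yielding $T(\alpha)$ directly. This is shorter and more conceptual, at the cost of importing a gauge-theoretic identity that the paper's hands-on computation does not assume.

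One cosmetic point: you write $\ve(\Theta)_1(\beta)=-J_\beta\Theta$, yet your subsequent computation shows $J_\beta\Theta=u^*\iota_{\cev\beta}\Theta=\theta(\beta)$ and you conclude the symbols match. The Leibniz relation $R_{f\alpha}=fR_\alpha+\d f\wedge J_\alpha$ forces $\ve(\Theta)_1=J_\bullet\Theta$ without the minus sign, so the stray sign in your displayed formula should be dropped; it does not affect the argument. Also, when you invoke \eqref{eq:R_uL_theta} for the $\End V$-valued form $\Theta$, note (as the paper does implicitly) that the identity is being applied with the induced representation and connection on $\End V$; since $u^*\Theta=0$ the correction term vanishes regardless.
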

\begin{proof}
  The second part follows from the first part and Theorem \ref{thm:obstruction_invariance_G} (i), since the restriction of the van Est map to multiplicative forms is injective when $G$ is source-connected. That the van Est map relates the obstruction classes is also a direct consequence of \eqref{eq:ve_Theta}.

  Let us prove the identity \eqref{eq:ve_Theta}. We begin by proving that the symbols coincide: for any $\alpha\in\Gamma(A)$ and $\xi\in\Gamma(V)$, there holds
  \begin{align*}
    u^*(\Theta(\alpha^L))\cdot\xi=u^*(\Theta(\alpha^L)\cdot s^*\xi)=u^*\big(\phi\big(\nabla^t_{\alpha^L}(\phi^{-1}s^*\xi)\big)\big)-u^*(\nabla^s_{\alpha^L}s^*\xi).
  \end{align*}
  The second term clearly equals $\nabla_{\rho\alpha}\xi$ since $s_*\alpha^L=\rho\alpha$. Using parallel transport, we can write the first term at any $g\in G$ as 
  \begin{align*}
    \phi\big(\nabla^t_{\alpha^L}(\phi^{-1}s^*\xi)\big)_g &= g^{-1}\cdot\deriv\lambda 0 \underbrace{\tau(\gamma^{\alpha^L}_g)^{\nabla^t}_{\lambda,0}}_{\id_{V_{t(g)}}}\Big(\underbrace{\phi^{\alpha^L}_\lambda(g)}_{\mathclap{g\phi^{\alpha^L}_\lambda(1_{s(g)})}}\cdot \xi_{\phi^{\rho\alpha}_\lambda(s(g))}\Big)=\deriv\lambda 0 \phi^{\alpha^L}_\lambda(1_{s(g)})\cdot\xi_{\phi^{\rho\alpha}_\lambda(s(g))},
  \end{align*}
  which equals $(s^*\nabla^A_\alpha\xi)_g$. We have used here that the action is linear and $t_*\alpha^L=0$. By tensoriality of $\Theta$, we have shown $\smash{\Theta(\alpha^L)=s^*\theta(\alpha)}$ and thus proved that the symbols coincide.

  For the leading terms, we need to show $R_\alpha\Theta=T(\alpha)$. To this end, we first use multiplicativity of $\Theta$ with equation \eqref{eq:R_jL} to express the left-hand side as $R_\alpha\Theta=u^*\L_{\alpha^L}^{\nabla^s}\Theta$ where $\nabla^s$ is the induced connection on $s^*\End(V)$. That is, for any $X\in\vf(M)$ and $\xi\in\Gamma(V)$,
  \begin{align*}
    (R_\alpha\Theta)(X)\cdot\xi&=u^*(\L^{\nabla^s}_{\alpha^L}\Theta)(X)\cdot\xi=u^*\big((\L^{\nabla^s}_{\alpha^L}\Theta)(Y)\cdot s^*\xi\big)\\
    &=u^*\big(\nabla^s_{\alpha^L}(\Theta(Y)\cdot s^*\xi)-\Theta(Y)\cdot \nabla^s_{\alpha^L}s^*\xi-\Theta[\alpha^L,Y]\cdot s^*\xi\big)
  \end{align*}
  where $Y\in \vf(G)$ extends $u_*X$ on $u(M)$ and can be chosen $s$-projectable to $s_*Y=X$. Expanding the three terms on the right-hand side and denoting by $\smash{\overline\nabla^t=\phi\nabla^t\phi^{-1}}$ the pullback connection along $t$ on the bundle $s^*V\ra G$, we get
  \begin{align*}
    \nabla^s_{\alpha^L}\big(\Theta(Y)\cdot s^*\xi\big)&=\nabla^s_{\alpha^L}\overline\nabla^t_Ys^*\xi-s^*\nabla_{\rho\alpha}\nabla_X\xi,\\
    \Theta(Y)\cdot \nabla^s_{\alpha^L}s^*\xi&=\overline\nabla^t_Y\nabla^s_{\alpha^L}s^*\xi-s^*\nabla_X\nabla_{\rho\alpha}\xi,\\
    \Theta[\alpha^L,Y]\cdot s^*\xi&=\overline\nabla^t_{[\alpha^L,Y]}s^*\xi-s^*\nabla_{[\rho\alpha,X]}\xi,
  \end{align*}
  where we have observed that $s_*[\alpha^L,Y]=[\rho\alpha,X]$. Hence, we obtain
  \[
    (R_\alpha\Theta)(X)\cdot\xi = u^*\big(\underbrace{\big(\nabla^s_{\alpha^L}\overline\nabla^t_Y-\overline\nabla^t_Y\nabla^s_{\alpha^L}-\overline\nabla^t_{[\alpha^L,Y]}\big)s^*\xi}_{\eqqcolon S(\alpha)(Y)\cdot s^*\xi}\big)-R^\nabla(\rho\alpha,X)\cdot \xi.
  \]
  We observe that the expression denoted by $S$ is tensorial in both $Y$ and $s^*\xi$ (but not in $\alpha$), so it defines a map $S\colon\Gamma(A)\ra \Omega^1(G;s^*\End V)$. Moreover, we can express it as
  \[
  S(\alpha)(Y)\cdot s^*\xi=R^{\overline \nabla^t}(\alpha^L,Y)\cdot s^*\xi+\overline \nabla^t_Y\big(\Theta(\alpha^L)\cdot s^*\xi \big)-\Theta(\alpha^L)\cdot\overline\nabla^t_Y s^*\xi.
  \]
  Since $R^{\overline\nabla^t}=\phi R^{\nabla^t}\phi^{-1}$ and $R^{\nabla^t}=t^* R^\nabla$, the first term vanishes by $t_*\alpha^L=0$. Hence, it is enough to show that 
  \[
    u^*\Big(\overline \nabla^t_Y\big(\Theta(\alpha^L)\cdot s^*\xi \big)-\Theta(\alpha^L)\cdot\overline\nabla^t_Y s^*\xi\Big)=\nabla_X(\theta(\alpha)\cdot\xi)-\theta(\alpha)\cdot\nabla_X\xi,
  \]
  but this is clear since $u^*\Theta(\alpha^L)=\theta(\alpha)$ and $u^*\overline\nabla^t_Y=\nabla_X$ since $Y$ extends $u_*X$.
\end{proof}

\section{Bott--Shulman--Stasheff complex with values in a bundle of ideals}
\label{sec:bss_boi}
In the remainder of this article, we will consider a special class of representations: subrepresentations of the adjoint representation $\Ad\colon G\curvearrowright \ker\rho$. Note that the latter is only a representation in the set-theoretic sense unless $\ker\rho$ is a vector bundle (that is, unless $G$ is regular), so we instead phrase this in the following way.
\begin{definition}
\label{defn:boi}
On a Lie groupoid $G\rra M$, a vector bundle $\frak k\subset \ker \rho$ is called a \textit{bundle of ideals}, if for any $g\in G$ the map
\[\Ad_g\coloneqq \d(C_g)_{1_{s(g)}}\colon \frak \ker\rho_{s(g)}\ra \frak \ker\rho_{t(g)}\] 
restricts to a map $\frak k_{s(g)}\ra \frak k_{t(g)}$, where $C_g\colon G^{s(g)}_{s(g)}\ra G^{t(g)}_{t(g)}$ denotes the conjugation map $k\mapsto gkg^{-1}$. 

On a Lie algebroid $A\Ra M$, a vector bundle $\frak k\subset \ker \rho$ is said to be a \textit{bundle of ideals} if for any $\alpha\in \Gamma(A)$ and $\xi\in\Gamma(\frak k)$ there holds
\begin{align}
  \label{eq:boi_inf}
  [\alpha,\xi]\in\Gamma(\frak k).
\end{align}
\end{definition}
\begin{remark}
  The terminology comes from \cite{mec}, however, it is not standard. In \cite{ideals}, a vector bundle $\frak k\subset\ker\rho$ satisfying \eqref{eq:boi_inf} is instead called a \textit{naïve ideal} of a Lie algebroid.
\end{remark}
If $A$ is the Lie algebroid of $G$, then 
\[[\alpha,\xi]_x=\deriv\lambda0\Ad_{\phi^{\alpha^L}_\lambda(1_x)}\big(\xi_{\phi^{\rho\alpha}_\lambda(x)}\big),\]
 holds at every $x\in M$, so that if $\frak k$ is a bundle of ideals on $G$, it is also a bundle of ideals on $A$. In other words, the map $(\alpha,\xi)\mapsto [\alpha,\xi]$ is just the infinitesimal version of the adjoint representation of $G\rra M$ on $\frak k$, i.e., a flat $A$-connection on $\frak k\ra M$, which we will also denote by $\nabla^A_\alpha\xi=[\alpha,\xi]$. 
 \begin{example}
  \label{ex:boi_groupoid_morphism}
  The main class of examples of bundles of ideals on Lie groupoids comes from surjective submersive groupoid morphisms $\Phi\colon G\ra H$ covering the identity, where one takes $\frak k$ to be the kernel of the associated Lie algebroid morphism, $\frak k=\ker\d\Phi|_M$. Not all bundles of ideals arise in this way---given a bundle of ideals $\frak k$ on $G\rra M$, we define its \textit{smearing} as the distribution $K\subset TG$,
  \begin{align}
    \label{eq:smearing}
    K_g\coloneqq \d (L_g)_{1_{s(g)}}(\frak k_{s(g)})=\d(R_g)_{1_{t(g)}}(\frak k_{t(g)}).
  \end{align}
  It is easy to see $K$ is involutive. If the corresponding foliation $\F(K)$ on $G$ is simple, the natural projection $\Phi\colon G\ra G/\F(K)$ is a surjective submersive Lie groupoid morphism with $\ker\d\Phi|_M=\frak k$. On Lie algebroids, however, every bundle of ideals $\frak k$ is clearly the kernel of the fibrewise surjective Lie algebroid morphism $\phi\colon A\ra A/\frak k$ that covers the identity.
 \end{example}

\subsection{Multiplicative Ehresmann connections}
In the study of bundles of ideals, the notion of a multiplicative connection offers a richer framework than that of an invariant connection. To lay the groundwork, we begin by recalling the definition and highlighting some fundamental properties.
\begin{definition}
Let $\frak k$ be a bundle of ideals on a Lie groupoid $G\rra M$. A \textit{multiplicative Ehresmann connection} for $\frak k$ is a distribution $E\subset TG$ which is also a wide Lie subgroupoid of $TG\rra TM$, satisfying
\[
TG=E\oplus K,
\]
where $K$ is the smearing of $\frak k$, defined by \eqref{eq:smearing}.
\end{definition}
\begin{example}
  If $\frak k$ comes from a surjective submersive Lie groupoid morphism $\Phi\colon G\ra H$, there holds $K=\ker\d\Phi$, hence $E$ is an Ehresmann connection in the usual sense, with the additional property of being multiplicative.
 \end{example}

There are several equivalent descriptions of multiplicative Ehresmann connections \cite{mec}*{Proposition 2.8}, among which we particularly favor the description with differential forms. Given a multiplicative Ehresmann connection $E\subset TG$, we can define a differential form $\omega \in \Omega^1(G;s^*\frak k)$ on $G$ with values in the pullback bundle $s^*\frak k\ra G$, 
\begin{align}
\label{eq:omega}
\omega(X)=\d(L_{g^{-1}})_g(v(X))\in \frak k_{s(g)},
\end{align}
for any $X\in T_gG$, where $v(X)$ is the so-called \textit{vertical} component in the unique decomposition $X=h(X)+v(X)\in E_g\oplus K_g=T_gG$. Here, the vector $h(X)$ is called the \textit{horizontal} component of $X$. The differential form just defined favors two characteristic properties \cite{mec}*{Proposition 2.8}.
\begin{proposition}
Let $G\rra M$ be a Lie groupoid. Given a multiplicative Ehresmann connection $E\subset TG$, the corresponding 1-form $\omega\in \Omega^1(G;s^*\frak k)$ satisfies:

\begin{enumerate}[label={(\roman*)}]
\item $\omega$ is \textit{multiplicative}: for any composable pair $(X,Y)\in T_{(g,h)}G^{(2)}$, there holds
\begin{align*}
\omega_{gh}(\d m(X,Y))=\Ad_{h^{-1}}\circ \omega_g(X) + \omega_h(Y).
\end{align*}
\item $\omega$ restricts to identity on $\frak k$: 
\[\omega|_{\frak k}=\id_{\frak k}.\]
\end{enumerate}
Conversely, given such a form $\omega$, $E=\ker \omega$ defines a multiplicative Ehresmann connection.
\end{proposition}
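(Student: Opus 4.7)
The plan is to establish both implications using the decomposition of composable tangent vectors into horizontal and vertical components, exploiting multiplicativity of $E$ (respectively of $\omega$) and reducing everything to an explicit formula for $\d m$ applied to a pair of left-translated vertical vectors.

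For the direct implication, property (ii) is immediate: any $\xi\in\frak k_x$ sits in $K_{1_x}=\frak k_x$ since $L_{1_x}$ is the identity on its domain, so $v(\xi)=\xi$ and $\omega(\xi)=\xi$. Property (i) is the main content. I would first observe that $K\subset\ker\d s\cap\ker\d t$, which forces the horizontal and vertical components of a composable pair $(X,Y)\in T_{(g,h)}G^{(2)}$ to be composable on their own, so by linearity of $\d m$ over composable pairs,
\[
\d m(X,Y)=\d m(h(X),h(Y))+\d m(v(X),v(Y)),
\]
where the first summand lies in $E_{gh}$ since $E$ is a subgroupoid. The key remaining step is to identify the second summand as $\d L_{gh}(\Ad_{h^{-1}}\omega(X)+\omega(Y))\in K_{gh}$. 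Writing $v(X)=\d L_g(\omega(X))$ and $v(Y)=\d L_h(\omega(Y))$ and representing $\omega(X)\in\frak k_{t(h)}$, $\omega(Y)\in\frak k_{s(h)}$ by curves $\tilde\alpha(\epsilon),\tilde\beta(\epsilon)$ in their respective isotropy groups based at the units, the product curve rearranges as $g\tilde\alpha(\epsilon)\cdot h\tilde\beta(\epsilon)=gh\cdot C_{h^{-1}}(\tilde\alpha(\epsilon))\cdot\tilde\beta(\epsilon)$, a path in $gh\cdot G^{s(h)}_{s(h)}$ through $gh$. Differentiating at $\epsilon=0$ and using that multiplication in the isotropy group linearizes to vector-space addition at the unit yields $\d L_{gh}(\Ad_{h^{-1}}\omega(X)+\omega(Y))$; the sum lies in $\frak k_{s(h)}$ because $\frak k$ is a bundle of ideals, placing the result in $K_{gh}$. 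Applying $\d L_{(gh)^{-1}}$ then proves (i).

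For the converse, setting $E=\ker\omega$ and assuming (i) and (ii), I would verify: (a) $\omega_g|_{K_g}\colon K_g\to\frak k_{s(g)}$ is an isomorphism, whence $TG=E\oplus K$ by a rank count; (b) $\d u(TM)\subset E$; (c) $E$ is closed under composition; (d) $E$ is closed under inversion. Each follows by applying (i) to a cleverly chosen composable pair: for (a), the pair $(0_g,\xi)$ with $\xi\in\frak k_{s(g)}$ satisfies $\d m(0_g,\xi)=\d L_g(\xi)$, so (i) combined with (ii) gives $\omega_g(\d L_g\xi)=\omega_{1_{s(g)}}(\xi)=\xi$; for (b), the diagonal $(\d u(V),\d u(V))$ forces $\omega_{1_x}(\d u V)=2\omega_{1_x}(\d u V)$; (c) is immediate; for (d), the pair $(X,\d i(X))$ composes to $\d u(t_*X)\in E$, which via (i) forces $\omega_{g^{-1}}(\d i X)=0$.

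The main obstacle I anticipate is the curve computation identifying $\d m(v(X),v(Y))$ with $\d L_{gh}(\Ad_{h^{-1}}\omega(X)+\omega(Y))$: this is the only step where the bundle-of-ideals hypothesis enters nontrivially, and it requires a careful rearrangement of the product $g\tilde\alpha h\tilde\beta$ through the conjugation $C_{h^{-1}}$. Once that identity is in place, everything else is a formal consequence of multiplicativity applied to a well-chosen composable pair, together with a dimension count for the direct-sum decomposition $TG=E\oplus K$.
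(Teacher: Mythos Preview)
Your proof is correct and follows essentially the same approach as the paper's (commented-out) proof: decompose a composable pair into horizontal and vertical parts, use that $E$ is a subgroupoid to kill the horizontal piece, and identify $\d m(v(X),v(Y))$ with the required expression. The only stylistic difference is that the paper writes $\d m(v^K,w^K)=\d(R_h)(v^K)+\d(L_g)(w^K)$ directly (splitting into $(v^K,0_h)+(0_g,w^K)$) and then applies $\d(L_{(gh)^{-1}})$ algebraically, whereas you compute the same thing via curves and conjugation; the paper also leaves the converse to the reader, while you supply it in full.
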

\begin{remark}
  Another way of expressing the differential form $\omega$ is by means of the \textit{Maurer--Cartan form} on $G$; this is a vector bundle isomorphism, defined on $X\in \ker\d t_g$ by 
  $
  \Theta_{MC}(X)=\d{(L_{g^{-1}})}_g(X).
  $
  \[\begin{tikzcd}[column sep=small]
	{\ker\d t} && {s^* A} \\
	& G
	\arrow["{\Theta_{MC}}", from=1-1, to=1-3]
	\arrow[from=1-1, to=2-2]
	\arrow[from=1-3, to=2-2]
\end{tikzcd}\]
  Given a multiplicative Ehresmann connection $E$, its connection form $\omega$ is defined in terms of the Maurer--Cartan form simply by $\omega=\Theta_{MC}\circ v$. Moreover, we note that by multiplicativity, property (ii) above is equivalent to $\omega|_K=\Theta_{MC}|_K$, or in other words, $\omega(\xi^L)=s^*\xi$ for any section $\xi\in\Gamma(\frak k)$.
\end{remark}
We will denote the set of forms satisfying the properties from the last proposition as 
\begin{align*}
  \A(G;\frak k)=\set{\omega\in \Omega^1_m(G;\frak k)\given \omega|_{\frak k}=\id_{\frak k}},
\end{align*}
and sometimes simply call them  \textit{multiplicative connections} on $G$. 
The following observation will be needed in the following section: a multiplicative connection defines a distribution $E^s\subset\ker\d s\subset TG$, 
\[
E^s=E\cap \ker\d s.
\]
Clearly, this distribution satisfies the following two properties:
\begin{align}
  \ker \d s&= E^s\oplus K,\\
  \d(R_h)_g(E^s_g)&=E^s_{gh},\quad\text{for any }(g,h)\in \comp G.\label{eq:right_inv}
\end{align}
The second property will be referred to as \textit{right-invariance} of $E^s$. In terms of the connection 1-form: \[(R_h)^*(\omega|_{\ker\d s_g})=\Ad_{h^{-1}}\circ\omega|_{\ker\d s_{gh}},\] for any composable pair 	$(g,h)\in\comp G$. 
\begin{remark}
  In the particular case when $G$ is regular and $\frak k=\ker \rho$, these two properties imply that for any $x\in M$, $E^s|_{G_x}$ defines a principal connection on the principal $G_x^x$-bundle $t\colon G_x\ra\O_x$. If $G$ is the gauge groupoid of a principal bundle $P\ra M$, we recover the classical notion of a principal connection on $P$.
\end{remark}

\subsection{Induced connection on bundle of ideals}

In this section, we show that any multiplicative Ehresmann connection $\omega$ for a bundle of ideals $\frak k$, induces a linear connection $\nabla$ on $\frak k\rightarrow M$. Later, in \sec\ref{sec:mec_inf}, we will see that $\nabla$  is actually a part of the infinitesimal data contained within $\omega$. In what follows, $\nabla$ is constructed using a global approach, in terms of the given Lie groupoid $G\rra M$, since this will turn out to be useful for the subsequent sections. 

To begin, let us denote by $v\colon TG\ra K$ and $h\colon TG\ra E$ the vertical and horizontal projection (both are morphisms of vector bundles over $G$) with respect to a given multiplicative Ehresmann connection that complements $\frak k$. 
\begin{proposition}
\label{prop:conn}
Let $G\rra M$ be a Lie groupoid with a multiplicative Ehresmann connection $\omega\in\A(G;\frak k)$. Let the map $\nabla\colon \vf(M)\times \Gamma(\frak k)\ra  \Gamma(\frak k)$ be given by:
\begin{align}
\label{eq:nabla}
\nabla_X\xi=v[h(Y),\xi^L]|_M
\end{align}
where $Y\in \vf(G)$ is any lift of $X$ along the source map, i.e., $s_*Y=X$, and the bracket denotes the Lie bracket on $TG$. 
The following holds.
\begin{enumerate}[label={(\roman*)}]
  \item $\nabla$ defines a linear connection on $\frak k$.
  \item The left-invariant extension of $\nabla_X\xi$ reads $(\nabla_X\xi)^L=v[h(Y),\xi^L]$.
  \item $\nabla$ preserves the Lie bracket on the bundle of ideals:
\begin{align}
\label{eq:nabla_preserves_bracket}
  \nabla_X[\xi,\eta]_{\frak k}=[\nabla_X\xi,\eta]_{\frak k}+[\xi,\nabla_X\eta]_{\frak k}.
\end{align}
In particular, $\frak k$ is a locally trivial Lie algebra bundle.
\end{enumerate}
\end{proposition}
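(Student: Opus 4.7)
The plan is to establish well-definedness and the connection axioms for $\nabla$ first, then prove (ii) by reducing to units via the multiplicativity of $\omega$, and finally derive (iii) from the Jacobi identity on $\vf(G)$.

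For well-definedness in (i), suppose $Y_1, Y_2$ are both $s$-lifts of $X$, so that $W := h(Y_1 - Y_2)$ lies in $\Gamma(E^s)$. I would decompose $W = W^R + W'$, where $W^R$ is the right-invariant extension of $W|_M$ (using the right-invariance of $E^s$ from \eqref{eq:right_inv}) and $W'|_M = 0$. Right- and left-invariant vector fields commute, so $[W^R, \xi^L] = 0$; for the remaining piece, Cartan's formula with any auxiliary connection $\nabla_0$ on $\frak k$ gives
\[
\omega([W', \xi^L]) = (s^*\nabla_0)_{W'}(s^*\xi) - d^{s^*\nabla_0}\omega(W', \xi^L),
\]
and both terms are pointwise tensorial in $W'$, hence vanish at any $1_x$ where $W'$ does. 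Thus $v[W, \xi^L]|_M = 0$. The connection axioms then follow quickly: $\xi^L(s^*f) = s^*((\rho\xi)(f)) = 0$ since $\xi \in \Gamma(\ker\rho)$, giving $\nabla_{fX}\xi = f\nabla_X\xi$; and $s_*h(Y) = X$ together with $v(\xi^L) = \xi^L$ (as $\xi^L \in \Gamma(K)$) yield the Leibniz identity in $\xi$.

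For (ii), the crucial input is that multiplicativity of $\omega$, applied with a zero first vector, gives $L_a^*\omega = \omega$ along each $t$-fibre; hence both $E$ and $K$ are preserved by pushforward under left translations. Consequently,
\[
\omega([h(Y), \xi^L])_g = \omega([(L_{g^{-1}})_*h(Y), \xi^L])|_{1_{s(g)}},
\]
where $L_{g^{-1}}\colon t^{-1}(t(g)) \to t^{-1}(s(g))$ sends $g$ to $1_{s(g)}$ and $(L_{g^{-1}})_*h(Y)$ is again a horizontal $s$-lift of $X$ near $1_{s(g)}$. Applying the well-definedness argument of (i) on the submanifold $t^{-1}(s(g))$, the right-hand side equals $\nabla_X\xi|_{s(g)}$, whence $v[h(Y), \xi^L]_g = (\nabla_X\xi)^L_g$. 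The main technical obstacle is the key lemma $v[W, \xi^L]|_M = 0$ for $W \in \Gamma(E^s)$, which interweaves the right-invariance of $E^s$ (a consequence of $E$ being a subgroupoid) with the tensorial behaviour of the exterior covariant derivative of $\omega$.

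For (iii), apply the Jacobi identity
\[
[h(Y), [\xi^L, \eta^L]] = [[h(Y), \xi^L], \eta^L] + [\xi^L, [h(Y), \eta^L]],
\]
noting $[\xi^L, \eta^L] = [\xi, \eta]_{\frak k}^L$ since left-invariant extension intertwines the vector-field and algebroid brackets and the latter restricts to $[\cdot, \cdot]_{\frak k}$ on $\Gamma(\frak k)$. Applying $v$ and restricting to $M$ turns the LHS into $\nabla_X[\xi, \eta]_{\frak k}$. Using (ii) to split $[h(Y), \xi^L] = h[h(Y), \xi^L] + (\nabla_X\xi)^L$, the vertical piece contributes $[\nabla_X\xi, \eta]_{\frak k}$, while the horizontal piece $h[h(Y), \xi^L] \in \Gamma(E^s)$ brackets with $\eta^L$ to give zero $v$-part on $M$ by the key lemma; symmetrically for the $\eta$-term. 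This yields \eqref{eq:nabla_preserves_bracket}. Finally, local triviality of $\frak k$ as a Lie algebra bundle follows since $\nabla$-parallel transport along curves in $M$ provides Lie algebra isomorphisms between fibres.
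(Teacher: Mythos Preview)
Your arguments for (i) and (iii) are correct and close in spirit to the paper's, though in (i) you take a different route: you decompose $W\in\Gamma(E^s)$ as $W^R+W'$ and use a Cartan-formula tensoriality argument for $W'$, whereas the paper observes directly that the flow of $\xi^L$ is by right translations, so right-invariance of $E^s$ forces $(\phi^{\xi^L}_\lambda)_*W\in\Gamma(E^s)$ and hence $[W,\xi^L]\in\Gamma(E^s)$ \emph{everywhere}, not merely $v[W,\xi^L]|_M=0$. This global conclusion is precisely what the paper then reuses in (ii).

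Your argument for (ii) has a genuine gap. Left translation $L_{g^{-1}}$ is only a diffeomorphism $t^{-1}(t(g))\to t^{-1}(s(g))$ between $t$-fibres, so its differential acts only on $\ker\d t$; since $h(Y)$ is not tangent to the $t$-fibre, the pushforward $(L_{g^{-1}})_*h(Y)$ is undefined and your displayed identity has no meaning as written. The paper repairs this by choosing a local bisection $\sigma$ through $g$ with $\d\sigma_x(T_xM)\subset E_g$, so that $L_\sigma$ extends $L_g$ to a diffeomorphism of open subsets of $G$ which preserves $E$ at points of $G^x$. One then compares $(L_\sigma)_*Y$ with $Y$: their difference lies in $E^s$ along the flow line of $\xi^L$, and the flow of $\xi^L$ (being by right translations) keeps it there, so $[(L_\sigma)_*Y-Y,\xi^L]_g\in E^s_g$. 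This is again the global flow argument, which your pointwise key lemma does not supply. Even granting a bisection extension, note that $(L_\sigma^{-1})_*h(Y)$ is horizontal only at $1_{s(g)}$ (unless $\sigma$ is everywhere tangent to $E$, which would require $E$ involutive), so you cannot simply invoke the independence-of-lift statement from (i) as you propose.
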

\begin{remark}
The defining equation \eqref{eq:nabla} above is similar to \cite{mec}*{equation 2.4}, where $\nabla$ is obtained by first integrating $\frak k$ to a bundle of simply-connected Lie groups. In fact, our definition of $\nabla$ was motivated by the referenced equation, and our aim was to construct it entirely in terms of the groupoid $G$. As innocuous as it may seem, the proposition above will turn out to be of paramount importance in the next section.
\end{remark}
\begin{proof}
First, we prove that our definition \eqref{eq:nabla} is independent of the choice of the lift $Y\in\vf(G)$ of $X$ along $s\colon G\ra M$. Pick another such lift $\tilde Y\in \vf(G)$, so $Y-\tilde Y\in\Gamma(\ker \d s)$, and hence $Z\coloneqq h(Y-\tilde Y)\in \Gamma(E^s)$. Note that the flow $\phi_\lambda^{\smash{\xi^L}}=R_{\exp(\lambda \xi)}$ of a left-invariant vector field $\smash{\xi^L}$ is given by the right translation along the bisection $\exp(\lambda \xi)$, hence by \eqref{eq:right_inv}, 
	\begin{align}
	\label{eq:principaltrick}
	\big((\phi_\lambda^{\xi^L})_*Z\big)_{g}\in E^s_{g},
	\end{align}
for all $\lambda\in\R$. This implies $\smash{[Z,\xi^L]=[h(Y),\xi^L]-[h(\tilde Y),\xi^L]}\in \Gamma(E^s)$. Since this difference is horizontal, the vertical parts of the two terms in the difference coincide, proving the expression $v[h(Y),\xi^L]$ is independent of the lift $Y$, and hence the same thus holds for $\nabla$. Let us now show $\nabla$ defines a connection. For $C^\infty(M)$-linearity in the first argument of the map $\nabla$, pick any $f\in C^\infty(M)$ and observe that if $Y$ is a lift of $X$, then $(f\circ s)Y$ is a lift of $fX$. Now use the Leibniz rule of the Lie bracket on $TG$ to obtain 
\[[h((f\circ s)Y),\xi^L]=(f\circ s)[h(Y),\xi^L]-\d(f\circ s)(\xi^L)h(Y).\]
The second term vanishes since $\frak k\subset \ker\d s$ and the first term gives us the wanted $C^\infty(M)$-linearity when its vertical part is evaluated at the units. 
For the Leibniz rule of $\nabla$, observe there holds $(f\xi)^L=(f\circ s)\xi^L$, so the Leibniz rule of the Lie bracket on $TG$ yields
\[
[h(Y),(f\circ s)\xi^L]=(f\circ s)[h(Y),\xi^L]+\d (f\circ s)(h(Y))\xi^L.
\]
Since $\d s_{1_x}(h(Y))=\d s_{1_x}(Y)=X_x$, vertically project and evaluate at the units to conclude.

To prove (ii), we must show that $v[h(Y),\xi^L]\in \Gamma(K)$ is left-invariant. For brevity, we will assume that $Y$ is a horizontal lift of $X$ along the source map, so $h(Y)=Y$. Our technique of proving (ii) utilizes bisections on $G$: fix $g\in G$, denote $s(g)=x$, and pick any local bisection $\sigma\colon U\ra G$ with $\sigma(x)=g$ such that\footnote{Such a choice of a local bisection is always possible, by a similar proof as in \cite{mackenzie}*{Proposition 1.4.9}.} 
\begin{align}
\label{eq:bisection_in_E}
  \im\d \sigma_{x}\subset E_g.
\end{align}
The left translation by $\sigma$, given as the diffeomorphism \[
L_\sigma\colon t^{-1}(U)\ra t^{-1}(t\circ\sigma(U))
,\quad L_\sigma(h)=\sigma(t(h))h,
\]
then preserves the splitting $T_hG=E_h\oplus K_h$ for any $h\in G^x$, i.e., its differential restricts to
\begin{align}
  \label{eq:L_sigma_E}
    \d(L_\sigma)_{h}\colon E_{h}\ra E_{gh}.
\end{align}
This follows from a simple computation: differentiate the equation $L_\sigma=m\circ(\sigma\circ t,\id)$ which defines  $L_\sigma$ and use it on an arbitrary horizontal vector $v\in E_h$ to obtain
\[\d(L_\sigma)_h(v)=\d m_{(g,h)}(\underbrace{\d\sigma_x (\d t_h (v))}_{\in E_g},v)\in E_{gh},
\]
where we have used \eqref{eq:bisection_in_E} and the assumption that $E$ is multiplicative. We now use the left translation $L_\sigma$ to locally write $(L_\sigma)_*[Y,\xi^L]=[(L_\sigma)_*Y,\xi^L]$ and thus 
\begin{align*}
	\d(L_\sigma)_{1_{x}}[Y,\xi^L]-[Y,\xi^L]_g&=[(L_\sigma)_*Y-Y,\xi^L]_g=\deriv\lambda0\big((\phi_\lambda^{\xi^L})_*((L_\sigma)_*Y-Y)\big)_g.
\end{align*}
For convenience, let us denote $k_\lambda\coloneqq \exp(-\lambda \xi)_x=\phi^{\xi^L}_{-\lambda}(1_x)\in G^x_x$. We have
$$
\big((\phi_\lambda^{\xi^L})_*((L_\sigma)_*Y-Y)\big)_g=\d(\phi_\lambda^{\xi^L})_{gk_\lambda}\big({\d(L_\sigma)_{k_\lambda}(Y)}-Y_{gk_\lambda}\big).
$$
By equation \eqref{eq:L_sigma_E}, the difference $\smash{\d(L_\sigma)_{k_\lambda}(Y)-Y_{gk_\lambda}}$ is horizontal, and since $\d s$ annihilates it by the fact that $s\circ L_\sigma=s$, it must be contained in $\smash{E^s_{gk_\lambda}}$ for all $\lambda\in\R$. But since the flow of $\xi^L$ is given by right translations, \eqref{eq:right_inv} now implies
$$
\d(L_\sigma)_{1_{x}}[Y,\xi^L]-[Y,\xi^L]_g\in E^s_g,
$$
and taking the vertical part of this expression yields
\[
v[Y,\xi^L]_g=v(\d(L_\sigma)_{1_{x}}[Y,\xi^L])=v(\d(L_\sigma)_{1_{x}}(h[Y,\xi^L]_{1_x}+v[Y,\xi^L]_{1_x}))=\d(L_g)_{1_{x}}(v[Y,\xi^L]_{1_x}),
\]
where we have used the equation \eqref{eq:L_sigma_E} on the third equality, together with the fact that $L_\sigma$ restricts on $t$-fibres to the usual left-translation. This proves (ii).

Finally, preservation of the Lie bracket in (iii) easily follows from the Jacobi identity of the Lie bracket on $TG$. That is, supposing $Y$ is a horizontal lift of $X$ along the source map,
\[
(\nabla_X[\xi,\eta]_{\frak k})^L=v[Y,[\xi,\eta]^L]=v[Y,[\xi^L,\eta^L]]=v[[Y,\xi^L],\eta^L]+v[\xi^L,[Y,\eta^L]]
\]
and now observe that in the first term on the right-hand side, $h[Y,\xi^L]$ is horizontal and $s$-projectable to zero, hence it is a section of $E^s$, so by \eqref{eq:right_inv} the bracket $[h[Y,\xi^L],\eta^L]$ is again a section of $E^s$ and vanishes when vertically projected. Now use involutivity of $K$.
\end{proof}

\begin{remark}
Instead of working with lifts along $s\colon G\ra M$ and left-invariant extensions, we can work with lifts along $t\colon G\ra M$ and right-invariant extensions. In fact, there holds
\[
(\nabla_X\xi)^R=v[h(W),\xi^R],
\]
for any lift $W\in \vf(G)$ of $X\in\vf(M)$ along $t\colon G\ra M$, and any $\xi\in \Gamma(\frak k)$. This follows from the following two facts: firstly, if $Y$ is a lift of $X$ along the source map, then $\inv_* Y$ is a lift of $X$ along the target map; secondly, for any $\xi\in \Gamma(\frak k)$, there holds $\xi^L=-\inv_*(\xi^R)$, which is a consequence of the equality $\d(\inv)|_{\frak k}=-\id_{\frak k}$ that comes from the theory of Lie groups. Hence:
\begin{align*}
v[h(W),\xi^R]&=v[\inv_*(h(Y)),-\inv_*(\xi^L)]=-v(\inv_*[h(Y),\xi^L])\\
&=-\inv_*(v[h(Y),\xi^L])=-\inv_*(\nabla_X\xi)^L=(\nabla_X\xi)^R,
\end{align*}
where we have observed that $\inv_*$ commutes with both $v$ and $h$.
\end{remark}

The linear connection $\nabla$ on $\frak k$ obtained in Proposition \ref{prop:conn} has a particularly nice expression when differentiating with respect to vectors tangent to the orbit foliation, by the following corollary.

\begin{corollary}
\label{cor:conn_orb}
Let $G\rra M$ be a Lie groupoid with a multiplicative Ehresmann connection $\omega\in\A(G;\frak k)$. For any $\alpha\in\Gamma(A)$ and $\xi\in\Gamma(\frak k)$, there holds
\begin{align}
  \nabla_{\rho(\alpha)} \xi=[h(\alpha),\xi],
\end{align}
where the bracket denotes the Lie bracket on the algebroid $A$.
\end{corollary}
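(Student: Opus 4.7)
The plan is to apply the formula $\nabla_X\xi = v[h(Y),\xi^L]|_M$ from Proposition \ref{prop:conn}(ii) with $X = \rho(\alpha)$ and the natural source-lift $Y = \alpha^L$ of $\rho(\alpha)$, thereby reducing the proof to the identity
\[
h(\alpha^L) = h(\alpha)^L.
\]
Here $h(\alpha)\in\Gamma(A)$ is a well-defined section, since $v(\alpha_x)\in\frak k_x\subset A_x$ for all $x\in M$, whence $h(\alpha_x) = \alpha_x - v(\alpha_x)\in A_x$. The content of the identity is that the horizontal projection of the left-invariant vector field $\alpha^L$ is itself left-invariant, equal to the left-invariant extension of the section $h(\alpha)\in\Gamma(A)$.

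The heart of the argument---and the main obstacle---is to verify the equivalent statement $v(\alpha^L) = v(\alpha)^L$. This is where multiplicativity of $\omega$ genuinely enters. I will express $v(\alpha^L)_g = \d(L_g)_{1_{s(g)}}\big(\omega_g(\alpha^L_g)\big)$ and apply multiplicativity of $\omega$ to the composable pair $(0_g,\alpha_{s(g)})\in T_{(g,1_{s(g)})}G^{(2)}$, which is indeed composable since $\d s(0_g) = 0 = \d t(\alpha_{s(g)})$, and whose product under $\d m$ is $\alpha^L_g = \d(L_g)_{1_{s(g)}}(\alpha_{s(g)})$. Multiplicativity then kills the $0_g$-contribution (because $\omega_g(0_g)=0$) and leaves only $\omega_{1_{s(g)}}(\alpha_{s(g)}) = v(\alpha)_{s(g)}$, which depends solely on $s(g)$---precisely the desired left-invariance of $v(\alpha^L)$. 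The last equality uses the normalization $\omega|_{\frak k}=\id_{\frak k}$, together with the observation that at units, $\omega$ acts on $A$ as the vertical projection.

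With $h(\alpha^L) = h(\alpha)^L$ in hand, the standard identity for Lie brackets of left-invariant vector fields gives
\[
[h(\alpha^L),\xi^L] = [h(\alpha)^L,\xi^L] = [h(\alpha),\xi]^L,
\]
where the last bracket is the algebroid bracket on $\Gamma(A)$. Since $h(\alpha)\in\Gamma(A)$, $\xi\in\Gamma(\frak k)$, and $\frak k$ is a bundle of ideals on $A$, the bracket $[h(\alpha),\xi]$ lies in $\Gamma(\frak k)$. Hence its left-invariant extension is a section of the smearing $K$ and already coincides with its own vertical projection; restricting to $M$ yields the claimed formula $\nabla_{\rho(\alpha)}\xi = [h(\alpha),\xi]$. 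Apart from the multiplicativity step, the argument is a formal combination of the bundle-of-ideals property with Proposition \ref{prop:conn}(ii).
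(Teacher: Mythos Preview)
Your proof is correct and follows essentially the same approach as the paper: both reduce to applying Proposition~\ref{prop:conn}(ii) with the horizontal lift $h(\alpha)^L$ of $\rho(\alpha)$, then use $[h(\alpha)^L,\xi^L]=[h(\alpha),\xi]^L$ together with $[\Gamma(A),\Gamma(\frak k)]\subset\Gamma(\frak k)$. The only difference is that you spell out the justification for $h(\alpha^L)=h(\alpha)^L$ via multiplicativity of $\omega$ applied to the composable pair $(0_g,\alpha_{s(g)})$, whereas the paper simply asserts that $h(\alpha)^L$ is horizontal (which is the same fact, immediate from $E$ being a subgroupoid of $TG$); note that the identity $\omega_{1_x}(\alpha_x)=v(\alpha_x)$ you use at the end follows directly from $\omega=\Theta_{MC}\circ v$ and $L_{1_x}=\id$, so the normalization $\omega|_{\frak k}=\id_{\frak k}$ is not actually needed there.
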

\begin{proof}
The vector field $h(\alpha)^L\in\vf(G)$ is a horizontal lift of $\rho(\alpha)$ along the source map, hence we can use Proposition \ref{prop:conn} (ii), to write $(\nabla_{\rho(\alpha)}\xi)^L=v[h(\alpha)^L,\xi^L]=v([h(\alpha),\xi]^L)$. Containment $[\Gamma(A),\Gamma(\frak k)]\subset\Gamma(\frak k)$ ensures that $[h(\alpha),\xi]^L$ is already vertical.
\end{proof}

\subsection{Horizontal exterior covariant derivative}
\label{sec:hor_ext_cov_der}
We now show that any multiplicative Ehresmann connection $\omega$ gives rise to the horizontal exterior covariant derivative on $\frak k$-valued forms. Just like the operator $\d{}^\nabla$ for an invariant connection $\nabla$, this operator will turn out to be a cochain map. Before defining it, we observe that a bundle of ideals $\frak k$ determines an  intrinsic subcomplex of $(\Omega^{\bullet,q}(G;\frak k),\delta)$. 
\begin{definition}
\label{defn:horizontal}
A form $\alpha\in\Omega^{p,q}(G;\frak k)$ is said to be \textit{horizontal}, if it vanishes when evaluated on a $p$-tuple of composable vectors from $K$, i.e., $\iota_X\alpha=0$ for any $X\in K^{(p)}$, where
\[
  K^{(p)}= (\underbrace{K\times\dots\times K}_{p \text{ copies}})\cap T G^{(p)}.
\]
The set of such forms will be denoted by \[\Omega^{p,q}(G;\frak k)^\Hor=\Gamma(\Lambda^q(K^{(p)})^\circ\otimes  (s\circ \pr_p)^*\frak k).\]
This definition applies when $p\geq 1$; at level $p=0$, all forms are defined to be horizontal.
At any fixed $q$, we obtain a subcomplex $(\Omega^{\bullet,q}(G;\frak k)^\Hor,\delta)\subset(\Omega^{\bullet,q}(G;\frak k),\delta)$. This follows from Definition \eqref{eq:delta_0} since $K\subset\ker\d s\cap \ker\d t$ and Definition \eqref{eq:delta_l} since $K\rra 0_M$ is a subgroupoid of $TG\rra TM$. It is called the \textit{horizontal subcomplex} of $\Omega^{\bullet,q}(G;\frak k)$.
\end{definition}
A multiplicative Ehresmann connection $\omega\in\A(G;\frak k)$ induces a \textit{horizontal projection} of tangent vectors in $TG^{(p)}$, by projecting all the components. That is,
\[
h\colon TG^{(p)}\ra E^{(p)}=(\underbrace{E\times\dots\times E}_{p\text{ copies}})\cap TG^{(p)},\quad h(X_1,\dots,X_p)=(h(X_1),\dots,h(X_p)).
\]
In turn, we obtain a horizontal projection of differential forms, given by the precomposition with the horizontal projection $\smash{TG^{(p)}\stackrel h\ra E^{(p)}\hookrightarrow TG^{(p)}}$ in all arguments. We denote it by
\begin{align*}
&h^*\colon \Omega^{p,q}(G;\frak k)\ra \Omega^{p,q}(G;\frak k)^\Hor.
\end{align*}
By multiplicativity of $E=\ker\omega$, it is also clear from \eqref{eq:delta_l} that $h^*$ is a cochain map, 
\[h^*\delta=\delta h^*.\]
\begin{definition}
\label{defn:D}
Let $G\rra M$ be a Lie groupoid with a multiplicative Ehresmann connection $\omega\in\A(G;\frak k)$. The \textit{horizontal exterior covariant derivative} is defined as the map 
\[
\D{}^\omega=h^*\d{}^\nabla\colon \Omega^{p,q}(G;\frak k)\ra \Omega^{p,q+1}(G;\frak k),
\]
where $\nabla$ is the induced connection by $\omega$ on $\frak k$. Namely, for any $\vartheta\in\Omega^{p,q}(G;\frak k)$, 
\[
(\D{}^\omega\vartheta) (X_0,\dots,X_q)=(\d{}^{\nabla^{s\circ\pr_p}}\vartheta)(h(X_0),\dots,h(X_q)),
\]
for any given vector fields $X_i\in \vf(G^{(p)})$.
\end{definition}

The task at hand is to show that $\D{}^\omega$ is a cochain map; we will also see that in general, the connection $\nabla$ induced by $\omega$ is not $G$-invariant, so that $\d{}^\nabla$ is not a cochain map.\footnote{More precisely, if $\nabla$ is $G$-invariant, then $\frak k$ is abelian; the converse holds if $G$ is source-connected. This is a direct consequence of Proposition \ref{prop:g_inv_a_inv} and the fact that infinitesimally, invariance of $\nabla$ is equivalent to $\frak k$ being abelian, as shown in Proposition \ref{prop:invariant_abelian}.} To this end, we need to compute the tensor $\Theta$ appearing in Theorem \ref{thm:G_invariant}. We note that defining the connection $\nabla$ entirely in terms of the groupoid $G$ as in equation \eqref{eq:nabla} turns out to be crucial for this purpose, together with the left-invariance property in Proposition \ref{prop:conn} (ii). 

The tensor $\Theta$ relates the pullback connections $\nabla^s$ and $\nabla^t$ on the pullback bundles $s^*\frak k\ra G$ and $t^*\frak k\ra G$. Note that these vector bundles are now both canonically isomorphic to $K\ra G$, with the isomorphisms given by restricting the Maurer--Cartan forms, that is,
\begin{align}
\begin{split}
  s^*\frak k\ra K,\ (g,\xi_{s(g)})&\mapsto \d(L_g)_{1_{s(g)}}(\xi_{s(g)}),\label{eq:iso_pullback_k}\\
t^*\frak k\ra K,\ (g,\xi_{t(g)})&\mapsto \d(R_g)_{1_{t(g)}}(\xi_{t(g)}).
\end{split}
\end{align}
This means that $\nabla^s$ and $\nabla^t$ can both be seen as connections on the vector bundle $K\ra G$. In what follows, these identifications will be employed, in order to relate the two pullback connections  simply by their difference $\nabla^t-\nabla^s$, thus avoiding notational complications. Importantly, we will show that $\nabla_X^t-\nabla_X^s$ depends only on the vertical component of $X$. We begin with a few simple observations. 
\begin{lemma}
\label{lem:nabla_st}
Let $G\rra M$ be a Lie groupoid with a multiplicative Ehresmann connection $\omega\in\A(G;\frak k)$. For any $\xi\in\Gamma(\frak k)$ and $X\in \vf(G)$ we have:
\begin{align}
\nabla^s_X(\xi^L)&=v[h(X),\xi^L],\label{eq:nablascinfty}\\
\nabla^t_X(\xi^R)&=v[h(X),\xi^R].\label{eq:nablascinfty2}
\end{align}
\end{lemma}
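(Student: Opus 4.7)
The plan is to establish identity \eqref{eq:nablascinfty} first for source-projectable $X$ by using the defining property of the pullback connection, and then to extend it to all $X\in\vf(G)$ by $C^\infty(G)$-linearity of both sides; identity \eqref{eq:nablascinfty2} will follow by an entirely analogous argument using target-projectable vector fields.

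For the first step, I would take $X\in\vf(G)$ to be $s$-projectable with $s_*X=Y\in\vf(M)$. The pullback connection on $s^*\frak k$ is characterized by $\nabla^s_X(s^*\xi) = s^*(\nabla_Y\xi)$. Under the Maurer--Cartan identification $s^*\frak k\cong K$ given in \eqref{eq:iso_pullback_k}, the pullback section $s^*\xi$ corresponds to the left-invariant vector field $\xi^L$, and $s^*(\nabla_Y\xi)$ corresponds to $(\nabla_Y\xi)^L$. Proposition \ref{prop:conn}(ii), applied with $X$ playing the role of the lift of $Y$, then yields $(\nabla_Y\xi)^L = v[h(X),\xi^L]$, which is precisely \eqref{eq:nablascinfty} for source-projectable $X$.

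To extend to arbitrary $X$, I would observe that both sides of \eqref{eq:nablascinfty} are $C^\infty(G)$-linear in $X$: the left-hand side is linear by definition of a connection, while for the right-hand side $h$ is itself a $C^\infty(G)$-linear projection, and the Leibniz rule of the bracket gives $[f\,h(X),\xi^L] = f\,[h(X),\xi^L] - \xi^L(f)\,h(X)$ for any $f\in C^\infty(G)$, the vertical part of which equals $f\,v[h(X),\xi^L]$ because $v\circ h = 0$. Since every vector field on $G$ is locally a $C^\infty(G)$-linear combination of source-projectable vector fields (e.g.\ lift a local frame of $TM$ and complete with a local frame of $\ker\d s$), identity \eqref{eq:nablascinfty} extends to all of $\vf(G)$. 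Identity \eqref{eq:nablascinfty2} is proved by exactly the same scheme, now invoking the right-invariant reformulation of Proposition \ref{prop:conn}(ii) recorded in the Remark immediately following that proposition.

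I do not anticipate any substantial obstacle; the main hurdle is essentially bookkeeping, namely keeping track of the three avatars of the same datum, sections of $s^*\frak k$, left-invariant sections of $K$, and values of $\nabla$ in $\frak k$, under the Maurer--Cartan identification. In fact, the lemma is essentially the statement that the groupoid-intrinsic formula \eqref{eq:nabla} was deliberately tailored so that \eqref{eq:nablascinfty}--\eqref{eq:nablascinfty2} would hold on pullback sections, which is exactly why Proposition \ref{prop:conn} was formulated as it was.
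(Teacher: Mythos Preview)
Your proposal is correct and follows essentially the same approach as the paper's proof: establish \eqref{eq:nablascinfty} for $s$-projectable $X$ via Proposition~\ref{prop:conn}(ii) and the identification $\xi^L\leftrightarrow s^*\xi$, then extend to arbitrary $X$ by the $C^\infty(G)$-linearity of $X\mapsto v[h(X),\xi^L]$; the paper treats \eqref{eq:nablascinfty2} as analogous, just as you do.
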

\begin{proof}
We only prove \eqref{eq:nablascinfty}; the proof of \eqref{eq:nablascinfty2} is similar. By definition of $\nabla^s$ and Proposition \ref{prop:conn} (ii), \eqref{eq:nablascinfty} already holds whenever $X\in\vf(G)$ is an $s$-projectable vector field, since $\xi^L$ is identified under \eqref{eq:iso_pullback_k} with $s^*\xi$. However, for any fixed $\xi\in \Gamma(\frak k)$, the map $\vf(G)\ra \Gamma(K)$ given as $X\mapsto v[h(X),\xi^L]$ is $C^\infty(G)$-linear. This implies that the right-hand side of \eqref{eq:nablascinfty} depends on $X$ pointwise, making the requirement of $s$-projectability redundant.
\end{proof}
\begin{remark}[Restriction of $\nabla^s$ to a $K$-connection is independent of $\omega$]
\label{rem:nabla_s}
Importantly, note that the last lemma implies $\nabla^s_X(\xi^L)=0$ for any vertical vector $X\in K$ and any $\xi\in\Gamma(\frak k)$. By virtue of Leibniz rule, the restricted $K$-connection on $K$ (again denoted by $\nabla^s$) is completely determined by this condition, i.e., that it vanishes on left-invariant sections of $K$. This shows that the $K$-connection $\nabla^s$ is intrinsic---it is independent of the choice of a multiplicative connection. Under the isomorphism \eqref{eq:iso_pullback_k}, it coincides with the trivial pullback connection on $s^*\frak k\ra G$ restricted to $K$, since the latter is defined precisely by the property that it vanishes on the pullback sections, and these are identified with left-invariant sections of $K$. More explicitly, in a local frame $(b_i)_i$ of $\frak k$ over an open subset $U\subset M$, any $Y\in\Gamma(K)$ may be expressed as $Y=Y^ib_i^L$ for some functions $Y^i\in C^\infty(s^{-1}(U))$, so that
\[
\nabla^s_X Y=X(Y^i)b_i^L
\]
for any $X\in K_g$, $g\in s^{-1}(U)$. 

\end{remark}

\begin{corollary}
\label{cor:diff_nabla_ts}
Let $G\rra M$ be a Lie groupoid with a multiplicative Ehresmann connection $\omega\in\A(G;\frak k)$. For any $X\in \vf(G)$ and $Y\in\Gamma(K)$ there holds:
\begin{align}
\label{eq:diff_nabla_ts}
  \nabla^t_XY-\nabla^s_XY=\nabla^t_{v(X)}Y-\nabla^s_{v(X)}Y.
\end{align}
\end{corollary}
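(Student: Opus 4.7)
The first move is to recognize the statement as a tensorial identity. Since the difference of any two linear connections on the same vector bundle is $C^\infty(G)$-linear in its vector-field argument, the map $X \mapsto (\nabla^t_X - \nabla^s_X) Y$ is $C^\infty(G)$-linear for each fixed $Y \in \Gamma(K)$. Writing $X = h(X) + v(X)$, the desired equality is therefore equivalent to showing that $\nabla^t_Z Y = \nabla^s_Z Y$ for every horizontal $Z \in \Gamma(E)$ and every $Y \in \Gamma(K)$, i.e., that the tensor $\nabla^t - \nabla^s \in \Omega^1(G; \End K)$ vanishes on the horizontal distribution.

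Because both sides obey Leibniz in $Y$ with the same $Z(f)\, Y$ term, it suffices to check this equality on a local frame of $K$. I would pick a left-invariant frame. On the one hand, Lemma \ref{lem:nabla_st} with $h(Z) = Z$ gives immediately
\[
\nabla^s_Z \xi^L = v[h(Z), \xi^L] = v[Z, \xi^L], \qquad \xi \in \Gamma(\frak k).
\]
The subtler computation is that of $\nabla^t_Z \xi^L$, since Lemma \ref{lem:nabla_st} only directly handles $\nabla^t$ on \emph{right}-invariant sections. My plan is to change trivialization: near any point of $G$, fix a local frame $(b_i)$ of $\frak k$ around the target and expand $\xi^L = f^i b_i^R$ for smooth functions $f^i$ on $G$ encoding the adjoint action via $\Ad_g\, \xi_{s(g)} = f^i(g)\, b_i(t(g))$. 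Leibniz for $\nabla^t$ combined with Lemma \ref{lem:nabla_st} applied to each $b_i^R$ then gives
\[
\nabla^t_Z \xi^L = Z(f^i)\, b_i^R + f^i\, v[Z, b_i^R].
\]
Each $b_i^R$ is already vertical, so $v$ fixes $Z(f^i)\, b_i^R$; expanding $[Z, \xi^L]$ by the Leibniz rule of the Lie bracket and then applying $v$ produces exactly the same expression. Hence $\nabla^t_Z \xi^L = v[Z, \xi^L] = \nabla^s_Z \xi^L$, completing the proof.

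The only bookkeeping requiring some care is to track the two identifications \eqref{eq:iso_pullback_k} of $K$ with $s^*\frak k$ and $t^*\frak k$ simultaneously---in particular, to confirm that the coefficients $f^i$ in the expansion $\xi^L = f^i b_i^R$ are genuinely smooth and that the Leibniz rule for the pullback connection $\nabla^t$ applies to them. Morally, the argument says that the two Lemma \ref{lem:nabla_st} formulas---one for $\nabla^s$ on $\xi^L$ and one for $\nabla^t$ on $\xi^R$---collapse, on horizontal vectors, to the single intrinsic expression $Y \mapsto v[Z, Y]$ on the vertical bundle $K$, which is precisely why the two pullback connections agree there.
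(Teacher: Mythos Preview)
Your proof is correct and follows essentially the same route as the paper: reduce by $C^\infty(G)$-linearity in $Y$ to left-invariant sections $\xi^L$, expand $\xi^L$ in a local right-invariant frame $(b_i^R)$, and then use both parts of Lemma \ref{lem:nabla_st} together with the Leibniz rules for $\nabla^t$ and the Lie bracket. The only cosmetic difference is that you first split $X=h(X)+v(X)$ and reduce to showing the tensor vanishes on horizontal $Z$, whereas the paper keeps $X$ general and directly computes the left-hand side to be $(v(X)Y^i)\,b_i^R=\nabla^t_{v(X)}Y$, then notes $\nabla^s_{v(X)}\xi^L=0$; the underlying computation is the same.
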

\begin{proof}
Since both sides of equation \eqref{eq:diff_nabla_ts} are $C^\infty(G)$-linear in $Y$, it is enough to show that this equality holds for any left-invariant section $Y\in \Gamma(K)$. Pick  any local frame $(b_i)_i$  of $\frak k$ over an open subset $U\subset M$, and expand the left-invariant section $Y$ with respect to the right-invariant extension of the local frame $(b_i)_i$. That is, write $Y=Y^ib_i^R$ for some functions $Y^i\in C^\infty(t^{-1}(U))$, using the summation convention. The left-hand side reads:
\begin{align*}
\nabla_X^t(Y^ib_i^R)-v[h(X),Y^ib_i^R]&=Y^i\nabla_X^t(b_i^R)+X(Y^i)b_i^R-Y^iv[h(X),b_i^R]-(h(X)Y^i)b_i^R\\
&=(v(X)Y^i)b_i^R=\nabla^t_{v(X)}Y
\end{align*}
where we have used the Leibniz rules for $\nabla^t$ and for the Lie bracket on $K$ in the second equality, and Lemma \ref{lem:nabla_st} in the third. The obtained expression clearly equals the right-hand side of equation \eqref{eq:diff_nabla_ts} since $\nabla^s_{v(X)}Y=0$ due to left-invariance of $Y$.
\end{proof}
\begin{theorem}
\label{thm:deltaD}
Let $G\rra M$ be a Lie groupoid with a multiplicative Ehresmann connection $\omega\in\A(G;\frak k)$. The horizontal exterior covariant derivative $\D{}^\omega$ is a cochain map, i.e., 
\begin{align}
\label{eq:deltaD}
  \delta\D{}^\omega=\D{}^\omega\delta.
\end{align}
In particular, $\D{}^\omega$ maps multiplicative forms to multiplicative forms.
\end{theorem}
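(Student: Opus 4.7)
The plan is to establish $\delta \D^\omega = \D^\omega \delta$ by reducing the statement to the vanishing of the horizontal projection of the commutator $[\delta, \d^\nabla]$, which by Lemma \ref{lem:G_invariant} is already expressed in terms of the $G$-invariance tensor $\Theta$ of the induced connection $\nabla$. First, I recall that the horizontal projection $h^*$ commutes with the simplicial differential $\delta$: this was noted in the paragraph preceding Definition \ref{defn:D}, and rests on multiplicativity of $E$ (so that face maps of the nerve preserve $E^{(\bullet)}$), together with the inclusions $K \subset \ker\d s \cap \ker\d t$ at level $p = 0$. Writing $\D^\omega = h^* \circ \d^\nabla$, one then computes
\[
\delta \D^\omega - \D^\omega \delta \;=\; \delta h^* \d^\nabla - h^* \d^\nabla \delta \;=\; h^*\big(\delta \d^\nabla - \d^\nabla \delta\big) \;=\; h^* [\delta, \d^\nabla],
\]
so the theorem reduces to showing $h^*[\delta, \d^\nabla] = 0$.

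By Lemma \ref{lem:G_invariant}, the commutator $[\d^\nabla, \delta]$ is given by a wedge product with $(\pr_{p+1})^*\Theta$, and in the explicit formula displayed there, $\Theta$ is always evaluated on a single tangent vector (the ``last component'' of an input). When precomposed with the horizontal projection, this vector is replaced by its horizontal part in $E$. Consequently it suffices to prove the pointwise statement:
\[
\Theta(X) \;=\; 0 \qquad \text{for every } X \in E.
\]

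The content of the argument lies in this last step, and the main subtlety will be the careful bookkeeping of bundle identifications. The key observation is that the isomorphisms \eqref{eq:iso_pullback_k} identify both $s^*\frak k$ and $t^*\frak k$ with the vertical subbundle $K \subset TG$, and under these identifications the coefficient-changing map $\phi\colon t^*\frak k \to s^*\frak k$ becomes the identity on $K$. Indeed, a vector $W \in K_g$ corresponds to $\d(R_{g^{-1}})(W) \in \frak k_{t(g)}$ on the $t^*\frak k$ side, and after applying $\phi$ and the $s^*\frak k$-identification it becomes $\d(L_g)\circ \d(C_{g^{-1}})\circ \d(R_{g^{-1}})(W) = \d(R_g \circ R_{g^{-1}})(W) = W$, since $L_g \circ C_{g^{-1}} = R_g$. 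Under this common identification, $\Theta(X)$ acts on sections of $K$ precisely as the difference $\nabla^t_X - \nabla^s_X$.

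The finishing move is Corollary \ref{cor:diff_nabla_ts}: this difference depends only on the vertical component $v(X)$. Since $v(X) = 0$ for $X \in E$, we conclude $\Theta(X) = 0$ on horizontal vectors, so $h^*\big((\pr_{p+1})^*\Theta \wedge (f_{p+1}^{(p+1)})^*\vartheta\big) = 0$ for every $\vartheta$, and the commutator $h^*[\delta, \d^\nabla]$ vanishes identically. The hard part, therefore, is not a computation but a conceptual one—recognizing $\Theta$ as $\nabla^t - \nabla^s$ on the single bundle $K$ via the Maurer--Cartan identifications—after which Corollary \ref{cor:diff_nabla_ts}, itself a consequence of the definition \eqref{eq:nabla} of $\nabla$ via the left-invariant extension, delivers the vanishing without effort.
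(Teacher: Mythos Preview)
Your proof is correct and follows essentially the same approach as the paper: reduce $\delta\D^\omega - \D^\omega\delta$ to $h^*[\d^\nabla,\delta]$ using that $h^*$ is a cochain map, invoke Lemma \ref{lem:G_invariant} to express the commutator via $\Theta$, and then use Corollary \ref{cor:diff_nabla_ts} to conclude $\Theta(X)=0$ for horizontal $X$. Your added explanation of why $\phi$ becomes the identity on $K$ under the Maurer--Cartan identifications is a helpful elaboration of what the paper leaves implicit.
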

In conclusion, a multiplicative Ehresmann connection yields the columns of a curved double complex, depicted in the diagram below, with the feature that $\D{}^\omega$ does not square to zero unless $E=\ker \omega$ is involutive.
\begin{align}
\label{eq:bss_ideals}
\begin{tikzcd}[ampersand replacement=\&, column sep=large, row sep=large]
	{\Omega^{q+1}(M;\frak k)} \& {\Omega^{q+1}(G;s^*\frak k)} \& {\Omega^{q+1}(G^{(2)};(s\circ\pr_2)^*\frak k)} \& \cdots \\
	{\Omega^q(M;\frak k)} \& {\Omega^{q}(G;s^*\frak k)} \& {\Omega^q(G^{(2)};(s\circ\pr_2)^*\frak k )} \& \cdots
	\arrow["{\delta}", from=1-1, to=1-2]
	\arrow["{\delta}", from=1-2, to=1-3]
	\arrow["{\delta}", from=1-3, to=1-4]
	\arrow["{\d{}^\nabla}", from=2-1, to=1-1]
	\arrow["{\delta}", from=2-1, to=2-2]
	\arrow["{\D{}^\omega}", from=2-2, to=1-2]
	\arrow["{\delta}", from=2-2, to=2-3]
	\arrow["{\D{}^\omega}", from=2-3, to=1-3]
	\arrow["{\delta}", from=2-3, to=2-4]
\end{tikzcd}
\end{align}
\begin{proof}
Corollary \ref{cor:diff_nabla_ts} states that the tensor $\Theta$ equals 
\[
\Theta(X)\xi=\phi(\nabla^t_{v(X)}\xi)-\nabla^s_{v(X)}\phi(\xi)
\]
for any $X\in \vf(G)$ and $\xi\in \Gamma(t^*\frak k)$,
where $\phi\colon t^*\frak k\ra s^*\frak k$ is the bundle isomorphism given by $(g,v)\mapsto (g,\Ad_{g^{-1}}(v))$ and $\nabla^s$ and $\nabla^t$ denote the trivial connections on the pullback bundles (as in Remark \ref{rem:nabla_s}). Since $h^*$ is a cochain map, we have 
\[
[\D{}^\omega,\delta]= h^*{\d{}^\nabla}\delta-\delta h^*{\d{}^\nabla}=h^*[\d{}^\nabla,\delta],
\]
which vanishes by the expression for $[\d{}^\nabla,\delta]$ from Lemma \eqref{lem:G_invariant} since $\Theta(h(X))=0$.
\end{proof}
\begin{remark}
  We observe that the proof generalizes to the following setting: suppose $V$ is an arbitrary representation of $G\rra M$ equipped with a linear connection $\nabla$, and $E$ is a multiplicative Ehresmann connection for a fixed bundle of ideals. Then $\D{}=h^*\circ\d{}^\nabla$ is a cochain map $\Omega^{\bullet,q}(G;V)\ra \Omega^{\bullet,q+1}(G;V)$ if and only if there holds
  $h^*\Theta=0$, where $\Theta$ is the $G$-invariance tensor of $\nabla$ from \eqref{eq:invariance_form_G}. Furthermore, taking $\frak k=0_M$ recovers the framework from \sec \ref{sec:invariant_linear_connections}, hence the usual exterior covariant derivative can be seen as a special case of the horizontal exterior covariant derivative.
\end{remark}

\needspace{4cm}
\section{Weil complex with values in a bundle of ideals}
\label{sec:weil_boi}
\subsection{Infinitesimal multiplicative Ehresmann connections}
\label{sec:mec_inf}
As observed in \cite{mec}, any multiplicative Ehresmann connection is mapped with the van Est map to an infinitesimal multiplicative form whose symbol restricts to the identity map on the bundle of ideals $\frak k\subset \ker\rho$. Importantly, this infinitesimal notion of a multiplicative connection makes sense on its own, without the need for a given algebroid to be integrable. 

Just like in the case of groupoids, we start this section by briefly recalling some definitions and results regarding infinitesimal multiplicative connections. To begin with, the notion of a bundle of ideals on a Lie algebroid has already been defined in Definition \ref{defn:boi}.
\begin{definition}
Let $\frak k\subset A$ be a bundle of ideals of a Lie algebroid $A\Rightarrow M$. An \textit{infinitesimal multiplicative connection} (more briefly, an \textit{IM connection}) for $\frak k$, is a $\frak k$-valued IM form $(\C,v)\in \Omega^1_{im}(A;\frak k)$, whose symbol $v\colon A\rightarrow \frak k$ restricts on $\frak k$ to the identity:
\begin{align}
\label{eq:symbol_id}
  v|_{\frak k}=\id_{\frak k}.
\end{align}
The set of all IM connections on $A$ for $\frak k$ is denoted $\A(A,\frak k)$.
\end{definition}

It is important to recognize that the defining property  \eqref{eq:symbol_id} of an IM connection $(\C,v)$ means precisely that its symbol $v$ is a splitting of the following short exact sequence of vector bundles:
\begin{align}
\label{eq:splitting}
  \begin{tikzcd}[ampersand replacement=\&]
	0 \& {\frak k} \& A \& {A/\frak k} \& 0
	\arrow[from=1-1, to=1-2]
	\arrow[from=1-2, to=1-3]
	\arrow["v", bend left=30, from=1-3, to=1-2]
	\arrow[from=1-3, to=1-4]
	\arrow[from=1-4, to=1-5]
\end{tikzcd}
\end{align}
So, let us denote by $H=\ker (v)$ the \textit{horizontal subbundle} of $A$, and denote by $\alpha=v(\alpha)+h(\alpha)$ the unique decomposition of any vector $\alpha\in A$, pertaining to the splitting $A=\frak k\oplus H$ given by the symbol. We will call $v(\alpha)\in \frak k$ and $h(\alpha)\in H$ the \textit{vertical} and \textit{horizontal} component of $\alpha$, respectively. 

In the case when a Lie groupoid $G$ integrates $A$ and $(\C,v)=\ve(\omega)$ for a multiplicative Ehresmann connection $\omega$, it is clear that the two splittings of the sequence $\eqref{eq:splitting}$ induced by $v$ and $\omega$ coincide, since $v=\omega|_A$.

\subsubsection{Associated coupling data of an IM connection}
The splitting of \eqref{eq:splitting}, determined by the symbol of an IM connection $(\C,v)$, enables us to isolate the information which is contained within the leading term $\C$, into the following two objects.
\begin{enumerate}[label={(\roman*)}]
  \item A linear connection $\nabla$ on the bundle of ideals $\frak k$, given by $\nabla=\C|_{\frak k}$, that is 
\begin{align}
\label{eq:nabla2}
  \nabla_X\xi=\C(\xi)(X).
\end{align}
  \item A tensor field $U\in \Gamma(H^*\otimes T^*M\otimes \frak k)$, given by
\begin{align}
\label{eq:defU}
  U(\alpha)(X)=-\C(\alpha)(X).
\end{align}
\end{enumerate}
The pair $(\nabla,U)$ constructed above will be called the \textit{coupling data} of an IM connection $(\C,v)$, which, owing to the Leibniz identity, indeed consists of a connection and a tensor field. In other words, we can write
\begin{align}
\label{eq:split_C}
  \C(\alpha)=\nabla(v\alpha)-U(h\alpha),
\end{align}
for any $\alpha\in\Gamma(A)$. 
To see the meaning behind the tensor $U$, note that compatibility condition \eqref{eq:c2} implies
\begin{align}
\label{eq:U_along_orbits}
  v[h(\alpha),h(\beta)]=U(h\alpha)(\rho\beta),
\end{align}
so that the orbital part of $U$ measures the failure of the splitting \eqref{eq:splitting} to be a splitting of Lie algebroids, i.e, $U$ vanishes on $T\F$ precisely when $H$ is a Lie subalgebroid of $A$. However, as we will see, $U$ does not encode the whole information about the curvature of $(\C,v)$, since we also need to account for the curvature $R^\nabla$ of $\nabla$. This will become clearer in \sec\ref{sec:curvature}.


Turning to the linear connection $\nabla$, we see that its construction is  more direct than in the groupoid case (Proposition \ref{prop:conn}). If $A$ is the Lie algebroid of a Lie groupoid $G$ equipped with the IM connection $(\C,v)=\ve(\omega)$ for a multiplicative Ehresmann connection $\omega$, then the two linear connections on $\frak k$ coincide---this is shown in Proposition \ref{prop:conn_global_inf}. As expected, we also obtain the infinitesimal version of Corollary \ref{cor:conn_orb}.
\begin{proposition}
\label{prop:conn2}
Let $A$ be a Lie algebroid with an IM connection $(\C,v)\in \A(A;\frak k)$. For any $\alpha\in\Gamma(A)$ and $\xi\in\Gamma(\frak k)$, there holds
\begin{align}
\label{eq:conn_orb2}
  \nabla_{\rho(\alpha)}\xi=[h(\alpha),\xi].
\end{align}
\end{proposition}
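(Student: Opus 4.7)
The plan is to extract the identity directly from the compatibility condition \eqref{eq:c2} of the IM form $(\C,v)\in\Omega^1_{im}(A;\frak k)$, which is the natural infinitesimal counterpart of the left-invariance argument used in Proposition \ref{prop:conn} (ii). Here the representation $V$ is $\frak k$ itself, equipped with the adjoint $A$-representation $\nabla^A_\alpha\xi=[\alpha,\xi]$, which is well defined since $\frak k$ is a bundle of ideals; the leading term is $c_0=\C$ and the symbol is $c_1=v$.

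The key step is to apply \eqref{eq:c2} with the roles interchanged, taking the antisymmetric slot to be $\xi\in\Gamma(\frak k)$ and the symmetric slot to be $\alpha\in\Gamma(A)$. This yields
\[
v[\xi,\alpha]=\L^A_\xi\bigl(v(\alpha)\bigr)-\iota_{\rho(\alpha)}\C(\xi).
\]
Now I would identify each of the three terms. On the left-hand side, since $\frak k$ is a bundle of ideals we have $[\xi,\alpha]\in\Gamma(\frak k)$, and since $v|_{\frak k}=\id_{\frak k}$ this gives $v[\xi,\alpha]=[\xi,\alpha]$. The first term on the right is $\L^A_\xi v(\alpha)=\nabla^A_\xi v(\alpha)=[\xi,v(\alpha)]$ by the definition of the adjoint representation (and since $v(\alpha)\in\Gamma(\frak k)$ is a section of the representation with no further arguments to act on). The last term is, by the defining equation \eqref{eq:nabla2} of $\nabla$, precisely $\nabla_{\rho(\alpha)}\xi$.

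Substituting these identifications and using the canonical splitting $\alpha=v(\alpha)+h(\alpha)$ together with bilinearity of the bracket, one gets
\[
-[\alpha,\xi]=[\xi,v(\alpha)]-\nabla_{\rho(\alpha)}\xi,
\]
which rearranges to $\nabla_{\rho(\alpha)}\xi=[\alpha,\xi]-[v(\alpha),\xi]=[h(\alpha),\xi]$, as desired. There is no serious obstacle: the only point that requires care is matching the conventions, namely that $V=\frak k$ carries the adjoint representation and that the $\L^A$-action on a plain section of $\frak k$ reduces to $\nabla^A$; once this is in place, the statement is a one-line consequence of \eqref{eq:c2}.
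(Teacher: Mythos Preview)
Your proof is correct and follows essentially the same route as the paper: both apply condition \eqref{eq:c2} with $\xi$ in the antisymmetric slot and $\alpha$ in the symmetric slot, then identify $v[\xi,\alpha]=[\xi,\alpha]$, $\L^A_\xi v(\alpha)=[\xi,v(\alpha)]$, and $\iota_{\rho(\alpha)}\C(\xi)=\nabla_{\rho(\alpha)}\xi$, and conclude via the splitting $\alpha=v(\alpha)+h(\alpha)$. The paper's proof is just a one-line display of the same computation.
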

\begin{proof}
Equation \eqref{eq:conn_orb2} is shown with a straightforward computation:
\begin{align*}
  \nabla_{\rho(\alpha)}\xi=\C(\xi)(\rho\alpha)=\L^A_\xi v(\alpha)-v[\xi,\alpha]=[\xi,v(\alpha)]+[\alpha,\xi]=[h(\alpha),\xi],
\end{align*}
where we have used the definition of $\nabla$ and condition \eqref{eq:c2}. 
\end{proof}

To conclude this section, we note that the compatibility conditions \eqref{eq:c1}--\eqref{eq:c3} for an IM connection $(\C,v)$ can actually be rewritten  entirely in terms of $\nabla$ and $U$, as shown in \cite{mec}*{Propositions 5.11 and 5.13}. We record these rewritten conditions here for our convenience. The following holds for all $\alpha,\beta\in\Gamma(A)$ and $\xi,\eta\in\Gamma(\frak k)$: 
\begin{enumerate}[label={(\roman*)}]
  \item The connection $\nabla$ preserves the Lie bracket on $\frak k$:
  \begin{align}
  \nabla[\xi,\eta]_{\frak k}&=[\nabla\xi,\eta]_{\frak k}+[\xi,\nabla\eta]_{\frak k}.\tag{S.1}\label{eq:s1}
  \end{align}
In particular, $\frak k$ is a locally trivial bundle of Lie algebras.
	\item The curvature tensor $R^\nabla$ of $\nabla$ relates to $U$ as follows:
  \begin{align}
	\iota_{\rho(\alpha)}R^\nabla\cdot\xi=[U(h\alpha),\xi]_{\frak k}.\tag{S.2}\label{eq:s2}
  \end{align}
  \item The tensor $U$ acts on the bracket of sections as:
\begin{align}
U(h[\alpha,\beta])=\L^\nabla_{\rho(\alpha)}U(h\beta)-\L^\nabla_{\rho(\beta)}U(h\alpha)+\nabla U(h\alpha)(\rho\beta).
\tag{S.3}\label{eq:s3}
\end{align}
\end{enumerate}
Using the point (ii) above, we can now show that $A$-invariance of the connection $\nabla$ is equivalent to the bundle of ideals $\frak k$ being abelian.
\begin{proposition}
\label{prop:invariant_abelian}
Let $A$ be a Lie algebroid with an IM connection $(\C,v)\in \A(A;\frak k)$. The connection $\nabla=\C|_\frak k$ is $A$-invariant if and only if the bundle of ideals $\frak k$ is abelian.
\end{proposition}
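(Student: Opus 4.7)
The plan is to unpack both halves of the equivalence using the explicit formula for $\nabla$ on the orbital directions from Proposition \ref{prop:conn2}, together with the compatibility condition \eqref{eq:s2} that relates $R^\nabla$ to $U$ and the bracket on $\mathfrak{k}$.

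For the forward direction, assume $\nabla$ is $A$-invariant, so in particular the symbol $\theta(\alpha) = \nabla^A_\alpha - \nabla_{\rho(\alpha)}$ of the $A$-invariance form vanishes. Since $\nabla^A$ is the adjoint representation of $A$ on $\mathfrak{k}$, for any $\alpha \in \Gamma(A)$ and $\xi \in \Gamma(\mathfrak{k})$ we have
\[\nabla^A_\alpha \xi = [\alpha, \xi] = [v(\alpha), \xi]_{\mathfrak{k}} + [h(\alpha), \xi],\]
and Proposition \ref{prop:conn2} gives $\nabla_{\rho(\alpha)} \xi = [h(\alpha), \xi]$. Subtracting, the vanishing $\theta(\alpha) = 0$ becomes $[v(\alpha), \xi]_\mathfrak{k} = 0$. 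Because $v\colon A \to \mathfrak{k}$ is surjective (its restriction to $\mathfrak{k}$ is the identity by the very definition of an IM connection, cf.\ \eqref{eq:symbol_id}), as $\alpha$ ranges over $\Gamma(A)$ the section $v(\alpha)$ ranges over all of $\Gamma(\mathfrak{k})$, so $[\zeta,\xi]_\mathfrak{k} = 0$ for every $\zeta,\xi \in \Gamma(\mathfrak{k})$, i.e.\ $\mathfrak{k}$ is abelian.

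For the converse, assume $\mathfrak{k}$ is abelian. The same calculation yields $\theta(\alpha) = [v(\alpha),\,\cdot\,]_\mathfrak{k} = 0$, so the symbol part of $A$-invariance holds automatically. It remains to verify the second half, namely $\iota_{\rho(\alpha)} R^\nabla = 0$ as an element of $\Omega^1(M;\End\mathfrak{k})$. This is exactly the content of the compatibility condition \eqref{eq:s2}:
\[(\iota_{\rho(\alpha)} R^\nabla) \cdot \xi = [U(h\alpha), \xi]_\mathfrak{k},\]
and since $U(h\alpha) \in \mathfrak{k}$ and $\mathfrak{k}$ is abelian, the right-hand side vanishes for every $\xi \in \Gamma(\mathfrak{k})$. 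Hence $\iota_{\rho(\alpha)} R^\nabla = 0$ and both requirements of $A$-invariance are satisfied.

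The main (minor) obstacle is simply identifying the correct pieces: one must recognize that Proposition \ref{prop:conn2} exactly computes the orbital part of $\nabla$, so that the obstruction $\theta(\alpha)$ collapses to the bracket with $v(\alpha) \in \mathfrak{k}$, and that the remaining curvature obstruction $\iota_{\rho(\alpha)} R^\nabla$ is already packaged into \eqref{eq:s2}. No further computation is needed beyond these two inputs.
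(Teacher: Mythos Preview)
Your proof is correct and follows essentially the same approach as the paper: both compute $\theta(\alpha)=\nabla^A_\alpha-\nabla_{\rho(\alpha)}=[v(\alpha),\cdot]$ via Proposition \ref{prop:conn2} to handle the symbol condition, and both invoke \eqref{eq:s2} to dispatch the curvature condition $\iota_{\rho(\alpha)}R^\nabla=0$ in the abelian case. Your write-up is slightly more explicit about the surjectivity of $v$ and treats the two directions separately, but the argument is identical in substance.
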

\begin{proof}
Equation \eqref{eq:conn_orb2} implies that for any $\alpha\in\Gamma(A)$, we have
\begin{align}
\label{eq:diff_a_conn}
  \nabla^A_\alpha-\nabla_{\rho(\alpha)}=[v(\alpha),\cdot],
\end{align}
so it is clear that $\nabla^A_\alpha=\nabla_{\rho(\alpha)}$ holds if and only if $\frak k$ is abelian. In this case, $\iota_{\rho(\alpha)}R^\nabla=0$ is implied by the identity \eqref{eq:s2}.
\end{proof}
\subsection{Horizontal projection of Weil cochains}
Following the same approach as in the Lie groupoid picture, we would now like to show that an IM connection $(\C,v)\in\A(A;\frak k)$ defines a horizontal projection of Weil cochains. We begin by providing the infinitesimal analogue of horizontal forms on the nerve (Definition \ref{defn:horizontal}). 
\begin{definition}
\label{defn:horizontal_inf}
A Weil cochain $c=(c_0,\dots,c_p)\in W^{p,q}(A;\frak k)$ is said to be \textit{horizontal} if 
\begin{align*}
c_i(\cdot\|\xi,\cdot)=0, \quad\text{for all }i\geq 1\text{ and } \xi\in\Gamma(\frak k).
\end{align*}
Note this is a condition on correction terms only. It is clear from equation \eqref{eq:delta_inf} that $\delta$ maps horizontal cochains to horizontal cochains, so at each fixed $q\geq 0$ we obtain the \textit{horizontal subcomplex} 
\[
W^{\bullet,q}(A;\frak k)^\Hor\leq W^{\bullet,q}(A;\frak k).
\]
Equation \eqref{eq:J_alpha} ensures the van Est map restricts to a map between horizontal subcomplexes.
\end{definition}
\begin{example}
\label{ex:horizontal_p=1}
At level $p=1$, horizontal cochains $c=(c_0,c_1)$ are simply the ones whose symbol restricts on $\frak k$ to zero: \[c_1|_{\frak k}=0.\] It is clear from definition \eqref{eq:delta0_im} of $\delta^0$ that any cohomologically trivial form is horizontal (and multiplicative), that is, $\im\delta^0\subset \Omega^\bullet_{im}(A;\frak k)^\Hor$. On a transitive algebroid, every IM form of degree $q\geq 2$ is horizontal by condition \eqref{eq:c3}, that is, $\Omega^q_{im}(A;\frak k)=\Omega^q_{im}(A;\frak k)^\Hor$.
\end{example}
Just as with multiplicative connections on Lie groupoids, we expect an IM connection for $\frak k$ to induce a horizontal projection map of Weil cochains,
\[
h^*\colon W^{p,q}(A;\frak k)\ra W^{p,q}(A;\frak k)^\Hor.
\] 
However, in contrast with the case of groupoids, there is now no straightforward and intuitive way of defining $h^*$. The issue, essentially, lies in the model $W^{p,q}(A;\frak k)$ that we are using to describe forms in the infinitesimal setting. We overcome this obstacle by employing a technique described in detail in \sec\ref{sec:homogeneous_horizontal}; the idea is the following.
\begin{itemize}
  \item Firstly, note that an IM connection $(\C,v)$ can equivalently be described in terms of an Ehresmann connection $E\subset TA$ for the projection $A\ra A/\frak k$. Infinitesimal multiplicativity of $(\C,v)$ is equivalent to $E$ being a VB-subalgebroid of $TA$. 
  \item Instead of Weil cochains $W^{p,q}(A;\frak k)$, use an alternative model, where the horizontal projection has a simple definition. The alternative model is provided by the so-called \textit{exterior cochains}, which are special forms on a certain VB-algebroid. Similarly to the groupoid case, the map $h^*$ on this alternative model is given  by the precomposition with the map $h\colon TA\ra E$. 
  \item Use the isomorphism 
  between the two models to derive the wanted formula for $h^*$ on Weil cochains 
  and infer its elementary properties (Theorem \ref{thm:derivation_hor_proj}). Importantly, establishing the properties of $h^*$ is significantly easier if done in the realm of exterior cochains---it is more conceptual and less combinatorial/computational. 
\end{itemize}
Ultimately, this procedure yields the map $h^*$ for Weil cochains, which we will now describe. Preliminarily, note that since $V=\frak k$, we may introduce the pairing
\begin{align}
  \label{eq:pairing}
\begin{split}
  &\Omega^k(M;S^j(A^*)\otimes \frak k)\times \Omega^\ell(M;\frak k) \xlongrightarrow{\wedgedot}\Omega^{k+\ell}(M;S^{j-1}(A^*)\otimes \frak k),\\
  &(\gamma\wedgedot \vartheta)(\beta_1,\dots,\beta_{j-1})(X_1,\dots,X_{k+\ell})\\
  &\phantom{(\gamma\wedgedot}=\frac 1{k!\ell!}\smashoperator{\sum_{\quad\sigma\in S_{k+\ell}}}(\sgn\sigma)\gamma\big(\vartheta(X_{\sigma(1)},\dots,X_{\sigma(\ell)}),\beta_1,\dots,\beta_{j-1}\big)(X_{\sigma(\ell+1)},\dots,X_{\sigma(\ell+k)}).
\end{split}
\end{align}
We will actually only need the case $\ell=1$, when $\vartheta$ is a 1-form. The form we obtain by pairing $\gamma$ consecutively with 1-forms $\vartheta_1,\dots,\vartheta_\ell\in\Omega^1(M;\frak k)$, where $\ell\leq j$, will be denoted
\begin{align}
\label{eq:wedgedot_one_by_one}
\gamma\wedgedot (\vartheta_1,\dots,\vartheta_\ell)\coloneqq \gamma\wedgedot \vartheta_\ell\wedgedot\vartheta_{\ell-1}\wedgedot\cdots\wedgedot\vartheta_1 \in \Omega^{k+\ell}(M;S^{j-\ell}(A^*)\otimes\frak k).
\end{align}
It will turn out that we only need the case when $j=\ell$, i.e., when we use up all the symmetric arguments by pairing with 1-forms. A short computation shows that \eqref{eq:wedgedot_one_by_one}, evaluated on vector fields $X_1,\dots, X_{k+\ell}\in\vf(M)$, reads
\begin{align}
  \label{eq:wedgedot_multiple}
\begin{split}
  \big(\gamma\wedgedot (&\vartheta_1,\dots,\vartheta_\ell)\big)(X_1,\dots, X_{k+\ell})\\&=\frac 1{k!}\smashoperator{\sum_{\quad\sigma\in S_{k+\ell}}}(\sgn\sigma)\gamma\big(\vartheta_1(X_{\sigma(1)}),\dots,\vartheta_\ell(X_{\sigma(\ell)})\big)(X_{\sigma(\ell+1)},\dots,X_{\sigma(\ell+k)}).
\end{split}
\end{align}

\begin{definition}
\label{def:hor_proj}
The \textit{horizontal projection} of Weil cochains, induced by an IM connection $(\C,v)\in\A(A;\frak k)$ on a Lie algebroid $A$, is defined as the map
\[
h^*\colon W^{p,q}(A;\frak k)\ra W^{p,q}(A;\frak k)^\Hor,
\]
whose leading term for a given $c\in W^{p,q}(A;\frak k)$ is given by
\begin{align*}
\begin{split}
(h^*c)_0(\alpha_1,\dots,\alpha_p)=\sum_{j=0}^p\,(-1)^j\smashoperator{\sum_{\hspace{2em}\sigma\in S_{(j,p-j)}}}(\sgn\sigma)c_j(\alpha_{\sigma(j+1)},\dots,\alpha_{\sigma(p)})\wedgedot (\C\alpha_{\sigma(1)}, \dots, \C\alpha_{\sigma(j)}),
\end{split}
\end{align*}
where $S_{(j,p-j)}\subset S_p$ denotes the set of $(j,p-j)$-shuffles. 
The correction terms are given as
\begin{align*}
\begin{split}
(h^*c)_k&(\alpha_1,\dots \alpha_{p-k}\|\beta_1,\dots,\beta_k)\\
&=\sum_{j=k}^p (-1)^{j-k}\smashoperator{\sum_{\hspace{3em}\sigma\in S_{(j-k,p-j)}}}(\sgn\sigma)c_j(\alpha_{\sigma(j-k+1)},\dots,\alpha_{\sigma(p-k)}\| h\ul\beta,\cdot)\wedgedot (\C\alpha_{\sigma(1)}, \dots, \C\alpha_{\sigma(j-k)}).
\end{split}
\end{align*}
The last correction term here simply reads $(h^*c)_p(\ul\beta)=c_p(h\ul\beta).$
\end{definition}
That the correction terms of $h^*$ indeed satisfy the Leibniz identity is a direct consequence of Theorem \ref{thm:derivation_hor_proj}. One can also show well-definedness of $h^*$ directly, using Lemma \ref{lem:wedgedot} (ii) and (v).
\begin{example}
\label{ex:low_levels_h}
We now write down the formula for the horizontal projection for low levels $p$. The most important case is $p=1$, where the above definition reads
\begin{align}
\begin{split}
\label{eq:h_p=1}
(h^*c)_0(\alpha)=c_0(\alpha)-c_1\wedgedot \C\alpha,\quad(h^*c)_1(\beta)=c_1(h\beta).
\end{split}
\end{align}
In other words, $(\alpha\mapsto c_1\wedgedot \C\alpha,\beta\mapsto c_1(v\beta))$ is the vertical part of $c$. We remark that by definition of $\wedgedot$, the second term in the leading coefficient reads
\[
(c_1\wedgedot\C\alpha)(X_1,\dots,X_q)=\textstyle\sum_i(-1)^{i+1}c_1(\C\alpha(X_i))(X_1,\dots,\widehat{X_i},\dots,X_q).
\]
At level $p=2$, we obtain
\begin{align*}
(h^*c)_0(\alpha_1,\alpha_2)&=c_0(\alpha_1,\alpha_2)-(c_1(\alpha_2)\wedgedot \C\alpha_1-c_1(\alpha_1)\wedgedot \C\alpha_2)+c_2\wedgedot (\C\alpha_1,\C\alpha_2),\hspace{0.3em}\\
(h^*c)_1(\alpha\|\beta)&=c_1(\alpha\| h\beta)-c_2(h\beta,\cdot)\wedgedot\C\alpha,\\
(h^*c)_2(\beta_1,\beta_2)&=c_2(h\beta_1,h\beta_2).
\end{align*}
We observe that the $k$-th correction term $(h^*c)_k$ of the horizontal projection of a Weil cochain $c$ contains all the higher correction terms $c_{\ell}$, for $\ell\geq k$.
\end{example}
\begin{remark}
\label{rem:wedges}
The pairing \eqref{eq:pairing} should be regarded as follows. Restricting any one of the symmetric arguments of $\gamma\in \Omega^k(M;S^j(A^*)\otimes \frak k)$ to $\frak k$, we obtain a tensor from $\Omega^k(M;S^{j-1}(A^*)\otimes \End\frak k)$, which will again for simplicity just be denoted $\gamma$. The pairing $\wedgedot$ is then related to the following natural pairing: for any vector bundle $V\ra M$, we can define
\begin{align*}
  &\wedge\colon\Omega^k(M;\End V)\times\Omega^\ell(M;V)\rightarrow \Omega^{k+\ell}(M;V),
\\
  &(\gamma\wedge\vartheta)(X_1,\dots,X_{k+\ell})=\frac 1{k!\ell!}\sum_{\mathclap{\hspace{1em}\sigma\in S_{k+\ell}}}\sgn (\sigma)\,\gamma(X_{\sigma(1)},\dots,X_{\sigma(k)})\cdot\vartheta(X_{\sigma(k+1)},\dots,X_{\sigma(k+\ell)}),
\end{align*}
and by an easy combinatorial exercise, the two pairings are related by
\begin{align}
\label{eq:wedge_wedgedot}
\gamma\wedgedot\vartheta=(-1)^{k\ell}\gamma\wedge \vartheta,
\end{align}
where we have omitted the arguments $\beta_1,\dots,\beta_{j-1}$. The usual wedge $\wedge$ is, in a way, more natural to work with, but it would introduce some additional complications regarding the signs if we used it in place of $\wedgedot$ in the definition of $h^*$. In the following lemma, we use this relation to infer some important properties of $\wedgedot$.
\end{remark}
\begin{lemma}
\label{lem:wedgedot}
The pairing \eqref{eq:pairing} satisfies the following properties for $\gamma\in \Omega^k(M;S^j(A^*)\otimes \frak k)$.
\begin{enumerate}[label={(\roman*)}]
\item $\gamma\wedgedot (\vartheta_1,\dots,\vartheta_j)$ is alternating and $C^\infty(M)$-multilinear in $\vartheta_1,\dots,\vartheta_j\in\Omega^1(M;\frak k)$.
\item For a simple tensor $\vartheta\otimes\xi$, where $\vartheta\in\Omega^1(M)$ and $\xi\in\Gamma(\frak k)$, there holds \[\gamma\wedgedot (\vartheta\otimes\xi)=\vartheta\wedge \gamma(\xi,\cdot).\]
More generally, if 1-forms $\vartheta_2,\dots,\vartheta_\ell\in\Omega^1(M;\frak k)$ are given, then
\[
\gamma\wedgedot(\vartheta\otimes\xi, \vartheta_2,\dots,\vartheta_\ell)=\vartheta\wedge\big(\gamma(\xi,\cdot)\wedgedot (\vartheta_2,\dots,\vartheta_\ell)\big).
\]
\item For any $\vartheta\in\Omega^\ell(M;\frak k)$ and $X\in\vf(M)$, there holds
\[\iota_X(\gamma\wedgedot\vartheta)=\gamma\wedgedot(\iota_X\vartheta)+(-1)^\ell (\iota_X\gamma)\wedgedot \vartheta.\]
\item For any $\vartheta\in\Omega^\ell(M;\frak k)$ and $\alpha\in\Gamma(A)$, there holds
\[
\L^A_\alpha(\gamma\wedgedot\vartheta)
=(\L^A_\alpha\gamma)\wedgedot\vartheta+\gamma\wedgedot(\L^A_\alpha\vartheta). 
\]
\item If $\gamma=c_j$ is the $j$-th correction term of $c=(c_0,\dots,c_p)\in W^{p,q}(A;\frak k)$, then there holds
\begin{align*}
c_j(f\alpha_1,\alpha_2,&\dots,\alpha_{p-j})\wedgedot (\vartheta_1,\dots,\vartheta_i)\\
&=f c_j(\ul\alpha)\wedgedot\ul\vartheta+(-1)^i\d f\wedge c_{j+1}(\alpha_2,\dots,\alpha_{p-j}\|\alpha_1,\cdot)\wedgedot \ul\vartheta,
\end{align*}
for any sections $\alpha_1,\dots,\alpha_{p-j}\in\Gamma(A)$, forms $\vartheta_1,\dots,\vartheta_i\in\Omega^1(M;\frak k)$ where $i\leq j$, and any function $f\in C^\infty(M)$. 
\end{enumerate}
\end{lemma}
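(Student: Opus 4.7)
The plan is to tackle the five properties in turn, relying on the explicit formula \eqref{eq:wedgedot_multiple} together with the identification $\gamma\wedgedot\vartheta=(-1)^{k\ell}\gamma\wedge\vartheta$ from Remark \ref{rem:wedges}. Items (i)--(iv) are essentially direct computations; the substance lies in (v), where the sign bookkeeping is delicate.

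For (i), alternation follows by performing the change of variables $\sigma\mapsto\sigma\circ\tau$ in \eqref{eq:wedgedot_multiple} for any permutation $\tau$ of $\{1,\dots,j\}$. This produces the factor $\sgn\tau$ overall, while the induced permutation of the first $j$ arguments of $\gamma$ leaves the value unchanged by symmetry of $S^j(A^*)$. The $C^\infty(M)$-multilinearity is immediate since each $\vartheta_i$ enters exactly once, as $\vartheta_i(X_{\sigma(i)})$. For (ii), one specialises \eqref{eq:wedgedot_multiple} to $\vartheta_1=\vartheta\otimes\xi$, factors out $\vartheta(X_{\sigma(1)})$ from each summand, and splits the sum according to the value of $\sigma(1)$; the residual sum reproduces $\gamma(\xi,\cdot)\wedgedot(\vartheta_2,\dots,\vartheta_\ell)$ and the sign $(-1)^{\sigma(1)-1}$ is exactly what the expansion of $\vartheta\wedge(\cdot)$ requires.

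For (iii) and (iv), both are Leibniz-type identities and the cleanest route is via Remark \ref{rem:wedges}. For (iii), using $\iota_X(\gamma\wedge\vartheta)=(\iota_X\gamma)\wedge\vartheta+(-1)^k\gamma\wedge(\iota_X\vartheta)$ on the $\wedge$-side and tracking the exponents $(-1)^{k\ell}$, $(-1)^{(k-1)\ell}$, $(-1)^{k(\ell-1)}$ arising from the three conversions between $\wedge$ and $\wedgedot$, the residual sign on the $(\iota_X\gamma)\wedgedot\vartheta$ term collapses to $(-1)^{\ell}$, as stated. For (iv), $\L^A_\alpha$ acts as a derivation of $S^j(A^*)\otimes \frak k$ through the chain rule built into the definition of the representation, hence also as a derivation of $\wedge$; there is no additional sign to track, matching the stated formula.

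The main step is (v), and I expect sign tracking to be the only nontrivial point. The strategy is to apply the Weil cochain Leibniz identity to $c_j(f\alpha_1,\alpha_2,\dots,\alpha_{p-j}\|\cdot)$ and then pair with $(\vartheta_1,\dots,\vartheta_i)$. The first summand passes through by $C^\infty(M)$-linearity from (i), yielding $f\,c_j(\ul\alpha)\wedgedot\ul\vartheta$. For the second, the task reduces to the auxiliary identity
\[
(\d f\wedge\beta)\wedgedot(\vartheta_1,\dots,\vartheta_i)=(-1)^i\,\d f\wedge\big(\beta\wedgedot(\vartheta_1,\dots,\vartheta_i)\big),
\]
for $\beta\in\Omega^{q-j-1}(M;S^{j+1}(A^*)\otimes\frak k)$. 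I would prove this by induction on $i$: for $i=1$, Remark \ref{rem:wedges} gives $(\d f\wedge\beta)\wedgedot\vartheta_1=(-1)^{q-j}\d f\wedge\beta\wedge\vartheta_1$ and $\d f\wedge(\beta\wedgedot\vartheta_1)=(-1)^{q-j-1}\d f\wedge\beta\wedge\vartheta_1$, whose ratio is $-1$. The inductive step multiplies these signs up to $(-1)^i$, and substituting back produces the stated formula.
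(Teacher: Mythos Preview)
Your proof is correct and follows essentially the same path as the paper. For (i)--(iv) you and the paper both argue directly from the definition or via the relation $\gamma\wedgedot\vartheta=(-1)^{k\ell}\gamma\wedge\vartheta$ from Remark~\ref{rem:wedges}; your sign bookkeeping in (iii) is accurate.

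The only notable difference is in (v). The paper derives it from item (ii): writing each $\vartheta_m$ locally as a sum of simple tensors $\eta_m\otimes\xi_m$, item (ii) gives $c_j(\cdots)\wedgedot(\eta_1\otimes\xi_1,\dots,\eta_i\otimes\xi_i)=\eta_1\wedge\cdots\wedge\eta_i\wedge c_j(\cdots\|\xi_1,\dots,\xi_i,\cdot)$, and then the Leibniz rule for $c$ applied to the last factor produces a $\d f$ that must be commuted past $i$ one-forms, yielding $(-1)^i$. You instead apply Leibniz first and isolate the auxiliary identity $(\d f\wedge\beta)\wedgedot\ul\vartheta=(-1)^i\d f\wedge(\beta\wedgedot\ul\vartheta)$, which you prove by induction on $i$ using the $\wedge$/$\wedgedot$ relation. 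Both routes are equivalent in spirit---the $(-1)^i$ ultimately comes from moving $\d f$ past $i$ degree-one objects---but the paper's route is slightly more economical since (ii) is already available. One small slip: your $\beta=c_{j+1}(\alpha_2,\dots,\alpha_{p-j}\|\alpha_1,\cdot)$ lies in $\Omega^{q-j-1}(M;S^{j}(A^*)\otimes\frak k)$, not $S^{j+1}$, since one symmetric slot has already been consumed by $\alpha_1$; this is harmless for your argument, which depends only on form degrees.
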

\begin{proof}
Properties (i) and (ii) are clear from the definition. Property (iii) is easily inferred from relation \eqref{eq:wedge_wedgedot} by noting that the identity
\[
\iota_X(T\wedge\vartheta)=(\iota_X T)\wedge\vartheta+(-1)^k T\wedge(\iota_X\vartheta)
\]
holds for any $T\in\Omega^k(M;\End V)$ and $\vartheta\in\Omega^\ell(M;V)$, for any given vector bundle $V\ra M$. The statement (iv) is a consequence 
of the following general fact:  if an $A$-connection $\nabla^A$ on $V$ is given, then $\L^A$ is distributive over $\wedge$, that is, 
\begin{align}
\label{eq:L_A_wedge}
  \L^A_\alpha(T\wedge\vartheta)=(\L^A_\alpha T)\wedge\vartheta + T\wedge (\L^A_\alpha \vartheta),
\end{align}
where $\L^A_\alpha T$ is defined, as usual, by the chain rule $(\L^A_\alpha T)\cdot\xi=\L^A_\alpha(T\cdot \xi)-T\cdot \nabla_\alpha^A\xi$, for any $\xi\in\Gamma(V)$. It then follows from relation \eqref{eq:wedge_wedgedot} that $\L^A$ is distributive over $\wedgedot$ as well. Finally, item (v) is a straightforward consequence of item (ii), combined with the Leibniz rule for the cochain $c$.
\end{proof}
We now arrive to the crucial property of the horizontal projection $h^*$.
\begin{proposition}
\label{prop:h_cochain}
Let $A$ be a Lie algebroid with an IM connection $(\C,v)\in\A(A;\frak k)$ for a bundle of ideals $\frak k$. The horizontal projection of Weil cochains is a cochain map, that is,
\[
h^*\delta=\delta h^*.
\]
\end{proposition}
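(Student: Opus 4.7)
The plan is to sidestep the combinatorial complexity of Definition \ref{def:hor_proj} by invoking the alternative exterior-cochain model alluded to and developed in Section \ref{sec:homogeneous_horizontal}. In that model, cochains are described as bona fide differential forms on a VB-algebroid built out of $TA$, the simplicial differential $\delta$ has a clean pullback-type description, and the horizontal projection $h^*$ is literally precomposition with the bundle map $h\colon TA\to E$ in every argument, in direct parallel with Definition \ref{defn:D} in the Lie-groupoid setting. This way, the proof becomes a conceptual restatement of the groupoid argument rather than a manipulation of shuffle sums.

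First I would recall that an IM connection $(\C,v)\in\A(A;\frak k)$ is equivalent to choosing an Ehresmann connection $E\subset TA$ for $A\to A/\frak k$ which is a VB-subalgebroid of $TA$ over $TM$; this is the infinitesimal counterpart of the multiplicativity condition on $E\subset TG$ that was exploited in the proof of Theorem \ref{thm:deltaD}. Under the model isomorphism of Theorem \ref{thm:derivation_hor_proj}, the Weil-cochain horizontal projection of Definition \ref{def:hor_proj} is then identified with pointwise precomposition by $h$.

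Next I would carry out the analogue of the groupoid argument of Theorem \ref{thm:deltaD}. In the exterior-cochain model the simplicial differential $\delta$ is assembled from face-type operators induced by the bracket and anchor of the VB-algebroid $TA$, and the VB-subalgebroid condition on $E$ ensures that these face operators restrict to $E$. Consequently, precomposition by $h$ commutes with each face operator, yielding $h^*\delta=\delta h^*$ on exterior cochains. Theorem \ref{thm:derivation_hor_proj} then transports the identity back to Weil cochains, since the model isomorphism intertwines both $h^*$ (by the very definition of $h^*$ on Weil cochains, which was obtained from the exterior-cochain $h^*$) and $\delta$ (tautologically).

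The main obstacle will be the bookkeeping in the first step: verifying carefully that IM multiplicativity of $(\C,v)$ really is equivalent to the VB-subalgebroid condition on $E\subset TA$, and that the formula of Definition \ref{def:hor_proj} is indeed the image under the model isomorphism of pointwise precomposition with $h$. Beyond this, the argument is a structural one. A fully direct proof in the Weil model would in principle be possible using Lemma \ref{lem:wedgedot} to distribute $\delta$ across the shuffle sums in Definition \ref{def:hor_proj}, matching terms against the three summands of \eqref{eq:delta_inf}, but it would require tracking many cancellations involving the Lie derivative, the bracket, the anchor, and the compatibility conditions \eqref{eq:s1}--\eqref{eq:s3}, and appears combinatorially forbidding by comparison with the VB-algebroid viewpoint.
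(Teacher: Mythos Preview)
Your proposal is correct and is essentially the paper's own argument: the paper states that the result is a direct consequence of Proposition~\ref{prop:h_cochain_ext} (that $h^*$ is a cochain map on exterior cochains, because $h\colon\AA_q\to\AA_q$ is a Lie algebroid morphism) together with Theorem~\ref{thm:derivation_hor_proj} (the model isomorphism intertwines the two horizontal projections). The paper additionally supplies a direct computation at level $p=1$ using Lemma~\ref{lem:wedgedot} and the compatibility conditions \eqref{eq:c1}--\eqref{eq:c2}, which is exactly the combinatorial route you sketch at the end.
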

This is a direct consequence of Proposition \ref{prop:h_cochain_ext} and Theorem \ref{thm:derivation_hor_proj} since the two models are isomorphic, however, the last lemma enables us to prove it directly. The computation for the general case is tedious, so we only provide a direct proof for the level $p=1$, revealing the key ingredient is infinitesimal multiplicativity of $(\C,v)$.
\begin{proof}
Let $c=(c_0,c_1)\in W^{1,q}(A;\frak k)$ be a Weil 1-cochain; we begin with the leading term. Using the definition \eqref{eq:delta_p=1} of $\delta$ together with the definition of $h^*$, we obtain
\begin{align*}
\begin{split}
(h^*\delta c)_0(\alpha_1,\alpha_2)&=(\delta c)_0(\alpha_1,\alpha_2)-\big((\delta c)_1(\alpha_2\|\cdot)\wedgedot \C\alpha_1- (\delta c)_1(\alpha_1\|\cdot)\wedgedot \C\alpha_2\big)+\cancel{(\delta c)_2\wedgedot (\C\alpha_1,\C\alpha_2)}\\
&=\L^A_{\alpha_1}c_0(\alpha_2)-\L^A_{\alpha_2}c_0(\alpha_1)-c_0[\alpha_1,\alpha_2]-(-\L^A_{\alpha_2}c_1\wedgedot \C\alpha_1+\L^A_{\alpha_1}c_1\wedgedot \C\alpha_2),
\end{split}
\end{align*}
where we have used that $\frak k\subset\ker\rho$. We now apply Lemma \ref{lem:wedgedot} (iv) on the last two terms, and combine with the first two terms to obtain
\begin{align*}
(h^*\delta c)_0&(\alpha_1,\alpha_2)=\L^A_{\alpha_1}(h^*c)_0(\alpha_2)-\L^A_{\alpha_2}(h^*c)_0(\alpha_1)-\big(c_0[\alpha_1,\alpha_2]-c_1\wedgedot(\L^A_{\alpha_1}\C\alpha_2-\L^A_{\alpha_2}\C\alpha_1)\big).
\end{align*}
Using the condition \eqref{eq:c1} for the IM connection $(\C,v)$ shows this is equal to $(\delta h^* c)_0(\alpha_1,\alpha_2)$. We next inspect the first correction term:
\begin{align*}
(h^*\delta c)_1(\alpha\|\beta)&=(\delta c)_1(\alpha\|h\beta)-(\delta c)_2(h\beta,\cdot)\wedgedot \C\alpha\\
&=-\L^A_\alpha (c_1 (h\beta))+c_1[\alpha,h\beta]+\iota_{\rho(\beta)} c_0(\alpha)+(\iota_{\rho(\beta)}c_1)\wedgedot \C\alpha,
\end{align*}
where we have observed that $\rho (h\beta)=\rho(\beta)$. On the other hand,
\begin{align*}
(\delta h^* c)_1(\alpha\|\beta)&=-\L^A_\alpha((h^*c)_1(\beta))+(h^*c)_1[\alpha,\beta]+\iota_{\rho(\beta)}(h^* c)_0(\alpha)\\
&=-\L^A_\alpha (c_1 (h\beta))+c_1 (h[\alpha,\beta])+\iota_{\rho(\beta)}c_0(\alpha)-\iota_{\rho(\beta)}(c_1\wedgedot \C\alpha).
\end{align*}
Using the condition \eqref{eq:c2} on the IM connection, we get 
\[
h[\alpha,\beta]=[\alpha,h\beta]+\iota_{\rho(\beta)}\C\alpha,
\]
and now use Lemma \ref{lem:wedgedot} (iii) to conclude $(h^* \delta c)_1=(\delta h^* c)_1$. The equality for the second correction term is simple and left to the reader.
\end{proof}

\subsection{Horizontal exterior covariant derivative}
\begin{definition}
Let $A$ be a Lie algebroid with an IM connection $(\C,v)\in\A(A;\frak k)$. The \textit{horizontal exterior covariant derivative} of Weil cochains is defined as the map
\begin{align*}
\D{}^{(\C,v)}=h^*\d{}^\nabla\colon W^{p,q}(A;\frak k)\ra W^{p,q+1}(A;\frak k)^\Hor,
\end{align*}
where $\nabla=\C|_\frak k$ is the induced connection on $\frak k$.
\end{definition}
\begin{example}
The most important case is $p=1$, where equations \eqref{eq:ext_cov_der_p=1} and \eqref{eq:h_p=1} yield
\begin{align}
\label{eq:D_inf}
\begin{split}
  (\D{}^{(\C,v)}c)_0(\alpha)&=\d{}^\nabla c_0(\alpha)-(\d{}^\nabla c)_1\wedgedot \C\alpha,\\
  (\D{}^{(\C,v)}c)_1(\beta)&=(\d{}^\nabla c)_1(h\beta),
\end{split}
\end{align}
where we recall that $(\d{}^\nabla c)_1(\beta)=c_0(\beta)-\d{}^\nabla (c_1(\beta))$ for any $\beta\in\Gamma(A)$, hence the second term of the leading coefficient, for any vectors $X_i\in \vf(M)$, reads
\begin{align*}
  \big((\d{}^\nabla c)_1\wedgedot \C\alpha\big)(X_0,\dots,X_q)=\textstyle\sum_i (-1)^i \big(c_0(\C\alpha (X_i))-\d{}^\nabla c_1(\C\alpha (X_i))\big)(X_0,\dots,\widehat{X_i},\dots,X_q).
\end{align*}
\end{example}
We now prove the infinitesimal analogue of Theorem \ref{thm:deltaD}.
\begin{theorem}
\label{thm:deltaD_inf}
Let $A$ be a Lie algebroid with an IM connection $(\C,v)\in\A(A;\frak k)$. The horizontal exterior covariant derivative is a cochain map, that is,
\begin{align*}
\delta\D{}^{(\C,v)}=\D{}^{(\C,v)}\delta.
\end{align*}
In particular, $\D{}^{(\C,v)}$ maps IM forms to IM forms. 
\end{theorem}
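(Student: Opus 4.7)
The plan is to mirror the argument of Theorem \ref{thm:deltaD} in the infinitesimal setting. Since $h^*$ commutes with the simplicial differential by Proposition \ref{prop:h_cochain}, and since $\D{}^{(\C,v)} = h^* \circ \d{}^\nabla$, a direct computation gives
\[
[\D{}^{(\C,v)}, \delta] \;=\; h^* \d{}^\nabla \delta - \delta h^* \d{}^\nabla \;=\; h^*(\d{}^\nabla \delta - \delta \d{}^\nabla) \;=\; h^*[\d{}^\nabla, \delta].
\]
Thus the theorem reduces to showing $h^*[\d{}^\nabla, \delta]c = 0$ for every Weil cochain $c$.

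By Lemma \ref{lem:A_invariant}, the commutator $[\d{}^\nabla, \delta]c$ is controlled by the $A$-invariance form $(T, \theta) \in W^{1,1}(A; \End \frak k)$ of the induced connection $\nabla = \C|_{\frak k}$, appearing via the wedge-like pairing introduced there. The key observation is that by Proposition \ref{prop:conn2}, we have $\nabla_{\rho\alpha}\xi = [h(\alpha),\xi]$, which yields the symbol identity
\[
\theta(\alpha) \;=\; \nabla^A_\alpha - \nabla_{\rho\alpha} \;=\; [v(\alpha),\,\cdot\,],
\]
so that $\theta$ vanishes on the horizontal subbundle $H = \ker v$; using compatibility condition \eqref{eq:s2} one obtains a parallel description of $T(\alpha)$ in terms of $\d{}^\nabla[v\alpha,\cdot]$ and $[U(h\alpha),\cdot]$, making precise the sense in which $(T,\theta)$ depends only on the vertical data of $\alpha$. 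This is the infinitesimal counterpart of Corollary \ref{cor:diff_nabla_ts}.

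It remains to show that applying $h^*$ kills $(T,\theta)\wedge c$. The correction terms involving $\theta(\beta_j)$ vanish immediately after horizontal projection, since the formula for $h^*$ from Definition \ref{def:hor_proj} replaces each symbol argument $\beta_j$ by its horizontal component $h\beta_j$, and $\theta(h\beta_j) = 0$. The contributions involving $T(\alpha_i)$ require rewriting $T(h\alpha_i)$ via \eqref{eq:s2} and using the Leibniz-type identities for the pairing $\wedgedot$ collected in Lemma \ref{lem:wedgedot}, together with the compatibility conditions \eqref{eq:c1}--\eqref{eq:c3} of $(\C, v)$, to produce the required cancellations between shuffles. Since the formula for $h^*$ involves a sum over shuffles pairing components of $c$ with $\C\alpha_i$'s, the combinatorial bookkeeping is the main obstacle; to bypass it, the conceptually cleaner route is to carry out the argument in the exterior cochain model of \sec\ref{sec:homogeneous_horizontal}, where $h^*$ is just the precomposition with the horizontal projection $TA \to E$, so that horizontal projection annihilates the obstruction directly—exactly as in the proof of Theorem \ref{thm:deltaD}—and the conclusion transports back to Weil cochains through the isomorphism of complexes given by Theorem \ref{thm:derivation_hor_proj}.
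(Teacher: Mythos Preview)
Your opening reduction $[\D{}^{(\C,v)},\delta]=h^*[\d{}^\nabla,\delta]$ and the identification $\theta(\alpha)=[v\alpha,\cdot]$ are exactly right and match the paper. But from that point you miss the decisive simplification.

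The paper does not attempt to show $h^*\big((T,\theta)\wedge c\big)=0$ directly for general $c$. It first completes the computation of the invariance form: using \eqref{eq:s1} and \eqref{eq:s2} one obtains the clean formula
\[
T(\alpha)\cdot\xi=[\C\alpha,\xi],
\]
not merely a vague description ``in terms of $\d{}^\nabla[v\alpha,\cdot]$ and $[U(h\alpha),\cdot]$''. With this in hand, the paper invokes the compatibility $h^*\big((T,\theta)\wedge c\big)=h^*(T,\theta)\wedge h^*c$, which reduces everything to the level-$1$ computation $h^*(T,\theta)=0$. The latter is then immediate from \eqref{eq:h_p=1}: the leading term is $T(\alpha)-\theta\wedgedot\C\alpha=[\C\alpha,\cdot]-[\C\alpha,\cdot]=0$, and the symbol is $\theta(h\alpha)=0$.

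Your direct attack has genuine gaps. The claim that ``the correction terms involving $\theta(\beta_j)$ vanish immediately after horizontal projection'' is only half true: the formula in Definition \ref{def:hor_proj} replaces the explicit symbol arguments $\beta_j$ by $h\beta_j$, but it also feeds $\C\alpha_{\sigma(i)}$ into the \emph{remaining} symbol slots via $\wedgedot$, and $\theta(\C\alpha(X))=[\C\alpha(X),\cdot]$ does not vanish---these are precisely the terms that must cancel against the $T$-contributions. Likewise, $h^*$ does \emph{not} replace the antisymmetric arguments $\alpha_i$ by $h\alpha_i$, so the phrase ``rewriting $T(h\alpha_i)$'' is off target. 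Finally, the suggestion to carry out the argument in the exterior cochain model is not directly available: \sec\ref{sec:homogeneous_horizontal} develops $h^*$ and $\delta$ in that model, but neither $\d{}^\nabla$ nor the pairing $(T,\theta)\wedge\,\cdot$, so transporting the commutator there would require new machinery rather than bypassing the combinatorics.
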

In other words, an IM connection yields the columns of a curved double complex, depicted in the diagram below, with the feature that $\D{}^{(\C,v)}$ does not square to zero unless $U=0$ and $R^\nabla=0$, as will become clear in \sec\ref{sec:curvature}.
\begin{align}
\label{eq:weil_ideals}
\begin{tikzcd}[ampersand replacement=\&, column sep=large,row sep=large]
	{\Omega^{q+1}(M;V)} \& {W^{1,q}(A;V)} \& {W^{2,q+1}(A;V)} \& \cdots \\
	{\Omega^q(M;V)} \& {W^{1,q}(A;V)} \& {W^{2,q}(A;V)} \& \cdots
	\arrow["{\delta}", from=1-1, to=1-2]
	\arrow["{\delta}", from=1-2, to=1-3]
	\arrow["{\delta}", from=1-3, to=1-4]
	\arrow["{\d{}^\nabla}", from=2-1, to=1-1]
	\arrow["{\delta}", from=2-1, to=2-2]
	\arrow["{\D{}^{(\C,v)}}", from=2-2, to=1-2]
	\arrow["{\delta}", from=2-2, to=2-3]
	\arrow["{\D{}^{(\C,v)}}", from=2-3, to=1-3]
	\arrow["{\delta}", from=2-3, to=2-4]
\end{tikzcd}
\end{align}
\begin{proof}
First observe that by Proposition \ref{prop:h_cochain}, $[\D{}^{(\C,v)},\delta]=h^*[\d{}^\nabla,\delta]$. As in the case of groupoids, we will use the explicit expression for $[\d{}^\nabla,\delta]$ from Lemma \ref{lem:A_invariant}, and to do so, we first have to compute the invariance form $(T,\theta)$ of connection $\nabla=\C|_{\frak k}$, given by \eqref{eq:invariance_form}. Equation \eqref{eq:diff_a_conn} already states that the tensor $\theta\colon A\ra \End\frak k$ reads
\[
\theta(\alpha)=[v\alpha,\cdot],
\]
and on the other hand, the map $T\colon\Gamma(A)\ra \Omega^1(M;\End \frak k)$ equals
\begin{align*}
T(\alpha)\cdot\xi&=\nabla(\theta(\alpha)\cdot\xi)-\theta(\alpha)\cdot\nabla\xi-\iota_{\rho(\alpha)}R^\nabla=\nabla[v\alpha,\xi]-[v\alpha,\nabla\xi]-[U(h\alpha),\xi]\\&=[\nabla (v\alpha)-U(h\alpha),\xi]=[\C\alpha,\xi],
\end{align*}
where we have used the properties \eqref{eq:s1} and \eqref{eq:s2} of the IM connection $(\C,v)$. Now, since there holds $h^*((T,\theta)\wedge c)=h^*(T,\theta)\wedge h^* c$
, it is enough to show $h^*(T,\theta)=0$. This is a simple computation: for the leading term,
\begin{align*}
  (h^*(T,\theta))_0(\alpha)(X)\xi=T(\alpha)(X)\xi-\theta(\C\alpha(X))\xi=[\C\alpha(X),\xi]-[\C\alpha(X),\xi]=0,
\end{align*}
and for the symbol, $(h^*(T,\theta))_1(\alpha)=\theta(h\alpha)=0$.
\end{proof}

\subsection{Van Est map and \texorpdfstring{$\D{}^\omega$}{the horizontal exterior covariant derivatives}}
\label{sec:van_est_D}
In this section we inspect the relationship between the van Est map and the horizontal exterior covariant derivatives. The following theorem states they commute at the level of multiplicative forms, and the proof will also reveal that they do not commute on general  cochains.
\begin{theorem}
  \label{thm:van_est_D}
  Let $\omega\in\A(G;\frak k)$ be a multiplicative Ehresmann connection on a Lie groupoid $G$. Let $A$ be its Lie algebroid, endowed with the IM connection $(\C,v)=\ve(\omega)$. The van Est map commutes with the horizontal exterior covariant derivatives at the level of multiplicative forms, that is, the following diagram commutes.

\begin{align}
  \label{eq:square_D}
    \begin{tikzcd}[row sep=large,column sep=large,ampersand replacement=\&]
    {\Omega^\bullet_m(G;\frak k)} \& {\Omega^{\bullet+1}_m(G;\frak k)} \\
    {\Omega^\bullet_{im}(A;\frak k)} \& {\Omega^{\bullet+1}_{im}(A;\frak k)}
    \arrow["{\D{}^\omega}", from=1-1, to=1-2]
    \arrow["{\ve}"', from=1-1, to=2-1]
    \arrow["{\ve}", from=1-2, to=2-2]
    \arrow["{\D{}^{(\C,v)}}"', from=2-1, to=2-2]
  \end{tikzcd}
  \end{align}
\end{theorem}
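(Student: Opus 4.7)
I would first reduce the claim by invoking Theorem \ref{thm:van_est_G_A}. For any multiplicative form $\tau\in\Omega^q_m(G;\frak k)$, the form $\tau$ is normalized, and so is $\d{}^\nabla\tau$ (by pullback naturality of the exterior covariant derivative, $u^*\d{}^\nabla\tau = \d{}^\nabla u^*\tau = 0$). Hence Theorem \ref{thm:van_est_G_A} gives $\ve(\d{}^\nabla\tau) = \d{}^\nabla(\ve\tau)$, and the desired identity
\[
\ve(\D{}^\omega\tau) = \ve(h^*_G \d{}^\nabla\tau) \stackrel{?}{=} h^*_A(\d{}^\nabla\ve\tau) = \D{}^{(\C,v)}(\ve\tau)
\]
reduces to showing $\ve(h^*_G\eta) = h^*_A(\ve\eta)$ for $\eta \coloneqq \d{}^\nabla\tau$.

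\textbf{Symbol.} I would next verify the symbol equality, which in fact holds on any $\eta\in\Omega^{1,q+1}(G;\frak k)$. At $x\in M$ and $Y_i\in T_xM$, a direct comparison yields
\[
(\ve h^*_G\eta)_1(\beta)|_x(Y_i)_i = \eta_{1_x}\big(h\beta_x,\, h(\d u_x Y_i)\big)_i = \eta_{1_x}\big(h\beta_x,\, \d u_x Y_i\big)_i = (h^*_A\ve\eta)_1(\beta)|_x(Y_i)_i,
\]
where $\d u(TM)\subset E$ holds because $E\rra TM$ is a wide Lie subgroupoid of $TG\rra TM$ whose unit section is precisely $\d u$.

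\textbf{Leading term and main obstacle.} For the leading term, the normalization of $\eta$ combined with the lemma preceding Theorem \ref{thm:van_est_G_A} (which gives $R_\alpha = u^*\L^{\nabla^s}_{\alpha^L}$ on normalized forms) yields
\[
(\ve h^*_G\eta)_0(\alpha) = u^*\L^{\nabla^s}_{\alpha^L}(h^*_G\eta),\qquad (h^*_A\ve\eta)_0(\alpha) = u^*\L^{\nabla^s}_{\alpha^L}\eta - (\ve\eta)_1 \wedgedot \C\alpha.
\]
The main technical step is to identify the commutator $[\L^{\nabla^s}_{\alpha^L}, h^*_G]\eta$ restricted to $u(M)$ with $(\ve\eta)_1\wedgedot\C\alpha$. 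This commutator measures the infinitesimal failure of the flow $\phi^{\alpha^L}_\lambda$ to preserve the horizontal distribution $E$: differentiating $h\circ\d\phi^{\alpha^L}_\lambda - \d\phi^{\alpha^L}_\lambda\circ h$ at $\lambda=0$ and pulling back to $u(M)$ yields a tensor which, once inserted into $\eta$, reproduces the correction $(\ve\eta)_1\wedgedot\C\alpha$ via the identification $\C\alpha = u^*\L^{\nabla^s}_{\alpha^L}\omega$ coming from $\ve(\omega) = (\C, v)$. This step is the hard part, and it essentially uses the multiplicativity of $\omega$ (equivalently of $E$) to translate the flow-theoretic failure into the infinitesimal connection data $\C$. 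For general Weil cochains, the leading term $c_0$ is not of the form $R_\alpha\eta$ for a normalized $\eta$, so neither the formula $R_\alpha = u^*\L^{\nabla^s}_{\alpha^L}$ nor the identification of the above commutator with $(\ve\eta)_1\wedgedot\C\alpha$ applies, which explains why the commutativity in the diagram only holds at the level of multiplicative forms.
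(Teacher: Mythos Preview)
Your reduction via Theorem \ref{thm:van_est_G_A} and the symbol computation are both correct, and they match the paper's approach. The gap is in the leading-term step. You claim that
\[
u^*[\L^{\nabla^s}_{\alpha^L},h^*_G]\eta \;=\; \pm(\ve\eta)_1\wedgedot\C\alpha
\]
for $\eta=\d{}^\nabla\tau$, but this identity is \emph{not} true for such $\eta$ without further input. Unwinding your commutator gives $u^*[\L^{\nabla^s}_{\alpha^L},h^*_G]\eta=-R_\alpha(v^*\eta)$ (using $u^*h^*_G=u^*$ and normalization of $\eta$ and $h^*_G\eta$), and this is exactly the quantity the paper analyses in Lemma \ref{lem:cochain_homotopy}. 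That lemma shows
\[
R_\alpha(v^*\eta)=(\ve\eta)_1\wedgedot\C\alpha\;+\;\Big(\text{a term expressed through }v^*\delta\eta\Big),
\]
so your claimed identification holds if and only if the second contribution vanishes. It does vanish when $\eta$ is multiplicative, but $\eta=\d{}^\nabla\tau$ is \emph{not} multiplicative in general (precisely because $\nabla$ is not $G$-invariant when $\frak k$ is nonabelian). Your heuristic of differentiating $h\circ\d\phi^{\alpha^L}_\lambda-\d\phi^{\alpha^L}_\lambda\circ h$ detects only the first piece; the cross-terms arising when several arguments of $\eta$ are simultaneously projected produce the second piece, and these do not organize into $\C\alpha$ alone.

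The paper's fix is to use multiplicativity of $\tau$ (not just of $\omega$) one more time: since $\delta\tau=0$, one has $\delta(\d{}^\nabla\tau)=-[\d{}^\nabla,\delta]\tau$, and by Lemma \ref{lem:G_invariant} this equals (up to sign) $\pr_2^*\Theta\wedge(f_2^{(2)})^*\tau$. In the expression from Lemma \ref{lem:cochain_homotopy}, the second components of the inserted vectors are always horizontal (they are $\d(\inv)hX_i^\lambda$), and it is exactly these components that feed into $\Theta$; since $h^*\Theta=0$ by Corollary \ref{cor:diff_nabla_ts}, the obstruction term vanishes. This is the missing ingredient in your plan: the multiplicativity of $\tau$ enters not only to make $\ve$ commute with $\d{}^\nabla$, but also to control $\delta(\d{}^\nabla\tau)$ via $\Theta$, and then the specific property $\Theta|_E=0$ of the induced connection $\nabla$ does the rest. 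Your closing explanation for why the diagram only holds on multiplicative forms is accordingly off target: the failure for general cochains is not about normalization, but about the nonvanishing of this $\delta$-correction (cf.\ Remark \ref{rem:cochain_homotopy}).
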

\begin{lemma}
  \label{lem:cochain_homotopy}
  Let $\omega\in \A(G;\frak k)$ be a multiplicative connection on a Lie groupoid $G\rra M$. The commutator $[\ve,h^*]\colon \Omega^{p,q}(G;\frak k)\ra W^{p,q}(A;\frak k)^\Hor$ at level $p=1$ has a vanishing symbol, and its leading term equals
  \begin{align*}
    \ve (h^*\eta)_0(\alpha)(X_i)_i-(h^*\ve \eta)_0(\alpha)(X_i)_i=\deriv \lambda 0 (v^*\delta \eta)_{(g_\lambda,g_{\lambda}^{\smash{-1}})}\big(X^\lambda_{i}, \d(\inv)h X^\lambda_{i}\big)_i
    \end{align*}
    for any $\eta\in\Omega^q(G;s^*\frak k)$, $\alpha\in \Gamma(A)$ and $X_i\in T_x M$. Here, we have denoted $g_\lambda=\phi^{\alpha^L}_\lambda(1_x)$ and $X_i^\lambda=\d(\phi^{\alpha^L}_{\lambda_{\phantom L}})\d u(X_i)$, and $v^*=\id-h^*$ denotes the vertical projection of differential forms. 
\end{lemma}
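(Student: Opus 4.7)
The plan is to verify the identity separately at the level of the symbol and the leading term.

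For the symbol, both $(\ve h^*\eta)_1(\beta)$ and $(h^*\ve\eta)_1(\beta)$ reduce to $u^*(\iota_{(h\beta)^L}\eta)$ (up to the common sign from the van Est convention). The key reduction uses the decomposition $\beta^L = (h\beta)^L + (v\beta)^L$, which is precisely the splitting of $\beta^L$ into horizontal and vertical parts: the component $(v\beta)^L$ lies in $K$ since $v\beta \in \Gamma(\frak k)$, while $(h\beta)^L$ is horizontal because multiplicativity of $\omega$ forces $\omega((h\beta)^L) = v(h\beta) = 0$. A second ingredient is $\d u(T_xM) \subset E_{1_x}$, which follows from evaluating multiplicativity of $\omega$ on the composable pair $(\d u X, \d u X)$ at $(1_x, 1_x)$, as in the proof of the symbol claim for $\D{}^\omega$.

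For the leading term, the plan is to expand both sides and match them explicitly. Using $\ve(\eta)_0(\alpha) = R_\alpha \eta$, $\ve(\eta)_1(\beta) = -J_\beta \eta$, and the definition of $h^*$ on Weil $1$-cochains from Example~\ref{ex:low_levels_h}, the LHS becomes $R_\alpha(h^*\eta) - R_\alpha\eta + \ve(\eta)_1 \wedgedot \C\alpha$. The first two terms measure a derivative of $v^*\eta$ along the flow $\phi^{\alpha^L}_\lambda$, while the third involves the pairing $\wedgedot$. For the RHS, I would use the identity $v^*\delta = \delta v^*$, which is a direct consequence of the cochain map property of $h^*$ established in the analogous manner of Proposition~\ref{prop:h_cochain}, and then expand the simplicial differential termwise. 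The piece involving $\eta_{g_\lambda^{-1}}$ vanishes since the vectors $\d(\inv)h X_i^\lambda$ are horizontal (as $\d(\inv)$ preserves the Lie subgroupoid $E$) and $v^*\eta$ annihilates all-horizontal tuples. The remaining two pieces involve the multiplication $\d m$ and the translation $g_\lambda\cdot(v^*\eta)_{g_\lambda}$. The key geometric identity, exploiting bilinearity of $\d m$ on composable pairs, is
\[
\d m(X_i^\lambda, \d(\inv)hX_i^\lambda) = \d m(hX_i^\lambda, \d(\inv)hX_i^\lambda) + \d m(vX_i^\lambda, 0),
\]
where the first summand is horizontal by multiplicativity of $\omega$ (so $\d m(E^{(2)}) \subset E$) and lies in the fixed fibre $E_{1_x}$, and the second equals $\d R_{g_\lambda^{-1}}(vX_i^\lambda) = \Ad_{g_\lambda}\omega(X_i^\lambda) \in \frak k_x$. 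Its derivative at $\lambda = 0$ recovers $\C\alpha(X_i)$ via the formula $\C\alpha(X) = \deriv\lambda 0 \omega_{g_\lambda}(X^\lambda)$ (which holds because $\omega(X^0) = \omega(\d u X) = 0$). Differentiating $v^*\eta$ evaluated on these vectors at $\lambda = 0$, only the vertical piece $\C\alpha(X_k)$ in one slot contributes, since the remaining arguments are the horizontal vectors $\d u X_j$ and $v^*\eta$ kills all-horizontal tuples; the resulting sum over $k$ reconstructs the $\wedgedot$-term via the antisymmetry of $\eta_{1_x}$.

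The hard part is the careful sign and combinatorial bookkeeping: one must correctly track the signs from the $\wedgedot$ pairing, from the antisymmetry of $\eta$ when moving $\C\alpha(X_k)$ into the leading slot, and from the Koszul-type conventions in the van Est formula, while simultaneously handling the $s^*\frak k$-valued expressions at varying base points $g_\lambda$ in a way compatible with the adjoint-action identification (equivalently, via the Maurer--Cartan form) used to take the derivative at $\lambda = 0$. Once this accounting is done correctly, both sides of the claimed formula agree.
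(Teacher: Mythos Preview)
Your proposal is correct, and for the leading term it is organised somewhat differently from the paper's argument. The symbol computation is essentially the same: both reduce to $u^*\iota_{(h\beta)^L}\eta$, using that $E\subset TG$ is a wide subgroupoid so that $h\circ\d u=\d u$, together with $\omega(\beta^L)=s^*v(\beta)$, so that $(h\beta)^L$ is horizontal.

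For the leading term, the paper does not invoke $v^*\delta=\delta v^*$. Instead it expands $R_\alpha(v^*\eta)$ via the binomial identity $v^*\eta=\sum_{k\geq 1}\sum_{\sigma\in S_{(k,q-k)}}\sgn(\sigma)\,\eta(v\,\cdot,\dots,v\,\cdot,h\,\cdot,\dots,h\,\cdot)$, and for \emph{each} term reintroduces $\delta\eta$ at $(g_\lambda,g_\lambda^{-1})$ with hand-picked composable vectors, splitting it into a piece $(\star)$ involving $\eta_{1_x}$ and a piece $(\star\star)$ involving $\delta\eta$. The $(\star)$ pieces sum (after $u^*\omega=0$ kills all $k\geq 2$ contributions) to $c_1\wedgedot\C\alpha$; the $(\star\star)$ pieces are then reassembled binomially into $v^*\delta\eta$ evaluated on $(X_i^\lambda,\d(\inv)hX_i^\lambda)$, using the identification $h(X_i^\lambda,\d(\inv)hX_i^\lambda)=(hX_i^\lambda,\d(\inv)hX_i^\lambda)$ and $v(X_i^\lambda,\d(\inv)hX_i^\lambda)=(vX_i^\lambda,0)$.

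Your route bypasses this expand-and-reassemble step: you apply $v^*\delta=\delta v^*$ up front and use the three-term simplicial formula for $\delta(v^*\eta)$ directly. One term vanishes by horizontality of $\d(\inv)hX_i^\lambda$, the $\Ad_{g_\lambda}$-term is precisely $R_\alpha(v^*\eta)$, and the $\d m$-term produces the $\wedgedot$-contribution via the same identity $\d m(X_i^\lambda,\d(\inv)hX_i^\lambda)=\d u(X_i)+\Ad_{g_\lambda}\omega(X_i^\lambda)$ that the paper uses. This is cleaner. Two small points: the cochain-map property of $h^*$ you need here is the \emph{groupoid} statement noted just before Definition~\ref{defn:D}, not Proposition~\ref{prop:h_cochain} (which is its Weil-complex analogue); and to justify ``the remaining arguments are $\d u(X_j)$'' you should observe that $\d m(hX_i^\lambda,\d(\inv)hX_i^\lambda)=\d(u\circ t)(hX_i^\lambda)=\d u(X_i)$, since $m(g,g^{-1})=1_{t(g)}$ and $t\circ\phi^{\alpha^L}_\lambda=t$.
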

\begin{remark}
  \label{rem:cochain_homotopy}
  The lemma suggests that $\ve$ and $h^*$ commute only up to a cochain homotopy $\Psi$, as portrayed in  the (noncommutative) diagram below. Specifically, the formula for $\Psi^2$ can be read from the equation above, whereas $\Psi^1=0$. This general statement will not be further explored here.
\[\begin{tikzcd}[row sep=large, column sep=large]
	\cdots & {\Omega^{p-1,q}(G;\frak k)} & {\Omega^{p,q}(G;\frak k)} & {\Omega^{p+1,q}(G;\frak k)} & \cdots \\
	\cdots & {W^{p-1,q}(G;\frak k)} & {W^{p,q}(G;\frak k)} & {W^{p+1,q}(G;\frak k)} & \cdots
	\arrow[from=1-1, to=1-2]
	\arrow["{\delta^{p-1}}", from=1-2, to=1-3]
	\arrow["{\delta^p}", from=1-3, to=1-4]
	\arrow["{\Psi^{p}}"{description}, from=1-3, to=2-2]
	\arrow["{\ve h^*}"', shift right, from=1-3, to=2-3]
	\arrow["{h^*\ve}", shift left, from=1-3, to=2-3]
	\arrow[from=1-4, to=1-5]
	\arrow["{\Psi^{p+1}}"{description}, from=1-4, to=2-3]
	\arrow[from=2-1, to=2-2]
	\arrow["{\delta^{p-1}}"', from=2-2, to=2-3]
	\arrow["{\delta^p}"', from=2-3, to=2-4]
	\arrow[from=2-4, to=2-5]
\end{tikzcd}\]
\end{remark}
\begin{proof}
  We begin with the symbol, where the computation is simple---for any section $\beta\in\Gamma(A)$,
  \begin{align*}
  \ve (h^*\eta)_1(\beta)=J_\beta (h^*\eta)=u^*\iota_{\beta^L} (h^*\eta)=u^* h^*\iota_{h\beta^L}\eta=u^*\iota_{h\beta^L}\eta=(h^*\ve\eta)_1(\beta),
  \end{align*}
  where we have used that $E\subset TG$ is a wide subgroupoid. For the leading term, first observe that the vertical projection $v^*= \id-h^*$ enables us to write
\begin{align*}
(R_\alpha h^*\eta)_x(X_i)_i=(R_\alpha \eta)_x(X_i)_i-(R_\alpha v^*\eta)_x(X_i)_i,
\end{align*}
for any vectors $X_i\in T_x M$. Observe that 
\begin{align*}
  (v^*\eta)(X_i)_i&=\eta(vX_i+hX_i)_i-\eta(hX_i)_i=\textstyle\sum\limits_{k=1}^q\textstyle\sum\limits_{\mathclap{\hspace{3.2em}\sigma\in S_{(k,q-k)}}}\sgn(\sigma)\eta(vX_{\sigma(1)},\dots,vX_{\sigma(k)},hX_{\sigma(k+1)},\dots,h X_{\sigma(q)}),
\end{align*}
hence $(R_\alpha v^*\eta)_x(X_i)_i$ may be expressed as a sum of terms of the form (up to a sign)
\begin{align}
  \label{eq:term_in_Rv}
  \deriv\lambda 0 \Ad_{\phi^{\alpha^L}_\lambda(1_x)}\cdot\eta(vY_1^\lambda,\dots, vY_k^\lambda,hY_{k+1}^\lambda,\dots,hY_{q}^\lambda)
\end{align}
for some $k\in\set{1,\dots, q}$, where we are denoting $Y^\lambda_i=X^\lambda_{\sigma(i)}$ for a fixed permutation $\sigma\in S_{(k,q-k)}$, and $X^\lambda_i=\d(\phi^{\alpha^L}_\lambda)\d u(X_i)$. 
We now use the formula for the simplicial differential,
\begin{align*}
  \delta\eta_{(g,h)}(v_i,w_i)_i=\eta_h(w_i)_i-\eta_{gh}(\d m_{(g,h)}(v_i,w_i))_i+\Ad_{h^{-1}}\cdot\eta_g(v_i)_i,
\end{align*}
with arrows $g=g_\lambda\coloneqq \phi^{\alpha^L}_\lambda(1_x)$, $h=g^{-1}$ and vectors
\begin{align*}
  v_i=\begin{cases}
    vY^\lambda_i & \text{if }i\leq k,\\
    h Y^\lambda_i & \text{if }i>k,
  \end{cases}
  \qquad\quad
  w_i=\begin{cases}
    0 & \text{if }i\leq k,\\
    \d(\inv) h Y^\lambda_i & \text{if }i>k.
  \end{cases}
\end{align*}
Using these vectors allows us to write \eqref{eq:term_in_Rv} as
\begin{align*}
  \eqref{eq:term_in_Rv}&= \eta_{gh}(\d m_{(g,h)}(v_i,w_i))_i+\delta\eta_{(g,h)}(v_i,w_i)_i\eqqcolon(\star)+(\star\star)
\end{align*}
It is not hard to see there holds 
\begin{align*}
  \d m(v_i,w_i)=\begin{cases}
    \Ad_{g_\lambda}\cdot\omega(Y^\lambda_i) & \text{if }i\leq k,\\
    \d u (Y^0_i) & \text{if }i>k,
  \end{cases}
\end{align*}
hence the first term $(\star)$ becomes
\begin{align*}
  (\star)&=\deriv\lambda 0 \eta_{1_x}\big(\Ad_{g_\lambda}\cdot\omega(Y_1^\lambda),\dots, \Ad_{g_\lambda}\cdot\omega(Y_k^\lambda),\d u (Y^0_{k+1}),\dots, \d u (Y^0_{q})\big)\\
  &=\sum_{i=1}^k\eta_{1_x}\bigg({\omega(\d u(Y^0_1))},\dots,\underbrace{\deriv\lambda 0\Ad_{g_\lambda}\cdot\omega(Y_i^\lambda)}_{\C(\alpha)(Y^0_i)},\dots,{\omega(\d u(Y^0_k))},\d u(Y^0_{k+1}),\dots, \d u(Y^0_{q})\bigg).
\end{align*}
Since $u^*\omega=0$ by multiplicativity of the connection $\omega$, the expression above is nonzero only in the case when $k=1$. Assuming for a moment that $\eta$ is multiplicative, we thus obtain
\begin{align*}
  (R_\alpha v^*\eta)_x(X_i)_i&=\smallderiv\lambda 0\textstyle\sum_i  \phi^{\alpha^L}_\lambda(1_x)\cdot\eta(hX_1^\lambda,\dots, vX_i^\lambda,\dots,hX_{q}^\lambda)\\
  &=\textstyle\sum_i (-1)^{i+1}\eta_{1_x}(\C\alpha(X_i),\d u(X_1),\dots,\widehat{\d u(X_i)},\dots, \d u(X_q))\\
  &=(c_1\wedgedot \C\alpha)(X_i)_i,
\end{align*}
where we are denoting by $c_1$ the symbol of $\ve (\eta)$. Since the leading term is $c_0(\alpha)=R_\alpha\eta$, this proves the lemma for the case when $\eta$ is multiplicative. If $\eta$ is not multiplicative, the term $(\star\star)$ becomes
{\begin{align*}
  (\star\star)=\deriv \lambda 0 \delta\eta_{(g_\lambda,g_{\lambda}^{\smash{-1}})}\big(&(v Y_1^\lambda, 0),\dots ,(v Y_k^\lambda, 0),(h Y^\lambda_{k+1}, \d(\inv)h Y^\lambda_{k+1}),\dots,(h Y^\lambda_{q}, \d(\inv)h Y^\lambda_{q})\big).
\end{align*}
}Finally, observe that on $TG^{(2)}$ there holds
\begin{align*}
h(Y^\lambda_{i}, \d(\inv)h Y^\lambda_{i})&=(hY^\lambda_{i}, \d(\inv)h Y^\lambda_{i}),\\
v(Y^\lambda_{i}, \d(\inv)h Y^\lambda_{i})&=(vY^\lambda_i,0).
\end{align*}
This establishes the wanted formula for the leading terms. 
\end{proof}
\begin{proof}[Proof of Theorem \ref{thm:van_est_D}]
  We need to check $\ve (h^* \d{}^{\nabla}\eta) =h^*\d{}^\nabla \ve(\eta)$ holds for any multiplicative form $\eta\in\Omega^q_m(G;s^*\frak k)$. By the lemma above, we have
  \begin{align*}
    \ve(h^*\d{}^{\nabla}\eta)_0(\alpha)(X_i)&=(h^*\d{}^\nabla\ve \eta)_0(\alpha)(X_i)_i+\deriv \lambda 0 (v^*\delta \d{}^\nabla\eta)_{(g_\lambda,g_{\lambda}^{\smash{-1}})}\big(X^\lambda_{i}, \d(\inv)h X^\lambda_{i}\big)_i
  \end{align*}
  where we have already used on the first term that $\ve$ commutes with $\d{}^\nabla$ on multiplicative forms, as established in Theorem \ref{thm:van_est_G_A}. To show that the second term vanishes, first note that multiplicativity of the form $\eta$ implies
  \[
    v^*\delta\d{}^\nabla\eta=-v^*[\d{}^\nabla,\delta]\eta.
  \]
  As already observed in the lemma, $v^*$ acts on individual arguments of a given form by either the horizontal or the vertical projection, so the second component $\d(\inv)h X^\lambda_{i}$ of any vector is either untouched by $v^*$ or sent to zero. This is important since by the formula for the commutator \eqref{eq:commutator_higher}, the second components are inserted into $\Theta$, and so by the fact that $h^*\Theta=0$ from Corollary \ref{cor:diff_nabla_ts}, the second term on the right-hand side above vanishes. This proves the theorem for leading terms; that the symbols coincide follows directly from the last lemma.
\end{proof}

\subsection{VB-algebroid picture of Weil cochains and IM connections
}
\label{sec:homogeneous_horizontal}
The purpose of this section is to derive the formula for the horizontal projection $h^*$ of Weil cochains, induced by an IM connection (Definition \ref{def:hor_proj}). Essentially, the idea is to identify Weil cochains with special forms on a certain algebroid, where the formula for $h^*$ is simple. Independently of this purpose, the process of defining this algebroid and making this identification uncovers interesting underlying geometric structures, so it provides an important viewpoint of the theory. For instance, we will observe that although isomorphic, the two models are in practice very different to work with---the Weil cochain model offers a more hands-on, computational approach, while the alternative model offers more conceptual clarity. The alternative model comes from the study of homogeneous cochains on VB-algebroids \cite{homogeneous}, which is an interesting topic in itself. This section hence assumes some familiarity with VB-algebroids; some good references for the basics of VB-algebroids and double vector bundles are \cites{bundles_over_gpds, mackenzie_doubles, dvb, gracia-saz, vb-algebroid-morphisms, mackenzie_duality}. 

For the reader's convenience, let us state how we have organized this section. In \sec\ref{sec:alternative_model_weil}, we recall the construction of the alternative model for Weil cochains from \cite{homogeneous}. In \sec\ref{sec:im_connections_distributions}, we connect this construction with the interpretation of IM connections as VB-subalgebroids of $TA$ from \cite{mec}. The method of obtaining $h^*$ for Weil cochains is then presented in \sec\ref{sec:derivation_horproj_weil}, together with some prerequisite results which are interesting on their own. Although we have found it impossible to keep this section short, we will try to keep the discussion goal-oriented, i.e., we will primarily focus on obtaining the wanted formula for $h^*$ and its properties. 

\subsubsection{Alternative model for the Weil complex}
\label{sec:alternative_model_weil}
Let us first fix some notation regarding the tangent algebroid of $A$. It is the algebroid $TA\Ra TM$, defined in such a way that the bundle projection $TA\ra A$ is a Lie algebroid map covering the projection $TM\ra M$ \cite{bialgebroids}*{Theorem 5.1}.
\[\begin{tikzcd}
	TA & A \\
	TM & M
	\arrow[from=1-1, to=1-2]
	\arrow[Rightarrow, from=1-1, to=2-1]
	\arrow[Rightarrow, from=1-2, to=2-2]
	\arrow[from=2-1, to=2-2]
\end{tikzcd}\]
This is a double vector bundle---it has two mutually compatible vector bundle structures, in the sense of \cite{mackenzie_duality}*{Definition 1.1}. The two addition maps and zero maps will be denoted by
\begin{align*}
  +_A&\colon TA\oplus_{A}TA\ra TA,  & +_{TM}&=\d{(+)}\colon TA\oplus_{TM} TA\ra TA,\\
  0_A&\colon A\ra TA, &0_{TM}&=\d{0}\colon TM\ra TA,
\end{align*}
where $+\colon A\oplus_M A\ra A$ denotes the addition in $A$ and $0\colon M\ra A$ is the zero section; we denote the respective homogeneous structures by $h_A^\lambda$ and $h_{TM}^\lambda$. Moreover, $TA$ is a VB-algebroid, meaning that the vector bundle operations fibred over $A$ are Lie algebroid morphisms over the corresponding operations in $TM\ra M$, where $TA\oplus_A TA\Ra TM\oplus_M TM$ carries the componentwise structure.

Now let us suppose $V$ is any representation of $A$. It gives rise to the \textit{prolongation VB-algebroid} $\pi^*V\Ra 0_M$, where $\pi\colon A\ra M$ is the bundle projection (see diagram below). Taking its dual with respect to the fibration over $A$, we obtain the action algebroid $\pi^*V^*\Ra V^*$ of the induced representation of $A$ on $V^*$.
\begin{equation*}
  \begin{aligned}
    [(\alpha,\xi),(\beta,\eta)]=([\alpha,\beta],\nabla^V_\alpha\eta-\nabla^V_\beta\xi)
    \\
    \rho(\alpha,\xi)=\rho(\alpha)
  \end{aligned}
  \quad
  \vcenter{\hbox{
\begin{tikzcd}
	{\pi^*V} & A \\
	{0_M} & M
	\arrow[from=1-1, to=1-2]
	\arrow[Rightarrow, from=1-1, to=2-1]
	\arrow[Rightarrow, from=1-2, to=2-2]
	\arrow[from=2-1, to=2-2]
\end{tikzcd}
  }}
  \,
  \xrightarrow{\text{dualize}}
  \,
  \vcenter{\hbox{
\begin{tikzcd}
	{\pi^*V^*} & A \\
	{V^*} & M
	\arrow[from=1-1, to=1-2]
	\arrow[Rightarrow, from=1-1, to=2-1]
	\arrow[Rightarrow, from=1-2, to=2-2]
	\arrow[from=2-1, to=2-2]
\end{tikzcd}
  }}
  \end{equation*}
Hence, we can consider the following VB-algebroid, obtained as the Whitney sum of VB-algebroids:
\begin{equation}
  \label{eq:vb_algebroid}
  \vcenter{\hbox{
    \begin{tikzcd}
      {\AA_q} & A \\
      {\MM_q} & M
      \arrow[from=1-1, to=1-2]
      \arrow[Rightarrow, from=1-1, to=2-1]
      \arrow[Rightarrow, from=1-2, to=2-2]
      \arrow[from=2-1, to=2-2]
    \end{tikzcd}
  }}
  \qquad
  \begin{aligned}
  \AA_q &= \oplus^q_A TA\oplus_A \pi^* V^*,
  \\
  \MM_q &= \oplus^q_M TM\oplus_M V^*.
  \end{aligned}
  \end{equation}
The algebroid structure is defined componentwise. This is again a double vector bundle---the vector bundle structure fibred over $A$ is just the usual Whitney sum structure, while the one fibred over $\MM_q$ is induced componentwise by $TA\ra TM$ and $\pi^*V^*\ra V^*$, that is,
\begin{align*}
  (X_1,\dots,X_q,(a,\zeta))+_{\MM_q}(X_1',\dots,X_q',(a',\zeta))&=(X_1 +_{TM} X_1',\dots,X_q +_{TM} X_q',(a+a',\zeta)),\\
  0_{\MM_q}(w_1,\dots,w_q,\zeta)&=(0_{TM}(w_1),\dots,0_{TM}(w_q),(0_x,\zeta)),
\end{align*}
where the vectors $X_i\in T_a A$, $X_i'\in T_{a'}A$ satisfy $\d\pi(X_i)=\d\pi(X_i')$, and $\zeta\in V_{\pi(a)}^*$. With the VB-algebroid \eqref{eq:vb_algebroid} in mind, we now define the alternative model to Weil cochains from \cite{homogeneous}. 

\begin{definition}
  Given any section $\omega\in\Gamma(\MM_q,\Lambda^p\AA_q^*)$,\footnote{The dual and the wedge are with respect to the bundle structure fibred over $\MM_q$.} we say that:
  \begin{enumerate}[label={(\roman*)}]
    \item $\omega$ is \textit{skew-symmertic} with respect to $\AA_q\ra A$, if for any $\sigma\in S_q$,
    \[
    (\sigma_A)^*\omega=\sgn(\sigma)\omega,
    \]
    where $\sigma_A\colon\AA_q\ra\AA_q$ permutes the $q$ components in $\oplus^q_A TA$ according to $\sigma$.
    \item $\omega$ is \textit{multilinear} with respect to $\AA_q\ra A$, if it is
    \begin{align*}
      &\textit{$(q+1)$-homogeneous}\text{:}\hspace{-6em}&(h^\lambda_A)^*\omega&=\lambda^{q+1}\omega,\\
      &\text{and }\textit{simple}\text{:}&(0^i_A)^*\omega&=0,\quad (i=1,\dots,q+1),
    \end{align*}
    where $h_\lambda^A\colon \AA_q\ra\AA_q$ is the homogeneous structure of $\AA_q\ra A$, and the maps $0^i_A\colon\AA_{q-1}\ra\AA_q$ and $0^{q+1}_A\colon \oplus^q_A TA\ra \AA_q$ insert a zero at the $i$-th and $(q+1)$-th factor, respectively.
  \end{enumerate}
  A skew-symmetric and multilinear section  with respect to $\AA_q\ra A$ will be called an \textit{exterior cochain}, and the set of all exterior cochains will be denoted by
  \[
  \Gamma_{\ext}(\MM_q,\Lambda^p\AA_q^*).
  \]
\end{definition}
It is shown in \cite{homogeneous}*{Proposition 4.10} that $\Gamma_{\ext}(\MM_q,\Lambda^\bullet\AA_q^*)\subset \Gamma(\MM_q,\Lambda^\bullet\AA_q^*)$ is a subcomplex of the usual cochain complex used to define the algebroid cohomology of $\AA_q\Ra \MM_q$. The differential there is just the standard one: for any sections $X^i\in\Gamma(\MM_q,\AA_q)$,
\begin{align}
  \label{eq:delta_vb}
\begin{split}
    \delta\omega(X^0,\dots, X^p)&=\textstyle\sum_i(-1)^i\L_{\rho_{\AA_q}(X^i)}\omega(X^0,\dots,\widehat{X^i},\dots,X^p)\\
    &+\textstyle\sum_{i<j}\omega([X^i,X^j]_{\AA_q},X^0,\dots,X^i,\dots,X^j,\dots,X^p).
\end{split}
\end{align}
Here, $\L$ denotes the usual directional derivative of a function. 
\begin{remark}
  To demystify the abstract definition of multilinearity with respect to $\AA_q\ra A$, let us try to see it as multilinearity in the usual sense. Denoting the exterior cochains at the level $p=0$ by $C^\infty_\ext(\MM_q)\coloneqq\Gamma_{\ext}(\MM_q,\Lambda^0\AA_q^*)$, they form a subset of $C^\infty(\oplus^q_M TM\oplus_M V^*)$. If $\omega\in C^\infty_\ext(\MM_q)$ is $(q+1)$-homogeneous and simple, it is also linear in each component by a standard argument as in Euler's homogeneous function theorem. Hence, it is multilinear in the usual sense, and we conclude 
  \[C^\infty_\ext(\MM_q)=\Gamma(\Lambda^q(T^*M)\otimes (V^*)^*)\cong\Omega^q(M;V). \]
  Similarly, at level $p=1$, multilinearity of $\omega\in \Gamma_\ext(\MM_q,\Lambda^1\AA_q^*)$ with respect to $\AA_q\ra A$ is just a terse way of expressing
  \[
  \omega(X_1+_A h_A^\lambda X_1',X_2\dots,X_q,(a,\zeta))=\omega(X_1,\dots,X_q,(a,\zeta))+\lambda \omega( X_1',\dots,X_q,(a,\zeta)),
  \]
  for any $\lambda\in \R$ and vectors $X_1',X_i\in T_a A, \zeta\in V^*_{\pi(a)}$, and similarly for the other arguments and for linearity in $(a,\zeta)$. Going one level further, at $p=2$ multilinearity with respect to $\AA_q\ra A$ reads
\begin{align*}
  &\omega\big((X_1+_A h_A^\lambda X_1',X_2\dots,X_q,(a,\zeta)),(Y_1+_A h_A^\lambda Y_1',Y_2\dots,Y_q,(b,\zeta))\big)\\
  &=\omega\big((X_1,\dots,X_q,(a,\zeta)),(Y_1,Y_2\dots,Y_q,(b,\zeta))\big)+\lambda \omega\big( (X_1',\dots,X_q,(a,\zeta)),(Y_1',Y_2\dots,Y_q,(b,\zeta))\big),
\end{align*}
where now the vectors $Y_1',Y_i\in T_b A, b\in\pi^{-1}(a)$ must satisfy $\d\pi(Y_1')=\d\pi(X_1')$ and $\d\pi(Y_i)=\d\pi(X_i)$.
\end{remark}

We now describe the canonical identification of the model of exterior cochains with the model of Weil cochains, called the \textit{evaluation map} and denoted by
\begin{align}
  \label{eq:evaluation_map}
  \ev\colon\Gamma_{\ext}(\MM_q,\Lambda^p\AA_q^*)\xrightarrow{} W^{p,q}(A;V).
\end{align}
To do so, we first note that by \cite{mackenzie_doubles}*{Proposition 2.2}, the space of sections $\Gamma(\MM_q,\AA_q)$ is generated as a $C^\infty(\MM_q)$-module by the linear and core sections, i.e., sections of the following form: for $\alpha\in\Gamma(A)$, 
\begin{align}
  \label{eq:linear_and_core_sections}
\begin{split}
    \T\alpha{(w_1,\dots,w_q,\zeta)}&=(\d\alpha(w_1),\dots,\d\alpha(w_q),\chi_\alpha(\zeta)),\\
    \Z_i\alpha{(w_1,\dots,w_q,\zeta)}&=\Big(0_{TM}(w_1),\dots,0_{TM}(w_i)+_A\deriv\lambda 0 \lambda \alpha_x,\dots,0_{TM}(w_q),0_\zeta\Big).
\end{split}
\end{align}
Here, $w_j\in T_x M, \zeta\in V^*_x$ and we have introduced the following vectors in $\pi^*V^*$,
\[
  \chi_\alpha(\zeta)=(\alpha_x,\zeta),\quad\text{and}\quad 0_\zeta=(0_x,\zeta).
\]
Importantly, the wanted map $\ev$ is defined by evaluating $\omega$ on the generating sections; this is done  via auxiliary maps. More precisely, for any $\omega\in\Gamma_{\ext}(\MM_q,\Lambda^p\AA_q^*)$, we define
\begin{align}
  \label{eq:def_exterior_weil_generators}
\begin{split}
    &\tilde c_k(\omega)\colon{\times}^p\Gamma(A)\ra C^\infty(\MM_q),\\
    &\tilde c_k(\omega)(\alpha_1,\dots,\alpha_{p-k}\|\beta_1,\dots,\beta_k)=\omega(\Z_1\beta_1,\dots,\Z_k\beta_k,\T\alpha_1,\dots,\T\alpha_{p-k}),
\end{split}
\end{align}
if $k\leq q$, and zero for all $k>q$. In turn, these maps define functions
\begin{align}
  \label{eq:def_exterior_weil_ck}
  \begin{split}
    &c_k(\omega)(\ul\alpha\|\ul\beta)\in C^\infty(\oplus^{q-k}_MTM\oplus_M V^*)\\
    &c_k(\omega)(\ul\alpha\|\ul\beta)(w_1,\dots,w_{q-k},\zeta)=\tilde c_k(\omega)(\ul\alpha\|\ul\beta)(0_x,\dots,0_x,w_1,\dots,w_{q-k},\zeta),
  \end{split}
\end{align}
which are shown to be $(q-k+1)$-homogeneous, simple and skew-symmetric, so they correspond to differential forms $c_k(\omega)(\ul\alpha\|\ul\beta)\in\Omega^{q-k}(M;V)$.
The maps $c_k(\omega)$ can thus be stacked in a sequence as terms of the wanted Weil cochain corresponding to $\omega$,
\[\ev(\omega)\coloneqq\big(c_0(\omega),\dots,c_p(\omega)\big)\in W^{p,q}(A;V).\]
For any fixed $q$, this map provides the wanted isomorphism of the complex of exterior cochains with the Weil complex, as shown in \cite{homogeneous}*{Proposition A.3}. We observe that the complexity of the simplicial differential \eqref{eq:delta_inf} of Weil cochains is now encoded entirely in the Lie algebroid $\AA_q\Ra \MM_q$, since the simplicial differential on exterior cochains \eqref{eq:delta_vb} is just the standard one.
\begin{remark}
  \label{rem:any_vectors_ck}
  It is not hard to see that we can actually pick any vectors instead of $0_x$'s in the defining equation \eqref{eq:def_exterior_weil_ck}, by multilinearity of $\omega$ over $\AA_q\ra A$. That is,
  \[
    c_k(\omega)(\ul\alpha\|\ul\beta)(w_1,\dots,w_{q-k},\zeta)=\tilde c_k(\omega)(\ul\alpha\|\ul\beta)(\tilde w_1,\dots,\tilde w_k,w_1,\dots,w_{q-k},\zeta),
  \]
  for any vectors $\tilde w_i\in T_x M$. For example, in the case $p=1$, we have
  \begin{align*}
    \tilde c_1(\omega)(\beta)(\tilde w,w_1,\dots,w_{q-1},\zeta)&=\omega\big(0_{TM}(\tilde w)+_A\smallderiv\lambda 0\lambda \beta_x,0_{TM}(w_1),\dots 0_{TM}(w_{q-1}),0_\zeta\big)\\
    &=\omega\big(0_{\MM_q}(\tilde w,w_1,\dots,w_{q-1},\zeta)\big)+\tilde c_1(\omega)(\beta)(0_x,w_1,\dots,w_{q-1},\zeta),
  \end{align*}
  where multilinearity over $A$ was used in the second equality. The first term then vanishes by multilinearity over $\MM_q$, since $\omega\in\Gamma(\MM_q,\Lambda^p\AA_q^*)$.
\end{remark}

\subsubsection{IM connections as distributions}
\label{sec:im_connections_distributions}
In this subsection, we will observe that the viewpoint of IM connections as VB-subalgebroids of $TA$, known from \cite{mec}, is just a manifestation of the model isomorphism \eqref{eq:evaluation_map} between Weil and exterior cochains.

First note that the differential of the projection $\phi\colon A\ra B=A/\frak k$ (with $\phi$ viewed as a surjective submersion between smooth manifolds) defines a short exact sequence of vector bundles, which is, moreover, a sequence of VB-algebroids.
\begin{align}
  \label{eq:ses_vb_algebroids}
  \begin{tikzcd}[ampersand replacement=\&]
  	0 \& \ker\d\phi \& TA \& {TB} \& 0. \\
  	0 \& {0_M} \& TM \& TM \& 0
  	\arrow[from=1-1, to=1-2]
  	\arrow[hook, from=1-2, to=1-3]
  	\arrow[Rightarrow,from=1-2, to=2-2]
  	\arrow["{\d\phi}", from=1-3, to=1-4]
  	\arrow[Rightarrow,from=1-3, to=2-3]
  	\arrow[from=1-4, to=1-5]
  	\arrow[Rightarrow,from=1-4, to=2-4]
  	\arrow[from=2-1, to=2-2]
  	\arrow[hook, from=2-2, to=2-3]
  	\arrow[from=2-3, to=2-4]
  	\arrow[from=2-4, to=2-5]
  \end{tikzcd}
\end{align}
Importantly, and as was observed in \cite{mec}*{Lemma 5.2}, there is a canonical identification of $K\coloneqq \ker\d\phi$ with $A\oplus_M\frak k=\pi^*\frak k$, given by the injective map
\begin{align}
  \label{eq:identification_i_K}
  &i\colon A\oplus_M\frak k\ra TA,\quad i(\alpha,\xi)=\deriv\lambda 0 (\alpha+\lambda\xi)
\end{align}
whose image is precisely $K$ due to dimensional reasons and since $\frak k=\ker\phi$. It is easy to see that the formula for $i$ can be rewritten in the following way, which will be more useful later:
\begin{align}
  \label{eq:identification_i_plus_tm}
  i(\alpha,\xi)=0_A(\alpha)+_{TM}\deriv\lambda 0 \lambda\xi.
\end{align}

Now, to obtain the correspondence of IM connections $\A(A;\frak k)$ with VB-subalgebroids of $TA$ complementary to $K$, suppose we are given a splitting $v^{TA}\colon TA\ra K$ of the sequence of VB-algebroids \eqref{eq:ses_vb_algebroids} covering the identity on $A\Ra M$. Such a splitting can be identified via the isomorphism $i\colon A\oplus_M\frak k\ra K$ with a Lie algebroid morphism $\nu\colon TA\ra A\oplus_M\frak k$ satisfying $\nu\circ i=\id_{\pi^*\frak k}$.
\[\begin{tikzcd}[row sep=small, column sep=small]
	TA && {A\oplus_M\frak k} \\
	& A && A \\
	TM && {0_M} \\
	& M && M
	\arrow["{\nu}", from=1-1, to=1-3]
	\arrow[from=1-1, to=2-2]
	\arrow[Rightarrow, from=1-1, to=3-1]
	\arrow[from=1-3, to=2-4]
	\arrow[Rightarrow, from=1-3, to=3-3]
	\arrow["{\id_A}"{pos=0.2}, from=2-2, to=2-4, crossing over]
	\arrow[Rightarrow, from=2-4, to=4-4]
	\arrow[from=3-1, to=3-3]
	\arrow[from=3-1, to=4-2]
	\arrow[from=3-3, to=4-4]
	\arrow[from=4-2, to=4-4]
  \arrow[Rightarrow, from=2-2, to=4-2, crossing over]
\end{tikzcd}\]
Since it is $C^\infty(A)$-linear, $\nu$ can thus be viewed as an element $\nu\in \Gamma_\ext(\MM_1,\Lambda^1\AA_1^*)$. Using the identification of models \eqref{eq:evaluation_map}, one then obtains an IM form
\[
(\C,v)=\ev(\nu)\in \Omega_{im}^1(A;\frak k),
\]
whose multiplicativity corresponds to $\nu$ being a VB-algebroid morphism, and  the condition $v|_{\frak k}=\id_{\frak k}$ corresponds to $\nu\circ i=\id_{\pi^*\frak k}$. Explicitly, the pair $(\C,v)$ reads
\begin{align}
  \label{eq:C_vb_picture}
  \C(\alpha)(w)&=\pr_{\frak k}\big(\nu(\d\alpha(w))\big),\quad (w\in T_xM)\\
  v(\alpha)&=\pr_{\frak k}(\nu(\alpha_c)),\label{eq:v_vb_picture}
\end{align}
where $\alpha_c\in\Gamma(TM,TA)$ is the core section induced by $\alpha\in\Gamma(A)$. It is implicit here that the vector $\nu(\alpha_c(w))$ is independent of the choice $w\in T_x M$, since $v^{TA}(0_{TM}(w))$ equals the zero of the core $0_A(0_x)=0_{TM}(0_x)$, so we obtain
\begin{align*}
  v^{TA}(\alpha_c(w))=v^{TA}\Big(0_{TM}(w)+_A\deriv\lambda 0 \lambda \alpha_x\Big)=v^{TA}\Big(\deriv\lambda 0 \lambda \alpha_x\Big).
\end{align*}
In other words,
\begin{align}
  \label{eq:v_core_section}
  v^{TA}\Big(\deriv\lambda 0\lambda\alpha_x\Big)=\deriv\lambda 0 \lambda v(\alpha_x).
\end{align}
Since any exterior cochain $\nu\in\Gamma_\ext(\MM_1,\AA_1^*)$ is determined by its values on the linear and core sections of $\AA_1$, identities \eqref{eq:C_vb_picture} also tell us how an IM connection $(\C,v)$ induces $\nu$.
Summing up, any IM connection can alternatively be viewed as a wide VB-subalgebroid $E\subset TA$:
\[\begin{tikzcd}
  E & A \\
  TM & M
  \arrow[from=1-1, to=1-2]
  \arrow[Rightarrow, from=1-1, to=2-1]
  \arrow[Rightarrow, from=1-2, to=2-2]
  \arrow[from=2-1, to=2-2]
\end{tikzcd}\]
The core of $E$ is $H=\ker v$, so we now view $v\colon A\ra \frak k$ as the induced splitting of the core $A\ra M$ of the tangent algebroid $TA\Ra TM$.\footnote{The fact that $\Gamma(A)\ra \Gamma(TM,TA), \alpha\mapsto\alpha_c$ is not a morphism of Lie algebras gives another perspective on why the induced core splitting $v\colon A\ra M$ is, in general, not a splitting of Lie algebroids, but merely of vector bundles.}

\subsubsection{Horizontal projection of exterior cochains}
\label{sec:derivation_horproj_weil}
Let us now identify the notion of horizontality (Definition \ref{defn:horizontal_inf}) for the alternative model of exterior cochains.
\begin{definition}
  Suppose $\frak k\subset A$ is a bundle of ideals of a Lie algebroid $A\Ra M$. A cochain $\omega\in \Gamma_{\ext}(\MM_q,\Lambda^p\AA_q^*)$ is said to be \textit{horizontal}, if for any $\xi\in\Gamma(\frak k)$ and $i=1,\dots,q$, there holds
  \[
  \iota_{\Z_i(\xi)}\omega=0,
  \]
  where $\Z_i(\xi)$ is a core section, see \eqref{eq:linear_and_core_sections}.
  We denote the space of horizontal exterior cochains by 
  \[\Gamma_{\ext}(\MM_q,\Lambda^p\AA_q^*)^\Hor\subset \Gamma_{\ext}(\MM_q,\Lambda^p\AA_q^*).\]
\end{definition}
That this forms a subcomplex (for a fixed $q$) is a consequence of the fact that the evaluation map is an isomorphism of cochain complexes which clearly maps horizontal exterior cochains to horizontal Weil cochains, which themselves form a subcomplex. For clarity, we prove this directly.
\begin{proposition}
  $\Gamma_{\ext}(\MM_q,\Lambda^p\AA_q^*)^\Hor$ is a subcomplex of $\Gamma_{\ext}(\MM_q,\Lambda^p\AA_q^*)$.
\end{proposition}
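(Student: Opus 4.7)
The plan is to apply Cartan's magic formula for the Lie algebroid $\AA_q\Ra\MM_q$ to the differential $\delta$ at each core section $\Z_i(\xi)$ with $\xi\in\Gamma(\frak k)$ and $i\in\{1,\dots,q\}$. Fix a horizontal cochain $\omega\in\Gamma_{\ext}(\MM_q,\Lambda^p\AA_q^*)^\Hor$. Cartan's formula
\[
\iota_{\Z_i(\xi)}\delta\omega = \L_{\Z_i(\xi)}\omega - \delta\bigl(\iota_{\Z_i(\xi)}\omega\bigr) = \L_{\Z_i(\xi)}\omega
\]
immediately reduces the task, using horizontality of $\omega$ in the second equality, to showing $\L_{\Z_i(\xi)}\omega=0$.

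Since $\L_{\Z_i(\xi)}\omega$ is itself a section of $\Lambda^p\AA_q^*$ and hence $C^\infty(\MM_q)$-multilinear, it suffices to evaluate it on the generating linear and core sections $\T\alpha$, $\Z_j\beta$ of \eqref{eq:linear_and_core_sections}. Expanding
\[
(\L_{\Z_i(\xi)}\omega)(X^1,\dots,X^p)=\rho_{\AA_q}(\Z_i(\xi))\bigl(\omega(X^1,\dots,X^p)\bigr)-\sum_{\ell}\omega\bigl(X^1,\dots,[\Z_i(\xi),X^\ell]_{\AA_q},\dots,X^p\bigr),
\]
the anchor term vanishes because $\rho_{\AA_q}(\Z_i(\xi))$ is the vertical lift of $\rho(\xi)$ in the $i$-th $TM$-factor of $\MM_q$, and this lift is zero as $\xi\in\Gamma(\frak k)\subset\ker\rho$. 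For the bracket terms, the standard bracket identities for the tangent algebroid $TA\Ra TM$ transport componentwise through the Whitney sum structure to give $[\Z_i(\xi),\Z_j\beta]_{\AA_q}=0$ and $[\Z_i(\xi),\T\alpha]_{\AA_q}=\Z_i([\xi,\alpha])$. The former kills its summand outright; in the latter, $[\xi,\alpha]\in\Gamma(\frak k)$ because $\frak k$ is a bundle of ideals, so horizontality of $\omega$ annihilates the contribution.

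The main technical point to verify with care is the identification of the anchor $\rho_{\AA_q}(\Z_i(\xi))$ and of the brackets $[\Z_i(\xi),\T\alpha]_{\AA_q}$ and $[\Z_i(\xi),\Z_j\beta]_{\AA_q}$ inside the Whitney sum $\AA_q=\oplus^q_A TA\oplus_A\pi^*V^*$. These follow from the double vector bundle structure on each $TA$-factor together with the action-algebroid structure on $\pi^*V^*$, but deserve an explicit check. Alternatively, one could sidestep this by invoking the cochain isomorphism $\ev$: formula \eqref{eq:def_exterior_weil_generators} makes it transparent that $\ev$ pairs horizontal exterior cochains with horizontal Weil cochains (feeding $\Z_j\xi$ with $\xi\in\Gamma(\frak k)$ into $\omega$ corresponds to the symmetric slot $\beta_j\in\Gamma(\frak k)$ of $c_k(\omega)$), and the latter subspace is already known to form a subcomplex by Definition \ref{defn:horizontal_inf}.
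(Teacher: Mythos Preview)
Your proof is correct and follows essentially the same approach as the paper: both compute $\iota_{\Z_i(\xi)}\delta\omega$, show the anchor term vanishes because $\frak k\subset\ker\rho$, and handle the bracket terms via the identities $[\Z_i(\xi),\Z_j\beta]_{\AA_q}=0$ and $[\Z_i(\xi),\T\alpha]_{\AA_q}=\Z_i([\xi,\alpha])$ together with the ideal property of $\frak k$. Your use of Cartan's formula is just a cleaner way of packaging the same expansion the paper does directly from the Koszul formula, and your alternative via $\ev$ is exactly the indirect argument the paper notes before choosing to give the direct proof.
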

\begin{proof}
Assuming $\iota_{\Z_i(\xi)}\omega =0$, use the definition \eqref{eq:delta_vb} of $\delta$ to get
\begin{align*}
  (\iota_{\Z_i(\xi)}\delta\omega)(X^1,\dots,X^p)=\L_{\rho_{\AA_q}(\Z_i(\xi))}\omega(X^1,\dots,X^p)+\textstyle\sum_{j=1}^p\omega([\Z_i(\xi),X^j]_{\AA_q},X^1,\dots,\widehat{X^j},\dots,X^p),
\end{align*}
for any sections $X^i\in\Gamma(\MM_q,\AA_q)$. To see the first term vanishes, note that the anchor $\smash{\rho_{\AA_q}}$ is defined componentwise, and the anchor $\rho_{TA}$ is defined as the composition of the canonical involution on $T(TM)$ with $\d\rho\colon TA\ra T(TM)$. Hence, it suffices to compute:
\[
\d\rho\big({\d 0(w)}+_A \smallderiv\lambda 0\lambda\xi\big)=\d\rho(\smallderiv\lambda 0\lambda\xi)=\smallderiv\lambda 0 \lambda\rho(\xi)=0,
\]
since $\frak k\subset\ker\rho$. For the remaining terms, using the fact that the Lie bracket on $\AA_q$ is defined componentwise, together with the definition of the Lie bracket on $TA$, we first observe
\[
[\Z_i(\xi),\Z_j(\alpha)]_{\AA_q}=0,\quad [\Z_i(\xi),\T(\alpha)]_{\AA_q}=\Z_i([\xi,\alpha]),
\]
for any $\alpha\in\Gamma(A)$. Notice that $[\xi,\alpha]\in\Gamma(\frak k)$ on account of $\frak k$ being a bundle of ideals, hence by multilinearity of $\omega$ with respect to $\AA_q\ra \MM_q$ and the fact that $\Gamma(\MM_q,\AA_q)$ is generated by the linear and core sections as a $C^\infty(\MM_q)$-module, the second term also vanishes.
\end{proof}
There is now a straightforward way of defining the horizontal projection of exterior cochains, given an IM connection for $\frak k$.
\begin{definition}
  \label{def:hor_proj_ext}
  Let $E\subset TA$ be an IM connection for a bundle of ideals $\frak k$ on $A\Ra M$, and let us denote by 
  $h\colon TA\ra E\hookrightarrow TA$ 
  the VB-algebroid morphism over $\id_A$ and $\id_{TM}$, 
  \begin{align}
    \label{eq:horizontal_proj}
    h(X)=X-_A v^{TA}(X).
  \end{align}
  The \textit{horizontal projection} of exterior cochains is the map
\begin{align*}
  &h^*\colon \Gamma_{\ext}(\MM_q,\Lambda^p\AA_q^*)\ra \Gamma_{\ext}(\MM_q,\Lambda^p\AA_q^*)^\Hor,\\
  &(h^*\omega)_{(w_1,\dots,w_q,\zeta)}(X^1,\dots,X^p)=\omega_{(w_1,\dots,w_q,\zeta)}(hX^1,\dots,h X^p),
\end{align*}
for any vectors $X^i=(X^i_1,\dots,X^i_q,(a^i,\zeta))$ with $\d\pi(X^i_j)=w_j\in T_x M$ and $a^i\in A_x$, where $h\colon \AA_q\ra \AA_q$ is the VB-algebroid morphism covering $\id_A$ and $\id_{\MM_q}$, induced by \eqref{eq:horizontal_proj}. Namely, $h\colon \AA_q\ra \AA_q$ is defined on any vector $X^i$ as above by
\[hX^i=(hX_1^i,\dots,hX_q^i,(a^i,\zeta)).\] 
\end{definition}
\begin{remark}
  We will use the same letter $h$ (resp., $v$) for all three horizontal (resp., vertical) projections of vectors in $\AA_q$, $TA$, and $A$, as it will always be contextually clear which one is used.
\end{remark}
What follows is the most important property of $h^*$, the proof of which is now almost trivial. Theorem \ref{thm:derivation_hor_proj} will imply that the same holds for Weil cochains, where proving this is  tedious.
\begin{proposition}
  \label{prop:h_cochain_ext}
  Let $E\subset TA$ be an IM connection for a bundle of ideals $\frak k$ on $A\Ra M$. The horizontal projection of exterior cochains is a cochain map, that is:
  \[\delta h^*=h^*\delta.\]
\end{proposition}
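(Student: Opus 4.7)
The plan is to reduce the proposition to the general fact that pullback along a Lie algebroid morphism commutes with the Chevalley--Eilenberg differential. By definition \eqref{eq:delta_vb}, $\delta$ on $\Gamma_\ext(\MM_q,\Lambda^\bullet\AA_q^*)$ is the restriction of the standard algebroid differential of $\AA_q \Rightarrow \MM_q$, and the horizontal projection is just $h^*\omega = \omega(h\,\cdot,\dots,h\,\cdot)$ for the map $h\colon\AA_q\to\AA_q$ of Definition \ref{def:hor_proj_ext}. It will therefore suffice to verify that $h\colon\AA_q\to\AA_q$ is a Lie algebroid morphism over $\id_{\MM_q}$; the commutation $\delta h^* = h^*\delta$ then follows tautologically.

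Since the algebroid structure on $\AA_q = \oplus_A^q TA \oplus_A \pi^*V^*$ is defined componentwise and $h$ acts as the identity on $\pi^*V^*$ and componentwise as $h\colon TA \to TA$ on the $TA$-factors, the core step is to show that $h\colon TA \to TA$ is a Lie algebroid morphism over $\id_{TM}$. The key identity is bracket compatibility $h[X,Y] = [hX,hY]$ for $X,Y\in\Gamma(TA)$. First I would decompose $X = hX + vX$ and $Y = hY + vY$ along $TA = E\oplus K$, where $K=\ker\d\phi$ and $\phi\colon A\to A/\frak k$, and then expand
\[
[X,Y] = [hX,hY] + [hX,vY] + [vX,hY] + [vX,vY].
\]
The infinitesimal multiplicativity of $(\C,v)$ is equivalent, under the correspondence in \sec\ref{sec:im_connections_distributions}, to $E\subset TA$ being a VB-subalgebroid, so $[hX,hY]\in\Gamma(E)$. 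On the other hand, $K$ is the kernel of the Lie algebroid morphism $\d\phi\colon TA\to T(A/\frak k)$, hence a Lie ideal of $TA$, which forces the remaining three summands to lie in $\Gamma(K)$. Therefore $h[X,Y]=[hX,hY]$. Anchor compatibility $\rho_{TA}\circ h = \rho_{TA}$ reduces to $\rho_{TA}|_K = 0$, which is immediate from the identification $K\cong\pi^*\frak k$ of \eqref{eq:identification_i_K} combined with $\frak k\subset\ker\rho$: for $\xi\in\frak k$ and $\alpha\in A$,
\[
\rho_{TA}\big(\smallderiv\lambda 0(\alpha + \lambda\xi)\big) = \smallderiv\lambda 0\rho(\alpha + \lambda\xi) = 0.
\]

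The mildly delicate point---what I expect to be the \emph{only} real obstacle---is correctly setting up the asymmetric roles in the decomposition $TA = E \oplus K$: $E$ is merely a subalgebroid (from IM-multiplicativity), while $K$ is a two-sided ideal (from being the kernel of a Lie algebroid morphism). Once this is in place, the four-term expansion above makes the bracket compatibility essentially automatic, and restricting the resulting cochain-map property from $\Gamma(\MM_q,\Lambda^\bullet\AA_q^*)$ down to $\Gamma_\ext$ causes no trouble because $h$ commutes with the relevant homogeneous structures and with the zero sections used to define exteriority.
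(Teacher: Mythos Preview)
Your proposal is correct and follows essentially the same approach as the paper: both reduce the claim to $h\colon\AA_q\to\AA_q$ being a Lie algebroid morphism over $\id_{\MM_q}$, so that pullback commutes with the Chevalley--Eilenberg differential \eqref{eq:delta_vb}. The paper's proof is a single sentence invoking this fact (having asserted in Definition \ref{def:hor_proj_ext} that $h$ is a VB-algebroid morphism), whereas you supply the explicit verification via the decomposition $TA=E\oplus K$ with $E$ a subalgebroid and $K$ the kernel-ideal of $\d\phi$; this is exactly the content the paper leaves implicit.
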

\begin{proof}
  Inspecting the defining equation \eqref{eq:delta_vb} of $\delta$, we observe that this is a direct consequence of $h\colon  \AA_q\ra \AA_q$ being a Lie algebroid morphism, since this just means
  \begin{align*}
    \rho_{\AA_q}(h X)=\rho_{\AA_q}(X),\quad h[X,Y]_{TA}=[hX,hY]_{TA},
  \end{align*}
  for any sections $X,Y\in\Gamma(\MM_q,\AA_q)$.
\end{proof}

We can now start deriving the formula for $h^*$ on Weil cochains. Looking at equation \eqref{eq:def_exterior_weil_generators}, we see that to do so, we need to see how the horizontal projection acts on the generators of the module $\Gamma(\MM_q,\AA_q)$. The following lemma shows how to express the horizontal projection of any generator as a linear combination of the generators, with respect to the structure on $\AA_q\ra\MM_q$.
\begin{lemma}
  \label{lem:h_on_generators}
  Let $E\subset TA$ be an IM connection for a bundle of ideals $\frak k$ on $A$. For any section $\alpha\in\Gamma(A)$, the linear and core sections $\T\alpha,\Z_i\alpha\in\Gamma(\MM_q,\AA_q)$ satisfy:
  \begin{align*}
    h(\T\alpha)(w_1,\dots,w_q,\zeta)&=\T\alpha(w_1,\dots,w_q,\zeta)-_{\MM_q}\textstyle\sum_i^{\MM_q}\Z_i(\C\alpha(w_i))(w_1,\dots,w_q,\zeta),\\
    h(\Z_i\alpha)&=\Z_i(h\alpha),
  \end{align*}
  for any vectors $w_j\in T_x M$ and $\zeta\in V_x^*$. Here, the superscript on the sum indicates that the summation is with respect to $\AA_q\ra {\MM_q}$.
\end{lemma}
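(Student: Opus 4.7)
The strategy rests on decomposing the horizontal projection as $h = \id -_A v^{TA}$, where the subtraction is taken in the vector bundle structure $TA\to A$, together with the fact that $h\colon\AA_q\to\AA_q$ acts componentwise on the $TA$-factors while fixing the $\pi^*V^*$-component. Being a VB-algebroid morphism covering $\id_A$ and $\id_{TM}$, the projection $h$ is additive with respect to both $+_A$ and $+_{TM}$; I would use this linearity repeatedly. I would start with the core section identity $h(\Z_i\alpha) = \Z_i(h\alpha)$, which is the easier of the two.

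Working component by component on the $TA$-factors, the $j$-th component of $\Z_i\alpha$ for $j\neq i$ is $0_{TM}(w_j)$, and the identity $v^{TA}(0_{TM}(w)) = 0$ noted just before equation \eqref{eq:v_core_section} gives $h(0_{TM}(w_j)) = 0_{TM}(w_j)$. For the $i$-th component $0_{TM}(w_i) +_A \smallderiv{\lambda}{0}\lambda\alpha_x$, $+_A$-additivity of $h$ distributes the two summands: the first is fixed as above, while equation \eqref{eq:v_core_section} combined with linearity of $v^{TA}$ on core vectors yields $h(\smallderiv{\lambda}{0}\lambda\alpha_x) = \smallderiv{\lambda}{0}\lambda h(\alpha_x)$. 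This recovers exactly the $i$-th component of $\Z_i(h\alpha)$; the $\pi^*V^*$-component $0_\zeta$ is untouched on both sides.

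For the linear section identity, the new input is the explicit formula for $v^{TA}$ on the components $\d\alpha(w_j)\in T_{\alpha(x)}A$. Using $v^{TA}=i\circ\nu$ together with \eqref{eq:C_vb_picture} and \eqref{eq:identification_i_plus_tm} yields
\[
v^{TA}(\d\alpha(w_j)) = 0_A(\alpha(x)) +_{TM} \smallderiv{\lambda}{0}\lambda\C\alpha(w_j).
\]
The crux of the argument is then the DVB identity
\[
X -_A \big(0_A(\alpha(x)) +_{TM} c\big) = X -_{TM} \big(0_{TM}(w) +_A c\big),
\]
for $X\in T_{\alpha(x)}A$ lying over $w\in T_xM$ and $c\in T_{0_x}A$ a core vector. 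This rewriting of $-_A$ in terms of $-_{TM}$ is a direct consequence of the interchange law of the double vector bundle $TA$ and can be verified cleanly in a local trivialization of $A$ near $x$.

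It remains to observe that the $j$-th component of $\sum_i^{\MM_q}\Z_i(\C\alpha(w_i))(w_1,\dots,w_q,\zeta)$ collapses to $0_{TM}(w_j)+_A c_j$, where $c_j = \smallderiv{\lambda}{0}\lambda\C\alpha(w_j)$: the contributions for $i\neq j$ equal the $+_{TM}$-zero $0_{TM}(w_j)$ of the fiber $(TA)_{w_j}$ and drop out of the sum, leaving only the $i=j$ term. Matching this with the preceding paragraph gives the claimed formula in the tangent components, and the $\pi^*V^*$-components match trivially since each $\Z_i$ contributes the zero $0_\zeta$. The main obstacle is the careful bookkeeping of the two compatible additions on $TA$, which the interchange law cleanly reconciles.
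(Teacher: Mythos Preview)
Your proposal is correct and follows essentially the same approach as the paper: both compute $h\d\alpha(w_j)=\d\alpha(w_j)-_A v^{TA}(\d\alpha(w_j))$, identify $v^{TA}(\d\alpha(w_j))=0_A(\alpha_x)+_{TM}\smallderiv\lambda 0\lambda\C\alpha(w_j)$ via \eqref{eq:C_vb_picture} and \eqref{eq:identification_i_plus_tm}, and then invoke the interchange law to rewrite the $-_A$ as a $-_{TM}$, arriving at \eqref{eq:horizontal_proj_dalpha_TM}. Your treatment of the core sections is slightly more explicit than the paper's one-line dismissal, but the content is identical.
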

\begin{proof}
  The formula for the core sections is clear from the definition of $\Z_i\alpha$ and equation \eqref{eq:v_core_section}. For the linear sections, we first write
  \begin{align*}
    h(\T\alpha)(w_1,\dots,w_q,\zeta)&=(h\d\alpha(w_1),\dots,h\d\alpha(w_q),\chi_\alpha(\zeta)),
  \end{align*}
  so we have to compute $h\d\alpha(w_i)$ for any $w_i\in T_xM$. Observe there holds
  \begin{align}
    h\d\alpha(w_i)&=\d\alpha(w_i)-_A v\d\alpha(w_i)\nonumber\\
    &=\big({\d\alpha(w_i)}+_{TM} 0_{TM}(w_i)\big)-_A \big(0_A(\alpha_x)+_{TM}\smallderiv\lambda0 \lambda \C(\alpha)(w_i)\big)\nonumber\\
    &=\big({\d\alpha(w_i)}-_{A} 0_{A}(\alpha_x)\big)+_{TM}\big(0_{TM}(w_i)-_A \smallderiv\lambda0 \lambda \C(\alpha)(w_i)\big)\nonumber\\
    &=\d\alpha(w_i)-_{TM}\big(0_{TM}(w_i)+_A \smallderiv\lambda0 \lambda \C(\alpha)(w_i)\big)\label{eq:horizontal_proj_dalpha_TM}
  \end{align}
  where we have used identities \eqref{eq:C_vb_picture} and \eqref{eq:identification_i_plus_tm} on the second equality and compatibility of the two vector bundle structures on $TA$ on the third. On the fourth, we have used that the two bundle structures coincide on the core $A\ra M$, so in particular, the two scalar multiplications coincide on the vector $\smallderiv\lambda 0 \lambda\C(\alpha)(w_i)$. Now, since the addition (over $\MM_q$) is componentwise, we obtain
  {\begin{align*}
    &h(\T\alpha)(w_1,\dots,w_q,\zeta)\\
    &=\T\alpha(w_1,\dots,w_q,\zeta)
    -_{\MM_q}\big(0_{TM}(w_1)+_A \smallderiv\lambda0 \lambda \C(\alpha)(w_1),\dots,0_{TM}(w_q)+_A \smallderiv\lambda0 \lambda \C(\alpha)(w_q), 0_\zeta \big)\\
    &=\T\alpha(w_1,\dots,w_q,\zeta)
    -_{\MM_q}\textstyle\sum_i^{\MM_q}\big(0_{TM}(w_1),\dots, 0_{TM}(w_i)+_A \smallderiv\lambda0 \lambda \C(\alpha)(w_i),\dots, 0_{TM}(w_q),0_\zeta\big),
  \end{align*}
  }from which the claimed formula for linear sections follows.
\end{proof}
\begin{remark}
  Intuitively, equation \eqref{eq:horizontal_proj_dalpha_TM} should be seen as a formula for how to horizontally project $\d\alpha$ within the fibre of $TA\ra TM$ instead of $TA\ra A$. As a consequence, it tells us how to horizontally project $\T\alpha$ within the fibre of $\AA_q\ra \MM_q$ instead of $\AA_q\ra A$.
  Henceforth, we will denote the so-called \textit{$\MM_q$-vertical component} of $\T\alpha$ by
  \begin{align*}
    u(\T\alpha)\in\Gamma(\MM_q,\AA_q),\quad u(\T\alpha)(w_1,\dots,w_q,\zeta)=\textstyle\sum_i^{\MM_q}\Z_i(\C\alpha(w_i))(w_1,\dots,w_q,\zeta),
  \end{align*}
  for any vectors $w_i\in T_x M$ and $\zeta\in V^*_x$.
\end{remark}
  To obtain some insight for deriving the general formula for $h^*$ on Weil cochains, let us first deal with the simplest nontrivial case: $p=1$. By the last lemma, for any $\omega\in \Gamma_\ext(\MM_q,\Lambda^1\AA_q^*)$,
  \begin{align*}
    \tilde c_0(h^*\omega)(\alpha)=\omega(h\T\alpha)=\omega(\T\alpha)-\omega(u\T\alpha),
  \end{align*}
  where the first term is just $\tilde c_0(\omega)(\alpha)$. The second term reads
  \begin{align}
    \omega(u\T\alpha)(w_1,\dots,w_q,\zeta)&=\textstyle\sum_i\omega\big(\Z_i(\C\alpha(w_i))\big)(w_1,\dots,w_q,\zeta)\nonumber\\
    &=\textstyle\sum_i (-1)^{i+1}\omega\big(\Z_1(\C\alpha(w_i))\big)(w_i,w_1,\dots,\widehat{w_i},\dots, w_q,\zeta)\label{eq:zi_expression_minus}\\
    &=\tfrac 1{(q-1)!}\textstyle\sum_{\sigma\in S_q} \sgn(\sigma)\omega\big(\Z_1(\C\alpha(w_{\sigma(1)}))\big)(w_{\sigma(1)},\dots, w_{\sigma(q)},\zeta),\label{eq:zi_to_z1}
  \end{align}
  where we have used the skew-symmetry of $\omega$ with respect to $\AA_q\ra A$ in the second equality. In the last line, we have merely rewritten the expression \eqref{eq:zi_expression_minus} with permutations because this form will be useful in the general proof. In any case, by Remark \ref{rem:any_vectors_ck}, the line \eqref{eq:zi_expression_minus} equals 
  \[
    \omega(u\T\alpha)(w_1,\dots,w_q,\zeta)=\textstyle\sum_i (-1)^{i+1}c_1(\omega)(\C\alpha(w_i))(w_1,\dots,\widehat{w_i},\dots,w_q,\zeta),
  \]
  which finally yields the formula \eqref{eq:h_p=1}, that is, \[c_0(h^*\omega)(\alpha)=c_0(\omega)(\alpha)-c_1(\omega)\wedgedot\C\alpha.\] At the level of symbols, $c_1(h^*\omega)(\beta)=c_1(\omega)(h\beta)$ is also clear from the last lemma. We now provide the desired derivation for $h^*$ on Weil cochains for the general case $p\geq 1$, preliminarily noting that exactly the same ideas will be used as above, though the procedure will be combinatorially heavier.
\begin{theorem}
  \label{thm:derivation_hor_proj}
  Let $E\subset TA$ be an IM connection for a bundle of ideals $\frak k$ on $A$. The model isomorphism $\ev\colon \Gamma_\ext(\MM_q,\Lambda^p\AA_q^*)\ra W^{p,q}(A;\frak k)$ commutes with the horizontal projections,
  \begin{align*}
    \ev\circ h^*=h^*\circ \ev,
  \end{align*}
  where $h^*$ on the left and the right side correspond to Definitions \ref{def:hor_proj_ext} and \ref{def:hor_proj}, respectively.
\end{theorem}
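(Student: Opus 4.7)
The plan is to compute $\ev(h^*\omega)$ directly from the definition and match it, term by term, with the formula in Definition~\ref{def:hor_proj}. Writing out $c_k(\ev(h^*\omega))(\ul\alpha\|\ul\beta)$ using \eqref{eq:def_exterior_weil_generators} amounts to evaluating $(h^*\omega)$ on the generating sections $\Z_1\beta_1,\dots,\Z_k\beta_k,\T\alpha_1,\dots,\T\alpha_{p-k}$. Since $h\colon \AA_q\ra\AA_q$ acts componentwise on the $q$ copies of $TA$, this pushes through to evaluating $\omega$ on $h\Z_j\beta_j$ and $h\T\alpha_i$, and Lemma~\ref{lem:h_on_generators} gives us an explicit expression for each in terms of generators: $h\Z_j\beta_j=\Z_j(h\beta_j)$, while $h\T\alpha_i=\T\alpha_i-_{\MM_q}u\T\alpha_i$ with $u\T\alpha_i$ a sum (over $\MM_q$) of core sections $\Z_l(\C\alpha_i(\cdot))$.

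Next, the plan is to expand the resulting expression using multilinearity of $\omega$ with respect to $\AA_q\ra\MM_q$. For each subset $S\subseteq\{1,\dots,p-k\}$ of size $j-k$ (with $k\leq j\leq p$), one obtains a summand carrying a sign $(-1)^{j-k}$ in which $\T\alpha_i$ is kept for $i\notin S$ and replaced by $u\T\alpha_i$ for $i\in S$. This contribution, after further expanding each $u\T\alpha_i=\sum_l^{\MM_q}\Z_l(\C\alpha_i(w_l))$, is an evaluation of $\omega$ on $k+(j-k)=j$ core sections together with $p-j$ linear sections—precisely the data that, under the model isomorphism \eqref{eq:evaluation_map}, computes the $j$-th correction term $c_j(\omega)$ evaluated with horizontal arguments on the original $\ul\beta$ side and with $\C\alpha_i$ arguments inserted on the side indexed by $S$.

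To match the combinatorics with the $(j-k,p-j)$-shuffle sum of Definition~\ref{def:hor_proj}, the plan is to use skew-symmetry of $\omega$ with respect to $\AA_q\ra A$ to rearrange the core sections stemming from $u\T\alpha_i$, exactly as in the warm-up $p=1$ computation \eqref{eq:zi_expression_minus}--\eqref{eq:zi_to_z1} where $\sum_i (-1)^{i+1}\Z_i$ was converted into a sum over $S_q$. The $|S|=j-k$ many $\Z$-insertions coming from the expansion, combined with Remark~\ref{rem:any_vectors_ck} and the skew-symmetry, then naturally assemble into the pairing $c_j(\omega)(\cdots\|h\ul\beta,\cdot)\wedgedot(\C\alpha_{\sigma(1)},\dots,\C\alpha_{\sigma(j-k)})$ by the description of $\wedgedot$ in \eqref{eq:wedgedot_multiple}. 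The indexing set $S$ corresponds to the first $j-k$ values of a $(j-k,p-j)$-shuffle $\sigma$, and the rearrangement sign matches $\sgn(\sigma)$.

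The main obstacle will be the sign and index bookkeeping. Three independent sources of signs must be reconciled: the factor $(-1)^{j-k}$ from choosing the subset $S$; the skew-symmetry signs incurred when moving the newly created $\Z$-sections into canonical positions $\Z_1,\dots,\Z_j$; and the shuffle signs $\sgn(\sigma)$ appearing in Definition~\ref{def:hor_proj}. The $p=1$ calculation preceding the theorem already exhibits the basic mechanism, and the general case is a careful enhancement of it; once the bookkeeping is done, well-definedness of the map $h^*$ on Weil cochains from Definition~\ref{def:hor_proj} is a free byproduct, since $\ev$ is an isomorphism of modules and $h^*$ on exterior cochains visibly takes values in horizontal cochains.
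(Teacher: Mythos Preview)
Your plan is essentially the same as the paper's proof: expand $h^*\omega$ on the generators via Lemma~\ref{lem:h_on_generators}, use multilinearity over $\MM_q$ to split into a shuffle sum indexed by $j$, and then use skew-symmetry over $\AA_q\to A$ together with Remark~\ref{rem:any_vectors_ck} to identify each summand with $c_j(\omega)(\cdots)\wedgedot(\C\alpha_{\sigma(1)},\dots)$ as in \eqref{eq:wedgedot_multiple}.

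There is one technical ingredient you did not name that the paper uses and that you will need. When you expand several $u\T\alpha_i$ simultaneously as $\sum_l^{\MM_q}\Z_l(\C\alpha_i(w_l))$, you must explain why the resulting core-section indices can be arranged into the positions $\Z_{k+1},\dots,\Z_j$ without extra cross terms. The paper does this by processing the $u\T\alpha_i$ one at a time (the ``Intermezzo'' step), and the key fact that makes this work is
\[
\iota_{\Z_i\alpha}\iota_{\Z_i\beta}\omega=0,
\]
which follows from multilinearity with respect to both bundle structures (a linear form on a vector bundle vanishes on two vertical vectors). This is what kills the $\Z_\ell$-summand of $u\T\alpha_{r+1}$ for $\ell\leq r$ once $\Z_1,\dots,\Z_r$ have already been placed, and is exactly what turns your ``sign and index bookkeeping'' into the clean formula \eqref{eq:wedgedot_multiple}. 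Once you insert this observation, your argument goes through and matches the paper's.
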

\begin{proof}
  Let us first deal with the leading term. For $\omega\in\Gamma_\ext(\MM_q,\Lambda^p\AA_q^*)$, we begin by computing
\begin{align}
  \label{eq:grouped_rearranged}
\begin{split}
  &\tilde c_0(h^*\omega)(\alpha_1,\dots,\alpha_p)=\omega(h\T\alpha_1,\dots,h\T\alpha_p)=\omega\big(\T\alpha_1-_{\MM_q}u\T\alpha_1,\dots,\T\alpha_p-_{\MM_q}u\T\alpha_p\big)\\
  &=\textstyle\sum_{j=0}^p(-1)^j\textstyle\sum\limits_{\mathclap{\qquad\sigma\in S_{(j,p-j)}}}\sgn(\sigma)\,\omega(u\T\alpha_{\sigma(1)},\dots, u\T\alpha_{\sigma(j)},\T\alpha_{\sigma(j+1)},\dots,\T\alpha_{\sigma(p)}),
\end{split}
\end{align}
  where we have used multilinearity of $\omega$ with respect to $\AA_q\ra\MM_q$ and grouped the terms with the same number of $\MM_q$-vertical arguments. The arguments have also been rearranged using skew-symmetry with respect to $\AA_q\ra\MM_q$, so that the $\MM_q$-vertical arguments appear first. To establish the theorem at the level of leading terms, we thus have to show
  \begin{align}
    \label{eq:omega_cj_wedgedot}
    \omega\big(u\T\alpha_1,\dots,u\T\alpha_j,\T\alpha_{j+1},\dots,\T\alpha_p\big)=c_j(\omega)(\alpha_{j+1},\dots,\alpha_p)\wedgedot (\C\alpha_1,\dots,\C\alpha_j).
  \end{align}
  The plan now is to transform each of the sections $u\T\alpha_i$ on the left-hand side into a core section of $\AA_q\Ra\MM_q$. We do this step by step, starting with $u\T\alpha_1$ and proceeding towards the right. For any vectors $w_1,\dots,w_q\in T_x M$ and $\zeta\in V^*_x$, we have
  \begin{align}
    &\omega\big(u\T\alpha_1,\dots,u\T\alpha_j,\T\alpha_{j+1},\dots,\T\alpha_p\big)(w_1,\dots,w_q,\zeta)\label{eq:omega_u_Talpha}\\
    &=\omega\big(\textstyle\sum_i\Z_i(\C\alpha_1(w_i)),u\T\alpha_2,\dots,u\T\alpha_j,\T\alpha_{j+1},\dots,\T\alpha_p\big)(w_1,\dots,w_q,\zeta)\nonumber\\
    &=\tfrac 1{(q-1)!}\textstyle\sum_{{\sigma\in S_q}}\sgn(\sigma) \omega\big(\Z_1(\C\alpha_1(w_{\sigma(1)})),\underbrace{u\T\alpha_2}_{\mathclap{\textstyle\sum_{i\geq 2}\Z_i(\C\alpha_2(w_{\sigma(i)}))}},\dots,u\T\alpha_j,\T\alpha_{j+1},\dots,\T\alpha_p\big)(w_{\sigma(1)},\dots,w_{\sigma(q)},\zeta),\nonumber
  \end{align}
  where we have used Lemma \ref{lem:h_on_generators} on the first equality and a similar identity as \eqref{eq:zi_to_z1} on the second equality (see Intermezzo \ref{intermezzo:intermediate_eq} below). Importantly, the under-brace follows from
  \begin{align}
    \label{eq:Z_Z_omega}
    \iota_{\Z_i\alpha}\iota_{\Z_i\beta}\omega=0,
  \end{align}
  for any $\alpha,\beta\in\Gamma(A)$, as a consequence of multilinearity of $\omega$ with respect to both vector bundle structures, and the general fact that linear differential forms on vector bundles vanish whenever two vertical vectors are inserted.
 \begin{intermezzo}
  \label{intermezzo:intermediate_eq}
   We observe that the following generalization of the equation \eqref{eq:zi_to_z1} holds:
   for any $\omega\in\Gamma_\ext(\MM_q,\Lambda^p\AA_q^*)$ and for any given $\ell\geq 1$, skew-symmetry of $\omega$ with respect to $\AA_q\ra A$ implies
 \begin{align*}
 \begin{split}
       &\omega\big(\textstyle\sum_{i\geq \ell}^{\MM_q}\Z_i(\C\alpha(w_i)),X^2,\dots,X^p\big)(w_1,\dots,w_q,\zeta)\\
       &=\textstyle\sum_{i\geq \ell}(-1)^{i-\ell}\omega\big(\Z_\ell(\C\alpha(w_i)),X^2,\dots,X^p\big)(w_1,\dots,w_{\ell-1},w_i,w_\ell,\dots,\widehat{w_i},\dots,w_q,\zeta)\\
       &=\tfrac 1{(q-\ell)!}\textstyle\sum_{{\sigma\in S_q^{\ell-1}}}\sgn(\sigma)\omega\big(\Z_\ell(\C\alpha(w_{\sigma(\ell)})),X^2,\dots,X^p\big)(w_{\sigma(1)},\dots,w_{\sigma(q)},\zeta),
 \end{split}
 \end{align*}
 for any vectors $X^i\in\AA_q$ over $(w_1,\dots,w_q,\zeta)$, where $S_q^{\ell-1}\subset S_q$ denotes the permutations which restrict to the identity on $\set{1,\dots, \ell-1}$. In the second line, we have used Lemma \ref{lem:h_on_generators}.
 \end{intermezzo}
\noindent By the intermezzo, the computation \eqref{eq:omega_u_Talpha} continues as
\begin{align*}
  &\eqref{eq:omega_u_Talpha}=\tfrac 1{(q-1)!(q-2)!}
    \textstyle\sum_{{\sigma\in S_q}}
    \textstyle\sum_{{\tilde\sigma\in S_q^1}}
    \sgn(\sigma)\sgn(\tilde\sigma)\cdot
    \\
    &\hspace{1em}
    \cdot\omega\big(\Z_1(\C\alpha_1(w_{\sigma(1)})),\Z_2(\C\alpha_2(w_{\sigma\tilde \sigma(2)})),u\T\alpha_3,\dots,\T\alpha_{j+1},\dots,\T\alpha_p\big)(w_{\sigma\tilde\sigma(1)},w_{\sigma\tilde\sigma(1)},\dots,w_{\sigma\tilde\sigma(q)},\zeta).
\end{align*}
By introducing $\tau=\sigma\tilde\sigma$, the condition $\tilde\sigma(1)=1$ becomes $\tau(1)=\sigma(1)$, so we can replace the second sum $\sum_{{\tilde\sigma\in S_q^1}}$ with $\sum_{\tau\in S_q, \tau(1)=\sigma(1)}$. The summand is then independent of $\sigma$, and since there are $(q-1)!$ permutations $\sigma$ in $S_q$ with a fixed $\sigma(1)$, we get
\begin{align*}
  \eqref{eq:omega_u_Talpha}=\tfrac 1{(q-2)!}\textstyle\sum\limits_{\smash[b]{\mathclap{\tau\in S_q}}}\sgn(\tau)\omega\big(\Z_1(\C\alpha_1(w_{\tau(1)})),\Z_2(\C\alpha_2(w_{\tau(2)})),u\T\alpha_3,\dots,\T\alpha_p\big)(w_{\tau(1)},\dots,w_{\tau(q)},\zeta).
\end{align*}
Repeating this procedure on each of the remaining sections $u\T\alpha_i$, we are left with
\begin{align*}
  \eqref{eq:omega_u_Talpha}&=\tfrac 1{(q-j)!}\textstyle\sum\limits_{\mathclap{\tau\in S_q}}\sgn(\tau)\omega\big(\Z_1(\C\alpha_1(w_{\tau(1)})),\dots,\Z_j(\C\alpha_j(w_{\tau(j)})),\T\alpha_{j+1}\dots,\T\alpha_p\big)(w_{\tau(1)},\dots,w_{\tau(q)},\zeta)\\
  &=\tfrac 1{(q-j)!}\textstyle\sum\limits_{\mathclap{\tau\in S_q}}\sgn(\tau)c_j(\omega)(\alpha_{j+1},\dots,\alpha_{p}\| \C\alpha_{1}(w_{\tau(1)}),\dots,\C\alpha_{j}(w_{\tau(j)}))(w_{\tau(j+1)},\dots,w_{\tau(p)}),
\end{align*}
where we have used Remark \ref{rem:any_vectors_ck} in the second equality. Comparing this expression with \eqref{eq:wedgedot_multiple} proves the wanted identity \eqref{eq:omega_cj_wedgedot}, in turn proving the theorem at the level of leading terms. 

Finally, for correction terms, first observe that a similar identity as in \eqref{eq:grouped_rearranged} holds:
\begin{align*}
  &\tilde c_k(h^*\omega)(\alpha_1,\dots,\alpha_{p-k}\|\beta_1,\dots,\beta_k)=\omega(h\Z_1\beta_1,\dots,h\Z_k\beta_k, h\T\alpha_1,\dots,h\T\alpha_{p-k})\\
  &=\textstyle\sum_{j=k}^p(-1)^{j-k}\textstyle\sum\limits_{\mathclap{\qquad\sigma\in_{(j-k,p-j)}}}\sgn(\sigma)\,\omega(h\Z_1\beta_1,\dots,h\Z_k \beta_k,u\T\alpha_{\sigma(1)},\dots, u\T\alpha_{\sigma(j)},\T\alpha_{\sigma(j+1)},\dots,\T\alpha_{\sigma(p-k)}).
\end{align*}
We now have to prove the following equality for any sections $\alpha_i,\beta_j\in\Gamma(A)$:
\begin{align*}
  \omega(h\Z_1\beta_1,\dots,h\Z_k\beta_k,&u\T\alpha_1,\dots,u\T\alpha_j,\T\alpha_{j+1},\dots,\T\alpha_{p-k})\\
  &=c_j(\omega)(\alpha_{j+1},\dots,\alpha_{p-k}\|h\beta_1,\dots,h\beta_k)\wedgedot (\C\alpha_1,\dots,\C\alpha_j).
\end{align*}
This holds since $h(\Z_i\beta)=\Z_i(h\beta)$ for any $\beta\in\Gamma(A)$ by Lemma \ref{lem:h_on_generators}, by using the identical procedure as for the leading term.
\end{proof}

\section{Applications}
\label{sec:applications}
\subsection{Obstruction to existence of multiplicative connections}
In this section, we develop the necessary and sufficient condition for the existence of (infinitesimal) multiplicative Ehresmann connections. This generalizes the case for groupoid extensions \cite{gerbes}*{Proposition 6.13}, and establishes the infinitesimal analogue. As we will see, the obstruction is fairly simple to obtain as soon as one considers the right cohomology---that of the horizontal subcomplexes of the Bott--Shulman--Stasheff and Weil complex which have been introduced in the previous sections.
\subsubsection{Global case}
\begin{definition}
Let $\frak k$ be a bundle of ideals on a Lie groupoid $G\rra M$. The \textit{horizontal cohomology} of $\frak k$-valued forms on $G$ is the cohomology of the horizontal subcomplex from Definition \ref{defn:horizontal}, that is,
\begin{align*}
H^{p,q}(G;\frak k)^\Hor\coloneqq H^p\big(\Omega^{\bullet,q}(G;\frak k)^\Hor,\delta\big).
\end{align*}
\end{definition}
We now construct the obstruction class for the existence of multiplicative connections for a bundle of ideals $\frak k$. Suppose $E\subset TG$ is any (not necessarily multiplicative) distribution on $G$ complementing $K$, and let $\omega\in \Omega^1(G;s^*\frak k)$ be the corresponding 1-form defined by equation \eqref{eq:omega}, i.e., $\omega$ is the vertical projection under the isomorphism $K\cong s^*\frak k$. Since $\delta\circ\delta=0$, we obtain a cocycle $\delta\omega$, which is horizontal, i.e., it vanishes on $K^{(2)}$:
\begin{align*}
\delta\omega(X,Y)&=\omega(Y)-\omega(\d m(X,Y))+\Ad_{h^{-1}}\omega(X)\\
&=\d(L_{h^{-1}})Y-\d{(L_{h^{-1}g^{-1}})}(\d(L_g)(Y)+\d(R_h)(X))+\Ad_{h^{-1}}\d(L_{g^{-1}})X=0,
\end{align*}
for any $X\in K_g$ and $Y\in K_h$ where $s(g)=t(h)$. Hence, we may define
\begin{align}
\label{eq:obs}
\obs_{\A(G;\frak k)}=[\delta \omega] \in H^{2,1}(G;\frak k)^\Hor.
\end{align}
The difference of two 1-forms $\omega$ and $\tilde\omega$, corresponding to different distributions, is horizontal since their restrictions to $K$ equal the Maurer--Cartan form, $\tilde\omega|_K=\omega|_K=\Theta_{MC}|_K$. Therefore, the class above is independent of the choice of the distribution $E$.
\begin{proposition}
\label{prop:existence}
A multiplicative Ehresmann connection on a Lie groupoid $G$ for a bundle of ideals $\frak k$ exists if and only if the class $\mathrm{obs}_{\A(G;\frak k)}$ vanishes.
\end{proposition}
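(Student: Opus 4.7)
The plan is to show both implications by directly manipulating connection $1$-forms, using that the obstruction class has been defined entirely in terms of the vertical projection associated to an arbitrary (not necessarily multiplicative) distribution complementing $K$.

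For the forward implication, suppose $\omega\in\A(G;\frak k)$ exists. Then $\omega$ itself is one of the $1$-forms out of which the class $\obs_{\A(G;\frak k)}$ may be computed, since it is associated to the distribution $E=\ker\omega$ by \eqref{eq:omega}. Multiplicativity of $\omega$ means precisely $\delta\omega=0$, so $\obs_{\A(G;\frak k)}=[\delta\omega]=0$.

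For the converse, start with any distribution $E\subset TG$ complementing $K$ and let $\omega\in\Omega^1(G;s^*\frak k)$ be the associated $1$-form from \eqref{eq:omega}. By construction, $\omega|_{\frak k}=\id_{\frak k}$, and the computation already carried out after \eqref{eq:obs} shows $\delta\omega\in\Omega^{2,1}(G;\frak k)^{\Hor}$. By the hypothesis $\obs_{\A(G;\frak k)}=0$, there exists a horizontal $1$-form $\eta\in\Omega^{1,1}(G;\frak k)^{\Hor}$ with $\delta\omega=\delta\eta$. Define
\[
\omega'\coloneqq \omega-\eta\in\Omega^1(G;s^*\frak k).
\]
Then $\delta\omega'=0$, so $\omega'$ is multiplicative. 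Moreover, since $\eta$ is horizontal and $\frak k\subset K|_M$, we have $\eta|_{\frak k}=0$, and hence $\omega'|_{\frak k}=\omega|_{\frak k}=\id_{\frak k}$. This gives $\omega'\in\A(G;\frak k)$, so that $E'=\ker\omega'$ is a multiplicative Ehresmann connection for $\frak k$; complementarity $TG=E'\oplus K$ follows from $\omega'|_{\frak k}=\id_{\frak k}$ combined with multiplicativity, which together force $\omega'|_K$ to coincide with the Maurer--Cartan trivialization of $K$ (as noted in the remark following the definition of $\A(G;\frak k)$).

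No step here is combinatorially difficult; the whole content of the proposition is packaged in the cohomological setup. The only subtlety to double-check carefully is that horizontality of $\eta$ really implies $\eta|_{\frak k}=0$ (recalling $\frak k\subset K$ at units), and that the resulting $\omega'$ satisfies the identity condition on all of $K$, not merely on $\frak k$; both points follow from the identification $s^*\frak k\cong K$ via the Maurer--Cartan form and the multiplicativity of $\omega'$, as pointed out earlier.
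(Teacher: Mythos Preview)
Your proof is correct and follows essentially the same approach as the paper: both argue that any (not necessarily multiplicative) connection $1$-form can be corrected by a horizontal $1$-form to become multiplicative precisely when the obstruction class vanishes. One minor remark: horizontality of $\eta$ at level $p=1$ already means $\eta|_K=0$ (not merely $\eta|_{\frak k}=0$), so $\omega'|_K=\omega|_K=\Theta_{MC}|_K$ holds directly, and your additional appeal to multiplicativity for this point is unnecessary.
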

\begin{proof}
Clearly, the existence of a multiplicative connection implies the vanishing of the obstruction class. Conversely, if the class \eqref{eq:obs} vanishes, there is a horizontal form $\alpha\in \Omega^1(G;s^*\frak k)^\Hor$ such that
\begin{align*}
\delta(\omega+\alpha)=0,
\end{align*}
where $\omega$ corresponds to a fixed distribution $E$, as above. This means that the form $\alpha$ corrects the connection $\omega$, making $\omega+\alpha$ a multiplicative Ehresmann connection.
\end{proof}
\begin{remark}
  This obstruction class can be used to provide alternative proofs of the following two results from \cite{mec}; this is currently a work in progress.
  \begin{itemize}
    \item Morita invariance of existence of multiplicative Ehresmann connections \cite{mec}*{Theorem 4.1}. This follows directly from the to-be-proved Morita invariance of horizontal cohomology. For Lie groupoid extensions, this was already proved in \cite{gerbes}*{Theorem 6.9}; our work in progress concerns generalizing this result to arbitrary bundles of ideals.
    \item Existence of multiplicative Ehresmann connections on proper Lie groupoids for arbitrary bundles of ideals \cite{mec}*{Theorem 4.2}. This statement is a direct consequence of a to-be-proved version of \textit{vanishing cohomology theorem} for (horizontal!)\ cohomology on proper Lie groupoids, with values in a representation. 
  \end{itemize}
\end{remark}

\subsubsection{Infinitesimal case}
\begin{definition}
Let $\frak k$ be a bundle of ideals on a Lie algebroid $A\Ra M$. The \textit{horizontal cohomology} of $\frak k$-valued Weil cochains on $A$ is the cohomology of the horizontal subcomplex from Definition \ref{defn:horizontal_inf}, that is,
\begin{align*}
H^{p,q}(A;\frak k)^\Hor\coloneqq H^p\big(W^{\bullet,q}(A;\frak k)^\Hor,\delta\big).
\end{align*}
\end{definition}
The construction of the obstruction class now proceeds as follows. Pick any triple of the following form: a splitting $v\colon A\ra \frak k$ of the short exact sequence \eqref{eq:splitting}, a linear connection on $\frak k\ra M$ and a tensor $U\in\Gamma(H^*\otimes T^*M\otimes \frak k)$, where $H=\ker v$ (for example, $U=0$). Combine the connection and the tensor to obtain a map $\C\colon \Gamma(A)\ra \Omega^1(M;\frak k)$,
\[
\C(\alpha)=\nabla(v\alpha)-U(h\alpha),
\]
which clearly satisfies $\C(f\alpha)=f\C(\alpha)+\d f\otimes v\alpha$, so it defines a Weil cochain $(\C,v)\in W^{1,1}(A;\frak k)$. Since $\delta\circ\delta=0$, we obtain a cocycle $\delta(\C,v)$, which is horizontal---indeed, by equation \eqref{eq:c2} we get
\begin{align*}
\delta(\C,v)_1(\alpha\|\xi)&=-[\alpha,v(\xi)]+v[\alpha,\xi]=0,
\end{align*}
and $\delta(\C,v)_2$ vanishes automatically since $2=k\geq q=1$. Hence, we may define
\begin{align}
\label{eq:obs_inf}
\obs_{\A(A;\frak k)}=[\delta(\C,v)]\in H^{2,1}(A;\frak k)^\Hor.
\end{align}
If $(\tilde \C,\tilde v)$ is another cochain obtained as above, the difference $(\tilde \C,\tilde v)-(\C,v)$ is clearly horizontal since both $\tilde v$ and $v$ are splittings of \eqref{eq:splitting}, hence the class above is independent of the choice of the triple $(v,\nabla,U)$. The proof of the following is analogous to that of Proposition \ref{prop:existence}.
\begin{proposition}
An IM connection on a Lie algebroid $A$ for a bundle of ideals $\frak k$ exists if and only if the class $\obs_{\A(A;\frak k)}$ vanishes. Moreover, if $A\Ra M$ is the algebroid of $G\rra M$ and $\frak k$ is a bundle of ideals on $G$, then the van Est map relates the two obstruction classes:
\[
\ve (\obs_{\A(G;\frak k)})=\obs_{\A(A;\frak k)}.
\]
\end{proposition}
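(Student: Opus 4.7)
The plan is to closely mirror the proof of Proposition \ref{prop:existence}, and then deduce the van Est statement from the fact that $\ve$ is a cochain map which preserves horizontality.

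For the ``if and only if'' statement, the ``only if'' direction is immediate: an IM connection $(\C,v)\in\A(A;\frak k)$ already satisfies $v|_\frak k=\id_\frak k$, so it arises from the triple $(v,\nabla,U)$ where $\nabla=\C|_\frak k$ and $U=-\C|_H$, and it is a cocycle by definition, hence $\obs_{\A(A;\frak k)}=0$. For the converse, I fix any triple $(v,\nabla,U)$ and form the associated $(\C,v)\in W^{1,1}(A;\frak k)$ as in the construction preceding \eqref{eq:obs_inf}. If $\obs_{\A(A;\frak k)}=0$, then there exists a horizontal cochain $(\alpha_0,\alpha_1)\in W^{1,1}(A;\frak k)^\Hor$ with $\delta(\C,v)=\delta(\alpha_0,\alpha_1)$. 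By Example \ref{ex:horizontal_p=1}, horizontality of $(\alpha_0,\alpha_1)$ is precisely the condition $\alpha_1|_\frak k=0$. Hence $(\C-\alpha_0,v-\alpha_1)$ is a cocycle whose symbol still satisfies $(v-\alpha_1)|_\frak k=\id_\frak k$, so it is an IM connection for $\frak k$.

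For the van Est statement, the key observation is that any connection form $\omega\in\Omega^1(G;s^*\frak k)$ coming from a (not necessarily multiplicative) distribution $E\subset TG$ complementing $K$ is itself a representative of the global obstruction, and its image $\ve(\omega)\in W^{1,1}(A;\frak k)$ is a representative of the infinitesimal one. To check the latter, I need to verify that $\ve(\omega)$ is of the form $(\C,v)$ with $v|_\frak k=\id_\frak k$, so that it arises from some triple $(v,\nabla,U)$. The symbol of $\ve(\omega)$ is computed via \eqref{eq:J_alpha} as $\ve(\omega)_1(\beta)(x)=\omega_{1_x}(\beta_x)$, and since $\omega|_K=\Theta_{MC}|_K$ under the identification $K\cong s^*\frak k$, this restricts on $\xi\in\frak k_x$ to the identity. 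Hence $\ve(\omega)$ fits the infinitesimal setup for computing $\obs_{\A(A;\frak k)}$.

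It then suffices to invoke the commutativity of $\ve$ with the simplicial differentials from \cite{homogeneous}, giving $\ve(\delta\omega)=\delta\ve(\omega)$, together with the fact noted in Definition \ref{defn:horizontal_inf} that $\ve$ restricts to a map between horizontal subcomplexes. The induced map on horizontal cohomology therefore sends $[\delta\omega]\in H^{2,1}(G;\frak k)^\Hor$ to $[\delta\ve(\omega)]\in H^{2,1}(A;\frak k)^\Hor$, which is precisely the relation $\ve(\obs_{\A(G;\frak k)})=\obs_{\A(A;\frak k)}$. No significant technical obstacle is expected; the only delicate point is the Maurer--Cartan identity $\omega|_K=\Theta_{MC}|_K$ which ensures that $\ve(\omega)$ lands in the correct affine class of Weil cochains.
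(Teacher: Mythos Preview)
Your proof is correct and follows essentially the same approach as the paper: for the first part the paper merely says the argument is analogous to the groupoid case (Proposition~\ref{prop:existence}), which is exactly what you spell out, and for the second part the paper performs the same verification that $\ve(\omega)_1|_{\frak k}=\id_{\frak k}$ via $\omega|_K=\Theta_{MC}|_K$, and then invokes $\ve\circ\delta=\delta\circ\ve$ to pass to cohomology.
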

\begin{proof}
  The proof of the first part of the proposition is analogous to the groupoid case. For the second part, note that if a form $\omega\in\Omega^1(G;s^*\frak k)$ satisfies $\omega|_K=\Theta_{MC}$, then the symbol of $(\C,v)=\ve(\omega)$ is given on any $\xi\in\Gamma(\frak k)$ by $v(\xi)=\omega(\xi^L)|_M=(s^*\xi)|_M=\xi$, so $v|_\frak k=\id_\frak k$, and hence
\begin{align}
    \ve(\obs_{\A(G;\frak k)})=\ve[\delta\omega]=[\ve(\delta\omega)]=[\delta(\ve(\omega))]=[\delta(\C,v)]=\obs_{\A(A;\frak k)}.\tag*\qedhere
\end{align}
\end{proof}

\subsection{Curvature and affine deformations}
\label{sec:curvature}
\subsubsection{Global case}
\begin{definition}
The \textit{curvature} $\Omega^\omega\in\Omega^2(G;s^*\frak k)$ of a multiplicative Ehresmann connection $\omega\in \A(G;\frak k)$ on a Lie groupoid $G\rra M$ is given by \[\Omega^\omega=\D{}^\omega\omega.\]
\end{definition}
In what follows we state several important properties of the curvature form, generalizing the ones already known from the theory of principal bundles. The following result is already established in \cite{mec}*{Propositions 2.22 and 2.24}, however, a direct proof of (i) below is now possible due to our results from \sec\ref{sec:hor_ext_cov_der}, and also a simpler proof of the structure equation in (iii) on account of our construction of $\nabla$ in Proposition \ref{prop:conn}. 
\begin{proposition}
	Let $\omega\in\A(G;\frak k)$ be a multiplicative Ehresmann connection on $G\rra M$. Its curvature satisfies the following properties:
	\begin{enumerate}[label={(\roman*)}]
		\item $\Omega^\omega$ is a multiplicative form.
		\item For any horizontal vector fields $X,Y\in \Gamma(E)$ there holds
		\begin{align}
		\label{eq:curv_horizontal}
			\d (L_g)_{1_{s(g)}}\Omega^\omega_g(X,Y)=h([X,Y]_g)-[X,Y]_g.
		\end{align}
		In particular, the curvature $\Omega^\omega$ vanishes if and only if $E$ is involutive.
		\item The structure equation for $\Omega^\omega$ holds:
		\begin{align}
		\label{eq:structure}
		\Omega^\omega=\d{}^{\nabla^s}\omega+\frac 12 [\omega,\omega]_{s^*\frak k}.
		\end{align}
		\item The \textit{Bianchi identity} for $\Omega^\omega$ holds:
		\begin{align}
		\label{eq:bianchi}
			\D{}^\omega \Omega^\omega = 0.
		\end{align}
	\end{enumerate}
\end{proposition}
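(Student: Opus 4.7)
Part (i) is essentially a corollary of Theorem \ref{thm:deltaD}: since $\omega$ is multiplicative, it is a $\delta$-cocycle, and since $\D{}^\omega$ is a cochain map, $\Omega^\omega=\D{}^\omega\omega$ is again a $\delta$-cocycle, hence multiplicative. For part (ii), I would expand $\Omega^\omega(X,Y)=(h^*\d{}^{\nabla^s}\omega)(X,Y)=\d{}^{\nabla^s}\omega(X,Y)$ for horizontal $X,Y$ via the Koszul formula. The terms $\nabla^s_X\omega(Y)$ and $\nabla^s_Y\omega(X)$ vanish because $E=\ker\omega$, leaving $-\omega([X,Y])$. The defining identity $\omega(Z)=\d{(L_{g^{-1}})}_g(v(Z))$ then yields equation \eqref{eq:curv_horizontal}, and the involutivity statement is immediate.

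For part (iii), the plan is to check that both sides agree separately on horizontal and on vertical arguments, exploiting the fact that $\Omega^\omega$ is horizontal by construction. On horizontal $X,Y$ both sides equal $-\omega([X,Y])$ by the computation in (ii), since $[\omega,\omega]$ contributes nothing. On pairs containing a left-invariant section $\xi^L\in\Gamma(K)$, I would invoke Lemma \ref{lem:nabla_st} together with the identification $\omega(\xi^L)=s^*\xi$: for horizontal $X$, the identity $\nabla^s_X(\xi^L)=v[h(X),\xi^L]$ reduces $\d{}^{\nabla^s}\omega(X,\xi^L)$ to a cancellation $v[X,\xi^L]-v[X,\xi^L]=0$, while on $(\xi^L,\eta^L)$ the Koszul expansion becomes $-\omega([\xi^L,\eta^L])=-[\xi,\eta]^L$, exactly cancelled by $\tfrac12[\omega,\omega](\xi^L,\eta^L)=[s^*\xi,s^*\eta]=[\xi,\eta]^L$ under the identification $K\cong s^*\frak k$. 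Hence the right-hand side is horizontal and agrees with $\Omega^\omega$ on horizontal pairs, so the two must coincide.

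For the Bianchi identity in (iv), I would differentiate the structure equation to obtain
\[
\d{}^{\nabla^s}\Omega^\omega=(\d{}^{\nabla^s})^2\omega+\tfrac12\d{}^{\nabla^s}[\omega,\omega].
\]
Here Proposition \ref{prop:conn} (iii) is essential: bracket-compatibility of $\nabla$ on $\frak k$ pulls back to bracket-compatibility of $\nabla^s$ on $s^*\frak k$, yielding the graded Leibniz rule $\d{}^{\nabla^s}[\omega,\omega]=2[\d{}^{\nabla^s}\omega,\omega]$. Applying $h^*$ kills both terms: the expansion of $(\d{}^{\nabla^s})^2\omega$ consists of contractions of the form $R^{\nabla^s}(\cdot,\cdot)\omega(\cdot)$, each involving $\omega$ evaluated on a horizontal vector, and in $[\d{}^{\nabla^s}\omega,\omega]$ the second slot becomes $\omega(h\cdot)=0$. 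Hence $\D{}^\omega\Omega^\omega=h^*\d{}^{\nabla^s}\Omega^\omega=0$. The main bookkeeping obstacle will be the careful treatment of the identification $K\cong s^*\frak k$ throughout part (iii); once this identification is firmly in place, parts (i), (ii), and (iv) are essentially mechanical given the cochain map property of $\D{}^\omega$ and the bracket-compatibility of $\nabla^s$.
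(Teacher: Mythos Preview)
Your proposal is correct and follows essentially the same route as the paper's own proof: part (i) via Theorem \ref{thm:deltaD}, part (ii) via the Koszul expansion on horizontal vectors, part (iii) by a case analysis on horizontal/left-invariant vertical arguments using Lemma \ref{lem:nabla_st} (equivalently Proposition \ref{prop:conn} (ii)), and part (iv) by differentiating the structure equation and applying the graded Leibniz rule from bracket-compatibility of $\nabla$. The only point the paper makes more explicit is the $C^\infty(G)$-linearity justification that allows the reduction to these special extensions in (iii), which you should state for completeness.
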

\begin{remark}
	The bracket on $\Omega^\bullet(G;s^*\frak k)$ which appears in equation \eqref{eq:structure} is the bracket of forms induced by the Lie bracket on the bundle of Lie algebras $s^*\frak k\ra G$. Skew-symmetry of the
	 latter and graded commutativity of the wedge product ensure there holds
   \[[\alpha,\beta]_{s^*\frak k}=(-1)^{kl+1}[\beta,\alpha]_{s^*\frak k}.\] Moreover, the graded Leibniz rule applies: 
\begin{align}
\label{eq:d_omega_graded_leibniz}
  	 \d{}^{\nabla^s}[\alpha,\beta]_{s^*\frak k}=[\d{}^{\nabla^s}\alpha,\beta]_{s^*\frak k}+(-1)^{k}[\alpha,\d{}^{\nabla^s}\beta]_{s^*\frak k},
\end{align} 
which follows from the fact that the induced connection $\nabla$ preserves the Lie bracket on $\frak k$. 
\end{remark}
\begin{proof}
Multiplicativity of $\Omega^\omega$ is a direct consequence of Theorem \ref{thm:deltaD}. To show (ii), we note that if $X,Y\in \Gamma(E)$, then there holds
\begin{align*}
\Omega^\omega(X,Y)&=\nabla^s_X\omega(Y)-\nabla^s_Y\omega(X)-\omega([X,Y])=-\omega([X,Y])
\end{align*}
and now use the defining equation $\eqref{eq:omega}$ of $\omega$ to conclude (ii). 

To prove (iii), first write out the right-hand side of equation \eqref{eq:structure}:
\begin{align*}
\Big({\d{}}^{\nabla^s}\omega&+\frac 12[\omega,\omega]_{s^*\frak k}\Big)(X,Y)=\nabla^s_X\omega(Y)-\nabla^s_Y\omega(X)-\omega([X,Y])+[\omega(X),\omega(Y)]_{s^*\frak k},
\end{align*}
for any vector fields $X,Y\in \vf (G)$. Note that both sides of \eqref{eq:structure} are $C^\infty(G)$-linear and moreover, any vertical vector can be extended to a vertical left-invariant vector field, and any horizontal vector can be extended to a horizontal $s$-projectable vector field on $G$. Hence, it is enough to consider all possible combinations of $X,Y$ being either vertical and left-invariant, or horizontal and $s$-projectable. If both $X$ and $Y$ are horizontal and $s$-projectable, both sides of \eqref{eq:structure} evaluate to $-\omega([X,Y])$, as already seen above. Furthermore, if both are vertical and left-invariant, i.e., $X=\xi^L$ and $Y=\eta^L$ for some $\xi,\eta\in \Gamma(\frak k)$, then both sides of \eqref{eq:structure} vanish since $[X,Y]=[\xi,\eta]^L$ and so $\omega([X,Y])=s^*[\xi,\eta]_{s^*\frak k}=[\omega(X),\omega(Y)]_{s^*\frak k}$. Finally, suppose that $X=\xi^L$ for some $\xi\in \Gamma(\frak k)$ and $Y\in \Gamma(E)$ is $s$-projectable to some vector field $s_*Y=U\in \vf(M)$. We need to show that the following identity holds:
\begin{align*}
\nabla^s_Y\omega(X)=\omega([Y,X]).
\end{align*}	
Notice that at any $g\in G$, we have
\begin{align*}
	\nabla^s_{Y}\omega(X)|_g&=\nabla^s_Y(s^*\xi)|_g=s^*(\nabla_U\xi)|_g=(g,v[Y,\xi^L]_{1_{s(g)}}),\\
	\omega_g([Y,X])&=(g,\d(L_{g^{-1}})_g(v[Y,X]_g)),
\end{align*}
where we have used the defining equation \eqref{eq:nabla} of $\nabla$ in the first line, and the defining equation \eqref{eq:omega} of $\omega$ in the second. These expressions coincide by left-invariance of $v[Y,\xi^L]$ from Proposition \ref{prop:conn} (ii).

The proof of the Bianchi identity is a matter of applying the structure equation:
\begin{align}
\label{eq:bianchi_intermediate}
  \d{}^{\nabla^s}\Omega^\omega=\d{}^{\nabla^s} (\d{}^{\nabla^s} \omega+\frac 12[\omega,\omega]_{s^*\frak k})=R^{\nabla^s}\wedge \omega+ [\d{}^{\nabla^s}\omega,\omega]_{s^*\frak k}
\end{align}
where we have denoted by $R^{\nabla^s}\in\Gamma(\Lambda^2(T^*G)\otimes \End(s^*\frak k))$ the usual curvature tensor of the pullback connection $\nabla^s$ on $s^*\frak k\ra G$. Inserting any three horizontal vectors into this equation now yields zero since $E=\ker \omega$.
\end{proof}
\begin{remark}
If we insert the structure equation into \eqref{eq:bianchi_intermediate} once more, we see that the Bianchi identity can also be written in the alternative form
\[
 \d{}^{\nabla^s}\Omega^\omega+[\omega,\Omega^\omega]_{s^*\frak k}=R^{\nabla^s}\wedge\omega.
\]
\end{remark}

\subsubsection{Infinitesimal case}
\label{sec:im_curvature}
In the rest of the paper, we use the simplified notation 
\[c(\alpha)=(c_0(\alpha),c_1(\alpha))\in \Omega^q(M;\frak k)\times \Omega^{q-1}(M;\frak k),\] for any Weil cochain $c=(c_0,c_1)\in W^{1,q}(A;\frak k)$, where $\alpha\in \Gamma(A)$.
\begin{definition}
Given a Lie algebroid $A$ with an IM connection $(\C,v)\in\A(A;\frak k)$, its \textit{curvature} is the horizontal IM form $\Omega^{(\C,v)}\in\Omega^2_{im}(A;\frak k)^\Hor$, defined as 
\[\Omega^{(\C,v)}=\D{}^{(\C,v)}(\C,v).\]
\end{definition} 
To obtain an explicit formula for the curvature, simply note that $(\d{}^\nabla(\C,v))_1=\C-\d{}^\nabla v$ restricts on $\frak k$ to zero, hence we get
\begin{align}
\begin{split}
  \label{eq:im_curv}
  \Omega^{(\C,v)}\alpha&=(\d{}^\nabla\C\alpha,\C h\alpha)=(R^\nabla\cdot v\alpha-\d{}^\nabla U(h\alpha),-U(h\alpha))
\end{split}
\end{align}
for any $\alpha\in\Gamma(A)$, where we have used the expression \eqref{eq:split_C} for $\C$ in terms of $\nabla$ and $U$. We observe that this coincides with \cite{mec}*{equation 5.9}, where a different approach for obtaining the formula for the curvature was used.

\begin{remark}
By definition, all the compatibility conditions for $\Omega^{(\C,v)}$ must follow from those for $(\C,v)$. More precisely, one can show that the condition \eqref{eq:c2} for $\Omega^{(\C,v)}$ is equivalent to \eqref{eq:s3} for $(\C,v)$; moreover, the condition \eqref{eq:c1} for $\Omega^{(\C,v)}$ may be split into three pieces, by considering $\Omega^{(\C,v)}[\alpha,\beta]$ for $\alpha$ and $\beta$ either horizontal or vertical sections:
\begin{itemize}[]
\item When both are vertical, the condition translates to: the curvature tensor $R^\nabla$ takes values in the bundle $\Der(\frak k)\subset\End(\frak k)$ consisting of fibrewise derivations of $\frak k$. In other words, for any $\xi,\eta\in\frak k$ there holds
\begin{align}
\label{eq:R_derivation}
R^\nabla\cdot [\xi,\eta]=[R^\nabla\cdot\xi,\eta]+[\xi,R^\nabla\cdot\eta].
\end{align}
This is a direct consequence of condition \eqref{eq:s1}.
\item  When one is horizontal and one vertical, the condition reads $\iota_{\rho\alpha}\d{}^{\nabla^{\End\frak k}} R^\nabla=0$, where $\nabla^{\End\frak k}$ denotes the induced connection on $\End \frak k\ra M$. However, the Bianchi identity $\smash{\d{}^{\nabla^{\End\frak k}}} R^\nabla=0$ already holds for any connection $\nabla$, so this case is trivial.
\item When both are horizontal, one straightforwardly checks that the obtained condition is equivalent to the equation one gets when applying  $\d{}^\nabla$ to both sides of identity \eqref{eq:s3}.
\end{itemize}
\end{remark}

Since we have discovered the horizontal exterior covariant derivative of IM forms in equation \eqref{eq:D_inf}, it now becomes easy to establish the infinitesimal version of the Bianchi identity \eqref{eq:bianchi} for the curvature of an IM connection.
\begin{theorem}[Infinitesimal Bianchi identity] 
  \label{thm:bianchi_inf}
The horizontal exterior covariant derivative of the curvature of any IM connection $(\C,v)\in \A(A;\frak k)$ on a Lie algebroid $A$ vanishes:
\begin{align}
\label{eq:bianchi_inf}
  \D{}^{(\C,v)}\Omega^{(\C,v)}=0.
\end{align}
\end{theorem}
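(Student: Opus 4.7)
The plan is a direct computation at level $p=1$, exploiting the explicit formulas \eqref{eq:D_inf} for $\D{}^{(\C,v)}$ and \eqref{eq:im_curv} for $\Omega\coloneqq\Omega^{(\C,v)}$. Since $\D{}^{(\C,v)} = h^*\circ\d{}^\nabla$, the first step is to compute the auxiliary cochain $\d{}^\nabla\Omega\in W^{1,3}(A;\frak k)$ in closed form, and only then apply the horizontal projection.

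For the leading term, I differentiate $\Omega_0(\alpha) = R^\nabla\cdot v\alpha - \d{}^\nabla U(h\alpha)$: the first piece uses the graded Leibniz rule on $R^\nabla\cdot v\alpha$ together with the second Bianchi identity $\d{}^{\nabla^{\End\frak k}}R^\nabla = 0$, while the second piece is handled by the standard relation $(\d{}^\nabla)^2 = R^\nabla\wedge\cdot$. Both contributions factor through $R^\nabla$ and recombine to the tidy expression $(\d{}^\nabla\Omega)_0(\alpha) = R^\nabla\wedge\nabla v\alpha - R^\nabla\wedge U(h\alpha) = R^\nabla\wedge\C\alpha$, where I have used \eqref{eq:split_C}. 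For the symbol, the formula $(\d{}^\nabla c)_1(\beta) = c_0(\beta) - \d{}^\nabla c_1(\beta)$ applied to $c=\Omega$ produces an internal cancellation of the two $\d{}^\nabla U(h\beta)$ contributions, leaving simply $(\d{}^\nabla\Omega)_1(\beta) = R^\nabla\cdot v\beta$.

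Applying the horizontal projection via \eqref{eq:h_p=1}, the symbol of $\D{}^{(\C,v)}\Omega$ vanishes immediately since $v\circ h = 0$, so $(\D{}^{(\C,v)}\Omega)_1(\beta) = R^\nabla\cdot v(h\beta) = 0$. The leading term reads
\begin{align*}
(\D{}^{(\C,v)}\Omega)_0(\alpha) = R^\nabla\wedge\C\alpha - (\d{}^\nabla\Omega)_1\wedgedot\C\alpha,
\end{align*}
and the cancellation of these two terms is the crux of the argument: because $\C\alpha$ takes values in $\frak k$, the evaluation $v(\C\alpha(X)) = \C\alpha(X)$ holds pointwise, so after substituting into the pairing \eqref{eq:wedgedot_multiple} the right-hand term produces exactly the same alternating sum $\sum_{\sigma}\sgn(\sigma)\,R^\nabla(X_{\sigma(2)},X_{\sigma(3)})\cdot\C\alpha(X_{\sigma(1)})$ that defines $R^\nabla\wedge\C\alpha$.

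The only real obstacle is this last combinatorial matching between $\wedgedot$ and $\wedge$; it is a routine check, guaranteed in the end by Lemma \ref{lem:wedgedot}(ii) together with the sign formula $\gamma\wedgedot\vartheta = (-1)^{k\ell}\gamma\wedge\vartheta$ from Remark \ref{rem:wedges} (which for $k=2$, $\ell=1$ gives a harmless $+1$). Once the two terms cancel, the infinitesimal Bianchi identity $\D{}^{(\C,v)}\Omega^{(\C,v)}=0$ follows.
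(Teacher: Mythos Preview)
Your proof is correct and follows essentially the same approach as the paper's. The only difference is cosmetic: the paper works with the compact form $\Omega^{(\C,v)}\alpha = (\d{}^\nabla\C\alpha, \C h\alpha)$ from \eqref{eq:im_curv}, so the leading term is obtained in one step as $\d{}^\nabla\d{}^\nabla\C\alpha = R^\nabla\wedge\C\alpha$, whereas you expand via $R^\nabla\cdot v\alpha - \d{}^\nabla U(h\alpha)$ and recombine; both routes lead to the same cancellation $R^\nabla\wedge\C\alpha - R^\nabla\wedgedot\C\alpha = 0$ via \eqref{eq:wedge_wedgedot}.
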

\begin{proof}
Using the explicit expression \eqref{eq:im_curv} for the curvature, we first compute:
\begin{align*}
  (\d{}^\nabla\Omega^{(\C,v)})_1(\alpha)=\d{}^\nabla\C\alpha-\d{}^\nabla\C (h\alpha)=\d{}^\nabla \C(v\alpha)=R^\nabla\cdot v\alpha,
\end{align*}
for any $\alpha\in \Gamma(A)$. Hence,
\begin{align*}
  \D{}^{(\C,v)}\Omega^{(\C,v)}\alpha=(\underbrace{\d{}^\nabla\d{}^\nabla \C\alpha}_{R^\nabla\wedge\:\C\alpha}-R^\nabla\wedgedot\C\alpha,R^\nabla\cdot v(h\alpha))=0,
\end{align*}
where we have used the relation \eqref{eq:wedge_wedgedot} between $\dot\wedge$ and $\wedge$.
\end{proof}

\SkipTocEntry\subsection*{Affine deformations of multiplicative connections}
\subsubsection{Global case}
The set $\A(G;\frak k)$ of multiplicative Ehresmann connections is an affine space modelled on the vector space $\Omega^1_m(G;s^*\frak k)^\Hor$ of horizontal multiplicative 1-forms, so a natural question is how the curvature changes as we make an affine deformation \[\omega\ra \omega+\lambda \alpha\]
of a multiplicative  connection $\omega \in\A(G;\frak k)$ by a horizontal multiplicative 1-form $\alpha\in \Omega_m^1(G;\frak k)^\Hor$, scaled by a factor $\lambda\in \R$. That is, we aim to obtain an expansion of $\Omega^{\omega+\lambda\alpha}$ in terms of the scalar $\lambda$. 
\begin{theorem}
\label{thm:expansion}
	For any multiplicative connection $\omega\in \A(G;\frak k)$ on a Lie groupoid $G\rra M$, we have
	\begin{align}
	\label{eq:expansion}
		\Omega^{\omega+\lambda\alpha}=\Omega^\omega+\lambda \D{}^\omega\alpha+\lambda^2\mathbf c_2(\alpha),
	\end{align}
	where $\alpha\in \Omega^1_m(G;\frak k)^\Hor$ is any horizontal multiplicative form and $\lambda\in\R$. Here, the map \[\mathbf c_2\colon \Omega^1_m(G;\frak k)^\Hor\rightarrow \Omega^2_m(G;\frak k)^\Hor\] is homogeneous of degree two, and 
independent of the connection $\omega\in\A(G;\frak k)$.
\end{theorem}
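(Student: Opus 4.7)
The plan is to compute $\Omega^{\omega+\lambda\alpha}$ directly from the definition $\Omega^{\omega'} = \D{}^{\omega'}\omega' = (h')^*\d{}^{\nabla'^s}\omega'$. Since $\omega'\circ h' = 0$, the two "Leibniz" summands of the exterior covariant derivative vanish and one obtains the compact formula
\[
\Omega^{\omega'}(X,Y) = -\omega'\bigl([h'(X),h'(Y)]\bigr).
\]
The horizontal projection of the deformed connection is read off from $\omega'|_{\frak k} = \id_{\frak k}$: identifying $s^*\frak k\cong K$ via the Maurer--Cartan form and writing $\eta^\ell\in\Gamma(K)$ for the vertical lift of a section $\eta\in\Gamma(s^*\frak k)$, horizontality of $\alpha$ yields $h'(X) = h(X) - \lambda\,\alpha(X)^\ell$ and $v'(X) = v(X) + \lambda\,\alpha(X)^\ell$. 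Substituting and expanding bilinearly yields a polynomial in $\lambda$ of degree at most three.

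The cubic term equals $-\lambda^3\alpha([\alpha(X)^\ell,\alpha(Y)^\ell])$, which vanishes by the observation following \eqref{eq:smearing}: $K$ is involutive, hence $[\alpha(X)^\ell,\alpha(Y)^\ell]\in\Gamma(K)$, and horizontality of $\alpha$ kills the evaluation. The constant term is $\Omega^\omega$. The linear-in-$\lambda$ coefficient is
$-\alpha([h(X),h(Y)]) + \omega([h(X),\alpha(Y)^\ell]) - \omega([h(Y),\alpha(X)^\ell])$; a local frame computation based on Lemma \ref{lem:nabla_st} together with Remark \ref{rem:nabla_s} identifies $\omega([h(X),\alpha(Y)^\ell])$ with $\nabla^s_{h(X)}\alpha(Y)$ (expand $\alpha(Y) = \alpha(Y)^i\, s^*b_i$ in a frame $(b_i)$ of $\frak k$ and use Leibniz), and horizontality gives $\alpha(Y) = \alpha(h(Y))$. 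The linear coefficient then recombines exactly into $\d{}^{\nabla^s}\alpha(h(X),h(Y)) = \D{}^\omega\alpha(X,Y)$, as required.

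The quadratic coefficient is
\[
\mathbf{c}_2(\alpha)(X,Y) = \alpha([h(X),\alpha(Y)^\ell]) + \alpha([\alpha(X)^\ell,h(Y)]) - \omega([\alpha(X)^\ell,\alpha(Y)^\ell]),
\]
manifestly quadratic in $\alpha$. Expanding the first two summands by the Cartan-type identity $\alpha([Z,W]) = \nabla^s_Z\alpha(W) - \nabla^s_W\alpha(Z) - \d{}^{\nabla^s}\alpha(Z,W)$ (using $\alpha\circ(\cdot)^\ell = 0$ and $\alpha(h(\cdot))=\alpha(\cdot)$), and expanding the third via $[b_i^L,b_j^L]=[b_i,b_j]_\frak{k}^L$ in a local frame, all $\nabla^s$-type contributions cancel in pairs and what remains is
\[
\mathbf{c}_2(\alpha)(X,Y) = -[\alpha(X),\alpha(Y)]_{s^*\frak k} - \d{}^{\nabla^s}\alpha(h(X),\alpha(Y)^\ell) + \d{}^{\nabla^s}\alpha(h(Y),\alpha(X)^\ell).
\]
The target formula is $\mathbf{c}_2(\alpha) = -\tfrac12[\alpha,\alpha]_{s^*\frak k}$, which is multiplicative, horizontal, quadratic in $\alpha$, and visibly independent of $\omega$.

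The main obstacle is eliminating the residual $\d{}^{\nabla^s}\alpha$ terms on mixed horizontal--vertical pairs: one must show $\d{}^{\nabla^s}\alpha(h(X),\alpha(Y)^\ell) = \d{}^{\nabla^s}\alpha(h(Y),\alpha(X)^\ell)$ (or, more strongly, that each vanishes). The natural route is via the multiplicativity of $\alpha$: passing to the infinitesimal picture $\ve(\alpha) = (c_0,c_1) \in \Omega^1_{im}(A;\frak k)^\Hor$ (with $c_1|_\frak k = 0$), the compatibility conditions \eqref{eq:c1}--\eqref{eq:c3} control the jet of $\alpha$ transverse to $K$ along $u(M)$; verifying the identity at units and extending along $s$-fibres via multiplicativity will complete the identification of $\mathbf{c}_2(\alpha)$ with $-\tfrac12[\alpha,\alpha]_{s^*\frak k}$ and hence the proof.
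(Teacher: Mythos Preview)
Your direct expansion via $\Omega^{\omega'}(X,Y)=-\omega'([h'(X),h'(Y)])$ is correct and is a clean alternative to the paper's route through the structure equation. The computation up to
\[
\mathbf c_2(\alpha)(X,Y)=\alpha([h(X),\alpha(Y)^\ell])+\alpha([\alpha(X)^\ell,h(Y)])-\omega([\alpha(X)^\ell,\alpha(Y)^\ell])
\]
is right, and so is its rewriting with the residual $\d{}^{\nabla^s}\alpha$ terms. The gap is the target you then set yourself: $\mathbf c_2(\alpha)=-\tfrac12[\alpha,\alpha]_{s^*\frak k}$ is \emph{false} in general. The paper's explicit formula is $\mathbf c_2(\alpha)=\tau^\alpha\wedge\alpha-\tfrac12[\alpha,\alpha]_{s^*\frak k}$, and your ``residual'' terms are precisely $\tau^\alpha\wedge\alpha$; they do not vanish. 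A quick sanity check: $-\tfrac12[\alpha,\alpha]_{s^*\frak k}$ is typically \emph{not} multiplicative (expand $m^*[\alpha,\alpha]$ and observe the nonzero cross term $2[\pr_2^*\alpha,\Phi_*\pr_1^*\alpha]$), whereas $\mathbf c_2(\alpha)=\Omega^{\omega+\alpha}-\Omega^\omega-\D{}^\omega\alpha$ must be. So the proposed endgame via $\ve(\alpha)$ and the compatibility conditions cannot succeed, because it is aimed at a false identity.

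The good news is that you already have everything needed to finish. For independence of $\omega$: in $\alpha([h(X),\alpha(Y)^\ell])$ replace $h(X)$ by $X$, since $[v(X),\alpha(Y)^\ell]\in\Gamma(K)$ by involutivity of $K$ and $\alpha$ is horizontal; and in $\omega([\alpha(X)^\ell,\alpha(Y)^\ell])$ the bracket lies in $K$, where $\omega|_K=\Theta_{MC}|_K$ is intrinsic. Thus
\[
\mathbf c_2(\alpha)(X,Y)=\alpha([X,\alpha(Y)^\ell])-\alpha([Y,\alpha(X)^\ell])-\Theta_{MC}([\alpha(X)^\ell,\alpha(Y)^\ell]),
\]
visibly independent of $\omega$ (this is exactly $\tau^\alpha\wedge\alpha-\tfrac12[\alpha,\alpha]_{s^*\frak k}$ in the paper's packaging, cf.\ Lemma~\ref{lem:tau_alpha}). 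Horizontality and multiplicativity of $\mathbf c_2(\alpha)$ follow at once from the expansion itself, since $\Omega^{\omega+\alpha}$, $\Omega^\omega$ and $\D{}^\omega\alpha$ are all horizontal multiplicative by Theorem~\ref{thm:deltaD}; degree-two homogeneity is manifest. With this correction your approach is complete and arguably more elementary than the paper's, which proceeds via the structure equation and a comparison lemma for $\d{}^{\omega+\alpha}$ versus $\d{}^\omega$.
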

In the proof, an explicit formula for the coefficient $\mathbf c_2$ will also be given. The theorem is proved in several steps, starting with the following observation.
\begin{lemma}
\label{lem:tau_alpha}
Let $\frak k$ be a bundle of ideals on a Lie groupoid $G\rra M$. Any horizontal 1-form $\alpha\in\Omega^1(G;K)^\Hor$ determines a 1-form $\tau^\alpha\in\Omega^1(G;\End(K))$,
\[
\tau^\alpha(X)Y=\alpha[X,Y]+\nabla^s_Y\alpha(X),
\]
for any $X\in\vf(G)$ and $Y\in\Gamma(K)$. In turn, the form $\tau^\alpha$ defines an operator $\Omega^\bullet (G;K)\ra \Omega^{\bullet+1}(G;K)$, $\beta\mapsto \tau^\alpha\wedge\beta$, given on any $q$-form $\beta$ as
\[
(\tau^\alpha\wedge \beta)(X_i)_{i=0}^q=\sum_i(-1)^i\tau^\alpha(X_i)\cdot\beta(X_0,\dots,\hat{X_i},\dots,X_q).
\]
\end{lemma}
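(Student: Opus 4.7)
The content of the lemma reduces to checking that the formula $\tau^\alpha(X)Y = \alpha[X,Y] + \nabla^s_Y \alpha(X)$ defines a tensorial expression in both $X \in \vf(G)$ and $Y \in \Gamma(K)$, taking values in $K$; the wedge product construction is then the standard one. I would organize the proof around two $C^\infty(G)$-linearity checks, making clear where each ingredient (Leibniz rule of the Lie bracket, Leibniz rule of $\nabla^s$, and horizontality of $\alpha$) enters.

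First, $C^\infty(G)$-linearity in $X$: for $f \in C^\infty(G)$, expand
\[
\tau^\alpha(fX)Y = \alpha[fX,Y] + \nabla^s_Y \alpha(fX) = f\alpha[X,Y] - Y(f)\,\alpha(X) + f\,\nabla^s_Y\alpha(X) + Y(f)\,\alpha(X),
\]
where the negative term arises from the Leibniz rule of the Lie bracket, and the positive term from the Leibniz rule of the connection $\nabla^s$. These cancel, giving $\tau^\alpha(fX)Y = f\tau^\alpha(X)Y$, without using horizontality.

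Second, $C^\infty(G)$-linearity in $Y$: here horizontality of $\alpha$ is the key ingredient. For $f \in C^\infty(G)$,
\[
\tau^\alpha(X)(fY) = \alpha[X,fY] + \nabla^s_{fY}\alpha(X) = f\alpha[X,Y] + X(f)\,\alpha(Y) + f\,\nabla^s_Y\alpha(X),
\]
and since $Y \in \Gamma(K)$ and $\alpha \in \Omega^1(G;K)^\Hor$ by assumption, $\alpha(Y) = 0$, leaving exactly $f\tau^\alpha(X)Y$. That the output lies in $K$ is immediate: $\alpha[X,Y] \in K$ because $\alpha$ is $K$-valued, and $\nabla^s_Y\alpha(X) \in K$ because $\nabla^s$ is a connection on $s^*\frak k \cong K$ (using the identification from \eqref{eq:iso_pullback_k}).

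The main (and only) obstacle is remembering to invoke horizontality of $\alpha$ in the second computation, as without it the term $X(f)\alpha(Y)$ would spoil tensoriality in $Y$. Once $\tau^\alpha$ is established as a genuine element of $\Omega^1(G;\End(K))$, the definition of $\tau^\alpha \wedge \beta$ for $\beta \in \Omega^q(G;K)$ is the standard wedge pairing between an $\End(K)$-valued 1-form and a $K$-valued $q$-form (cf.\ Remark \ref{rem:wedges}), and the displayed formula for $(\tau^\alpha \wedge \beta)(X_0,\dots,X_q)$ is immediate from the definition; tensoriality and alternation follow automatically from those of $\tau^\alpha$ and $\beta$.
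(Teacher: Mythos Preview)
Your proof is correct and follows exactly the approach the paper indicates: the paper's own proof simply notes that $\nabla^s$ is the intrinsic $K$-connection from Remark~\ref{rem:nabla_s} and then declares the $C^\infty(G)$-linearity checks ``simple and left to the reader,'' which is precisely what you have carried out. One small remark worth adding is the paper's observation that $\nabla^s$ here is the intrinsic $K$-connection independent of any multiplicative Ehresmann connection, so the construction of $\tau^\alpha$ does not rely on choosing one.
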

\begin{proof}
First note that $\nabla^s$ denotes the intrinsic $K$-connection on $K$ (see Remark \ref{rem:nabla_s}), so we are not fixing any multiplicative Ehresmann connection. The only thing to show is $C^\infty(G)$-linearity in both arguments of $\tau^\alpha$, which is simple and left to the reader. 
\end{proof}
What follows is a lemma relating the exterior covariant derivatives induced by two different multiplicative Ehresmann connections.
\begin{lemma}
\label{lem:diff_ext_cov}
	Let $G\rra M$ be a Lie groupoid with a multiplicative Ehresmann connection $\omega\in \A(G;\frak k)$. For any horizontal multiplicative 1-form $\alpha\in \Omega^1_m(G,\frak k)^\Hor$ and any form $\beta\in \Omega^\bullet(G;s^*\frak k)$, there holds
	\begin{align}
	\begin{split}
		\d{}^{\omega+\alpha}\beta-\d{}^\omega\beta=\tau^\alpha\wedge\beta\label{eq:diff_ext_cov}
		- [\alpha,\beta]_{s^*\frak k},
	\end{split}
	\end{align}
	with $\tau^\alpha\in \Omega^1(G;s^*\End(\frak k))$ under the identification $s^*\frak k\cong K$ of vector bundles.
\end{lemma}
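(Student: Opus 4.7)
The plan is to reduce the claim to identifying the difference of the induced pullback connections. Since $\d^{\omega+\alpha}$ and $\d^{\omega}$ are both exterior covariant derivatives associated with connections on $s^*\frak k$—namely, the pullbacks $\nabla'^s$ and $\nabla^s$ of the linear connections on $\frak k$ induced by $\omega+\alpha$ and $\omega$ via Proposition \ref{prop:conn}—their difference is a tensorial operator, given by wedging with the $\End(s^*\frak k)$-valued $1$-form $\Delta\coloneqq\nabla'^s-\nabla^s$. Thus the lemma reduces to showing
\[\Delta=\tau^\alpha-[\alpha,\cdot\,]_{s^*\frak k}\quad\text{in }\Omega^1(G;\End s^*\frak k),\]
after which the standard identity $(\d^{\nabla'^s}-\d^{\nabla^s})\beta=\Delta\wedge\beta$ yields the claim.

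The main computation uses Proposition \ref{prop:conn} (ii). Under the identification $s^*\frak k\cong K$, $\alpha$ becomes an element of $\Omega^1(G;K)^{\Hor}$, and the new vertical/horizontal projections for $\omega+\alpha$ are $v'=v+\alpha$ and $h'=h-\alpha$. For $\xi\in\Gamma(\frak k)$ and a lift $Y\in\vf(G)$ of $X\in\vf(M)$ along $s$, I will expand
\[\big((\nabla'-\nabla)_X\xi\big)^L=v'[h'(Y),\xi^L]-v[h(Y),\xi^L].\]
Two simplifications will occur: $[\alpha(Y),\xi^L]\in\Gamma(K)$ by involutivity of $K$, so $v=\id$ on it and $\alpha$ annihilates it by horizontality; similarly $\alpha[\alpha(Y),\xi^L]=0$. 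This leaves
\[\big((\nabla'-\nabla)_X\xi\big)^L=\alpha[h(Y),\xi^L]-[\alpha(Y),\xi^L]^{TG},\]
where $[\,\cdot\,,\cdot\,]^{TG}$ denotes the vector-field bracket on $G$.

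The remaining step reconciles this with $\tau^\alpha-[\alpha,\cdot\,]_{s^*\frak k}$, the only subtle point being that $\tau^\alpha$ is defined using the $TG$-bracket while $[\alpha,\cdot\,]_{s^*\frak k}$ uses the fibrewise bracket. I will establish the key identity: for any $Z\in\Gamma(K)$ and $\xi\in\Gamma(\frak k)$,
\[[Z,\xi^L]^{TG}_g=[Z_g,\xi^L_g]_K-\nabla^s_{\xi^L_g}Z,\]
by writing $Z=Z^i b_i^L$ in a local left-invariant frame of $K$ and invoking Remark \ref{rem:nabla_s}, which identifies $\nabla^s_V(Z^i b_i^L)=V(Z^i)b_i^L$ for $V\in K$. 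Plugging $Z=\alpha(Y)$ into this identity, then using horizontality of $\alpha$ to replace $\alpha[Y,\xi^L]$ with $\alpha[h(Y),\xi^L]$ (as $\alpha[v(Y),\xi^L]=0$), shows that
\[\tau^\alpha(Y_g)\xi^L_g-[\alpha(Y_g),\xi^L_g]_K=\alpha[h(Y),\xi^L]_g-[\alpha(Y),\xi^L]^{TG}_g,\]
matching the formula for $\Delta$ on left-invariant sections. Since pullback sections $\xi^L$ generate $\Gamma(s^*\frak k)$ as a $C^{\infty}(G)$-module and both sides are tensorial, this determines $\Delta$ completely.

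The hard part will be handling the interplay between the two bracket structures on $\Gamma(K)$—namely the vector-field bracket on $TG$ and the fibrewise Lie-algebra-bundle bracket on $s^*\frak k$—and correctly pinpointing that their discrepancy is exactly the intrinsic $K$-connection term $\nabla^s$, which is precisely the extra summand appearing in the definition of $\tau^\alpha$. Once this identity is in hand, the rest is a straightforward algebraic manipulation; multiplicativity of $\alpha$ enters only implicitly, ensuring that $\omega+\alpha$ is itself a multiplicative connection so that $\nabla'$ is well-defined via Proposition \ref{prop:conn}.
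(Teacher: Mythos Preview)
Your proposal is correct and follows essentially the same route as the paper's proof: both reduce the claim to computing the endomorphism-valued $1$-form $\nabla'^s-\nabla^s$ via Proposition~\ref{prop:conn}(ii), arrive at the expression $\alpha[Y,\xi^L]-[\alpha(Y),\xi^L]^{TG}$, and then reconcile the $TG$-bracket with the fibrewise $s^*\frak k$-bracket by the torsion identity $[\xi_1,\xi_2]^{TG}=[\xi_1,\xi_2]_{s^*\frak k}+\nabla^s_{\xi_1}\xi_2-\nabla^s_{\xi_2}\xi_1$ involving the intrinsic $K$-connection $\nabla^s$. Your identification of the ``hard part'' as precisely this bracket comparison is spot on, and your observation about where multiplicativity of $\alpha$ enters is a nice clarification that the paper leaves implicit.
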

\begin{remark}
	We are denoting $\d{}^\omega\coloneqq\d{}^{\nabla^s}$ where $\nabla$ is the linear connection on $\frak k$ from Proposition \ref{prop:conn} induced by $\omega$. In fact, in the proof below, the connection $\nabla$ induced by $\omega$ will be denoted $\nabla^\omega$. Moreover, we will denote the corresponding multiplicative distribution by $E^\omega=\ker \omega$, and the corresponding horizontal and vertical projections by $h_\omega\colon TG\ra E^\omega$ and $v_\omega\colon TG\ra K$, respectively. Throughout, we will be using the isomorphism $s^*\frak k\cong K$ of vector bundles as defined by \eqref{eq:iso_pullback_k}.
\end{remark}
\begin{proof}
	By the definitions of the exterior covariant derivatives, we have
	\begin{align}
	\begin{split}
	\label{eq:diff_ext_der_1}
	(\d{}^{\omega+\alpha}\beta&-\d{}^\omega\beta)(X_i)_{i=0}^q=\textstyle\sum_{i}(-1)^i(s^*\nabla^{\omega+\alpha}-s^*\nabla^\omega)_{X_i}\beta(X_0,\dots,\hat{X_i},\dots,X_q)
	\end{split}
	\end{align}
	for any vector fields $X_i\in \vf(G)$. First note that the difference of linear connections is an endomorphism-valued 1-form, in our case:
	\[
	s^*\nabla^{\omega+\alpha}-s^*\nabla^\omega\in \Omega^1(G;\End(s^*\frak k)).
	\]
	So, let us evaluate the expression 
	\begin{align*}
		(s^*\nabla^{\omega+\alpha}-s^*\nabla^\omega)_YX,
	\end{align*}
	for $Y\in \vf(G)$ an $s$-projectable vector field to $s_*Y=U\in \vf(M)$, and $X=s^*\xi$ the pullback of a section $\xi\in \Gamma(\frak k)$. In this case, by definition of the pullback of a linear connection, we have
	\[
	(s^*\nabla^{\omega+\alpha}-s^*\nabla^\omega)_YX=s^*(\nabla^{\omega+\alpha}_U\xi - \nabla^\omega_U\xi),
	\]
	so we inspect the difference of the covariant derivatives on the right-hand side. By Proposition \ref{prop:conn} (ii), there holds
	\begin{align*}
	(&\nabla^{\omega+\alpha}_U\xi - \nabla^\omega_U\xi)^L\\
  &=v_{\omega+\alpha}[h_{\omega+\alpha}(Y),\xi^L]-v_{\omega}[h_{\omega}(Y),\xi^L]=v_{\omega+\alpha}[Y-v_{\omega+\alpha}(Y),\xi^L]-v_{\omega}[Y-v_{\omega}(Y),\xi^L]\\
	&=(v_{\omega+\alpha}-v_\omega)[Y,\xi^L]-[(v_{\omega+\alpha}-v_\omega)Y,\xi^L]=\bar\alpha[Y,\xi^L]-[\bar\alpha(Y),\xi^L],
	\end{align*}
	where we have denoted by $\bar\alpha\in\Omega^1(G;K)$ the form $\alpha$ under the identification $s^*\frak k\cong K$, i.e., $\bar\alpha=v_{\omega+\alpha}-v_\omega$. We now want to write this as a section of $s^*\frak k$, and rewrite the second term using the bracket $[\cdot,\cdot]_{s^*\frak k}$. To do so, we first note that the canonical isomorphism $s^*\frak k\cong K$ of vector bundles, given in \eqref{eq:iso_pullback_k}, is not an isomorphism of Lie algebroids---in fact, the anchor of $K$ is the inclusion and the anchor of $s^*\frak k$ is zero. The relation between their brackets is the following: $[\cdot,\cdot]_{s^*\frak k}$ is the torsion of the canonical $K$-connection $\nabla^s$ on $K$ (up to a sign). Indeed, note that on left-invariant sections, $[\cdot,\cdot]$ on $K$ agrees with $[\cdot,\cdot]_{s^*\frak k}$ on $s^*\frak k$ (up to the identification $s^*\frak k\cong K$ as vector bundles), hence it is an application of the Leibniz rule of $[\cdot,\cdot]$ to see that
	\[
	\left[\xi_1,\xi_2\right]=[\xi_1,\xi_2]_{s^*\frak k}+\nabla^s_{\xi_1} \xi_2-\nabla^s_{\xi_2} \xi_1,
	\]
	for any $\xi_1,\xi_2\in \Gamma(K)$.  Using this relation, we can rewrite $(\nabla^{\omega+\alpha}_U\xi - \nabla^\omega_U\xi)^L$ as:
	\begin{align*}
  s^*(\nabla^{\omega+\alpha}_U\xi - \nabla^\omega_U\xi)&=\alpha[Y,\xi^L]-[\alpha(Y),s^*\xi]_{s^*\frak k}+\nabla^s_{\xi^L}\alpha(Y)\\
  &=\tau^\alpha(Y)s^*\xi-[\alpha(Y),s^*\xi]_{s^*\frak k},
\end{align*}
where we have used that $\nabla^s$ vanishes on pullback sections in the first line, and Lemma \ref{lem:tau_alpha} in the second. Due to $C^\infty(G)$-linearity of the difference $s^*\nabla^{\omega+\alpha}-s^*\nabla^\omega$, this allows us to rewrite \eqref{eq:diff_ext_der_1} as
	\begin{align*}
		(\d{}^{\omega+\alpha}\beta-\d{}^\omega\beta)(X_i)_{i=0}^q&=\textstyle\sum_{i}(-1)^i\tau^\alpha(X_i)\beta(X_0,\dots,\hat{X_i},\dots,X_q)\\
		&- \textstyle\sum_{i}(-1)^i[\alpha(X_i),\beta(X_0,\dots,\hat{X_i},\dots,X_q)]_{s^*\frak k}.
	\end{align*}
The first term is just $\tau^\alpha\wedge \beta$ in equation \eqref{eq:diff_ext_cov}, and the second term is $[\alpha,\beta]_{s^*\frak k}$
with a bit of combinatorics: since $\alpha$ is a 1-form, the bracket of forms $[\alpha,\beta]_{s^*\frak k}$ reads
	\begin{align*}
  [\alpha,\beta]_{s^*\frak k}(X_0,\dots,X_k)&=\frac1{k!}\textstyle\sum_{{\sigma\in S_{k+1}}}\sgn(\sigma)[\alpha(X_{\sigma(0)}),\beta(X_{\sigma(1)},\dots,X_{\sigma(k)})]_{s^*\frak k}\\
	&=\frac 1{k!}\textstyle\sum_{i}\textstyle\sum_{{\substack{\sigma\in S_{k+1} \\ \sigma(0)=i}}}\sgn(\sigma)[\alpha(X_i),\beta(X_{\sigma(1)},\dots,X_{\sigma(k)})]_{s^*\frak k}\\
  &=\textstyle\sum_{i}(-1)^i[\alpha(X_i),\beta(X_0,\dots,\hat{X_i},\dots,X_k)]_{s^*\frak k},
	\end{align*}
	where we have used that the number of permutations from $S_{k+1}$ with $\sigma(0)=i$ equals $k!$, and observed that given any such $\sigma$, ordering the arguments of $\beta(X_{\sigma(1)},\dots,X_{\sigma(k)})$ yields an additional factor of $(-1)^i\sgn\sigma$, concluding our proof.
\end{proof}
\begin{proof}[Proof of Theorem \ref{thm:expansion}]
	We first consider the case $\lambda=1$. The structure equation gives us
	\begin{align}
	\begin{split}
	\Omega^{\omega+\alpha}&=\d{}^{\omega+\alpha}(\omega+\alpha)+\frac12[\omega+\alpha,\omega+\alpha]_{s^*\frak k}
	\label{eq:structure_perturbed}\\
	&=\d{}^{\omega+\alpha}(\omega+\alpha)+\frac12[\omega,\omega]_{s^*\frak k}+[\omega,\alpha]_{s^*\frak k}+\frac {1}2[\alpha,\alpha]_{s^*\frak k},
	\end{split}
	\end{align}
	so we need to compute $\d{}^{\omega+\alpha}\omega$ and $\d{}^{\omega+\alpha}\alpha$. By Lemma \ref{lem:diff_ext_cov}, we have
	\begin{align*}
	\d{}^{\omega+\alpha}\omega-\d{}^\omega\omega&=\tau^\alpha\wedge \omega-[\alpha,\omega]_{s^*\frak k}=\D{}^\omega\alpha-\d{}^\omega\alpha-[\alpha,\omega]_{s^*\frak k},
	\end{align*}
	where we have used the equality
	\begin{align}
	\label{eq:diff_D_d}
		\D{}^\omega\alpha=\d{}^\omega\alpha+\tau^\alpha\wedge\omega
	\end{align}
	which holds due to horizontality of $\alpha$ and is a straightforward computation (see Remark \ref{rem:diff_D_d} after the proof). 
  On the other hand, using the lemma on $\alpha$ yields
		\begin{align*}
	\d{}^{\omega+\alpha}\alpha-\d{}^\omega\alpha=\tau^\alpha\wedge\alpha-[\alpha,\alpha]_{s^*\frak k}.
	\end{align*}
	When both equalities are plugged into the right-hand side of \eqref{eq:structure_perturbed}, the two terms with $\d{}^\omega\alpha$ cancel out, and likewise the terms containing $[\omega,\alpha]_{s^*\frak k}$, so we are left with
	\begin{align}
	\Omega^{\omega+\alpha}=\Omega^\omega+ \D{}^\omega\alpha+\big(\tau^\alpha\wedge\alpha-\frac 12[\alpha,\alpha]_{s^*\frak k}\big),
	\end{align}
	which proves the case $\lambda=1$. It is clear that the second-order coefficient 
\begin{align}
\label{eq:secondorder}
  \mathbf c_2(\alpha)=\tau^\alpha\wedge\alpha-\frac 12[\alpha,\alpha]_{s^*\frak k}
\end{align}
is independent of $\omega\in \A(G;\frak k)$, and since the forms $\Omega^{\omega+\alpha}, \Omega^\omega$ and $\D{}^\omega\alpha$ are horizontal and multiplicative by Theorem \ref{thm:deltaD}, so is $\mathbf c_2(\alpha)$. Since $\D{}^\omega$ is linear and $\mathbf c_2$ is clearly homogeneous of degree two, the general case $\lambda\in\R$ now also follows.
\end{proof}
\begin{remark}
\label{rem:diff_D_d}
More generally than in equation \eqref{eq:diff_D_d} above, it is straightforward to show (using the definition of the exterior covariant derivative $\d{}^\omega$) that for any horizontal form $\alpha\in\Omega^q(G;s^*\frak k)^\Hor$, there holds
	\begin{align*}
	(\D{}^\omega\alpha&-\d{}^\omega\alpha)(X_0,\dots,X_k)=-\textstyle\sum_i(-1)^i\nabla^s_{v_\omega(X_i)}\alpha(X_0,\dots,\hat{X_i},\dots,X_q)\\
	&-\textstyle\sum_{i<j}(-1)^{i+j}\alpha([X_i,v_\omega(X_j)]+[v_\omega(X_i),X_j],X_0,\dots,\hat{X_i},\dots,\hat{X_j},\dots,X_q),
	\end{align*}
	for any $X_i\in \vf (G)$. If $\alpha$ is a horizontal 1-form, this is just
\begin{align*}
  (\D{}^\omega\alpha-\d{}^\omega\alpha)(X,Y)&=\nabla^s_{v_\omega(Y)}\alpha(X)-\nabla^s_{v_\omega(X)}\alpha(Y)+\alpha[X,v_\omega (Y)]-\alpha[Y,v_\omega (X)]=(\tau^\alpha\wedge\omega)(X,Y),
\end{align*}
for any $X,Y\in \vf(G)$.
\end{remark}
\begin{remark}
Roughly speaking (since the spaces are infinite dimensional), we can read from the first order coefficient that the derivative of the map
\[
\kappa\colon\A(G;\frak k)\ra \Omega^2_m(G;\frak k)^\Hor,\quad \kappa(\omega)=\Omega^\omega
\]
is given for any $\alpha\in\Omega^1_m(G;\frak k)^\Hor$ by
\[
\d \kappa_\omega(\alpha)=\D{}^\omega\alpha.
\]
\end{remark}

\subsubsection{Infinitesimal case}
Similarly to the global case, the set $\A(A;\frak k)$ of all IM connections is an affine space modelled on the vector space $\Omega^1_{im}(A;\frak k)^\Hor$ of horizontal IM 1-forms. We hence now discuss the infinitesimal analogue of Theorem 
\ref{thm:expansion}, i.e., we are going to inspect how the curvature of an IM connection changes as we make an affine deformation 
\begin{align*}
  (\C,v)\rightarrow (\C,v)+\lambda (L,l)
\end{align*}
of an IM connection $(\C,v)$ by a horizontal IM form $(L,l)\in\Omega^1_{im}(A,\frak k)^\Hor$ scaled by $\lambda\in\R$.

\begin{theorem}
\label{thm:expansion_inf}
On a Lie algebroid $A\Ra M$ with an IM connection $(\C,v)\in\A(A;\frak k)$, there holds
\begin{align}
\label{eq:expansion_inf}
  \Omega^{(\C,v)+\lambda(L,l)}=\Omega^{(\C,v)}+\lambda \D{}^{(\C,v)}(L,l)+\lambda^2\mathbf c_2(L,l),
\end{align}
for any horizontal IM form $(L,l)\in\Omega^1_{im}(A,\frak k)^\Hor$ and $\lambda\in \R$, where the second-order coefficient in the expansion is given by the map
\begin{align}
\begin{split}
\label{eq:secondorder_inf}
    &\mathbf c_2\colon\Omega^1_{im}(A,\frak k)^\Hor\rightarrow \Omega^2_{im}(A,\frak k)^\Hor,\\
  &\mathbf c_2(L,l)(\alpha)=-(L|_{\frak k}\wedgedot L\alpha, L|_{\frak k}\cdot l\alpha).
\end{split}
\end{align}
\end{theorem}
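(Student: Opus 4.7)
The proof strategy is the infinitesimal counterpart of the one used for Theorem \ref{thm:expansion}, but considerably more direct because the explicit formula \eqref{eq:im_curv} for $\Omega^{(\C,v)}$ sidesteps the need for a structure equation and a lemma comparing exterior covariant derivatives for different Ehresmann connections.

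The plan is to substitute the deformed coupling data directly into \eqref{eq:im_curv} and expand. Writing $(\tilde \C,\tilde v) = (\C+\lambda L, v+\lambda l)$, horizontality $l|_{\frak k}=0$ makes $\tilde v$ a splitting of \eqref{eq:splitting} and yields the induced connection $\tilde\nabla = \tilde\C|_{\frak k} = \nabla + \lambda L|_{\frak k}$. Since $\tilde v$ is still a projection, the deformed horizontal projection is simply $\tilde h = \id - \tilde v = h - \lambda l$. Applying \eqref{eq:im_curv} to $(\tilde\C,\tilde v)$ gives
\begin{align*}
\Omega^{(\tilde\C,\tilde v)}\alpha = \bigl(\d^{\tilde\nabla}\tilde\C\alpha,\;\tilde\C(\tilde h\alpha)\bigr).
\end{align*}
The standard identity $\d^{\nabla+\eta} = \d^\nabla + \eta\wedge\cdot$ for $\eta\in \Omega^1(M;\End\frak k)$ expands the leading term as
\begin{align*}
\d^{\tilde\nabla}\tilde\C\alpha = \d^{\nabla}\C\alpha + \lambda\bigl(\d^{\nabla}L\alpha + L|_{\frak k}\wedge\C\alpha\bigr) + \lambda^2\, L|_{\frak k}\wedge L\alpha,
\end{align*}
while expanding $(\C+\lambda L)(h\alpha - \lambda l\alpha)$ and using that $l\alpha\in\Gamma(\frak k)$ (so $\C(l\alpha)=\nabla(l\alpha)$ and $L(l\alpha)=L|_{\frak k}\cdot l\alpha$) gives
\begin{align*}
\tilde\C\tilde h\alpha = \C h\alpha + \lambda\bigl(Lh\alpha - \d^\nabla(l\alpha)\bigr) - \lambda^2\, L|_{\frak k}\cdot l\alpha.
\end{align*}

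The remaining work is to identify each order with the stated expression. The zeroth-order term is $\Omega^{(\C,v)}\alpha$ by definition. For the first order, I would unpack $\D{}^{(\C,v)}(L,l)\alpha$ via \eqref{eq:D_inf}: the key observation is that $(\d^\nabla(L,l))_1(\xi) = L(\xi) - \d^\nabla(l\xi)$ reduces to $L|_{\frak k}(\xi)$ on $\xi\in\frak k$ by horizontality, so $(\d^\nabla(L,l))_1 \wedgedot \C\alpha = L|_{\frak k} \wedgedot \C\alpha$. Converting to $\wedge$ using \eqref{eq:wedge_wedgedot} with $k=\ell=1$ gives a sign flip that precisely matches the first-order coefficient above; an identical sign conversion identifies $L|_{\frak k}\wedge L\alpha = -L|_{\frak k}\wedgedot L\alpha$ at order $\lambda^2$, reproducing $\mathbf c_2(L,l)\alpha$. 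Finally, that $\mathbf c_2(L,l) \in \Omega^2_{im}(A;\frak k)^\Hor$ follows cleanest by setting $\lambda=1$ and writing $\mathbf c_2(L,l) = \Omega^{(\C,v)+(L,l)} - \Omega^{(\C,v)} - \D{}^{(\C,v)}(L,l)$, a difference of horizontal IM forms by Theorem \ref{thm:deltaD_inf}; independence of $(\C,v)$ and homogeneity of degree two in $(L,l)$ are immediate from the formula, and the general scaling $\lambda\in\R$ follows from linearity of $\D{}^{(\C,v)}$ and the scaling behavior of $\mathbf c_2$.

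The only real subtlety I anticipate is the careful bookkeeping of $\wedgedot$ versus $\wedge$ signs and consistent use of $l|_{\frak k}=0$ at every step where $l$ meets a vertical argument. Otherwise, this is a transparent order-by-order expansion, and the verification that $\mathbf c_2(L,l)$ satisfies the IM compatibility conditions \eqref{eq:c1}--\eqref{eq:c3} is bypassed entirely by the difference argument.
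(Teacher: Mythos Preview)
Your proof is correct and follows essentially the same strategy as the paper: substitute the deformed data into the curvature and expand order by order in $\lambda$. The only organizational difference is that you start from the already-simplified formula \eqref{eq:im_curv}, $\Omega^{(\tilde\C,\tilde v)}\alpha=(\d^{\tilde\nabla}\tilde\C\alpha,\tilde\C\tilde h\alpha)$, and expand each factor directly, whereas the paper returns to the definition $\Omega^{(\tilde\C,\tilde v)}=\D{}^{(\tilde\C,\tilde v)}(\tilde\C,\tilde v)$, splits it linearly as $\D{}^{(\tilde\C,\tilde v)}(\C,v)+\D{}^{(\tilde\C,\tilde v)}(L,l)$, and computes each piece via the full formula \eqref{eq:D_inf} with its $\wedgedot$ terms, leading to several cancellations. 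Your route is a bit more economical; the identifications (notably $l(h\alpha)=l(\alpha)$ from $l|_{\frak k}=0$, and the $\wedgedot/\wedge$ sign \eqref{eq:wedge_wedgedot}) are exactly the ones the paper relies on, and your closing argument for $\mathbf c_2(L,l)\in\Omega^2_{im}(A;\frak k)^\Hor$ via the difference of IM forms is the same as in the global case.
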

\begin{proof}
It is enough to show the identity 
\eqref{eq:expansion_inf} for the case $\lambda=1$, since $\mathbf c_2$ is homogeneous of degree two. Let us first fix some notation. Denote \[(\tilde\C,\tilde v)=(\C,v)+(L,l),\] so the relation between the respective horizontal projections is given by $\tilde h=h-l$, and the relation between induced linear connections is just 
$
\tilde \nabla = \nabla+ L|_{\frak k}.
$
The induced exterior derivatives are thus related by 
\[
\d{}^{\tilde\nabla}=\d{}^\nabla+L|_{\frak k}\wedge\cdot
\] 
where the wedge product is as in Remark \ref{rem:wedges}. For any $(J,j)\in\Omega^{1}_{im}(A;\frak k)$, let us simplify the notation for $(\d{}^\nabla(J,j))_1$ and simply denote it by $\ul J^\nabla$. It is not hard to see there holds
\[
  \ul J^{\tilde\nabla}\alpha = \ul J^\nabla\alpha - L|_{\frak k}\wedge j\alpha, 
\] 
for any $\alpha \in \Gamma(A)$. The task at hand now is to expand the expression \[
\D{}^{(\tilde \C,\tilde v)}(\tilde \C,\tilde v)=\D{}^{(\tilde \C,\tilde v)}(\C,v)+\D{}^{(\tilde \C,\tilde v)}(L,l),
\]
and we do this by means of a straightforward computation. The first term reads
\begin{align*}
  \D{}^{(\tilde \C,\tilde v)}(\C,v)\alpha &=(\d{}^{\tilde\nabla} \C\alpha-\ul\C^{\tilde\nabla}\wedgedot\tilde\C\alpha,\ul\C^{\tilde\nabla}\tilde h\alpha)\\
  &=(\d{}^\nabla\C\alpha+\bcancel{L|_{\frak k}\wedge\C\alpha} -\ul\C^{\nabla}\wedgedot \C\alpha + \bcancel{L|_{\frak k}\wedgedot\C\alpha}-\cancel{\ul\C^{\nabla}\wedgedot L\alpha}+L|_{\frak k}\wedgedot L\alpha,\\
  & \begingroup \color{white} =( \endgroup \ul\C^{\nabla}h\alpha-\cancel{\ul\C^{\nabla}l\alpha}-L|_{\frak k}\cdot \cancel{v(h\alpha)}+L|_{\frak k}\cdot l\alpha)\\
  &=\Omega^{(\C,v)}\alpha+ (L|_{\frak k}\wedgedot L\alpha,L|_{\frak k}\cdot l\alpha),
\end{align*}
where we have used the relation \eqref{eq:wedge_wedgedot} and observed that the restriction of $\ul\C^{\nabla}$ to $\frak k$ vanishes. On the other hand, the second term becomes
\begin{align*}
\D{}^{(\tilde \C,\tilde v)}(L,l)(\alpha)
&=(\d{}^{\tilde\nabla}L\alpha- \ul L^{\tilde\nabla}\wedgedot \tilde \C\alpha, \ul L^{\tilde\nabla}\tilde h\alpha)\\
&=(\d{}^\nabla L\alpha + L|_{\frak k}\wedge L\alpha - \ul L^{\nabla}\wedgedot \C\alpha - L|_{\frak k}\wedgedot L\alpha,
\ul L^\nabla h\alpha -L|_{\frak k}\cdot l\alpha - \ul L^{\nabla} l\alpha)\\
&=\D{}^{(\C,v)}(L,l)(\alpha)-2(L|_{\frak k}\wedgedot L\alpha,L|_{\frak k}\cdot l\alpha),
\end{align*}
where we have observed that $\ul L^{\tilde\nabla}$ coincides with $\ul L^{\nabla}$ when restricted to $\frak k$. Adding the two expressions together, we obtain the wanted identity
\begin{align*}
  \Omega^{(\tilde \C,\tilde v)}\alpha=\Omega^{(\C,v)}\alpha + \D{}^{(\C,v)}(L,l)\alpha-(L|_{\frak k}\wedgedot L\alpha, L|_{\frak k}\cdot l\alpha).\tag*\qedhere
\end{align*}
\end{proof}
The theorem may also be restated using the components $R^\nabla$ and $U$, appearing in the explicit expression \eqref{eq:im_curv} for the curvature $\Omega^{(\C,v)}$ of an IM connection $(\C,v)$. They transform with an affine deformation $(\tilde \C,\tilde v)=(\C,v)+\lambda(L,l)$ as
\begin{align}
R^{\tilde\nabla}\cdot \xi &=R^{\nabla}\cdot \xi+\lambda\d{}^{\nabla^{\End\frak k}}(L|_{\frak k})\cdot \xi+\lambda^2 L|_{\frak k}\wedge L\xi,\\
\tilde U\tilde h\alpha &=Uh\alpha+\lambda \ul L^\nabla h\alpha-\lambda^2 L|_{\frak k}\cdot l\alpha,
\end{align}
for any $\alpha \in \Gamma(A)$ and $\xi\in\Gamma(\frak k)$.

\begin{remark}
The second-order coefficient \eqref{eq:secondorder_inf} is just the infinitesimal counterpart of the one from Theorem \ref{thm:expansion}, defined in equation
\eqref{eq:secondorder}. More precisely, if a Lie groupoid $G$ integrates $A$, then the following diagram commutes. 
\[\begin{tikzcd}[row sep=large]
	{\Omega^1_m(G;\frak k)^\Hor} & {\Omega^2_m(G;\frak k)^\Hor} \\
	{\Omega^1_{im}(A,\frak k)^\Hor} & {\Omega^2_{im}(A,\frak k)^\Hor}
	\arrow["{\mathbf c_2}", from=1-1, to=1-2]
	\arrow["\ve"', from=1-1, to=2-1]
	\arrow["\ve", from=1-2, to=2-2]
	\arrow["{\mathbf c_2}", from=2-1, to=2-2]
\end{tikzcd}\]
This follows from both theorems on affine deformations and the diagram \eqref{eq:square_D}. 
\end{remark}

\subsection{Primitive multiplicative connections and their curvings}
\label{sec:primitive}
We now study the role of cohomological triviality for the theory of multiplicative connections. More precisely, we will focus on multiplicative connections with cohomologically trivial curvature. 
This has already been briefly studied in \cite{gerbes} for the particular case of Lie groupoid extensions; we show that these results extend to the infinitesimal realm, and actually hold for arbitrary bundles of ideals. We will only develop the respective results for algebroids, and the respective results for groupoids will be omitted. The reason for this decision is that the restriction of the van Est map to the cohomologically trivial forms, \[\ve\colon \im\delta_G^0\ra\im\delta_A^0,\] is a bijection when $G$ is source-connected, since in this case the $G$-invariant forms on the base coincide with $A$-invariant forms---that is, $\ker\delta_G^0=\ker\delta_A^0$. 

\begin{definition}
Let $\frak k\subset A$ be a bundle of ideals of a Lie algebroid $A\Rightarrow M$. An IM connection $(\C,v)\in\A(A;\frak k)$ is said to be \textit{primitive} if $\Omega^{(\C,v)}$ is cohomologically trivial, i.e., if the
cohomological class $[\Omega^{(\C,v)}]\in H^{1,2}(A;\frak k)^\Hor$ vanishes.
We denote the set of primitive connections by \[\arc(A;\frak k)\subset \A(A;\frak k).\]
Given a primitive connection $(\C,v)$, a form $F\in\Omega^2(M;\frak k)$ satisfying
\begin{align}
\label{eq:curving}
\Omega^{(\C,v)}=\delta^0 F
\end{align}
is then called a \emph{curving} of the connection $(\C,v)$. The differential form \[G\coloneqq \d{}^\nabla F\] is then called the \textit{curvature 3-form} of $F$, where $\nabla=\C|_{\frak k}$ is the induced linear connection on $\frak k$.
\end{definition}
\begin{example}
If $V\ra M$ is a vector bundle, seen as a Lie algebroid with the trivial structure, and $\frak k=V$, then primitive connections are in a 1-to-1 correspondence with flat linear connections on $V$. This will follow as a particular case of Proposition 
\ref{prop:abelian_primitive}, however, the reader is encouraged to see this only by referencing equation \eqref{eq:R_ad_F}.
\end{example}
\begin{example}
If $A\Ra M$ is a transitive algebroid and $\frak k=\ker\rho$, IM connections are in a bijective correspondence with splittings of the abstract Atiyah sequence by condition \eqref{eq:c2}, and every IM connection admits a unique curving---the curvature of the splitting $v\colon A\ra\frak k$. It is defined as
\begin{align}
  \label{eq:curvature_splitting}
  F^v\in\Omega^2(M;\frak k),\quad F^v(X,Y)=\sigma[X,Y]-[\sigma(X),\sigma(Y)],
\end{align}
where $\sigma\colon TM\ra A$ is the horizontal lift along the anchor, associated to the splitting $v$. The fact that $F^v$ is the unique curving of the IM connection associated to $v\colon A\ra \frak k$ follows from the fact that $\delta^0\colon \Omega^q(M;\frak k)\ra \Omega^q_{im}(A;\frak k)^\Hor$ is canonically an isomorphism, whose inverse is \[(\delta^0)^{-1}(c_0,c_1)(v_1,\dots,v_q)=c_1(\tilde v_1)(v_2,\dots,v_q),\] where $\tilde v_1$ is an arbitrary lift of $v_1$ along the anchor. By combining this fact with the observation from Example \ref{ex:horizontal_p=1}, we also obtain a canonical isomorphism $\Omega^q_{im}(A;\frak k)\cong \Omega^q(M;\frak k)$ for all $q\geq 2$. We observe that the curvature 3-form of the curving $F^v$ vanishes by the Bianchi identity for the curvature of any splitting, $\d{}^\nabla F^v=0$.
\end{example}
Notice that a curving $F$ of a fixed IM connection $(\C,v)$ is in general not unique---it is only determined up to an invariant 2-form. In other words, the space of curvings $(\delta^0)^{-1}(\Omega^{(\C,v)})$ of a connection $(\C,v)$ is an affine space modelled on the vector space $\Omega^2_\inv(M;\frak k)=\ker\delta^0$ of invariant 2-forms on $M$. This means that any two curvings $\tilde F$ and $F$ must differ by a 2-form $\beta\in\Omega^2(M;\frak k)$ satisfying the following conditions:
\begin{align}
\label{eq:invariant}
\L^A_\alpha\beta=0,\quad\iota_{\rho(\alpha)}\beta=0, 
\end{align}
for any $\alpha\in \Gamma(A)$. The associated curvature 3-forms are then clearly related by
\begin{align}
\label{eq:3curv_diff}
\tilde G-G =\d{}^\nabla\beta.
\end{align}
\begin{remark}
\label{rem:invariant}
By the first condition in \eqref{eq:invariant}, any invariant form $\beta\in\Omega^\bullet(M;\frak k)$ is center-valued (take $\alpha\in\Gamma(\frak k)$). Furthermore, the second condition means that $\beta$ is \textit{transversal}, i.e.,\ it is a section of $\Lambda^\bullet(T\F)^\circ\otimes z(\frak k)$, so it must vanish wherever its degree is greater than the codimension of $\F$: for any $x\in M$, $\deg\beta>\operatorname{corank}\rho_x$ implies $\beta_x=0$. In particular, if $\codim\F=1$ then the curving of any multiplicative connection is unique, if it exists.
\end{remark}
The following is the infinitesimal analogue of \cite{gerbes}*{Theorem 6.33}.
\begin{lemma}
\label{lem:curving}
Let $(\C,v)\in \arc(A;\frak k)$ be a primitive IM connection on a Lie algebroid $A\Ra M$, and let $F$ be its curving. The following statements hold.
\begin{enumerate}[label={(\roman*)}]
\item The curvature tensors $U$ and $R^\nabla$ are determined by the curving $F$ as
\begin{align}
\label{eq:R_ad_F}
R^\nabla\cdot\xi&=[\xi,F],\\
U(\alpha)&=-\iota_{\rho(\alpha)}F
\end{align}
for any $\xi\in\Gamma(\frak k)$ and $\alpha\in H$. 
\item Bianchi identities: the curvature 3-form $G$ satisfies
\begin{align}
\label{eq:3curv}
\delta^0 G=0 \quad\text{and}\quad \d{}^\nabla G=0.
\end{align}
Hence, $G$ is center-valued and transversal, so it vanishes wherever $\codim\F\leq 2$.
\end{enumerate}
\end{lemma}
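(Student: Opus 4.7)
For part (i), the plan is to unwrap the equation $\Omega^{(\C,v)} = \delta^0 F$ component by component, using the explicit formula \eqref{eq:im_curv} on the left and the formulas \eqref{eq:delta0_im} for $\delta^0$ on the right. Matching symbols immediately gives $-U(h\alpha) = \iota_{\rho\alpha} F$, which is precisely $U(\alpha) = -\iota_{\rho\alpha} F$ for $\alpha \in H$ (and is consistent with the horizontality of $\Omega^{(\C,v)}$ since $\rho\xi = 0$ for $\xi \in \Gamma(\frak k)$). Matching leading terms yields $R^\nabla \cdot v\alpha - \d{}^\nabla U(h\alpha) = \L^A_\alpha F$, and the crucial specialization is to $\alpha = \xi \in \Gamma(\frak k)$: this kills the $\d{}^\nabla U(h\alpha)$ term as well as all $[\rho\xi, X_j]$-terms in $\L^A_\xi F$, leaving $\nabla^A_\xi$ acting on $F$ fibrewise. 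Since $\nabla^A$ restricted to $\frak k$ is the adjoint representation, we recover $R^\nabla \cdot \xi = [\xi, F]$.

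For part (ii), the first Bianchi identity $\delta^0 G = 0$ admits a slick proof combining the infinitesimal Bianchi identity $\D{}^{(\C,v)}\Omega^{(\C,v)} = 0$ (Theorem \ref{thm:bianchi_inf}) with the cochain property $\delta\D{}^{(\C,v)} = \D{}^{(\C,v)}\delta$ (Theorem \ref{thm:deltaD_inf}). At level $p = 0$ every cochain is trivially horizontal, so $\D{}^{(\C,v)} F = h^*\d{}^\nabla F = \d{}^\nabla F = G$. Substituting $\Omega^{(\C,v)} = \delta^0 F$ then yields
\[
0 = \D{}^{(\C,v)}\delta^0 F = \delta^0\D{}^{(\C,v)} F = \delta^0 G.
\]
For the identity $\d{}^\nabla G = 0$, I will use $\d{}^\nabla G = (\d{}^\nabla)^2 F = R^\nabla \wedge F$ and substitute $R^\nabla\cdot\xi = [\xi, F]$ from part (i); a short shuffle computation (swapping the pair of arguments of $R^\nabla$ with those of $F$) shows that $R^\nabla \wedge F$ equals, up to a sign, the graded commutator $[F, F]$, which vanishes by graded antisymmetry since $F$ has even degree.

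The vanishing statement $G = 0$ whenever $\codim\F \leq 2$ is an immediate consequence of $\delta^0 G = 0$: its symbol reads $\iota_{\rho\alpha} G = 0$, so $G$ is transversal, i.e., a section of $\Lambda^3(T\F)^\circ \otimes \frak k$, and the rank of $\Lambda^3(T\F)^\circ$ is $\binom{\codim\F}{3} = 0$ when $\codim\F \leq 2$. The main subtlety throughout is in part (i), where one must carefully reconcile the $A$-representation $\nabla^A$ on $\frak k$ with the chosen linear connection $\nabla = \C|_{\frak k}$; their discrepancy $\theta(\alpha) = [v\alpha,\cdot]$ is precisely what makes the fibrewise identity $\L^A_\xi F = [\xi, F]$ come out cleanly for $\xi \in \Gamma(\frak k)$. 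In part (ii), both identities become essentially formal once one recognizes that $\D{}^{(\C,v)}$ at level $p = 0$ reduces to $\d{}^\nabla$, so the hard work has already been invested in Theorems \ref{thm:bianchi_inf} and \ref{thm:deltaD_inf}.
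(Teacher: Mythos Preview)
Your proof is correct and follows essentially the same approach as the paper's: part (i) by matching the explicit formula \eqref{eq:im_curv} against $\delta^0 F$, and part (ii) by combining Theorems \ref{thm:deltaD_inf} and \ref{thm:bianchi_inf} for $\delta^0 G=0$ and using $(\d{}^\nabla)^2 F = R^\nabla\wedge F = [F,F]=0$ for the second identity. The only minor omission is that you do not explicitly note that $G$ is center-valued; this follows from the leading term of $\delta^0 G=0$ evaluated at $\alpha=\xi\in\Gamma(\frak k)$, giving $[\xi,G]=\L^A_\xi G=0$.
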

\begin{proof}
The point (i) follows directly from using \eqref{eq:curving} with the explicit expression \eqref{eq:im_curv} for the curvature. For the second point, note that
\[
\delta^0 G=\delta^0 \d{}^\nabla F=\D{}^{(\C,v)}\delta^0F=\D{}^{(\C,v)}\Omega^{(\C,v)}=0,
\]
where we have used Theorem \ref{thm:deltaD_inf} and the Bianchi identity. Finally,
\[
\d{}^\nabla G=(\d{}^\nabla)^2 F=R^\nabla\wedge F=[F,F]=0,
\]
since $F$ is a 2-form, where we have used the point (i).
\end{proof}

Equation \eqref{eq:R_ad_F} of item (i) of the last lemma will henceforth simply be written as 
\[
R^\nabla=-\operatorname{ad}F,
\]
and it is stronger than the condition \eqref{eq:s2} for the coupling data $(\nabla,U)$ of a primitive connection. In a concealed way, the last lemma also tells us what \eqref{eq:s3} should be replaced with for the primitive case, as we will now see. In light of the coupling construction from \cite{mec}, we shall reverse the process, and start with a linear connection and a curving to construct a primitive IM connection.
\begin{proposition}
\label{prop:splitting_coh_triv}
Let $B\Ra M$ be a Lie algebroid and let $(\frak k,[\cdot,\cdot]_{\frak k})$ be a bundle of Lie algebras over $M$. Suppose that a connection $\nabla$ on $\frak k$ and a form $F\in \Omega^2(M;\frak k)$ are given, such that the following conditions are satisfied:
\begin{enumerate}[label={(\roman*)}]
\item The connection $\nabla$ preserves the Lie bracket on $\frak k$.
\item The tensors $R^\nabla$ and $F$ are related by $R^\nabla=-\ad F$.
\item The 3-form $\d{}^\nabla F$ is transversal, i.e., $\iota_{\rho_B(\alpha)}\d{}^\nabla F=0$ for any $\alpha\in\Gamma(B)$.
\end{enumerate}
Then the direct sum $A=B\oplus \frak k$ has a structure of a Lie algebroid, which admits a primitive IM connection $(\C,v)$, given by
\[
v(\alpha,\xi)=\xi,\quad\C(\alpha,\xi)=\nabla\xi+\iota_{\rho_B(\alpha)}F,
\]
with $F$ as its curving. Conversely, any Lie algebroid $A$ with a bundle of ideals $\frak k\subset A$ that admits a primitive IM connection, is isomorphic to one of this type.
\end{proposition}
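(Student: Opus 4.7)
The plan is to reverse the coupling construction of \cite{mec}: on $A = B \oplus \frak k$, I would define the anchor by $\rho_A(\alpha, \xi) = \rho_B(\alpha)$ and the bracket on constant sections by
\begin{align*}
  [(\alpha, \xi), (\beta, \eta)]_A = \big([\alpha, \beta]_B,\ \nabla_{\rho_B(\alpha)}\eta - \nabla_{\rho_B(\beta)}\xi + [\xi, \eta]_\frak k - F(\rho_B(\alpha), \rho_B(\beta))\big),
\end{align*}
extended by the Leibniz rule. The Jacobi identity for $[\cdot,\cdot]_A$ splits according to how many of the three input sections lie in $\frak k$: three $\frak k$-arguments reproduce Jacobi for $[\cdot,\cdot]_\frak k$; two $\frak k$-arguments yield condition (i); one $\frak k$-argument reduces to $R^\nabla = -\ad F$, namely condition (ii); and three $B$-arguments produce, in the $\frak k$-component, a cyclic sum which, after using (i) and (ii) to eliminate the $\nabla$-interchange terms, collapses to $\iota_{\rho_B(\alpha)}\iota_{\rho_B(\beta)}\iota_{\rho_B(\gamma)}\d{}^\nabla F$ and hence vanishes by (iii).

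Once $A$ is shown to be a Lie algebroid, $\frak k$ is manifestly a bundle of ideals, and the symbol $v(\alpha, \xi) = \xi$ restricts to $\id_\frak k$. The verification that $(\C, v)$ with $\C(\alpha, \xi) = \nabla \xi + \iota_{\rho_B(\alpha)} F$ is an IM form then reduces to the compatibility conditions (S.1)--(S.3) of \sec\ref{sec:mec_inf} for the coupling data $(\nabla, U)$ with $U(h\alpha) = -\iota_{\rho_B(\alpha)} F$: (S.1) is exactly (i); (S.2) follows from (ii) since $[U(h\alpha), \xi]_\frak k = -[\iota_{\rho_B(\alpha)} F, \xi]_\frak k = \iota_{\rho_B(\alpha)} R^\nabla \cdot \xi$; and (S.3), after expanding using Cartan's formula and the Leibniz rule, becomes $\iota_{\rho_B(\alpha)}\iota_{\rho_B(\beta)}\d{}^\nabla F = 0$, i.e.\ (iii). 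Primitivity is then a direct computation from the explicit formula \eqref{eq:im_curv}: using (ii) and (iii) together with Cartan's magic formula,
\begin{align*}
  \Omega^{(\C,v)}\alpha = \big(\d{}^\nabla\iota_{\rho_B(\alpha)}F + [\xi, F],\ \iota_{\rho_B(\alpha)}F\big) = \big(\L^A_\alpha F,\ \iota_{\rho(\alpha)}F\big) = (\delta^0 F)(\alpha),
\end{align*}
so $F$ is a curving.

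For the converse, given a primitive IM connection $(\C, v)$ on a Lie algebroid $A$ with bundle of ideals $\frak k$ and curving $F$, I would set $H = \ker v$ and take $B = A/\frak k$ with its quotient Lie algebroid structure, which is canonically isomorphic to $H$ as a vector bundle via $h$. Define $\nabla = \C|_\frak k$: by Lemma \ref{lem:curving} and the compatibility conditions of $(\C, v)$, the triple $(\nabla, U = -\C|_H, F)$ satisfies (i)--(iii). Finally, unwinding the bracket on $A$ in the decomposition $A = H \oplus \frak k$ using Proposition \ref{prop:conn2} and the identity $v[h\alpha, h\beta] = U(h\alpha)(\rho\beta) = -F(\rho_B(\alpha), \rho_B(\beta))$ from \eqref{eq:U_along_orbits} recovers exactly the bracket formula above, yielding the desired isomorphism of Lie algebroids with IM connections.

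The main obstacle will be the three-$B$-sector of the Jacobi identity, where isolating the transversality condition (iii) requires carefully tracking the interplay between $F$, the non-flat connection $\nabla$ and the bracket on $\frak k$, and sorting out which cyclic terms cancel by (i) and (ii) before (iii) can be invoked. The remaining sectors and the verification of (S.1)--(S.3) are essentially a rephrasing of the coupling construction of \cite{mec} combined with the characterisation of primitive connections in Lemma \ref{lem:curving}.
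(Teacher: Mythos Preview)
Your proposal is correct and follows essentially the same route as the paper. The one substantive difference is that the paper does not verify the Jacobi identity directly: instead it checks that (i)--(iii) imply conditions \eqref{eq:s1}--\eqref{eq:s3} for the pair $(\nabla,U)$ with $U(\alpha)=-\iota_{\rho_B(\alpha)}F$, and then invokes \cite{mec}*{Proposition 5.13} to obtain both the Lie algebroid structure on $B\oplus\frak k$ and the IM connection in one stroke. Your plan to verify Jacobi by hand (splitting into sectors by the number of $\frak k$-arguments) and then separately check \eqref{eq:s1}--\eqref{eq:s3} is more self-contained but partly redundant, since the sector-by-sector Jacobi computation reproduces the orbital restrictions of those same conditions. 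The primitivity computation and the converse (via Lemma~\ref{lem:curving} and the decomposition $A=H\oplus\frak k$) are the same in both treatments. One small imprecision: you write that \eqref{eq:s3} ``becomes $\iota_{\rho_B(\alpha)}\iota_{\rho_B(\beta)}\d{}^\nabla F=0$, i.e.\ (iii)'', but (iii) is the strictly stronger statement $\iota_{\rho_B(\alpha)}\d{}^\nabla F=0$; the paper makes the same observation, noting only that (iii) \emph{implies} \eqref{eq:s3}, and the full strength of (iii) is used precisely in the Cartan-formula step of the primitivity check.
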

\begin{remark}
  This proposition simplifies considerably when $\frak k$ is a semisimple Lie algebra bundle; see \sec\ref{sec:semisimple} and Corollary \ref{cor:semisimple_extension}.
\end{remark}
\begin{proof}
Assumptions (i) and (ii) imply that conditions \eqref{eq:s1} and \eqref{eq:s2} are satisfied for the pair $(\nabla,U)$, where $U\in \Gamma (B^*\otimes T^*M\otimes \frak k)$ is given by $U(\alpha)=-\iota_{\rho_B(\alpha)}F$. Moreover, it is straightforward to see that \eqref{eq:s3} for our case  amounts to saying that $\d{}^\nabla F$ vanishes when evaluated on two vectors tangent to the orbit foliation of $B$. Hence, (iii) implies \eqref{eq:s3}, and we can apply \cite{mec}*{Proposition 5.13} to conclude $A$ is a Lie algebroid, with the anchor given by the composition $A\ra B\xrightarrow{\rho_B}TM$, and the bracket by
\begin{align}
\label{eq:bracket_construction}
[(\alpha,\xi),(\beta,\eta)]=\big([\alpha,\beta]_B, \nabla_{\rho_B(\alpha)}\eta-\nabla_{\rho_B(\beta)}\xi+[\xi,\eta]_{\frak k}-F(\rho_B(\alpha),\rho_B(\beta))\big). 
\end{align}
The obtained IM connection $(\C,v)$ is indeed primitive since condition (iii) used with Cartan's magic formula yields 
\[
-\d{}^\nabla U(\alpha)=\L^\nabla_{\rho_B(\alpha)}F,
\]
for any $\alpha\in\Gamma(B)$, hence also 
\[
\Omega^{(\C,v)}(\alpha,\xi)=(R^\nabla\cdot\xi-\d{}^\nabla U(\alpha),-U(\alpha))=(\L^A_{(\alpha,\xi)} F,\iota_{\rho(\alpha,\xi)}F)=\delta^0 F.\qedhere
\]
\end{proof}
\begin{corollary}
  \label{cor:primitive_bijective_correspondence}
  Let $\frak k$ be a bundle of ideals of a Lie algebroid $A$. There is a bijective correspondence between primitive IM connections for $\frak k$ together with a choice of curving, and triples $(v,\nabla,F)$, where $v\colon A\ra \frak k$ is a splitting, the pair $(\nabla,F)$ satisfies conditions (i)--(iii) of Proposition \ref{prop:splitting_coh_triv}, and the splitting is compatible with $(\nabla,F)$, that is:
  \begin{align}
    \label{eq:primitive_additional_conditions}
    \nabla^A_{h(\alpha)}=\nabla^{}_{\rho(\alpha)},\quad F^v=(\rho_B)^* F
  \end{align}
  for any $\alpha\in A$, where $B=A/\frak k$ and $F^v\in\Omega^2(B;\frak k)$ is the curvature of the splitting.
\end{corollary}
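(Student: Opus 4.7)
The plan is to use Proposition \ref{prop:splitting_coh_triv} as the engine and to show that the extra bookkeeping in the corollary---namely the splitting $v$ together with the compatibility conditions \eqref{eq:primitive_additional_conditions}---captures exactly the ambiguity between an abstract Lie algebroid $A$ with bundle of ideals $\frak k$ and its reconstruction as $B \oplus \frak k$. The bijection will send $((\C,v), F) \mapsto (v, \C|_{\frak k}, F)$, with inverse produced by Proposition \ref{prop:splitting_coh_triv}.

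For the forward direction, starting with a primitive IM connection $(\C,v)\in\arc(A;\frak k)$ and a curving $F$, I would set $\nabla = \C|_{\frak k}$ and verify the three hypotheses of Proposition \ref{prop:splitting_coh_triv}: (i) is exactly \eqref{eq:s1}; (ii) is Lemma \ref{lem:curving}(i); and (iii), asking $\iota_{\rho_B(\alpha)} \d{}^\nabla F = 0$, follows by reading the symbol of $\delta^0 G = 0$ from Lemma \ref{lem:curving}(ii) through \eqref{eq:delta0_im}. The first compatibility $\nabla^A_{h\alpha} = \nabla_{\rho\alpha}$ is immediate from Proposition \ref{prop:conn2}, since for $h\alpha \in H$ both sides compute $[h\alpha,\xi]$. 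The second compatibility $F^v = (\rho_B)^* F$ is obtained by combining \eqref{eq:U_along_orbits} (which identifies $v[h\alpha, h\beta]$ with $U(h\alpha)(\rho\beta)$) with $U(h\alpha) = -\iota_{\rho\alpha} F$ from Lemma \ref{lem:curving}(i).

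For the inverse direction, given a triple $(v,\nabla,F)$ satisfying all stated conditions, the splitting identifies $A$ with $H \oplus \frak k \cong B \oplus \frak k$ as a vector bundle. Proposition \ref{prop:splitting_coh_triv} equips the latter with a Lie algebroid structure and a primitive IM connection; the role of \eqref{eq:primitive_additional_conditions} is to ensure that the existing bracket on $A$ agrees with the reconstructed bracket \eqref{eq:bracket_construction}. For mixed terms $[\alpha, \xi]_A$ with $\alpha \in \Gamma(H)$ and $\xi \in \Gamma(\frak k)$ the first compatibility identifies $[\alpha,\xi]_A = \nabla^A_\alpha\xi$ with $\nabla_{\rho\alpha}\xi$, and for purely horizontal $\alpha,\beta \in \Gamma(H)$ the vertical part $v[\alpha,\beta]_A = -F^v(\pi\alpha,\pi\beta)$ equals $-F(\rho\alpha,\rho\beta)$ by the second compatibility (since $\rho = \rho_B \circ \pi$). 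The two assignments preserve $(v,\nabla,F)$ by construction and are therefore mutually inverse.

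The main obstacle I anticipate is not any single calculation---each verification above is essentially one line---but the careful bookkeeping of where each object lives (on $A$, on $B$, or pulled back via $\rho_B$) together with fixing the sign in the definition of $F^v$, so that the identification $F^v = (\rho_B)^* F$ is pinned down unambiguously. Once those conventions are set, Proposition \ref{prop:splitting_coh_triv}, Proposition \ref{prop:conn2}, and Lemma \ref{lem:curving} provide all the content, and the corollary follows.
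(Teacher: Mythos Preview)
Your proposal is correct and follows essentially the same approach as the paper: the forward direction uses \eqref{eq:U_along_orbits}, Proposition \ref{prop:conn2}, and Lemma \ref{lem:curving}, while the reverse direction invokes Proposition \ref{prop:splitting_coh_triv} with the compatibility conditions \eqref{eq:primitive_additional_conditions} guaranteeing that the reconstructed bracket on $B\oplus\frak k$ agrees with the original one on $A$. Your write-up simply fills in the details that the paper leaves implicit, and your anticipated obstacle about the sign convention for $F^v$ is well-placed but resolves correctly with the definition \eqref{eq:curvature_splitting}.
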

\begin{proof}
  One direction is clear from the  identities \eqref{eq:U_along_orbits}, \eqref{eq:conn_orb2} and Lemma \ref{lem:curving}. For the other direction we use the previous proposition, where the conditions \eqref{eq:primitive_additional_conditions} ensure that the obtained algebroid structure on $B\oplus \frak k$ is isomorphic to $A$. 
\end{proof}
\begin{remark}
  If $F'\in \Omega^2(M;\frak k)$ is another 2-form satisfying the same properties as $F$ in the corollary above, then $(v,\nabla,F')$ defines the same IM connection as $(v,\nabla, F)$ if and only if the difference $F-F'$ is transversal, that is, $\iota_X(F-F')=0$ whenever $X\in T\F$. The equivalence follows by observing that for any $\alpha\in \Gamma(A)$,
  \[
  \L^A_\alpha(F-F')=\L^\nabla_{\rho\alpha}(F-F')+[v\alpha,F-F']=0,
  \]
  using Cartan's formula together with properties (ii) and (iii) from the proposition. 
\end{remark}

We now examine the affinity of the space of primitive multiplicative connections. We emphasize that this should not be confused with the aforementioned affinity of $(\delta^0)^{-1}(\C,v)$ for a fixed IM connection $(\C,v)$.
\begin{proposition}
Let $(\C,v)\in \arc(A;\frak k)$ be a primitive connection on a Lie algebroid $A$. If $(L,l)\in\Omega^1_{im}(A;\frak k)^\Hor$ is a cohomologically trivial IM form, then $(\C,v)+(L,l)$ is also primitive. 
In particular, if $H^{1,1}(A;\frak k)^\Hor=0$ then $\arc(A;\frak k)$ is an affine subspace of $\A(A;\frak k)$.
\end{proposition}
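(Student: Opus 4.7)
My plan is to combine the affine deformation formula of Theorem \ref{thm:expansion_inf} with the cochain property of $\D{}^{(\C,v)}$ to produce an explicit curving of $(\C,v)+(L,l)$. Writing $(L,l)=\delta^0\gamma$ for some $\gamma\in\Omega^1(M;\frak k)$ and $\Omega^{(\C,v)}=\delta^0 F$ for a chosen curving $F$, the theorem gives
\[
\Omega^{(\C,v)+(L,l)}=\delta^0 F+\D{}^{(\C,v)}(L,l)+\mathbf c_2(L,l),
\]
so it suffices to show that each of the last two terms lies in the image of $\delta^0$.

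The first-order term is essentially formal: at level $p=0$ every Weil cochain is horizontal by Definition \ref{defn:horizontal_inf}, so $\D{}^{(\C,v)}$ restricts to $\d{}^\nabla$ on $W^{0,\bullet}(A;\frak k)$. Theorem \ref{thm:deltaD_inf} then yields $\D{}^{(\C,v)}(\delta^0\gamma)=\delta^0\D{}^{(\C,v)}\gamma=\delta^0\d{}^\nabla\gamma$. The main work is in the second-order term, where I will establish the identity
\[
\mathbf c_2(\delta^0\gamma)=-\tfrac 12\,\delta^0[\gamma,\gamma],
\]
with $[\gamma,\gamma]\in\Omega^2(M;\frak k)$ denoting the wedge of $\gamma$ with itself via the fibrewise Lie bracket on $\frak k$. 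Starting from the explicit formula \eqref{eq:secondorder_inf}, substituting $L=\L^A_{\cdot}\gamma$ and $l=\iota_{\rho(\cdot)}\gamma$ from \eqref{eq:delta0_im}, and unfolding the $\wedgedot$-pairing on a pair of vector fields via Lemma \ref{lem:wedgedot} (ii), the Leibniz terms collapse into $\nabla^A_\alpha[\gamma(X),\gamma(Y)]$ by virtue of $\nabla^A$ being a derivation of the bracket on $\frak k$ (which holds since $\frak k$ is a bundle of ideals). The symbol computation is parallel, with $\L^A$ replaced by $\iota_\rho$ and $\tfrac12\iota_{\rho\alpha}[\gamma,\gamma]=[\iota_{\rho\alpha}\gamma,\gamma(\cdot)]$ playing the analogous role.

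Assembling the three pieces exhibits $F+\d{}^\nabla\gamma-\tfrac12[\gamma,\gamma]$ as an explicit curving of $(\C,v)+(L,l)$, proving the first claim. The \enquote{in particular} statement is then immediate: if $H^{1,1}(A;\frak k)^\Hor=0$, every element of $\Omega^1_{im}(A;\frak k)^\Hor$ is cohomologically trivial, so by the first part $\arc(A;\frak k)$ is preserved under arbitrary affine deformations; it therefore coincides with $\A(A;\frak k)$ whenever it is nonempty, and in particular is an affine subspace. I expect the only real obstacle to be the bookkeeping in the $\mathbf c_2$ computation; everything else follows from the cochain property already established.
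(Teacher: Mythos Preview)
Your proposal is correct and follows essentially the same route as the paper: reduce via the affine expansion of Theorem \ref{thm:expansion_inf} and the cochain property of $\D{}^{(\C,v)}$ to the identity $\mathbf c_2(\delta^0\gamma)=-\tfrac12\,\delta^0[\gamma,\gamma]$, which the paper proves by a direct computation equivalent to yours (the Jacobi identity there is exactly your derivation property of $\nabla^A$). The explicit curving $F+\d{}^\nabla\gamma-\tfrac12[\gamma,\gamma]$ you assemble is precisely what the paper records in Remark \ref{rem:deformation_curving}.
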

\begin{proof}
We have to show that if $(L,l)$ and $\Omega^{(\C,v)}$ are cohomologically trivial, then so is $\Omega^{(\C,v)+(L,l)}$. By Theorems \ref{thm:expansion_inf} and \ref{thm:deltaD_inf}, we only need to check that $\mathbf c_2(L,l)$ is cohomologically trivial. We show this by proving 
\begin{align}
\label{eq:c2_cohtriv}
\mathbf c_2(\delta^0\gamma)=-\frac 12\delta^0[\gamma,\gamma],
\end{align}
i.e., that the following diagram commutes.
\[
\begin{tikzcd}[row sep=large]
	{\Omega^1_{im}(A;\frak k)^\Hor} & {\Omega^2_{im}(A;\frak k)^\Hor} \\
	{\Omega^1(M;\frak k)} & {\Omega^2(M;\frak k)}
	\arrow["{\mathbf c_2}", from=1-1, to=1-2]
	\arrow["{\delta^0}", from=2-1, to=1-1]
	\arrow["{\gamma\mapsto-\frac 12[\gamma,\gamma]}"', from=2-1, to=2-2]
	\arrow["{\delta^0}"', from=2-2, to=1-2]
\end{tikzcd}
\]
Let us denote $(L,l)=\delta^0\gamma$ and $(J,j)= \mathbf c_2(\delta^0\gamma)$, that is, 
\[
(J,j)\alpha=-(L|_{\frak k}\wedgedot L\alpha,L|_{\frak k}\cdot l\alpha)=-(L|_{\frak k}\wedgedot\L^A_\alpha\gamma, [\gamma(\rho\alpha),\gamma]),
\]
for any $\alpha\in\Gamma(A)$, where $L|_{\frak k}=[-,\gamma]$. To check that the symbols coincide, just note that $[\gamma,\gamma](X,Y)=2[\gamma(X),\gamma(Y)]$ holds for any $X,Y\in\vf(M)$, hence
\[
j(\alpha)=-\frac 12\big(\iota_{\rho(\alpha)}[\gamma,\gamma]\big).
\]
For the leading term, we compute 
\begin{align*}
-J\alpha(X,Y)&=[(\L^A_\alpha\gamma)(X),\gamma(Y)]-[(\L^A_\alpha\gamma)(Y),\gamma(X)]\\
&=[[\alpha,\gamma(X)],\gamma(Y)]-[\gamma[\rho\alpha,X],\gamma(Y)]-[[\alpha,\gamma(Y)],\gamma(X)]+[\gamma[\rho\alpha,Y],\gamma(X)]\\
&=[\alpha,[\gamma(X),\gamma(Y)]]-[\gamma[\rho\alpha,X],\gamma(Y)]-[\gamma(X),\gamma[\rho\alpha,Y]],
\end{align*}
where  the first and third term in the second line were combined using the Jacobi identity. Now using the identity for $[\gamma,\gamma]$ on all three terms shows that this equals
\[
-J\alpha(X,Y)=\frac 12\L^A_\alpha[\gamma,\gamma](X,Y),
\]
which concludes our proof.
\end{proof}
\begin{remark}
\label{rem:deformation_curving}
The proof shows that by choosing a curving $F$ of an IM connection $(\C,v)\in\arc(A;\frak k)$, we also choose a curving for all connections of the form $(\C,v)+\delta^0\gamma$:
\begin{align}
\label{eq:curving_deformation}
F^{\gamma}&=F+\d{}^\nabla\gamma-\frac 12[\gamma,\gamma].
\end{align}
Moreover, the linear connection on $\frak k$ induced by $(\C,v)+\delta^0\gamma$ (denoted $\nabla^\gamma$) reads
\begin{align}
\label{eq:nabla_gamma}
\nabla^\gamma\xi=\nabla\xi+[\xi,\gamma],
\end{align}
for all $\xi\in\Gamma(\frak k)$. Remarkably, the 3-curvature does not change with this deformation:
\begin{align*}
G^{\gamma}&=\d{}^{\nabla^\gamma}F^{\gamma}=(\d{}^\nabla+[\cdot,\gamma])\big(F+\d{}^\nabla\gamma-\frac 12[\gamma,\gamma]\big)\\
&=G+R^\nabla\wedge\gamma-[\d{}^\nabla\gamma,\gamma]+[F,\gamma]+[\d{}^\nabla\gamma,\gamma]-\frac 12[[\gamma,\gamma],\gamma]=G,
\end{align*}
where we have used \eqref{eq:R_ad_F} and observed that the Jacobi identity for $[\cdot,\cdot]_{\frak k}$ implies $[[\gamma,\gamma],\gamma]=0$.
\end{remark}
\subsubsection{Lie algebroids of principal type}
Let $A\Ra M$ be an arbitrary algebroid and suppose we are given a primitive connection $(\C,v)\in\A(A;\frak k)$ with a curving $F$. Observe that the curvature 3-form $G=\d{}^\nabla F$ vanishes if and only if the direct sum $A'\coloneqq TM\oplus \frak k$ is a Lie algebroid, with anchor $\pr_1\colon A'\ra TM$ and the Lie bracket given by
\begin{align}
  \label{eq:principal_bracket}
  [(X,\xi),(Y,\eta)]=([X,Y],\nabla_X\eta-\nabla_Y\xi+[\xi,\eta]_{\frak k}-F(X,Y)).
\end{align}
The vanishing of $G$ amounts precisely to the vanishing Jacobiator for this bracket. In this case, $A'$ is clearly a transitive algebroid, and furthermore, the algebroid $A\Ra M$ is isomorphic to a Lie algebroid \textit{of principal type} \cite{mec}*{\sec 6.6},
\[
A'\times_{TM} B \Ra M,\quad A'\times_{TM} B \coloneqq\set{(\alpha,\beta)\in A'\times B\given \rho_{A'}(\alpha)=\rho_B(\beta)}.
\]
Its algebroid structure is determined by the condition that the inclusion $A'\times_{TM}B\hookrightarrow A'\times B$ is a Lie algebroid monomorphism into the direct product algebroid. The isomorphism reads \[\alpha\mapsto (\rho(\alpha),v(\alpha), \phi(\alpha)),\] where $\phi\colon A\ra B=A/\frak k$ is the canonical projection. Hence, the existence of a primitive connection for $\frak k$, with a curving whose curvature 3-form vanishes, forces the algebroid to be of principal type. 

This observation is closely related to the construction of IM connections on algebroids of principal type from \cite{mec}*{\sec 6.6}. To elaborate, let $A\coloneqq A'\times_{TM}B$ be an algebroid of principal type for some pair of Lie algebroids $A'$ and $B$, with $A'$ transitive. Denote by $\phi\colon A\ra B$ the projection and consider the bundle of ideals $\frak k=\ker\phi$, which can be identified with $\ker\rho_{A'}$ under the projection $\pr_{A'}\colon A\ra A'$. Recall that a splitting of the Atiyah sequence of $A'$,
\begin{align}
  \label{eq:splitting_principal_type}
    \begin{tikzcd}[ampersand replacement=\&, column sep=large]
    0 \& {\frak k} \& {A'} \& {TM} \& 0,
    \arrow[from=1-1, to=1-2]
    \arrow[from=1-2, to=1-3]
    \arrow["v'"{pos=0.45}, bend left=30, from=1-3, to=1-2]
    \arrow["\rho_{A'}", from=1-3, to=1-4]
    \arrow[from=1-4, to=1-5]
  \end{tikzcd}
  \end{align}
is the same thing as an IM connection $(\C',v')\in\A(A';\frak k)$ by condition \eqref{eq:c2}, which can then be pulled back along $\pr_{A'}$ to an IM connection $(\C,v)\in\A(A;\frak k)$:
\begin{align}
\label{eq:principal_type_IM_connection_induced_by_splitting}
    \C(\alpha,\beta)=\C'(\alpha),\quad v(\alpha,\beta)=v'(\alpha),
\end{align}
for any $\alpha\in\Gamma(A')$ and $\beta\in\Gamma(B)$ with $\rho_{A'}(\alpha)=\rho_B(\beta)$. In this way, the authors of \cite{mec} conclude that Lie algebroids of principal type admit IM connections. As the following shows, the obtained IM connection is actually primitive.
\begin{proposition}
  Let $A=A'\times_{TM} B$ be a Lie algebroid of principal type, with the bundle of ideals $\frak k=\ker\phi$, where $\phi\colon A\ra B$ is the canonical projection. The IM connection \eqref{eq:principal_type_IM_connection_induced_by_splitting} induced by a splitting $v'\colon A'\ra \frak k$ of the transitive algebroid $A'$ is primitive with curving $F^{v'}$, whose curvature 3-form vanishes.
\end{proposition}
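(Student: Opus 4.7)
The plan is to show the identity $\Omega^{(\C,v)} = \delta^0 F^{v'}$ on $A$, by identifying the coupling data of $(\C,v)$ with the (pullback of) coupling data of $(\C',v')$, and then to appeal to the Bianchi identity for splittings of transitive algebroids which was recorded after equation \eqref{eq:curvature_splitting}.

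First I would unpack the coupling data of $(\C,v)$. Under the identification $\frak k = \ker\phi \cong \ker\rho_{A'}$ via $\pr_{A'}\colon A \to A'$, the induced linear connection on $\frak k$ reads
\[
\nabla_X \xi = \C(\xi,0)(X) = \C'(\xi)(X) = \nabla'_X \xi,
\]
so $\nabla = \nabla'$. The horizontal subbundle is $H = \ker v = \{(\alpha',\beta) \in A : \alpha' \in H'\}$ where $H' = \ker v'$; thus for any $\alpha = (\alpha',\beta) \in \Gamma(A)$ the tensor $U$ from the splitting \eqref{eq:split_C} satisfies $U(h\alpha) = -\C(h\alpha) = -\C'(h'\alpha') = U'(h'\alpha')$. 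Equivalently, the coupling data $(\nabla,U)$ of $(\C,v)$ is the pullback of $(\nabla',U')$ along $\pr_{A'}$.

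Next I would apply the explicit formula \eqref{eq:im_curv} for the curvature. Since $\rho_A(\alpha) = \rho_{A'}(\alpha')$ and $\nabla = \nabla'$ on $\frak k$, the formula gives
\[
\Omega^{(\C,v)}(\alpha) = \bigl(R^{\nabla'} \cdot v'(\alpha') - \d{}^{\nabla'} U'(h'\alpha'),\ -U'(h'\alpha')\bigr) = \Omega^{(\C',v')}(\alpha').
\]
Because $A'$ is transitive and $v'$ is a splitting of its Atiyah sequence, the example following equation \eqref{eq:curvature_splitting} shows $\Omega^{(\C',v')} = \delta^0 F^{v'}$. Both components of $\delta^0 F^{v'}$ depend on $\alpha$ only through $\rho_A(\alpha) = \rho_{A'}(\alpha')$ and through the action on $\frak k$, which agrees with that of $A'$ via $\pr_{A'}$ (since the bracket in the fibred product $A = A' \times_{TM} B$ is componentwise, and $[\alpha,\xi]_A = ([\alpha',\xi]_{A'}, 0)$ for $\xi\in\Gamma(\frak k)$); hence $\delta^0 F^{v'}$ computed on $A$ agrees with $\delta^0 F^{v'}$ computed on $A'$ after pullback. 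Combining these, $\Omega^{(\C,v)} = \delta^0 F^{v'}$ on $A$, which shows that $(\C,v)$ is primitive with curving $F^{v'}$.

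Finally, the curvature 3-form of this curving is $G = \d{}^\nabla F^{v'} = \d{}^{\nabla'} F^{v'}$, which vanishes by the Bianchi identity for the curvature of the splitting $v'$ of the transitive algebroid $A'$, as recorded after equation \eqref{eq:curvature_splitting}. I anticipate no serious obstacle: the only slightly delicate point is keeping track of the two algebroid structures and verifying that the $A$-action on $\frak k$ really does coincide with the $A'$-action (so that $\delta^0 F^{v'}$ is well-behaved under $\pr_{A'}$); this is a direct consequence of the componentwise bracket on the fibred product.
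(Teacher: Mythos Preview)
Your proposal is correct and follows essentially the same approach as the paper: both arguments establish $\Omega^{(\C,v)}(\alpha',\beta)=\Omega^{(\C',v')}(\alpha')$ (the paper phrases this as $(\C,v)=(\pr_{A'})^*(\C',v')$, while you unpack the coupling data explicitly), then identify $\L^A_{(\alpha',\beta)}=\L^{A'}_{\alpha'}$ on $\frak k$-valued forms so that $\delta^0_A F^{v'}$ agrees with the pullback of $\delta^0_{A'}F^{v'}$, and finally invoke the Bianchi identity for the splitting of $A'$.
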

\begin{proof}
  First observe that the curving of the IM connection \eqref{eq:principal_type_IM_connection_induced_by_splitting} is given by the curvature $F^{v'}$ of the splitting $v'\colon A'\ra \frak k$. Indeed, we have
\[
\Omega^{(\C,v)}(\alpha,\beta)=\Omega^{(\C',v')}(\alpha)=\delta^0_{A'}(F^{v'})(\alpha)=(\L^{A'}_{\alpha}F^{v'},\iota_{\rho_{A'}(\alpha)}F^{v'})
\]
where the first equality holds since $(\C,v)=(\pr_{A'})^*(\C',v')$. Now observe that $\rho(\alpha,\beta)=\rho_{A'}(\alpha)$ and the representation of $A$ on $\frak k$ is given for any $\xi\in\Gamma(\frak k)$ as
\[
[(\alpha,\beta),(\xi,0)]=([\alpha,\xi],0),
\]
so we conclude $\smash{\L^A_{(\alpha,\beta)}=\L^{A'}_\alpha}$ and hence $\Omega^{(\C,v)}=\delta^0_A(F^{v'})$. In conclusion, any IM connection on a Lie algebroid of principal type $A'\times_{TM}B$, induced by a splitting $v'$ of \eqref{eq:splitting_principal_type}, is primitive with a canonical curving $F^{v'}$ satisfying $\d{}^\nabla F^{v'}=0$ by the Bianchi identity.
\end{proof}


\subsubsection{The semisimple case}
\label{sec:semisimple}
A particularly well-behaved scenario for the theory of multiplicative connections is when the typical fibre of $\frak k$ is a semisimple Lie algebra, in which case an IM connection on $A$ for $\frak k$ always exists. This was established in \cite{mec}*{Corollary 6.6} by showing that $A$ is then isomorphic to an algebroid of principal type, with $\frak k$ the isotropy of a transitive algebroid $A'$, whose sections are bracket-preserving derivations of $\frak k$. We now establish some stronger properties.

In what follows, we will refer to a locally trivial bundle of Lie algebras $\frak k$ with a semisimple typical fibre as a \textit{semisimple Lie algebra bundle}.\footnote{When the base $M$ is connected and all the fibres are semisimple, local triviality is automatic by the rigidity results for bundles of semisimple Lie algebras.} In the case when $\frak k\subset A$ is also a bundle of ideals, it will be called a \textit{semisimple bundle of ideals} of $A$. As concerns cohomological triviality, we have the following.
\begin{proposition}
Let $\frak k$ be a semisimple bundle of ideals of a Lie algebroid $A\Ra M$. The following holds:
\begin{enumerate}[label={(\roman*)}]
  \item The simplicial differential $\delta^0\colon \Omega^\bullet(M;\frak k)\ra \Omega^\bullet_{im}(A;\frak k)^\Hor$ is an isomorphism.
  \item Any multiplicative connection for $\frak k$ is uniquely primitive, with vanishing  3-curvature.
\item There is a bijective correspondence:\\
\begin{minipage}{\linewidth}
\vspace{2pt}
\begin{align*}
\left\{
\parbox{3cm}{\centering IM connections $\A(A;\frak k)$}
\right\}
\longleftrightarrow
\left\{
\parbox{7.9cm}{\centering Pairs $(v,
\nabla)$, where $v\colon A\ra \frak k$ is a splitting and\\$\nabla$ is a bracket-preserving connection on $\frak k$,\\such that $\nabla^A_{h(\alpha)}=\nabla^{}_{\rho(\alpha)}$ for all $\alpha\in A$}
\right\}
\end{align*}
\vspace{0pt}
\end{minipage}
\end{enumerate}
\end{proposition}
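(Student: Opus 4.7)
The plan is to prove the three statements in sequence, with the structure theory of semisimple Lie algebras—specifically that the center $z(\frak k)$ vanishes and $\ad\colon \frak k \to \Der(\frak k)$ is an isomorphism fibrewise—serving as the common driving tool.

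For (i), injectivity is immediate: if $\delta^0\gamma = 0$, then for every $\xi \in \Gamma(\frak k)$ one has $\L^A_\xi \gamma = [\xi, \gamma] = 0$, forcing $\gamma$ to be center-valued and hence zero. For surjectivity, given $(c_0, c_1) \in \Omega^q_{im}(A; \frak k)^{\Hor}$, I would reconstruct $\gamma$ pointwise. The key observation—obtained by applying the compatibility \eqref{eq:c1} with $\alpha_1, \alpha_2 \in \Gamma(\frak k)$—is that for each fixed tuple $(X_1, \ldots, X_q)$ the map $\xi \mapsto c_0(\xi)_x(X_1, \ldots, X_q)$ is a derivation of $\frak k_x$, hence inner; this defines $\gamma_x(X_1, \ldots, X_q) \in \frak k_x$ uniquely by $c_0(\xi) = [\xi, \gamma]$, with smoothness guaranteed by smoothness of $c_0$ and local triviality of $\frak k$. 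To verify $\delta^0\gamma = (c_0, c_1)$ globally, I would look at the difference $\tilde c \coloneqq (c_0, c_1) - \delta^0 \gamma$: both of its components vanish on $\Gamma(\frak k)$ by construction, and \eqref{eq:c1} and \eqref{eq:c2} applied with one argument in $\Gamma(\frak k)$ (using that $\frak k$ is a bundle of ideals) force $\tilde c_0(\alpha)$ and $\tilde c_1(\beta)$ to take values in $z(\frak k) = 0$.

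For (ii), the curvature $\Omega^{(\C,v)} = \D{}^{(\C,v)}(\C,v)$ is a horizontal IM 2-form by Theorem \ref{thm:deltaD_inf}, and by (i) it lies in the image of $\delta^0$, showing primitivity; the curving is unique by injectivity of $\delta^0$. The 3-curvature $G = \d{}^\nabla F$ then satisfies $\delta^0 G = 0$ by Lemma \ref{lem:curving}, and a final application of (i) yields $G = 0$.

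For (iii), the forward map $(\C, v) \mapsto (v, \C|_{\frak k})$ takes values in the prescribed set by \eqref{eq:s1} and Proposition \ref{prop:conn2}. For the inverse, given a pair $(v, \nabla)$, the plan is to promote it to a triple $(v, \nabla, F)$ and apply Corollary \ref{cor:primitive_bijective_correspondence}. Since $\nabla$ preserves the bracket, $R^\nabla$ is derivation-valued, and semisimplicity gives a unique $F \in \Omega^2(M; \frak k)$ with $R^\nabla = -\ad F$. What remains is to verify the conditions of Proposition \ref{prop:splitting_coh_triv} together with the second compatibility in \eqref{eq:primitive_additional_conditions}; these reduce, via applying $\ad$ to both sides and using the Bianchi identity $\d{}^{\nabla^{\End \frak k}} R^\nabla = 0$ together with the hypothesis $\nabla^A_{h\alpha} = \nabla_{\rho\alpha}$, to identities that become automatic from injectivity of $\ad$. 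Concretely, $\ad(\d{}^\nabla F) = -\d{}^{\nabla^{\End\frak k}} R^\nabla = 0$ forces $\d{}^\nabla F = 0$ (so in particular $\d{}^\nabla F$ is transversal), while the identity $F^v = (\rho_B)^* F$ emerges by computing $\nabla^A_{[h\alpha_1, h\alpha_2]}\xi$ in two ways—via the representation property $[\nabla_{\rho\alpha_1}, \nabla_{\rho\alpha_2}] = \nabla_{[\rho\alpha_1, \rho\alpha_2]} + R^\nabla(\rho\alpha_1, \rho\alpha_2)$ and via the decomposition $[h\alpha_1, h\alpha_2] = h[h\alpha_1, h\alpha_2] + v[h\alpha_1, h\alpha_2]$—and matching up the results modulo the zero center. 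The main obstacle I anticipate is precisely this last bookkeeping step of (iii): the strategy is uniform, but threading $\ad$-arguments through the definitions in Proposition \ref{prop:splitting_coh_triv} and Corollary \ref{cor:primitive_bijective_correspondence} requires careful attention to sign conventions in the curvature of the splitting.
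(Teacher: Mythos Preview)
Your proposal is correct and follows essentially the same approach as the paper. The only cosmetic difference is in the verification step of (i): the paper directly checks that $\delta^0\gamma$ agrees with $(c_0,c_1)$ by applying $\iota_{\rho(\alpha)}$ and $\L^A_\alpha$ to the defining relation $[\xi,\gamma]=c_0(\xi)$, whereas you argue that the difference $(c_0,c_1)-\delta^0\gamma$ is a horizontal IM form whose components vanish on $\frak k$ and are therefore center-valued---but both routes use exactly the same compatibility conditions \eqref{eq:c1}--\eqref{eq:c2} and the triviality of $z(\frak k)$.
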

\begin{remark}
As will be clear from the proof, it is actually enough to assume that \[\ad\colon \frak g\rightarrow \Der(\frak g)\] is an isomorphism, where $\frak g$ is the typical fibre of $\frak k$. In other words, (i) states that the vanishing of the zeroth and first Chevalley--Eilenberg cohomology groups of the typical fibre $\frak g$, with values in the adjoint representation of $\frak g$, implies the vanishing of the zeroth and first horizontal simplicial cohomology groups of $\frak k$-valued IM forms on $A\Ra M$,
\begin{align*}
  H^{0,\bullet}(A;\frak k)^\Hor=H^{1,\bullet}(A;\frak k)^\Hor=0.
\end{align*}
\end{remark}
\begin{proof}
The center $z(\frak k)$ of a semisimple Lie algebra bundle $\frak k$ is trivial, so $\delta^0$ is injective by Remark \ref{rem:invariant}. For surjectivity, we take any $(L,l)\in\Omega^\bullet_{im}(A;\frak k)^\Hor$ and observe that $L|_{\frak k}$ is tensorial by horizontality. By condition \eqref{eq:c1}, it is moreover a derivation-valued form $L|_{\frak k}\in\Omega^\bullet(M;\Der\frak k)$, hence semisimplicity implies there is a unique $\gamma\in\Omega^\bullet(M;\frak k)$ which satisfies $L|_{\frak k}=-\ad \gamma$. That is,
\begin{align}
\label{eq:semisimple_inverse}
[\xi,\gamma]=L|_{\frak k}\cdot \xi,
\end{align}
for any $\xi\in\Gamma(\frak k)$. To see that $\delta\gamma=(L,l)$, we let $\alpha\in\Gamma(A)$ and apply $\iota_{\rho(\alpha)}$ to both sides of \eqref{eq:semisimple_inverse}. By condition \eqref{eq:c2}, there holds
\[
[\xi,\iota_{\rho(\alpha)}\gamma]=\iota_{\rho(\alpha)}L\xi=\L_\xi^A l\alpha-l[\xi,\alpha]=[\xi,l\alpha],
\]
where we have used horizontality of $(L,l)$. This shows that the symbols coincide. Finally, for the leading term we use condition \eqref{eq:c1} to get
\begin{align*}
[\xi,L\alpha]=\L_\xi^A L\alpha=L[\xi,\alpha]+\L_\alpha^A L\xi=[[\xi,\alpha],\gamma]+\L^A_\alpha[\xi,\gamma].
\end{align*}
It is now easy to see that the right-hand side equals $[\xi,\L_\alpha^A\gamma]$ by Jacobi identity, concluding part (i). Part (ii) is clearly implied by (i): applying equation \eqref{eq:semisimple_inverse} to the curvature $\Omega^{(\C,v)}$ of an IM connection $(\C,v)$ yields the unique curving $F$, implicitly defined by 
\begin{align}
  \label{eq:semisimple_def_F}
  R^\nabla=-\ad F.
\end{align} 
For part (iii), if we are given a pair $(v,\nabla)$, there is a unique form $F$ satisfying \eqref{eq:semisimple_def_F}, so we would now like to apply Corollary \ref{cor:primitive_bijective_correspondence}. The condition (iii) there is automatically fulfilled since $\d{}^\nabla F$ is $z(\frak k)$-valued:
\begin{align*}
[\xi,\d{}^\nabla F]=\d{}^\nabla[\xi,F]-[\nabla\xi,F]=\d{}^\nabla(R^\nabla\cdot \xi)-R^\nabla\wedge\nabla\xi=(\d{}^{\nabla^{\End\frak k}}R^\nabla)\cdot \xi=0
\end{align*}
where we used the Bianchi identity for $R^\nabla$ in the last step. Moreover, the condition $\nabla^A_{h\alpha}=\nabla_{\rho\alpha}$ implies $(\rho_B)^*R^\nabla=-\ad F^v$ already in the non-semisimple case, and now inserting \eqref{eq:semisimple_def_F} into this equation yields $(\rho_B)^*F=F^v$, which is what we needed to show.
\end{proof}
The last part of the proof above shows that the coupling construction from Proposition \ref{prop:splitting_coh_triv} significantly simplifies in the semisimple case: we just need to assume that $\nabla$ is bracket-preserving, and the other two conditions are automatically satisfied by defining $F$ with $R^\nabla$. However, the existence of a bracket-preserving connection $\nabla$ is equivalent to local triviality of $\frak k$, which is already assumed, so we obtain the following.
\begin{corollary}
\label{cor:semisimple_extension}
If $B\Ra M$ is a Lie algebroid and $(\frak k,[\cdot,\cdot]_{\frak k})$ is a semisimple Lie algebra bundle, then $A=B\oplus \frak k$ has a structure of a Lie algebroid defined by \eqref{eq:bracket_construction}, where $\nabla$ is any bracket-preserving linear connection on $\frak k$ and the form $F$ is implicitly defined by $R^\nabla=-\ad F$.  Conversely, any Lie algebroid $A$ with a semisimple bundle of ideals $\frak k$ is isomorphic to one of this type.
\end{corollary}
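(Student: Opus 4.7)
The plan is to reduce both directions to Proposition \ref{prop:splitting_coh_triv} (the general coupling construction), where the three conditions (i)--(iii) are to be checked, with the crucial simplifications being provided by the semisimplicity of $\frak k$.

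For the forward direction, suppose $B$ and the semisimple Lie algebra bundle $(\frak k,[\cdot,\cdot]_{\frak k})$ are given. I would first invoke local triviality of $\frak k$ together with a partition of unity to produce a bracket-preserving linear connection $\nabla$ on $\frak k$; this gives condition (i). For condition (ii), the preservation of the bracket implies $R^\nabla$ takes values in $\Der(\frak k)$, and since $\frak k$ is fibrewise semisimple the adjoint map $\ad\colon \frak k\rightarrow \Der(\frak k)$ is a bundle isomorphism, so there is a unique $F\in\Omega^2(M;\frak k)$ with $R^\nabla=-\ad F$. The main point is then condition (iii). Rather than checking transversality of $\d{}^\nabla F$ directly, I would reuse the computation that appeared in the proof of the previous proposition, namely
\[
[\xi,\d{}^\nabla F]=\d{}^\nabla[\xi,F]-[\nabla\xi,F]=\big(\d{}^{\nabla^{\End\frak k}}R^\nabla\big)\cdot\xi=0,
\]
where the last equality is the Bianchi identity for $R^\nabla$. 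This shows $\d{}^\nabla F$ is $z(\frak k)$-valued; but $\frak k$ is semisimple, so $z(\frak k)=0$ and therefore $\d{}^\nabla F=0$. Consequently condition (iii) holds trivially, and Proposition \ref{prop:splitting_coh_triv} endows $A=B\oplus\frak k$ with the claimed Lie algebroid structure \eqref{eq:bracket_construction}.

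For the converse, I would start from a Lie algebroid $A$ with a semisimple bundle of ideals $\frak k\subset A$ and set $B=A/\frak k$. By part (iii) of the previous proposition, $\A(A;\frak k)$ is nonempty (it is in bijection with compatible pairs $(v,\nabla)$, both of which exist: $v$ is any vector bundle splitting of \eqref{eq:splitting}, and $\nabla$ any bracket-preserving connection, with compatibility $\nabla^A_{h\alpha}=\nabla_{\rho\alpha}$ readily adjusted). By part (ii) of that proposition, every IM connection in the semisimple setting is automatically primitive, with the unique curving $F$ determined by $R^\nabla=-\ad F$. Therefore the converse statement of Proposition \ref{prop:splitting_coh_triv} applies verbatim, yielding an isomorphism of $A$ onto $B\oplus\frak k$ equipped with the bracket \eqref{eq:bracket_construction} built from this $(\nabla,F)$.

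The bulk of the work is conceptual rather than computational: the potential obstacle of condition (iii) in Proposition \ref{prop:splitting_coh_triv} collapses because the only obstruction lives in the center of $\frak k$, which vanishes under the semisimplicity hypothesis. The other mildly delicate point is recognizing that, in the converse, one can always match a chosen splitting $v$ with a bracket-preserving $\nabla$ so that the compatibility $\nabla^A_{h\alpha}=\nabla_{\rho\alpha}$ and $F^v=(\rho_B)^*F$ of Corollary \ref{cor:primitive_bijective_correspondence} are met; this is exactly what part (iii) of the previous proposition guarantees in the semisimple case.
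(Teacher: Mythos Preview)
Your proposal is correct and follows essentially the same route as the paper: the corollary is presented there as an immediate consequence of the preceding proposition, noting that once $\nabla$ is bracket-preserving the remaining hypotheses of Proposition \ref{prop:splitting_coh_triv} are automatic because $\d{}^\nabla F$ is center-valued (by the Bianchi computation you reproduce) and hence zero. The only point you leave slightly informal is the ``readily adjusted'' compatibility $\nabla^A_{h\alpha}=\nabla_{\rho\alpha}$ in the converse; the paper sidesteps this by citing the existence of IM connections for semisimple $\frak k$ from \cite{mec}*{Corollary 6.6}, after which part (ii) of the preceding proposition makes every such connection primitive and the converse of Proposition \ref{prop:splitting_coh_triv} applies directly.
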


\subsubsection{The abelian case}
We now establish a criterion for the existence of primitive IM connections for an abelian bundle of ideals $\frak k\subset A$. Preliminarily, observe that when $\frak k$ is abelian, the representation $\nabla^A$ descends to a  representation $\nabla^B$ of $B$ on $\frak k$, given by $\nabla^B_\alpha\xi=[\tilde\alpha,\xi]$, where $\tilde\alpha\in\Gamma(A)$ denotes any lift of $\alpha\in\Gamma(B)$ along the anchor, and $\xi\in\Gamma(\frak k)$.

Now suppose $(\C,v)\in \arc(A;\frak k)$ is a primitive connection and $F$ is its curving. By identity \eqref{eq:R_ad_F}, $\nabla$ is now necessarily flat, and moreover, equation \eqref{eq:conn_orb2} now just says that $\nabla$ induces $\nabla^B$, that is, 
\begin{align}
\label{eq:nabla_induces}
\nabla^B_\alpha=\nabla_{\rho(\alpha)}.
\end{align}
 Let us consider the set of 2-forms $F$ with \textit{transverse coboundary} $\d{}^\nabla F$, that is,
\[
\Omega^2_{tc,\nabla}(M;\frak k)\coloneqq \set{F\in \Omega^2(M;\frak k)\given \iota_X\d{}^\nabla F=0\text{ for any $X\in T\F$}}.
\]
Let $(\rho_B)^*\colon \Omega^2(M;\frak k)\ra \Omega^2(B;\frak k)$ denote the pullback along the anchor of $B$, and observe that there holds $(\rho_B)^*\d{}^\nabla=\d{}^{\nabla^B}(\rho_B)^*$ by equation \eqref{eq:nabla_induces}, so that the pullback restricts to
\[
(\rho_B)^*_{tc,\nabla}\colon \Omega_{tc,\nabla}^2(M;\frak k)\ra Z^2(B;\frak k)
\]
where $Z^2(B;\frak k)$ denotes $\frak k$-valued 2-cocycles on $B$ with respect to the differential $\d{}^{\nabla^B}$. Clearly, the map above maps $F$ to the curvature of the splitting $F^v$. Moreover, in the usual Lie algebroid cohomology  of $B$ with values in $\frak k$, the class 
\[c(A)\coloneqq [F^v]\in H^2(B;\frak k)\] 
is independent of the splitting $v\colon A\ra \frak k$ since $\frak k$ is abelian. It is thus in the image of the map $\Omega_{tc,\nabla}^2(M;\frak k)\ra H^2(B;\frak k)$ induced by $(\rho_B)^*_{tc,\nabla}$, which we denote by the same symbol. With this in mind, we formulate the following.
\begin{proposition}
\label{prop:abelian_primitive}
An abelian bundle of ideals $\frak k\subset A$ admits a primitive connection if and only if there exists a flat connection $\nabla$ on $\frak k$, which induces $\nabla^B$ and satisfies \[c(A)\in\im(\rho_B)^*_{tc,\nabla}.\] Moreover, we have the following bijective correspondence.
\begin{align*}
\left\{
\parbox{4.8cm}{\centering Primitive connections for $\frak k$\\with a choice of curving}
\right\}
\longleftrightarrow
\left\{
\parbox{7.8cm}{\centering Triples $(v,
\nabla,F)$, where $v\colon A\ra \frak k$ is a splitting,\\$\nabla$ is a  flat connection on $\frak k$ inducing $\nabla^B$, and\\$F\in\Omega^2_{tc,\nabla}(M;\frak k)$ satisfies $F^v=(\rho_B)^*F$.}
\right\}
\end{align*}
\end{proposition}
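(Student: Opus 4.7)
The plan is to deduce both assertions from Corollary \ref{cor:primitive_bijective_correspondence}, which already provides a general bijective correspondence between primitive IM connections with a choice of curving and triples $(v,\nabla,F)$ satisfying conditions (i)--(iii) of Proposition \ref{prop:splitting_coh_triv} together with the compatibility identities \eqref{eq:primitive_additional_conditions}. In the abelian setting these conditions collapse, and the choice of splitting can be absorbed into a cohomological condition on $A$.

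First I would translate the five general conditions under the assumption $[\cdot,\cdot]_{\frak k} = 0$: condition (i) of Proposition \ref{prop:splitting_coh_triv} (bracket preservation) is automatic; condition (ii) $R^\nabla = -\ad F$ collapses to flatness $R^\nabla = 0$ since $\ad\equiv 0$; condition (iii) is by definition $F \in \Omega^2_{tc,\nabla}(M;\frak k)$; the first compatibility identity $\nabla^A_{h\alpha} = \nabla_{\rho\alpha}$ becomes the statement that $\nabla$ induces $\nabla^B$, using that $\nabla^A$ descends to $\nabla^B$ and $\phi\circ h = \phi$; while $F^v = (\rho_B)^* F$ remains unchanged. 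These substitutions immediately yield the stated bijective correspondence.

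For the existence assertion, I would reformulate the existence of a triple $(v,\nabla,F)$ as a cohomological condition. The existence of a flat connection $\nabla$ inducing $\nabla^B$ is independent of the remaining data, so it is retained as a hypothesis. Fixing such $\nabla$, one direction is tautological: if $(v,F)$ with $F^v = (\rho_B)^* F$ exists, then $c(A) = [F^v] = [(\rho_B)^* F] \in \im(\rho_B)^*_{tc,\nabla}$. For the converse, starting from an arbitrary splitting $v_0$ and some $F \in \Omega^2_{tc,\nabla}(M;\frak k)$ with $[(\rho_B)^* F] = c(A)$, the difference $F^{v_0} - (\rho_B)^* F$ is a $\d{}^{\nabla^B}$-coboundary, say $\d{}^{\nabla^B}\tilde\eta$ for some $\tilde\eta \in \Gamma(B^*\otimes \frak k)$, and the plan is to absorb $\tilde\eta$ into a new splitting $v = v_0 + \tilde\eta$ (viewed via the inclusion $\frak k\hookrightarrow A$).

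The only technical point---and the place where abelian-ness is essentially used---is to verify that the curvature $F^v$ transforms affinely under translation of the splitting:
\[F^{v_0 + \tilde\eta} - F^{v_0} = -\d{}^{\nabla^B}\tilde\eta.\]
Expanding $[\sigma_0\alpha + \tilde\eta(\alpha),\,\sigma_0\beta + \tilde\eta(\beta)]_A$ in the definition \eqref{eq:curvature_splitting} of $F^{v_0 + \tilde\eta}$, the quadratic correction $[\tilde\eta(\alpha),\tilde\eta(\beta)]_A$ vanishes by abelian-ness, while the mixed terms collapse to $\nabla^B_\alpha\tilde\eta(\beta) - \nabla^B_\beta\tilde\eta(\alpha)$ by the descent definition of $\nabla^B$; combined with the term $\tilde\eta[\alpha,\beta]_B$ coming from $\sigma_0[\alpha,\beta]_B$, this reassembles precisely into $-\d{}^{\nabla^B}\tilde\eta$. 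With this identity in hand the adjusted splitting $v$ satisfies $F^v = (\rho_B)^* F$, completing the converse and hence the existence assertion.
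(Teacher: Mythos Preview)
Your proof is correct and follows essentially the same route as the paper, reducing to Corollary \ref{cor:primitive_bijective_correspondence} and then adjusting the splitting so that $F^v=(\rho_B)^*F$ holds on the nose; you are in fact more explicit than the paper at this step, since the paper simply asserts that ``since $\frak k$ is abelian, the latter means precisely that there exists a splitting $v$ with $F^v=(\rho_B)^*F$'' without writing out the change-of-splitting formula. One minor notational slip: your ``$v=v_0+\tilde\eta$'' should be read as adjusting the horizontal lift to $\sigma=\sigma_0+\tilde\eta$ (equivalently $v=v_0-\tilde\eta\circ\phi$), which is consistent with the computation you then carry out.
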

\begin{proof}
One direction for the statement about existence is already proved above. For the other direction, suppose there is a flat connection $\nabla$ that induces $\nabla^B$ and satisfies $c(A)\in\im(\rho_B)^*_{tc,\nabla}$. Since $\frak k$ is abelian, the latter means precisely that there exists a splitting $v\colon A\ra \frak k$ with curvature 
\begin{align}
\label{eq:temp_abelian_prop}
F^v=(\rho_B)^* F,
\end{align}
for some $F\in\Omega^2(M;\frak k)$ with $\iota_{\rho_B(\alpha)}\d{}^\nabla F=0$ for any $\alpha\in B$. The pair $(\nabla, F)$ satisfies the assumptions of Proposition \eqref{prop:splitting_coh_triv}, and equation \eqref{eq:temp_abelian_prop} ensures that the Lie algebroid $A$ is isomorphic to the obtained algebroid $B\oplus \frak k$, which has an IM connection with curving $F$. The statement regarding the bijective correspondence is clear from this argument. 
\end{proof}
\begin{remark}
The last proposition is not unrelated to the criterion for existence of kernel flat connections on an abelian bundle of ideals $\frak k$ from \cite{mec}*{Proposition 5.21}. In fact, Proposition \ref{prop:abelian_primitive} is its refinement, and their relation can be precisely summarized with the following diagram.
\[\begin{tikzcd}[row sep=large]
	{H_{im}^2(B;\frak k)} & {H^2(B;\frak k)} \\
	{\Omega_{tc,\nabla}^2(M;\frak k)}
	\arrow["\operatorname{sym}", from=1-1, to=1-2]
	\arrow["{\delta^0}", from=2-1, to=1-1]
	\arrow["{(\rho_B)^*_{tc,\nabla}}"', from=2-1, to=1-2]
\end{tikzcd}\]
Here, $H_{im}^\bullet(B;\frak k)$ denotes the cohomology of the cochain complex $(\Omega^\bullet_{im}(B;\frak k),\d{}^\nabla_{im})$, where the differential $\d{}^\nabla_{im}$ is the restriction to IM forms of $\d{}^\nabla$ as in equation \eqref{eq:ext_cov_der_p=1}, and \[\operatorname{sym}(c_0,c_1)(\alpha,\beta)=c_1(\alpha)(\rho \beta).\]
\end{remark}

\appendix
\section{Auxiliary computations}
\begin{lemma}
\label{lemma:wd_dnabla}
Let $\nabla$ be a connection on a representation $V$ of a Lie algebroid $A\Ra M$. The map $\d{}^\nabla\colon W^{p,q}(A;V)\ra W^{p,q+1}(A;V)$ given by \eqref{eq:dnabla} is well-defined.
\end{lemma}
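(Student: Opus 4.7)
The proof reduces to checking three things for each $k=0,\dots,p$: (a) that $(\d{}^\nabla c)_k$ is $\R$-multilinear and alternating in the first $p-k$ arguments, (b) that it is $C^\infty(M)$-multilinear and symmetric in the last $k$ arguments, and (c) that it satisfies the Leibniz identity relating $(\d{}^\nabla c)_k$ to $(\d{}^\nabla c)_{k+1}$. The first two are immediate from the corresponding properties of $c_k$ and $c_{k-1}$: alternation in the $\ul\alpha$-arguments is inherited since the sum $\sum_i c_{k-1}(\beta_i,\ul\alpha\|\dots)$ treats $\ul\alpha$ as a single block, and symmetry/tensoriality in the $\ul\beta$-arguments is manifest because the sum runs over the full tuple of $\beta_i$'s.

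The only content of the lemma is thus the Leibniz identity. My plan is to compute
\[
(-1)^k(\d{}^\nabla c)_k(f\alpha_1,\alpha_2,\dots,\alpha_{p-k}\|\ul\beta) = \d{}^\nabla c_k(f\alpha_1,\dots\|\ul\beta) - \sum_{i=1}^k c_{k-1}(\beta_i,f\alpha_1,\dots\|\widehat{\beta_i})
\]
by expanding the two pieces separately. For the first, I apply Leibniz for $c_k$ to split it as $fc_k(\ul\alpha\|\ul\beta)+\d f\wedge c_{k+1}(\alpha_2,\dots\|\alpha_1,\ul\beta)$, then apply $\d{}^\nabla$ using that it satisfies the graded Leibniz rule and $\d{}^\nabla(\d f)=\d^2 f=0$. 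For each term in the sum I use antisymmetry to move $f\alpha_1$ into the first slot, apply Leibniz for $c_{k-1}$, and swap back; this produces an $f$-linear term $fc_{k-1}(\beta_i,\ul\alpha\|\widehat{\beta_i})$ together with a correction $-\d f\wedge c_k(\beta_i,\alpha_2,\dots\|\alpha_1,\widehat{\beta_i})$.

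Collecting terms, the $f$-summands assemble exactly into $f\cdot(-1)^k(\d{}^\nabla c)_k(\ul\alpha\|\ul\beta)$. The remaining $\d f$-coefficient equals
\[
c_k(\alpha_1,\dots\|\ul\beta)-\d{}^\nabla c_{k+1}(\alpha_2,\dots\|\alpha_1,\ul\beta)+\sum_{i=1}^k c_k(\beta_i,\alpha_2,\dots\|\alpha_1,\widehat{\beta_i}),
\]
and the final step is to recognize this as $(-1)^k(\d{}^\nabla c)_{k+1}(\alpha_2,\dots\|\alpha_1,\ul\beta)$, which follows by applying \eqref{eq:dnabla} at level $k+1$ to the tuple $(\alpha_2,\dots,\alpha_{p-k}\|\alpha_1,\beta_1,\dots,\beta_k)$ and noting that the inserted $\alpha_1$ accounts for the distinguished summand.

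The main obstacle is purely the sign bookkeeping: reconciling the $(-1)^k$ factor in the definition with the $(-1)^{k+1}$ factor that appears when $(\d{}^\nabla c)_{k+1}$ is invoked after shifting the index, and correctly tracking the two antisymmetric swaps needed to move $\alpha_1$ past $\beta_i$. Once the signs are matched, the identity is a term-by-term comparison with no further analytic input.
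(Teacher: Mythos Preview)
Your proposal is correct and follows essentially the same approach as the paper's proof: expand $(-1)^k(\d{}^\nabla c)_k(f\alpha_1,\dots\|\ul\beta)$ using the Leibniz identity for $c_k$ and $c_{k-1}$, apply the graded Leibniz rule for $\d{}^\nabla$, collect the $f$- and $\d f$-terms, and recognize the $\d f$-coefficient as $(-1)^k(\d{}^\nabla c)_{k+1}(\alpha_2,\dots\|\alpha_1,\ul\beta)$. Your identification of the sign bookkeeping as the only obstacle is accurate, and the paper handles it exactly as you outline.
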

\begin{proof}
  Given $c=(c_0,\dots,c_p)\in W^{p,q}(A;V)$, we would like to show that \[\d{}^\nabla c=((\d{}^\nabla c)_0,\dots,(\d{}^\nabla c)_p)\] defines a Weil cochain. Letting $\ul\alpha=(\alpha_1,\dots,\alpha_{p-k})$, $\ul\beta=(\beta_1,\dots,\beta_k)$ and $f\in C^\infty(M)$, we straightforwardly compute:
\begin{align*}
  &(-1)^k (\d{}^\nabla c)_k(f\alpha_1,\dots,\alpha_{p-k}\|\ul\beta)=\d{}^\nabla\big(fc_k(\ul\alpha\|\ul\beta)+\d f\wedge c_{k+1}(\alpha_2,\dots,\alpha_{p-k}\|\alpha_1,\beta)\big)\\
  &-\textstyle\sum_i\big(f c_{k-1}(\beta_i,\ul\alpha\|\beta_1,\dots,\widehat{\beta_i},\dots,\beta_k)-\d f\wedge c_k(\beta_i,\alpha_2,\dots,\alpha_{p-k}\|\alpha_1,\beta_1,\dots,\widehat{\beta_i},\dots,\beta_k)\big)\\
  &=f\big({\d{}}^\nabla c_k(\ul\alpha\|\ul\beta)-\textstyle\sum_i c_{k-1}(\beta_i,\ul\alpha\|\beta_1,\dots,\widehat{\beta_i},\dots,\beta_k)\big)\\
  &+\d f\wedge \big(c_k(\ul\alpha\|\ul\beta)+\textstyle\sum_i c_k(\beta_i,\alpha_2,\dots,\alpha_{p-k}\|\alpha_1,\beta_1,\dots,\widehat{\beta_i},\dots,\beta_k)-\d{}^\nabla c_{k+1}(\alpha_2,\dots,\alpha_{p-k}\|\alpha_1,\ul\beta)\big)\\
  &=f (-1)^k (\d{}^\nabla c)_k(\ul\alpha\|\ul\beta)-\d f\wedge (-1)^{k+1}(\d{}^\nabla c)_{k+1}(\alpha_2,\dots,\alpha_{p-k}\|\alpha_1,\ul\beta),
\end{align*}
where we have used the Leibniz identity for $c$ in the first and second line, and the graded Leibniz rule for $\d{}^\nabla$ in the third and fourth. In the last line, we recognized that the expressions in the parentheses in the third and fourth line are precisely the defining expressions for $(-1)^k(\d{}^\nabla c)_k$ and $-(-1)^{k+1}(\d{}^\nabla c)_{k+1}$, respectively. Cancelling the factor $(-1)^k$ on both sides finishes the proof.
\end{proof}

\begin{lemma}
  \label{lem:T_theta_wedge}
  Let $V$ be a representation of a Lie algebroid $A\Ra M$. For any $c\in W^{p,q}(A;V)$ and $(T,\theta)\in W^{1,1}(A;\End V)$, the following expression defines the coefficients of a Weil cochain $(T,\theta)\wedge c\in W^{p+1,q+1}(A;V)$.
\begin{align*}
\big((T,\theta)\wedge c\big)_k(\alpha_0,\dots,\alpha_{p-k}\|\ul\beta)&=\textstyle\sum_{i=0}^{p-k} (-1)^i T(\alpha_i)\wedge c_k(\alpha_0,\dots,\widehat{\alpha_i},\dots,\alpha_{p-k}\|\ul\beta)\\
&+\textstyle\sum_{j=1}^k\theta(\beta_j)\cdot c_{k-1}(\ul\alpha\|\beta_1,\dots,\widehat{\beta_j},\dots,\beta_k),
\end{align*}
\end{lemma}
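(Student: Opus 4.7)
To verify the claim, one must show three things for the prescribed family of maps $((T,\theta)\wedge c)_k$: that for each $k$, the $k$-th term is $\mathbb R$-multilinear and alternating in the first $p+1-k$ arguments, that it is $C^\infty(M)$-multilinear and symmetric in the last $k$ arguments, and that the system as a whole satisfies the Leibniz identity of \sec\ref{sec:invariant_linear_connections} relating levels $k$ and $k+1$ of a Weil cochain in $W^{p+1,q+1}(A;V)$.

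The first condition is visible directly from the definition, since the sum $\sum_i (-1)^i T(\alpha_i)\wedge c_k(\alpha_0,\dots,\widehat{\alpha_i},\dots)$ is the standard Koszul-type expression that is automatically alternating in the $\alpha$'s by the alternating property of $c_k$. For the second, tensoriality of $\theta$ gives $\theta(f\beta_j)=f\theta(\beta_j)$ and $C^\infty(M)$-multilinearity of $c_{k}$ and $c_{k-1}$ in their symmetric slots handles the remaining terms; symmetry under a transposition $\beta_m\leftrightarrow\beta_n$ is checked by reindexing the sum $\sum_j \theta(\beta_j)\cdot c_{k-1}(\ul\alpha\|\beta_1,\dots,\widehat{\beta_j},\dots,\beta_k)$ via the involution and using symmetry of $c_{k-1}$ in its symmetric arguments.

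The main calculation is the Leibniz identity. I would expand $((T,\theta)\wedge c)_k(f\alpha_0,\alpha_1,\dots,\alpha_{p-k}\|\ul\beta)$ by applying the Leibniz rule $T(f\alpha_0)=fT(\alpha_0)+\d f\otimes\theta(\alpha_0)$ to the $i=0$ summand, the Weil-cochain Leibniz rule $c_k(f\alpha_0,\alpha_1,\dots,\widehat{\alpha_i},\dots\|\ul\beta)=fc_k(\dots)+\d f\wedge c_{k+1}(\alpha_1,\dots,\widehat{\alpha_i},\dots\|\alpha_0,\ul\beta)$ to each summand with $i\ge 1$, and the analogous rule for $c_{k-1}$ in the $\theta(\beta_j)$-summands. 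Collecting the $f$-proportional contributions reassembles exactly $f\cdot ((T,\theta)\wedge c)_k(\ul\alpha\|\ul\beta)$, and the $\d f$-contributions come from three sources that must be matched against the expected $\d f\wedge ((T,\theta)\wedge c)_{k+1}(\alpha_1,\dots,\alpha_{p-k}\|\alpha_0,\ul\beta)$ after expanding the latter by the defining formula at level $k+1$ with antisymmetric slot $\alpha_0$ moved into the symmetric arguments.

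The main obstacle, though not deep, is the sign bookkeeping: when pushing $\d f$ past the 1-form $T(\alpha_i)$ one picks up a sign $(-1)$, which combined with the $(-1)^i$ from the original sum and the reindexing $i'=i-1$ gives precisely the alternating sum $\sum_{i'} (-1)^{i'} T(\alpha_{i'+1})\wedge c_{k+1}(\alpha_1,\dots,\widehat{\alpha_{i'+1}},\dots\|\alpha_0,\ul\beta)$ appearing in $((T,\theta)\wedge c)_{k+1}(\alpha_1,\dots,\alpha_{p-k}\|\alpha_0,\ul\beta)$. The two remaining pieces of the $\d f$-sum, namely $\d f\wedge\theta(\alpha_0)\cdot c_k$ and $\sum_j \d f\wedge\theta(\beta_j)\cdot c_k(\ul\alpha\|\alpha_0,\beta_1,\dots,\widehat{\beta_j},\dots,\beta_k)$, correspond respectively to the $m=1$ and $m=j+1$ terms of the symmetric-argument sum at level $k+1$, once $\alpha_0$ is regarded as the new first entry of the enlarged symmetric tuple $(\alpha_0,\beta_1,\dots,\beta_k)$. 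Matching these three groups completes the proof.
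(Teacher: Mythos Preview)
Your proposal is correct and follows essentially the same approach as the paper's proof: both expand $((T,\theta)\wedge c)_k(f\alpha_0,\dots)$ using the Leibniz rule for $(T,\theta)$ on the $i=0$ summand and the Weil-cochain Leibniz rules for $c_k$ and $c_{k-1}$ on the remaining summands, then regroup the $f$- and $\d f$-terms. Your description of the sign bookkeeping and the matching of the three $\d f$-pieces with the level-$(k+1)$ formula is exactly what the paper does, with a bit more commentary on the preliminary multilinearity and symmetry checks.
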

\begin{proof}
  For any $\ul\alpha=(\alpha_0,\dots,\alpha_{p-k})$, $\ul\beta=(\beta_1,\dots,\beta_k)$ and $f\in C^\infty(M)$, we compute:
\begin{align*}
\big((T&,\theta)\wedge c\big)_k(f\alpha_0,\dots,\alpha_{p-k}\|\ul\beta)=\underbrace{T(f\alpha_0)}_{\mathclap{fT(\alpha_0)+\d f\otimes\theta(\alpha_0)}}\wedge\, c_k(\alpha_1,\dots,\alpha_{p-k}\|\ul\beta)\\
&+\textstyle\sum_{i=1}^{p-k}(-1)^i T(\alpha_i)\wedge \underbrace{c_k(f\alpha_0,\alpha_1,\dots,\widehat{\alpha_i},\dots,\alpha_{p-k}\|\ul\beta)}_{\mathclap{fc_k(\alpha_0,\dots,\widehat{\alpha_i},\dots,\alpha_{p-k}\|\ul\beta)+\d f\wedge c_{k+1}(\alpha_1,\dots,\widehat{\alpha_i},\dots,\alpha_{p-k}\|\alpha_0,\ul\beta)}}\\
&+\textstyle\sum_{j=1}^k\theta(\beta_j)\cdot \underbrace{c_{k-1}(f\alpha_0,\dots,\alpha_{p-k}\|\beta_1,\dots\widehat{\beta_i},\dots,\beta_k)}_{\mathclap{f c_{k-1}(\alpha_0,\dots,\alpha_{p-k}\|\beta_1,\dots\widehat{\beta_i},\dots,\beta_k)+\d f\wedge c_k(\alpha_1,\dots,\alpha_{p-k}\|\alpha_0,\beta_1,\dots\widehat{\beta_i},\dots,\beta_k)}}
\end{align*}
Exchanging $\d f$ with $T(\alpha_i)$ yields an additional minus, and separating the terms with $f$ and $\d f$ yields
\begin{align*}
  \big((T&,\theta)\wedge c\big)_k(f\alpha_0,\dots,\alpha_{p-k}\|\ul\beta)=f\big((T,\theta)\wedge c\big)_k(\alpha_0,\dots,\alpha_{p-k}\|\beta)\\
  &+\d f\wedge \big({-}\textstyle\sum_{i=1}^{p-k}(-1)^i T(\alpha_i)\wedge c_{k+1}(\alpha_1,\dots,\widehat{\alpha_i},\dots,\alpha_{p-k}\|\alpha_0,\ul\beta)\\
  &\phantom{+\d f\wedge \big(}+\theta(\alpha_0)c_k(\alpha_1,\dots,\alpha_{p-k}\|\ul\beta)+\textstyle\sum_{j=1}^k\theta(\beta_j)c_k(\alpha_1,\dots,\alpha_{p-k}\|\alpha_0,\beta_1,\dots,\widehat{\beta_j},\dots,\beta_k)\big).
  \end{align*}
Now just observe that the terms in the parentheses in the second and third line add up to 
\[\big((T,\theta)\wedge c\big)_{k+1}(\alpha_1,\dots,\alpha_{p-k}\|\alpha_0,\ul\beta).\qedhere\]
\end{proof}
\begin{proposition}
  \label{prop:conn_global_inf}
  Let $\omega\in\A(G;\frak k)$ be a multiplicative Ehresmann connection on a Lie groupoid $G\rra M$. Let $A$ be its Lie algebroid, endowed with the IM connection $(\C,v)=\ve(\omega)$. The two linear connections induced by $\omega$ and $(\C,v)$, respectively defined by equations \eqref{eq:nabla} and \eqref{eq:nabla2}, coincide.
\end{proposition}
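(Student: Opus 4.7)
Since $(\C,v)=\ve(\omega)$, the leading-term formula for the van Est map gives $\C(\alpha)=R_\alpha\omega$ for every $\alpha\in\Gamma(A)$. Specialising to $\alpha=\xi\in\Gamma(\frak k)$ and comparing with \eqref{eq:nabla2}, the proposition reduces to the identity
\[
  R_\xi\omega(X) \;=\; v\bigl[h(Y),\xi^L\bigr]_{1_x}\qquad (X\in T_x M,\ \xi\in\Gamma(\frak k))
\]
with $Y$ an arbitrary $s$-lift of an extension $\tilde X$ of $X$. The plan is to rewrite the left-hand side as a Lie derivative, choose $Y$ to be horizontal so that $\omega(Y)$ is identically zero, and then invoke Proposition \ref{prop:conn}~(ii) to recognise the resulting expression.

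Concretely, because $\omega$ is multiplicative we have $u^*\omega=0$, so the hypothesis $j_1^*\omega=0$ of Lemma \ref{lem:R_jL} (equation \eqref{eq:R_jL}) is met; taking the linear connection there to be the connection $\nabla$ from Proposition \ref{prop:conn}, we obtain $R_\xi\omega=u^*\L^{\nabla^s}_{\xi^L}\omega$. Multiplicativity of $\omega$ moreover forces the unit bisection to be horizontal, i.e.\ $\d u_xX\in E_{1_x}$, which allows us to pick $Y$ as the \emph{horizontal} $s$-lift of $\tilde X$ near $1_x$, arranging $Y_{1_x}=\d u_xX$. Since $Y\in\Gamma(E)$, the tautological identity $\omega(Y)\equiv 0$ reduces the derivation formula for the Lie derivative to
\[
  (\L^{\nabla^s}_{\xi^L}\omega)(Y)\bigm|_{1_x}
  \;=\; -\,\omega\bigl[\xi^L,Y\bigr]_{1_x}
  \;=\; \omega\bigl[Y,\xi^L\bigr]_{1_x},
\]
and no contribution from $\nabla^s$ appears.

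Finally, because $Y$ is horizontal we have $h(Y)=Y$, and Proposition \ref{prop:conn}~(ii) identifies the vertical part of the bracket as $v[Y,\xi^L]=(\nabla_{\tilde X}\xi)^L$, whose value at $1_x$ is $\nabla_X\xi$. On the other hand, the fibre identification $\omega_{1_x}=\d L_{1_x^{-1}}\!\circ v=v|_{K_{1_x}}$ at the unit shows $\omega[Y,\xi^L]_{1_x}=v[Y,\xi^L]_{1_x}$. Putting these together,
\[
  R_\xi\omega(X)
  \;=\; (\L^{\nabla^s}_{\xi^L}\omega)(Y)\bigm|_{1_x}
  \;=\; v[Y,\xi^L]_{1_x}
  \;=\; \nabla_X\xi,
\]
which proves the claim. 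The only mildly subtle step is the choice of $Y$ as a horizontal lift, but this is made possible precisely by the multiplicativity condition $u^*\omega=0$; once $Y$ is horizontal everything reduces to Proposition \ref{prop:conn}~(ii), which is how the linear connection $\nabla$ is built from $\omega$ in the first place.
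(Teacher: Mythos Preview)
Your argument is correct, and it takes a genuinely different route from the paper's own proof. The paper computes both sides directly in terms of flows: it writes $v[Y,\xi^L]_{1_x}$ as $\smallderiv\lambda 0 v\bigl(\d(\phi^{\xi^L}_\lambda)(Y_{g_\lambda})\bigr)$, expands the differential of the right translation using the groupoid multiplication, uses that $v\colon TG\to K$ is a groupoid morphism (because $E$ is multiplicative) together with horizontality of $Y$ to obtain $\smallderiv\lambda 0 \d L_{g_\lambda}\,v\bigl(\d(\phi^{\xi^L}_\lambda)\,\d u(X)\bigr)$, and then matches this against the van Est expression $\C(\xi)(X)=\smallderiv\lambda 0\Ad_{g_\lambda^{-1}}\omega\bigl(\d(\phi^{\xi^L}_\lambda)\,\d u(X)\bigr)$ using $u^*\omega=0$ and the chain rule.

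By contrast, you bypass the explicit flow manipulation entirely by invoking two results already established in the paper: equation \eqref{eq:R_jL} to rewrite $R_\xi\omega$ as $u^*\L^{\nabla^s}_{\xi^L}\omega$, and Proposition \ref{prop:conn}~(ii) to identify the bracket term. The key device is the choice of a \emph{horizontal} $s$-lift $Y$ with $Y_{1_x}=\d u_x X$ (legitimate since $E$ is wide, i.e.\ $\d u(TM)\subset E$), which kills the $\nabla^s_{\xi^L}(\omega(Y))$ term in the derivation formula and reduces everything to $\omega[Y,\xi^L]_{1_x}=v[Y,\xi^L]_{1_x}$. Your approach is more synthetic and economical---it recycles machinery the paper has already built---whereas the paper's direct computation is self-contained and makes the role of the multiplicativity of $E$ (via $v$ being a groupoid morphism) more visibly geometric.
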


\begin{proof}
  We need to show that for any $\xi\in\Gamma(\frak k)$ and $X\in\vf(M),$ there holds
  \[
  \C(\xi)_x(X)=v[Y,\xi^L]_{1_x}
  \]
  for any $x\in M$, where $Y$ is any horizontal $s$-lift of $X$. Denoting $g_\lambda=\phi^{\xi^L}_{-\lambda}(1_x)$, the right-hand side equals
  \begin{align*}
    v[Y,\xi^L]_{1_x}=\deriv\lambda 0 v\big({\d(\phi^{\xi^L}_\lambda)_{g_{\lambda}}}(Y_{g_{\lambda}})\big).
  \end{align*}
  The flow of a left-invariant vector field is given by the right translation along the bisection $\exp(\lambda\xi)$, i.e., $\phi^{\xi^L}_\lambda=R_{\exp(\lambda\xi)}$ and differentiating 
  $R_{\exp(\lambda\xi)}=m\circ(\id,\phi^{\xi^L}_\lambda\circ u\circ s),$ we get
  \[
  \d{}\big(R_{\exp(\lambda\xi)}\big)_{g_\lambda}(Y)=\d m \big(Y_{g_\lambda},\d{}(\phi^{\xi^L}_\lambda)_{1_x}\d u(X_x)\big)
  \]
  Now observe that since $E$ is multiplicative, $v\colon TG\ra K$ is a groupoid morphism. But since $Y$ is horizontal, we obtain
   \begin{align}
    \label{eq:aux_v_Y_xiL}
     v[Y,\xi^L]_{1_x}=\deriv\lambda 0 \d{}({L_{g_{\lambda}}}) v\big({\d(\phi^{\xi^L}_\lambda)_{1_x}\d u(X_x)}\big).
   \end{align}
  On the other hand, we have
  \begin{align*}
    \C(\xi)_x(X)=\deriv\lambda 0 \underbrace{\Ad_{g_{\lambda}^{-1}}}_{\frak k_x\ra\frak k_x}\big(\underbrace{\omega(\d(\phi^{\xi^L}_\lambda)\d u(X_x))}_{\text{in }\frak k_x\text{ for all }\lambda}\big),
  \end{align*}
  so we can apply the chain rule to differentiate this expression. But since $u^*\omega=0$, this yields precisely the expression \eqref{eq:aux_v_Y_xiL}, concluding the proof.
\end{proof}

\begin{bibdiv}
\begin{biblist}

\bib{weil}{article}{
   author={Arias Abad, Camilo},
   author={Crainic, Marius},
   title={The Weil algebra and the Van Est isomorphism},
   language={English, with English and French summaries},
   journal={Ann. Inst. Fourier (Grenoble)},
   volume={61},
   date={2011},
   number={3},
   pages={927--970},
   issn={0373-0956},
   review={\MR{2918722}},
   doi={10.5802/aif.2633},
}

\bib{im_forms}{article}{
   author={Bursztyn, Henrique},
   author={Cabrera, Alejandro},
   title={Multiplicative forms at the infinitesimal level},
   journal={Math. Ann.},
   volume={353},
   date={2012},
   number={3},
   pages={663--705},
   issn={0025-5831},
   review={\MR{2923945}},
   doi={10.1007/s00208-011-0697-5},
}
\bib{bundles_over_gpds}{article}{
   author={Bursztyn, Henrique},
   author={Cabrera, Alejandro},
   author={del Hoyo, Matias},
   title={Vector bundles over Lie groupoids and algebroids},
   journal={Adv. Math.},
   volume={290},
   date={2016},
   pages={163--207},
   issn={0001-8708},
   review={\MR{3451921}},
   doi={10.1016/j.aim.2015.11.044},
}
\bib{linear_mult}{article}{
   author={Bursztyn, Henrique},
   author={Cabrera, Alejandro},
   author={Ortiz, Cristi\'an},
   title={Linear and multiplicative 2-forms},
   journal={Lett. Math. Phys.},
   volume={90},
   date={2009},
   number={1-3},
   pages={59--83},
   issn={0377-9017},
   review={\MR{2565034}},
   doi={10.1007/s11005-009-0349-9},
}

\bib{twisted_dirac}{article}{
   author={Bursztyn, Henrique},
   author={Crainic, Marius},
   author={Weinstein, Alan},
   author={Zhu, Chenchang},
   title={Integration of twisted Dirac brackets},
   journal={Duke Math. J.},
   volume={123},
   date={2004},
   number={3},
   pages={549--607},
   issn={0012-7094},
   review={\MR{2068969}},
   doi={10.1215/S0012-7094-04-12335-8},
}

\bib{homogeneous}{article}{
   author={Cabrera, Alejandro},
   author={Drummond, Thiago},
   title={Van Est isomorphism for homogeneous cochains},
   journal={Pacific J. Math.},
   volume={287},
   date={2017},
   number={2},
   pages={297--336},
   issn={0030-8730},
   review={\MR{3632890}},
   doi={10.2140/pjm.2017.287.297},
}

\bib{local}{article}{
   author={Cabrera, A.},
   author={M\u arcu\c t, I.},
   author={Salazar, M. A.},
   title={Local formulas for multiplicative forms},
   journal={Transform. Groups},
   volume={27},
   date={2022},
   number={2},
   pages={371--401},
   issn={1083-4362},
   review={\MR{4431167}},
   doi={10.1007/s00031-020-09607-y},
}

\bib{diff_cohomology}{article}{
   author={Crainic, Marius},
   title={Differentiable and algebroid cohomology, van Est isomorphisms, and
   characteristic classes},
   journal={Comment. Math. Helv.},
   volume={78},
   date={2003},
   number={4},
   pages={681--721},
   issn={0010-2571},
   review={\MR{2016690}},
   doi={10.1007/s00014-001-0766-9},
}

\bib{spencer}{article}{
   author={Crainic, Marius},
   author={Salazar, Maria Amelia},
   author={Struchiner, Ivan},
   title={Multiplicative forms and Spencer operators},
   journal={Math. Z.},
   volume={279},
   date={2015},
   number={3-4},
   pages={939--979},
   issn={0025-5874},
   review={\MR{3318255}},
   doi={10.1007/s00209-014-1398-z},
}
\bib{vb-valued}{article}{
   author={Drummond, Thiago},
   author={Egea, Leandro},
   title={Differential forms with values in VB-groupoids and its Morita
   invariance},
   journal={J. Geom. Phys.},
   volume={135},
   date={2019},
   pages={42--69},
   issn={0393-0440},
   review={\MR{3872622}},
   doi={10.1016/j.geomphys.2018.08.019},
}

\bib{vb-algebroid-morphisms}{article}{
   author={Drummond, T.},
   author={Jotz Lean, M.},
   author={Ortiz, C.},
   title={${\mathcal{VB}}$-algebroid morphisms and representations up to
   homotopy},
   journal={Differential Geom. Appl.},
   volume={40},
   date={2015},
   pages={332--357},
   issn={0926-2245},
   review={\MR{3333112}},
   doi={10.1016/j.difgeo.2015.03.005},
}

\bib{dvb}{article}{
   author={Gracia-Saz, Alfonso},
   author={Mehta, Rajan Amit},
   title={Lie algebroid structures on double vector bundles and
   representation theory of Lie algebroids},
   journal={Adv. Math.},
   volume={223},
   date={2010},
   number={4},
   pages={1236--1275},
   issn={0001-8708},
   review={\MR{2581370}},
   doi={10.1016/j.aim.2009.09.010},
}
\bib{gracia-saz}{article}{
   author={Gracia-Saz, Alfonso},
   author={Mehta, Rajan Amit},
   title={$\mathcal{VB}$-groupoids and representation theory of Lie groupoids},
   journal={J. Symplectic Geom.},
   volume={15},
   date={2017},
   number={3},
   pages={741--783},
   issn={1527-5256},
   review={\MR{3696590}},
   doi={10.4310/JSG.2017.v15.n3.a5},
}

\bib{ideals}{article}{
   author={Jotz, M.},
   title={Obstructions to representations up to homotopy and ideals},
   journal={Asian J. Math.},
   volume={26},
   date={2022},
   number={2},
   pages={137--166},
   issn={1093-6106},
   review={\MR{4557078}},
   doi={10.4310/ajm.2022.v26.n2.a1},
}

\bib{actions}{article}{
   author={Kosmann-Schwarzbach, Y.},
   author={Mackenzie, K. C. H.},
   title={Differential operators and actions of Lie algebroids},
   conference={
      title={Quantization, Poisson brackets and beyond},
      address={Manchester},
      date={2001},
   },
   book={
      series={Contemp. Math.},
      volume={315},
      publisher={Amer. Math. Soc., Providence, RI},
   },
   isbn={0-8218-3201-8},
   date={2002},
   pages={213--233},
   review={\MR{1958838}},
   doi={10.1090/conm/315/05482},
}

\bib{gerbes}{article}{
   author={Laurent-Gengoux, Camille},
   author={Sti\'enon, Mathieu},
   author={Xu, Ping},
   title={Non-abelian differentiable gerbes},
   journal={Adv. Math.},
   volume={220},
   date={2009},
   number={5},
   pages={1357--1427},
   issn={0001-8708},
   review={\MR{2493616}},
   doi={10.1016/j.aim.2008.10.018},
}

\bib{ve_mein}{article}{
   author={Li-Bland, David},
   author={Meinrenken, Eckhard},
   title={On the van Est homomorphism for Lie groupoids},
   journal={Enseign. Math.},
   volume={61},
   date={2015},
   number={1-2},
   pages={93--137},
   issn={0013-8584},
   review={\MR{3449284}},
   doi={10.4171/LEM/61-1/2-5},
}

\bib{mec}{article}{
    AUTHOR = {Loja Fernandes, Rui},
    AUTHOR = {M\u{a}rcu\c{t}, Ioan},
     TITLE = {Multiplicative {E}hresmann connections},
   JOURNAL = {Adv. Math.},
    VOLUME = {427},
      YEAR = {2023},
     PAGES = {Paper No. 109124, 84},
      ISSN = {0001-8708,1090-2082},
       DOI = {10.1016/j.aim.2023.109124},
       URL = {https://doi.org/10.1016/j.aim.2023.109124},
}

\bib{mackenzie_duality}{article}{
   author={Mackenzie, Kirill C. H.},
   title={Duality and triple structures},
   conference={
      title={The breadth of symplectic and Poisson geometry},
   },
   book={
      series={Progr. Math.},
      volume={232},
      publisher={Birkh\"auser Boston, Boston, MA},
   },
   isbn={0-8176-3565-3},
   date={2005},
   pages={455--481},
   review={\MR{2103015}},
   doi={10.1007/0-8176-4419-9\_15},
}

\bib{mackenzie_doubles}{article}{
   author={Mackenzie, Kirill C. H.},
   title={Ehresmann doubles and Drinfel'd doubles for Lie algebroids and Lie
   bialgebroids},
   journal={J. Reine Angew. Math.},
   volume={658},
   date={2011},
   pages={193--245},
   issn={0075-4102},
   review={\MR{2831518}},
   doi={10.1515/CRELLE.2011.092},
}

\bib{mackenzie}{book}{
    AUTHOR = {Mackenzie, Kirill C. H.},
     TITLE = {General theory of {L}ie groupoids and {L}ie algebroids},
    SERIES = {London Mathematical Society Lecture Note Series},
    VOLUME = {213},
 PUBLISHER = {Cambridge University Press, Cambridge},
      YEAR = {2005},
     PAGES = {xxxviii+501},
      ISBN = {978-0-521-49928-3; 0-521-49928-3},
       DOI = {10.1017/CBO9781107325883},
       URL = {https://doi.org/10.1017/CBO9781107325883},
}

\bib{bialgebroids}{article}{
   author={Mackenzie, Kirill C. H.},
   author={Xu, Ping},
   title={Lie bialgebroids and Poisson groupoids},
   journal={Duke Math. J.},
   volume={73},
   date={1994},
   number={2},
   pages={415--452},
   issn={0012-7094},
   review={\MR{1262213}},
   doi={10.1215/S0012-7094-94-07318-3},
}


\bib{meinrenken_pike}{article}{
   author={Meinrenken, Eckhard},
   author={Pike, Jeffrey},
   title={The Weil algebra of a double Lie algebroid},
   journal={Int. Math. Res. Not. IMRN},
   date={2021},
   number={11},
   pages={8550--8622},
   issn={1073-7928},
   review={\MR{4266146}},
   doi={10.1093/imrn/rnz361},
}

\bib{lie2}{article}{
   author={Moerdijk, Ieke},
   author={Mr\v{c}un, Janez},
   title={On integrability of infinitesimal actions},
   journal={Amer. J. Math.},
   volume={124},
   date={2002},
   number={3},
   pages={567--593},
   issn={0002-9327},
   review={\MR{1902889}},
}

\bib{symplectic_groupoids}{article}{
   author={Weinstein, Alan},
   title={Symplectic groupoids and Poisson manifolds},
   journal={Bull. Amer. Math. Soc. (N.S.)},
   volume={16},
   date={1987},
   number={1},
   pages={101--104},
   issn={0273-0979},
   review={\MR{0866024}},
   doi={10.1090/S0273-0979-1987-15473-5},
}



  \end{biblist}
\end{bibdiv}

\Addresses

\end{document}